

\documentclass[11pt]{amsart}

\usepackage{amsmath}
\usepackage{amssymb}
\usepackage{amsthm}
\usepackage{bbm}
\usepackage{esint} 
\usepackage[colorlinks,citecolor=red,pagebackref,hypertexnames=false, hyperindex,breaklinks]{hyperref}

\usepackage{esint} 
\usepackage{enumitem} 
\usepackage{verbatim} 

\usepackage{tikz}



       %
\newcommand{\R}{{\mathbb R}}       
\newcommand{\N}{{\mathbb N}}

\newcommand{\bH}{{\mathbb H}}
\newcommand{\bS}{{\mathbb S}}
\newcommand{\bM}{{\mathbb M}}


\newcommand{\HH}{{\mathcal H}}

\newcommand{\EE}{{\mathcal E}}

\newcommand{\diam}{{\rm diam}}
\newcommand{\dist}{{\rm dist}}

\newcommand{\rf}[1]{{(\ref{#1})}}

\newcommand{\supp}{\operatorname{supp}}

\newcommand{\vphi}{{\varphi}}
\newcommand{\ve}{{\varepsilon}}
\newcommand{\vv}{{\vspace{2mm}}}
\newcommand{\vvv}{{\vspace{3mm}}}
\newcommand{\wt}[1]{{\widetilde{#1}}}
\newcommand{\wh}[1]{{\widehat{#1}}}

\newcommand{\rad}{{\operatorname{rad}}}

\newcommand{\poms}{{\partial_{\bS^n}\Omega}}
\newcommand{\pom}{{\partial \Omega}}

\newcommand{\capp}{\operatorname{Cap}}

\def\Xint#1{\mathchoice
	{\XXint\displaystyle\textstyle{#1}}%
	{\XXint\textstyle\scriptstyle{#1}}%
	{\XXint\scriptstyle\scriptscriptstyle{#1}}%
	{\XXint\scriptscriptstyle\scriptscriptstyle{#1}}%
	\!\int}
\def\XXint#1#2#3{{\setbox0=\hbox{$#1{#2#3}{\int}$ }
		\vcenter{\hbox{$#2#3$ }}\kern-.58\wd0}}

\def\avint{\;\Xint-}

\textwidth16.3cm
\textheight21cm
\evensidemargin.2cm
\oddsidemargin.2cm


\newtheorem{theorem}{Theorem}[section]
\newtheorem{lemma}[theorem]{Lemma}
\newtheorem{mlemma}[theorem]{Main Lemma}
\newtheorem{coro}[theorem]{Corollary}

\newtheorem{claim}{Claim}

\newtheorem*{theorem*}{Theorem}

\newtheorem{theorema}{Theorem}[section]

\theoremstyle{definition}

\theoremstyle{remark}
\newtheorem{rem}[theorem]{\bf Remark}

\numberwithin{equation}{section}

\newcommand{\brem}{\begin{rem}}
\newcommand{\erem}{\end{rem}}

\usepackage{xcolor}



\begin{document}
	
	\title[ Faber-Krahn inequalities, the ACF formula, and Carleson's conjecture]
	{Faber-Krahn inequalities, the Alt-Caffarelli-Friedman formula, and Carleson's $\ve^2$ conjecture in higher dimensions}

\begin{abstract}
The main aim of this article is to prove quantitative spectral inequalities for the Laplacian with Dirichlet boundary conditions. More specifically, we prove sharp quantitative stability for the Faber-Krahn inequality in terms of Newtonian capacities and Hausdorff contents of positive codimension, thus providing an answer to a question posed by De Philippis and Brasco.
	
One of our results asserts that for any bounded domain $\Omega\subset\R^n$, $n\geq3$, with Lebesgue measure equal to that of the
unit ball $B_0$ and
whose first eigenvalue is $\lambda_\Omega$, 
denoting by $\lambda_{B_0}$ the first eigenvalue for $B_0$,
for any $a\in (0,1)$ it holds
$$\lambda_\Omega - \lambda_{B_0} \geq C(a) \,\inf_B
\bigg(\sup_{t\in (0,1)} \avint_{\partial ((1-t) B)} \frac{\capp_{n-2}(B(x,atr_B)\setminus \Omega)}{(t\,r_B)^{n-3}}\,d\HH^{n-1}(x)\bigg)^2,$$
where the infimum is taken over all balls $B$ with the same Lebesgue measure as $\Omega$ and $\capp_{n-2}$ is the Newtonian capacity of homogeneity $n-2$. In fact, this holds for bounded subdomains of the sphere and the hyperbolic space, as well. 

In a second result, we also apply the new Faber-Krahn type inequalities to quantify the Hayman-Friedland inequality about the characteristics of disjoint domains in the unit sphere.
Thirdly, we propose a natural extension of  Carleson's $\ve^2$-conjecture to higher dimensions
in terms of a square function involving the characteristics of certain spherical domains, and we prove
the necessity of the finiteness of such square function in the tangent points via the Alt-Caffarelli-Friedman monotonicity formula. 
Finally, we answer in the negative a question posed by Allen, Kriventsov and Neumayer in connection to rectifiability and the positivity set of the ACF monotonicity formula.
\end{abstract}

\author{Ian Fleschler}
\address{Department of Mathematics, Fine Hall, Princeton University, Washington Road, Princeton, NJ
08540, USA}
\email{imf@princeton.edu}

\author{Xavier Tolsa}

\address{ICREA (Barcelona, Catalonia)\\
 Universitat Aut\`onoma de Barcelona \\
and Centre de Recerca Matem\`atica (Barcelona, Catalonia).}
\email{xavier.tolsa@uab.cat}

\author{Michele Villa}

\address{University of Oulu (Oulu, Finland)
and Universitat Aut\`onoma de Barcelona (Barcelona, Catalonia).}

\email{michele.villa@oulu.fi}

\thanks{
I.F. acknowledges the support of the National Science Foundation through the grant FRG-1854147. 
X.T and M.V. are supported by the European Research Council (ERC) under the European Union's Horizon 2020 research and innovation programme (grant agreement 101018680). X.T. is also partially supported by MICINN (Spain) under the grant PID2020-114167GB-I00, the María de Maeztu Program for units of excellence (Spain) (CEX2020-001084-M), and 2021-SGR-00071 (Catalonia).  
M.V. is supported by the Academy of Finland via the project ``Higher dimensional Analyst's Traveling Salesman theorems and Dorronsoro estimates on non-smooth sets", grant No. 347828/24304228.
This work is based upon research funded by the  Deutsche Forschungsgemeinschaft (DFG, German Research Foundation) under Germany's Excellence Strategy – EXC-2047/1 – 390685813, while the authors were in residence at the Hausdorff Institute of Mathematics in Spring 2022 during the program ``Interactions between geometric measure theory, singular integrals, and PDEs''.
}

\maketitle
	
	\tableofcontents

\section{Introduction}

The main aim of this article is to prove quantitative spectral inequalities for the Laplacian with Dirichlet boundary conditions. More specifically, we show that the Faber-Krahn inequality is stable, where stability is quantified in terms of Newtonian capacities and Hausdorff contents of positive codimension. Our results hold for subdomains of the Euclidean space, the unit sphere and the hyperbolic space, and they are sharp up to a constant factor. We remark that Theorem \ref{teomain0}, our main result (see below), can be seen as a \textit{geometric} solution of an issue raised by Brasco and De Philippis (\cite[Open Problem 7.23]{BD}), and it foundamentally rests on the \textit{analytic} solution of the same problem by Allen, Kriventsov and Neumayer \cite{AKN1}.
Our interest in these versions of the Faber-Krahn inequality arise also from applications in connection with the Alt-Caffarelli-Friedman monotonicity formula and with the Carleson $\ve^2$-conjecture 
about tangent points for Jordan domains
in the plane. Indeed, in this paper we also propose a natural extension of  Carleson $\ve^2$-conjecture to higher dimensions
in terms of a square function involving the characteristics of certain spherical domains, and we prove
the necessity of the finiteness of such square function at almost every tangent point. Sufficiency is shown in the companion paper \cite{FTV}.

\subsection{Quantitative Faber-Krahn inequalities}

Given a bounded open set $\Omega\subset \R^n$, we say that $u\in W^{1,2}_0(\Omega)$ is a Dirichlet eigenfunction of
$\Omega$ (for the Laplacian) if $u\not\equiv 0$ and
$$-\Delta u = \lambda\,u,$$
for some $\lambda\in \R\setminus \{0\}$. The number $\lambda$ is the eigenvalue associated with $u$. It is well known that all the eigenvalues of the Laplacian are positive and the smallest one, i.e., the first eigenvalue $\lambda_1$, satisfies
$$\lambda_1 = \inf_{u\in  W^{1,2}_0(\Omega)} \frac{\int_\Omega|\nabla u|^2\,dx}{\int_\Omega|u|^2\,dx}.$$
Further the infimum is attained by an eigenfunction $u_0$ which does not change sign, and so which can be assumed to be non-negative. We will denote by $\lambda_\Omega$ (or $\lambda(\Omega)$) the first eigenvalue of $\Omega$ and by $u_\Omega$ the associated non-negative eigenfunction, normalized so that $\|u_\Omega\|_{L^2(\Omega)}=1$.

The classical Faber-Krahn inequality asserts that among all bounded open sets with a fixed volume, a ball minimizes the first eigenvalue.
Following many previous works (see, for example, \cite{HN} and \cite{Melas}), Brasco, De Philippis, and Velichkov proved in \cite{BDV} the following
sharp quantitative version of the Faber-Krahn inequality.

\begin{theorem}\label{teoBDV}
For $n\geq2$, let $\Omega\subset \R^n$ be a bounded open set with $\HH^n(\Omega)=1$ and let $B_0\subset \R^n$ be a ball with $\HH^n(B_0)=1$.
Then
\begin{equation}\label{eqBDV}
\lambda_\Omega - \lambda_{B_0} \geq c\,\inf_B\HH^n(\Omega \triangle B)^2,
\end{equation}
where $c$ is a positive absolute constant and the infimum is taken over all balls $B$ with $\HH^n(B)=1$.
\end{theorem}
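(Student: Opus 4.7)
The plan is to prove \eqref{eqBDV} via a \emph{selection principle}, in the spirit of the Cicalese--Leonardi approach to the sharp quantitative isoperimetric inequality: argue by contradiction, produce a sequence of domains that are simultaneously almost-minimizers of a suitable penalized shape functional and still violate the inequality, deduce enough regularity to render them $C^{1,\alpha}$-close to a ball, and conclude via a Fuglede-type second variation analysis on nearly spherical smooth domains.

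Suppose \eqref{eqBDV} fails. Then there is a sequence $\Omega_k$ with $\HH^n(\Omega_k)=1$ and
\[\frac{\lambda_{\Omega_k}-\lambda_B}{A(\Omega_k)^2}\longrightarrow 0, \qquad A(\Omega):=\inf_B \HH^n(\Omega\triangle B).\]
A suboptimal quantitative Faber--Krahn (for instance, the Hansen--Nadirashvili or Melas bound $\lambda_\Omega-\lambda_B\geq c\,A(\Omega)^p$ for some large $p$) already yields $A(\Omega_k)\to 0$, so after translation $\Omega_k\to B$ in measure. To upgrade this to regular convergence I would introduce the penalized shape functional
\[J_k(\Omega)=\lambda_\Omega+\Lambda\bigl|\HH^n(\Omega)-1\bigr|+\sqrt{\varepsilon_k^2+(A(\Omega)-A(\Omega_k))^2}\,,\]
minimized over quasi-open subsets of a fixed large ball. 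Existence of a minimizer $\wt\Omega_k$ follows from the Buttazzo--Dal Maso $\gamma$-compactness theory; tuning $\Lambda$ large forces $|\wt\Omega_k|=1$, while choosing $\varepsilon_k=o(\lambda_{\Omega_k}-\lambda_B)$ guarantees that $\wt\Omega_k$ still violates \eqref{eqBDV}. The crucial point is that, since the asymmetry term in $J_k$ is Lipschitz in $\Omega$, the set $\wt\Omega_k$ is a \emph{quasi-minimizer} of $\lambda$ under a volume constraint; by the regularity theory for quasi-minimizers of $\lambda$ (Bri\c{a}ncon--Hayouni--Pierre, Bucur), its boundary is $C^{1,\alpha}$ and converges to $\partial B$, so
\[\partial\wt\Omega_k=\bigl\{(1+\varphi_k(\sigma))\,\sigma:\sigma\in\partial B\bigr\},\qquad \|\varphi_k\|_{C^{1,\alpha}(\partial B)}\to 0.\]

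On such nearly spherical smooth domains I would run Fuglede's second-variation computation: expand $\lambda_{\wt\Omega_k}$ to second order in $\varphi_k$, use the volume constraint and the translation freedom in the choice of the optimal ball $B$ to kill the zeroth and first spherical-harmonic modes of $\varphi_k$, and exploit the spectral gap of the Laplace--Beltrami operator on $\partial B$ to deduce the sharp bound
\[\lambda_{\wt\Omega_k}-\lambda_B\gtrsim\|\varphi_k\|_{H^{1/2}(\partial B)}^2-o\bigl(\|\varphi_k\|_{L^2(\partial B)}^2\bigr).\]
Combined with the elementary estimate $A(\wt\Omega_k)\lesssim\|\varphi_k\|_{L^1(\partial B)}\lesssim\|\varphi_k\|_{L^2(\partial B)}$, this forces $(\lambda_{\wt\Omega_k}-\lambda_B)/A(\wt\Omega_k)^2\gtrsim 1$ for $k$ large, contradicting the hypothesis.

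The hard step is the regularity one: upgrading ``quasi-minimizer of $\lambda_\Omega$ under a volume constraint'' to ``$C^{1,\alpha}$-nearly spherical boundary'' requires nondegeneracy, density and flatness estimates for the first Dirichlet eigenfunction, which are delicate because $\lambda_\Omega$ is not of perimeter type and the natural free-boundary condition $|\nabla u_\Omega|=\text{const}$ on $\partial\Omega$ must be extracted from the minimality of $J_k$. A subtler technical point is the calibration of $\Lambda$ and $\varepsilon_k$: the asymmetry penalization must be strong enough to transfer the failure of \eqref{eqBDV} from $\Omega_k$ to $\wt\Omega_k$, yet weak enough to preserve the quasi-minimality needed for the regularity theory to apply.
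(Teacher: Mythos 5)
The statement you were asked to prove is not proved in this paper: Theorem~\ref{teoBDV} is cited from Brasco--De Philippis--Velichkov \cite{BDV} as background, so there is no ``paper's own proof'' to compare against. Your outline does capture the broad framework that \cite{BDV} use --- a Cicalese--Leonardi selection principle combined with a Fuglede-type second variation estimate on nearly spherical domains --- but it differs from the actual argument in one structurally important respect. In \cite{BDV} the penalized functional is \emph{not} built from $\lambda_\Omega$ directly. The authors first establish a sharp quantitative form of the Saint-Venant inequality for the torsional rigidity $T(\Omega)$, where the penalized problem is a genuine one-phase Bernoulli-type free boundary problem and the Alt--Caffarelli regularity theory applies cleanly; they then transfer the stability estimate from $T$ to $\lambda_1$ via a quantitative Kohler--Jobin type inequality (which is one of the new ingredients of their paper). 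Penalizing $\lambda_\Omega$ itself, as you propose, is in principle possible thanks to Bri\c{a}ncon's regularity theory for minimizers of $\lambda_1(\Omega)+\Lambda|\Omega|$, but the extension to quasi-minimizers (with the asymmetry perturbation, which only gives an $L^1$-Lipschitz lower-order term) is considerably more delicate than in the torsion case --- the eigenvalue functional is not additive over disjoint sets, the nondegeneracy and Lipschitz estimates for $u_\Omega$ are harder to localize, and the normalization $\|u_\Omega\|_{L^2}=1$ interacts awkwardly with domain perturbations --- and this is precisely why \cite{BDV} route through torsion. So: your sketch is morally in the right direction, but the ``hard step'' you flag at the end is genuinely harder than your outline suggests if one works with $\lambda_1$ directly, and the Kohler--Jobin reduction is a missing essential ingredient of the actual proof.
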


The inequality above is sharp in the sense that the power $2$ on the right hand side cannot be lowered.

The classical Faber-Krahn inequality also holds for subdomains of the sphere $\bS^n$ or the hyperbolic space $\bH^n$. In this context one should take the Laplacian with respect to the Riemannian metric of the space (i.e., the Laplace-Beltrami operator) and one should consider geodesic balls. That is, for subdomains $\Omega$ of $\bS^n$ or $\bH^n$ with a given volume, the minimal
value of $\lambda_\Omega$ is attained again by a geodesic ball among all open bounded domains of the 
same volume. Further, recently Allen, Kriventsov, and Neumayer \cite{AKN1} have obtained the following quantitative form.

\begin{theorem}\label{teoAKN}
For $n\geq2$, let $\bM^n$ be either $\R^n$, $\bS^n$, or $\bH^n$, and let $\beta>0$.
Let $\Omega$ be a relatively open subset of $\bM^n$ and let $B_0$ be a geodesic ball in $\bM^n$ such that $\HH^n(B_0)=\HH^n(\Omega)$.
In the case $\bM^n=\bS^n$, suppose also that $\beta\leq \HH^n(\Omega)\leq \HH^n(\bS^n)-\beta$, and in the case
  $\bM^n=\R^n$ or $\bM^n=\bH^n$ only assume that $\HH^n(\Omega)\leq \beta$.
 Denote by $\lambda_\Omega$ and $\lambda_{B_0}$ the respective first Dirichlet eigenvalues of $-\Delta_{\bM^n}$ in $\Omega$ and $B_0$. Then
\begin{equation}\label{eqAKN}
\lambda_\Omega - \lambda_{B_0} \geq c(\beta)\,\inf_{B} \Big(\HH^n(\Omega \triangle B)^2 + \int_{\bM^n} |u_\Omega - u_{B}|^2\,d\HH^n\Big),
\end{equation}
where $c(\beta)>0$, the infimum is taken over all geodesic balls $B\subset\bM^n$  
such that $\HH^n(B)=\HH^n(\Omega)$, and $u_\Omega$ and $u_B$ are the corresponding first eigenfunctions in $\Omega$ and $B$ normalized so that
they are positive and $\|u_\Omega\|_{L^2(\bM^n)} = \|u_B\|_{L^2(\bM^n)}=1$.
 In the case $\bM^n=
\bS^n$, \rf{eqAKN} also holds  with the infimum over $\bS^n$ replaced 
by the choice of $B$ equal to a ball centered at the $\bS^n$-barycenter of $\Omega$ with $\HH^n(B)=\HH^n(\Omega)$ 
 (possibly with a different constant $c(\beta)$).
\end{theorem}

In the theorem we have denoted by $\Delta_{\bM^n}$ the Laplace-Beltrami operator on $\bM^n$.
For the definition of $\bS^n$-barycenter of a ball in $\bS^n$, see Section \ref{secnot}. The assumption involving the parameter $\beta$ is necessary to prevent the domain from being too big and, in the case $\bM^n=\bS^n$, also too small.
Remark that in the case $\bM^n=\R^n$ one can get an appropriate scaling invariant statement by renormalising. This is not case for $\bS^n$ or $\bH^n$.
 Notice the presence of the additional term $\int_{\bM^n} |u_\Omega - u_B|^2\,d\HH^n$ in the inequality \rf{eqAKN} when compared to \rf{eqBDV}. In this term we assume that the functions $u_\Omega$ and $u_B$ vanish outside of $\Omega$ and $B$ respectively.

\vv
To state our results we need to introduce some additional notation and terminology. For  $\bM^n=\R^{n}$ or $\bM^n=\bS^{n}$.
We denote by $\dist_{\bM^n}$ the geodesic distance in $\bM^n$ and by $\partial_{\bM^n}A$ the boundary
of any set $A\subset \bM^n$. We also write  
$B_{\bM^n}(x,r)$ to denote an open ball in $\bM^n$ centered in $x$, with radius $r$. 
Given $B=B_{\bM^n}(x,r)$ and $\rho>0$, we set $\rho B=B_{\bM^n}(x,\rho r)$ and we denote $\delta_B(x) =\dist_{\bM^n}(x,\partial_{\bM^n}B)$,


We denote by $\capp_{n-2}$ the Newtonian capacity of homogeneity $n-2$, and by $\capp_L$ the logarithmic capacity
(see Section \ref{seccap} for the precise definitions).

Our first main result in this paper is the following.

\begin{theorema}\label{teomain0}
Given $n\geq2$, let $\bM^n$ be either $\R^n$, $\bS^n$ or $\bH^n$, and let $\beta>0$, $a\in (0,1)$.
Let $\Omega$ be a relatively open subset of $\bM^n$ and let $B_0$ be a geodesic ball in $\bM^n$ such that $\HH^n(B_0)=\HH^n(\Omega)$.
In the case $\bM^n=\bS^n$, suppose also that $\beta\leq \HH^n(\Omega)\leq \HH^n(\bS^n)-\beta$, while in the case
$\bM^n=\R^n$ or $\bM^n=\bH^n$ only assume that $\HH^n(\Omega)\leq \beta$.  Denote by $\lambda_\Omega$ and $\lambda_{B_0}$ the respective first Dirichlet eigenvalues of $-\Delta_{\bM^n}$ in $\Omega$ and $B_0$.
Then, there is some constant $C(a,\beta)>0$ such that in the case $n\geq 3$ we have
\begin{equation}\label{eqmain00}
\lambda_\Omega - \lambda_{B_0} \geq C(a,\beta)\,\inf_{B}
\bigg(\sup_{t\in (0,1)} \avint_{\partial_{\bM^n} ((1-t) B)} \frac{\capp_{n-2}(B_{\bM^n}(x,atr_B)\setminus \Omega)}{(t\,r_B)^{n-3}}\,d\HH^{n-1}(x)\bigg)^2,
\end{equation}
where the infimum is taken over all geodesic balls $B\subset\bM^n$  
such that $\HH^n(B)=\HH^n(\Omega)$ and  $r_B$ is the radius of $B$.
In the case $n=2$ we have
\begin{equation}\label{eqmain01}
\lambda_\Omega - \lambda_{B_0} \geq C(a,\beta)\,\inf_{B}
\bigg(\sup_{t\in (0,1)} \avint_{\partial_{\bM^2} ((1-t) B)} \frac{t\,r_B}{
\log\frac{2ta r_B}{\capp_L(B_{\bM^2}(x,at r_B)\setminus \Omega)}}\,d\HH^{1}(x)\bigg)^2.
\end{equation}
 In the case $\bM^n=
\bS^n$, \rf{eqmain00} and \rf{eqmain01} also hold  
 with the infimum over $\bM^n$ replaced by choice of $B$ equal to a ball centered at the $\bS^n$-barycenter of $\Omega$ with $\HH^n(B)=\HH^n(\Omega)$ (possibly with a different constant $C(a,\beta)$).
\end{theorema}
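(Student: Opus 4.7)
The plan is to deduce Theorem~\ref{teomain0} from Theorem~\ref{teoAKN}. Because the capacitary expression on the right-hand side of~\eqref{eqmain00}--\eqref{eqmain01} is squared, it suffices to establish the linear bound
$$\avint_{\partial_{\bM^n}((1-t)B)} \frac{\capp_{n-2}(B_{\bM^n}(y, atr_B)\setminus\Omega)}{(tr_B)^{n-3}}\,d\HH^{n-1}(y) \lesssim_{a,\beta} \HH^n(\Omega\triangle B) + \|u_\Omega - u_B\|_{L^2(\bM^n)}$$
for each fixed $t\in(0,1)$, where $B = B_{x_0}$ is a geodesic ball at which the Allen--Kriventsov--Neumayer infimum is (almost) attained. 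Squaring, taking the supremum over $t$, and invoking Theorem~\ref{teoAKN} then yields~\eqref{eqmain00}. The spherical-barycenter case follows by the same scheme using the corresponding barycentric form of Theorem~\ref{teoAKN}. I may also assume $\lambda_\Omega - \lambda_B \leq C(\beta)$ at the outset, since otherwise~\eqref{eqmain00} is automatic.

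The heart of the argument is a pointwise capacitary estimate at each $y\in\partial_{\bM^n}((1-t)B)$ applied to $E_y := B_{\bM^n}(y, r)\setminus\Omega$ with $r := atr_B$. Regularity for the first Dirichlet eigenfunction of a geodesic ball in $\bM^n$ (with constants depending on $\beta$) gives $u_B(y)\gtrsim t\, r_B^{-n/2}$ together with a pointwise Lipschitz bound for $u_B$, so that, fixing $a\in(0,1)$ small, one has $u_B\geq \tfrac12 u_B(y)$ throughout $B_{\bM^n}(y,2r)$. Setting $v := u_B - u_\Omega$ (with $u_\Omega$ extended by zero off $\Omega$), one has $v = u_B \geq \tfrac12 u_B(y)$ on $E_y$. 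Multiplying $v$ by a standard cutoff $\chi$ supported on $B_{\bM^n}(y,2r)$ equal to $1$ on $B_{\bM^n}(y,r)$, Maz'ya's capacitary inequality $\capp_{n-2}(\{\phi\geq 1\}) \leq \int|\nabla \phi|^2$ applied to $\phi = 2v\chi/u_B(y)$ yields
$$u_B(y)^2\,\capp_{n-2}(E_y) \lesssim \int_{B_{\bM^n}(y,2r)}|\nabla v|^2\,d\HH^n + r^{-2}\int_{B_{\bM^n}(y,2r)} v^2\,d\HH^n.$$

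The gradient term is absorbed via a Caccioppoli-type inequality on the scale $r \ll r_B$: since $-\Delta v = \lambda_B u_B - \lambda_\Omega u_\Omega$ weakly, a standard computation (using $\lambda_B \ll r^{-2}$ and the pointwise $L^\infty$ bound on $u_B$) recasts $\int_{B(y,2r)}|\nabla v|^2$ as $\lesssim r^{-2}\int_{B(y,Cr)} v^2$ plus lower-order errors that are controlled by $\HH^n(\Omega\triangle B)^2 + (\lambda_\Omega - \lambda_B)$, both absorbable into the AKN bound. I then average the pointwise capacitary estimate over $y\in\partial_{\bM^n}((1-t)B)$ using Fubini and the elementary bound $\HH^{n-1}(\partial_{\bM^n}((1-t)B)\cap B_{\bM^n}(x, Cr))\lesssim r^{n-1}$, which reduces the spherical average of $\int_{B_{\bM^n}(y, Cr)} v^2$ to $\lesssim (at)^{n-1}\|v\|_{L^2(\bM^n)}^2$. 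Inserting the lower bound $u_B(y)^2 \gtrsim t^2 r_B^{-n}$ together with the homogeneity factor $(tr_B)^{n-3}$ in the denominator of the theorem, the powers of $a$, $t$, and $r_B$ cancel exactly, producing the desired linear bound. The case $n=2$ follows identically with Maz'ya's inequality for the logarithmic capacity $\capp_L$ replacing that for $\capp_{n-2}$, which is responsible for the logarithmic denominator in~\eqref{eqmain01}. The principal obstacle is the gradient control for $v$: since $v$ fails to satisfy a clean elliptic equation across $\partial\Omega$, no single Caccioppoli estimate applies; one must exploit the weak formulation for each of $u_B$ and $u_\Omega$ and combine carefully with the AKN bound $\lambda_\Omega - \lambda_B \gtrsim \|v\|_{L^2}^2$ so that all cross-terms remain genuinely of lower order than the main deviation.
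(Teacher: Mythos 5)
Your approach (apply Maz'ya's capacitary inequality to $\phi=2v\chi/u_B(y)$, average over the sphere, and invoke the Allen--Kriventsov--Neumayer bound) is genuinely different from the paper's, which instead forms suitable homogeneous extensions $\wt u_B,\wt u_\Omega$ to $\R^{n+1}$, represents $\wt v$ via the Green function and harmonic measure of $\wt\Omega$, and then bounds $\wt v$ from \emph{below} at a corkscrew point using Bourgain's lemma, the product-capacity Lemma~\ref{lemmaprod1}, and a comparison with an auxiliary Lipschitz domain. Unfortunately your route has two concrete gaps. First, the gradient term $\int_{B(y,2r)}|\nabla v|^2$ in the Maz'ya bound is not controllable by the AKN output. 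Globally $\|\nabla v\|_{L^2(\bM^n)}^2\lesssim \lambda_B+\lambda_\Omega\approx 1$ but is not small when $\|v\|_{L^2}$ is small, and no local Caccioppoli inequality applies: the extension of $u_\Omega$ by zero has a distributional Laplacian equal to $-\lambda_\Omega u_\Omega\chi_\Omega$ \emph{plus a nonnegative singular measure on $\partial\Omega$}, so $v=u_B-u_\Omega$ satisfies no elliptic equation across $\partial\Omega$. The weak formulations for $u_B$ and $u_\Omega$ separately allow only test functions in $W^{1,2}_0(B)$ and $W^{1,2}_0(\Omega)$ respectively, and the localized $v\phi^2$ is in neither; this is precisely the obstacle you flag at the end, and it is not resolved.

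Second, the power counting does \emph{not} close even if the gradient term is discarded. Normalizing $r_B=1$ and using $u_B(y)\approx t$, $r=at$, and your Fubini bound $\avint_{\partial((1-t)B)}\int_{B(y,Cr)}v^2\lesssim (at)^{n-1}\|v\|_{L^2}^2$, one finds
\[
\avint_{\partial((1-t)B)}\frac{\capp_{n-2}(E_y)}{t^{n-3}}\,d\HH^{n-1}(y)
\;\lesssim\;
\frac{1}{t^{n-3}\,t^2}\cdot\frac{1}{(at)^2}\cdot(at)^{n-1}\|v\|_{L^2}^2
\;=\;
a^{n-3}\,t^{-2}\,\|v\|_{L^2}^2.
\]
The factor $t^{-2}$ does not cancel and blows up as $t\to 0$, and the right-hand side is quadratic rather than linear in $\|v\|_{L^2}$. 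Since the sharp theorem needs $\sup_t\avint\frac{\capp}{t^{n-3}}\lesssim(\lambda_\Omega-\lambda_B)^{1/2}\approx\|v\|_{L^2}$, your bound gives the wrong conclusion unless $\|v\|_{L^2}\lesssim t^2$, which is false in general (take $t$ small with $\Omega$ fixed). So the claim that ``the powers of $a$, $t$, and $r_B$ cancel exactly'' is incorrect, and the argument cannot produce the sharp quadratic stability estimate. The paper avoids both issues by never estimating the capacity from above with a Dirichlet-energy bound; instead it uses the capacity to produce a \emph{lower} bound for harmonic measure (Lemma~\ref{lem2.1}) and hence for $\wt v$ itself, after which only the $L^2$ deviation enters.
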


In the theorem we used the standard notation $\avint_{A} f\, d\mu = \frac1{\mu(A)}\int_A f\,d\mu$.
Notice that in \rf{eqmain00} we have
$$
\frac{\capp_{n-2}(B_{\bM^n}(x,atr_B)\setminus \Omega)}{(t\,r_B)^{n-3}}\lesssim t\,r_B,$$
because of the $(n-2)$-homogeneity of $\capp_{n-2}$.
Remark also that, for all $a,a'\in (0,1)$, we have
\begin{multline*}
\sup_{t\in (0,1)} \avint_{\partial_{\bM^n} ((1-t) B)} \frac{\capp_{n-2}(B_{\bM^n}(x,atr_B)\setminus \Omega)}{(t\,r_B)^{n-3}}\,d\HH^{n-1}(x)\\ \approx_{a,a'}
 \sup_{t\in (0,1)} \avint_{\partial_{\bM^n} ((1-t) B)} \frac{\capp_{n-2}(B_{\bM^n}(x,a'tr_B)\setminus \Omega)}{(t\,r_B)^{n-3}}\,d\HH^{n-1}(x).
\end{multline*}
 The same happens in the case $n=2$ concerning the integral on the right hand side of \rf{eqmain01}. See Lemmas~\ref{lemcompar} and \ref{lemcompar'}.
 
 The estimates in Theorem are sharp up to a constant factor. Indeed, 
as in \cite{BDV} and \cite{AKN2}, we can consider ellipsoidal perturbations of the unit ball, such as
$$\Omega_\ve = \{x\in\R^{n}:(1+\ve)x_1^2 + (1-\ve)x_2^2+x_3^2+\ldots+x_{n}^2\leq 1\},$$
with $\ve\to0$. 
 As remarked in \cite{BDV} and \cite{BD}, letting $B_\ve$ be a ball such that $\HH^{n}(B_\ve) = \HH^{n}(\Omega_\ve)$,
 it holds $\lambda_{\Omega_\ve} - \lambda_{B_\ve}\approx \ve^2$, uniformly as $\ve\to0$. On the other hand, it is easy to check that
the right hand side terms of \rf{eqmain00} and \rf{eqmain01} are also comparable to $\ve^2$, also uniformly as $\ve\to0$.
 
As far as we know, Theorem \ref{teomain0} provides the first sharp quantitative Faber-Krahn inequality stated in terms of the capacity of (subsets of) $B\setminus\Omega$. As remarked above, Theorem \ref{teomain0}
is related to the open problem 7.23 from \cite{BD}, which asks for a sharp quantitative Faber-Krahn inequality in terms of a suitable a capacitary asymmetry of $B$ and $\Omega$.
Notice however that Theorem \ref{teomain0} only involves the capacity of subsets of $B\setminus \Omega$, and not of $\Omega\setminus B$. This is a natural fact because there are domains such that $\capp_{n-2}(\Omega\setminus B)$ is large while $\lambda_\Omega - \lambda_B$ is as small as wished. Indeed, 
for $0<\ve\leq1/10$, consider
an $\ve$-neighborhood $U_\ve$ of the planar set $B(0,1)\cup[1,2]$ and contract it by a suitable factor so that 
the resulting domain $\Omega_\ve$ has the same area as $B=B(0,1)$. It is easy to check that $\lambda_{\Omega_\ve}\to\lambda_B$ as $\ve\to0$, but $\capp_{n-2}(\Omega_\ve\setminus B')$ is large for any given ball $B'$ with the same area as $\Omega_\ve$, due to the fact that $[-1,2]\subset \Omega_\ve$, for example.

Using the connection between Hausdorff contents and capacities (see Lemma \ref{lemcap-contingut} below), we deduce the following corollary from the preceding theorem.

\begin{coro}\label{coromain0}
Under the assumptions and notation of Theorem \ref{teomain0}, for any $s>n-2$ there is some constant $C(s,a,\beta)>0$ such that, in the case $n\geq 3$, we have
\begin{equation}\label{eqmain00**}
\lambda_\Omega - \lambda_{B_0} \geq C(s,a,\beta)\,
\inf_{B}\bigg(\sup_{t\in (0,1)} \avint_{\partial_{\bM^n} ((1-t) B)} \left(\frac{\HH^s_\infty(B_{\bM^n}(x,atr_B)\setminus \Omega)}{(t\,r_B)^s}\right)^{\frac{n-2}s}\,t\,r_B\,d\HH^{n-1}(x)\bigg)^2.
\end{equation}
In the case $n=2$, for any $s>0$ we have
\begin{equation}\label{eqmain01**}
\lambda_\Omega - \lambda_{B_0} \geq C(s,a,\beta)\,
\inf_{B}\bigg(\sup_{t\in (0,1)} \avint_{\partial_{\bM^2} ((1-t) B)} \frac{t\,r_B}{
\log\left(\frac{(2atr_B)^s}{\HH^s_\infty(B_{\bM^n}(x,atr_B)\setminus \Omega)}\right)}\,d\HH^{1}(x)\bigg)^2.
\end{equation}
In the case $\bM^n=
\bS^n$, \rf{eqmain00**} and \rf{eqmain01**} also hold  
 with the infimum over $\bM^n$ replaced 
by the choice of $B$ equal to a ball centered at the $\bS^n$-barycenter of $\Omega$ with $\HH^n(B)=\HH^n(\Omega)$  
  (possibly with a different constant $C(s,a,\beta)$).
\end{coro}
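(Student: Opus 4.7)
The approach is to deduce Corollary \ref{coromain0} directly from Theorem \ref{teomain0} by pointwise replacement of the capacities in the integrands of \rf{eqmain00} and \rf{eqmain01} by the corresponding Hausdorff content expressions. The requisite Frostman-type comparisons between the $(n-2)$-Newtonian capacity (resp.\ the logarithmic capacity) and the Hausdorff content $\HH^s_\infty$ are precisely the content of Lemma \ref{lemcap-contingut}, which is referenced in the statement.

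For $n\geq 3$, the classical scale-invariant Frostman comparison reads
$$\capp_{n-2}(E)\gtrsim_s \HH^s_\infty(E)^{(n-2)/s},\qquad s>n-2,$$
and is valid for every Borel set $E\subset\R^n$. Setting $E=B_{\bM^n}(x,atr_B)\setminus\Omega$ and dividing by $(tr_B)^{n-3}$, a direct algebraic manipulation (all the $(tr_B)$-factors collapse correctly) produces
$$\frac{\capp_{n-2}(E)}{(tr_B)^{n-3}}\gtrsim_s\left(\frac{\HH^s_\infty(E)}{(tr_B)^s}\right)^{(n-2)/s}\,tr_B,$$
pointwise in the point of integration $x$, in $t\in(0,1)$, and for every ball $B_x$. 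Since the operations of averaging over $\partial_{\bM^n}((1-t)B_x)$, taking the supremum in $t$, taking the infimum in $x$, and squaring all preserve pointwise lower bounds (with the constant squared), substituting into the inequality \rf{eqmain00} immediately produces \rf{eqmain00**}.

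For $n=2$ the argument is completely analogous. The relevant quantitative Frostman estimate, for $E\subset B_{\bM^2}(x,atr_B)$, has the form
$$\log\frac{2atr_B}{\capp_L(E)}\leq C(s)\log\frac{(2atr_B)^s}{\HH^s_\infty(E)},\qquad s>0.$$
Taking reciprocals and multiplying by $tr_B$ turns the integrand in \rf{eqmain01} into (a constant times) the integrand in \rf{eqmain01**}, and the same propagation through averages, sup, inf, and squaring yields the conclusion.

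The statements with the $\bS^n$-barycenter in place of the infimum over $\bM^n$ follow by the same substitutions, starting from the corresponding variant of Theorem \ref{teomain0}. There is no genuine obstacle in this deduction: the only nontrivial input is Lemma \ref{lemcap-contingut} itself, which is a standard consequence of Frostman's lemma together with the maximum principle for Riesz (resp.\ logarithmic) potentials, and which is proved in the preliminary sections.
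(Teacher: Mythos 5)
For $n\geq 3$ your deduction is exactly what the paper intends: by Lemma \ref{lemcap-contingut},
\begin{equation*}
\frac{\capp_{n-2}(E)}{(t\,r_B)^{n-3}} \gtrsim_s \frac{\HH^s_\infty(E)^{(n-2)/s}}{(t\,r_B)^{n-3}} = \left(\frac{\HH^s_\infty(E)}{(t\,r_B)^s}\right)^{(n-2)/s} t\,r_B,
\end{equation*}
and this pointwise inequality propagates through the average in $z$, the $\sup$ in $t$, the $\inf$ in $x$, and the square. This part is correct and essentially identical to the paper's implicit argument.

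For $n=2$, however, the inequality you invoke,
$\log\frac{2atr_B}{\capp_L(E)}\leq C(s)\log\frac{(2atr_B)^s}{\HH^s_\infty(E)}$,
does not follow from Lemma \ref{lemcap-contingut} and in fact fails pointwise: since $\capp_L(E)\leq\capp_L(\overline{B_{\bM^2}(z,atr_B)})=atr_B$ for every $E\subset B_{\bM^2}(z,atr_B)$, the left-hand side is always $\geq\log 2$, while the right-hand side tends to $0$ as $\HH^s_\infty(E)\to(2atr_B)^s$ — and this supremal value \emph{is} attained, since $\HH^s_\infty(B(z,\rho))=(2\rho)^s$ for all $0<s\leq 2$ in the plane. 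Lemma \ref{lemcap-contingut} only yields $\log\frac{2atr_B}{\capp_L(E)}\leq \frac1s\log\frac{(2atr_B)^s}{\HH^s_\infty(E)}+\log\frac1{c_s}$ with $c_s\leq\tfrac12$, and the additive constant cannot be absorbed when the content is close to maximal. Concretely, at points $z$ where $B_{\bM^2}(z,atr_B)\cap\Omega=\varnothing$, the integrand of \rf{eqmain01**} is $+\infty$ while the one of \rf{eqmain01} is $\leq t\,r_B/\log 2$, so the pointwise domination your argument rests on breaks down. Your proof of \rf{eqmain01**} therefore has a gap; a repair requires either a modification of the statement (enlarging the constant inside the logarithm, or truncating the integrand) or an argument that the minimizing $x$ avoids the degenerate $z$ — neither of which is supplied. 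The same subtlety is arguably latent in the paper, which states the corollary without proof, but as written your $n=2$ step is not valid.
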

\vv

In the case when  we just quantify $\lambda_\Omega - \lambda_{B_0}$ in terms of some integral over a suitable family of ``thick points'' we get a somewhat sharper result. To state it, we need some additional notation.
For given $c_0>0$ and $a\in (0,1)$, in the case $n\geq3$, we denote
\begin{equation}\label{eqtsc01}
T_{c_0}(\Omega,B,a) = \big\{x\in B\setminus \Omega:\capp_{n-2}(B_{\bM^n}(x,a\,\delta_B(x))\setminus \Omega)\geq c_0\,\delta_B(x)^{n-2}\big\},
\end{equation}
and, in the case $n=2$, 
\begin{equation}\label{eqtsc02}
 T_{c_0}(\Omega,B,a) = \big\{x\in B\setminus \Omega:\capp_L(B_{\bM^n}(x,a\,\delta_B(x))\setminus \Omega)\geq c_0\,\delta_B(x)\big\}.
\end{equation}
We should understand the condition in the definition of $T_{c_0}(\Omega,B,a)$ as a thickness type condition. In particular, if 
$\Omega$ satisfies the capacity density condition or CDC (see Section \ref{seccap}), or $\partial\Omega$ is lower $s$-content regular for some $s>n-2$, then $T_{c_0}(\Omega,B,a)  = B\setminus \Omega$, for $c_0$ small enough.

\begin{theorema}\label{teomain1}
Given $n\geq2$ and $0<s\leq n$, let $\bM^n$ be either $\R^n$, $\bS^n$ or $\bH^n$, and let $c_0,\beta>0$, $a\in (0,1)$.
Let $\Omega$ be a relatively open bounded subset of $\bM^n$ and let $B_0$ be a geodesic ball in $\bM^n$ 
such that
 $\HH^n(B_0)=\HH^n(\Omega)$.
For $\bM^n=\bS^n$, suppose that $\beta\leq \HH^n(\Omega)\leq \HH^n(\bS^n)-\beta$, while for
$\bM^n=\R^n$ or $\bM^n=\bH^n$ assume only that $\HH^n(\Omega)\leq \beta$.
 Denote by $\lambda_\Omega$ and $\lambda_{B_0}$ the first Dirichlet eigenvalues of $-\Delta_{\bM^n}$ in $\Omega$ and $B_0$, respectively.
Then, 
\begin{equation}\label{eqmain1}
\lambda_\Omega - \lambda_{B_0} \geq C(a,s,\beta,c_0)\,\inf_{B}\bigg(\int_{T_{c_0}(\Omega,B,a)} \delta_B(x)^{n-s}\,d\HH^{s}_\infty(x)\bigg)^2,
\end{equation}
where where the infimum is taken over all geodesic balls $B\subset\bM^n$  
such that $\HH^n(B)=\HH^n(\Omega)$. In the case $\bM^n=
\bS^n$, \rf{eqmain1} also holds 
 with the infimum over $\bM^n$ replaced by the choice
 of $B$ equal to a ball centered at the $\bS^n$-barycenter of $\Omega$ with $\HH^n(B)=\HH^n(\Omega)$ 
 (possibly with a different constant $C(a,s,\beta,c_0)$).
\end{theorema}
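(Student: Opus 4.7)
The plan is to derive Theorem \ref{teomain1} by combining the improved Faber--Krahn bound of Theorem \ref{teoAKN} with the sharper Theorem \ref{teomain0}, so that the integral in \eqref{eqmain1} is controlled by a sum of the two estimates. Concretely, let $F(t)$ denote the integrand of the supremum-average appearing in \eqref{eqmain00}. Theorems \ref{teoAKN} and \ref{teomain0}, applied at the infimizing ball $B_x$, give
$$\lambda_\Omega-\lambda_B \gtrsim \HH^n(\Omega\triangle B_x)^2 + \int_{\bM^n}|u_\Omega-u_{B_x}|^2\,d\HH^n \qquad\text{and}\qquad \lambda_\Omega-\lambda_B \gtrsim \Big(\sup_{t\in(0,1)} F(t)\Big)^2.$$
It will therefore suffice to establish the intermediate inequality
$$\int_{T_{c_0}(\Omega,B_x,a)} \delta_B^{n-s}\,d\HH^s_\infty \lesssim \HH^n(\Omega\triangle B_x) + C(a,\beta)\sup_{t}F(t),$$
whose square then immediately yields \eqref{eqmain1} after expanding $(\alpha+\gamma)^2 \leq 2\alpha^2 + 2\gamma^2$.

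To prove this intermediate bound, I would partition $T_{c_0}$ into dyadic shells $T_k := T_{c_0}\cap\{2^{-k-1}r_B\le\delta_B\le 2^{-k}r_B\}$, $k\ge 0$, and estimate
$$\int_{T_{c_0}} \delta_B^{n-s}\,d\HH^s_\infty \leq C\sum_{k\ge 0}(2^{-k}r_B)^{n-s}\,\HH^s_\infty(T_k).$$
For each $k$, the content $\HH^s_\infty(T_k)$ would be bounded by balancing two complementary estimates: a \emph{thick} bound via $\HH^n(T_k)\le \HH^n(\Omega\triangle B_x)$, combined with an elementary covering of $T_k$ by balls of radius $\approx 2^{-k}r_B$ (adequate when the shell is genuinely $n$-dimensional), and a \emph{fractal} bound from the capacitary density on the shell $\partial_{\bM^n}((1-2^{-k})B_x)$. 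The latter uses the thickness condition defining $T_{c_0}$ together with Lemma \ref{lemcap-contingut}, which turns the capacity lower bound $\capp_{n-2}(B_{\bM^n}(x,a\delta_B(x))\setminus\Omega)\ge c_0\delta_B(x)^{n-2}$ into a uniform lower bound $\HH^s_\infty(B_{\bM^n}(x,a\delta_B(x))\setminus\Omega)\gtrsim \delta_B(x)^s$ for $s>n-2$, with the logarithmic-capacity analogue handling $n=2$ and a Frostman-type argument handling $s\le n-2$. A Vitali-type covering of $T_k$ by balls of radius $\approx 2^{-k}r_B$ then ties the contribution of each shell to $F(2^{-k})$, and the geometric decay of both $F(2^{-k})$ and $(2^{-k}r_B)^{n-s}\HH^s_\infty(T_k)$ makes the sum over $k$ comparable, up to constants depending on $a,s,\beta,c_0$, to a single supremum.

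\textbf{The main obstacle} is managing this balance: for $s$ close to $n$, the thick bound via $\HH^n(\Omega\triangle B_x)$ suffices, while for smaller $s$, the fractal estimate through the sup-average $F$ is essential; intermediate $s$ requires both. Tracking the transition between the two regimes across all dyadic scales, and carefully controlling the dependence on $a,s,\beta,c_0$, is where the technical work lies. The spherical barycenter variant and the case $n=2$ follow along identical lines using \eqref{eqmain01} in place of \eqref{eqmain00}.
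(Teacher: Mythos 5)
The intermediate inequality
\[
\int_{T_{c_0}(\Omega,B_x,a)}\delta_B^{n-s}\,d\HH^s_\infty\;\lesssim\;\HH^n(\Omega\triangle B_x)+C(a,\beta)\,\sup_{t}F(t)
\]
is false in general, and this breaks your reduction. The left side aggregates contributions from all dyadic scales $\delta_B\approx 2^{-k}r_B$, whereas the supremum on the right sees only one scale. For a concrete failure (take $n\geq 3$, $s=n-1$, $B$ the unit ball, $K$ large): set $\Omega = B\setminus\bigcup_{K/n\leq k\leq K}S_k$, where $S_k$ is a spherical cap inside $\partial((1-2^{-k})B)$ of angular radius $\rho_k=2^{(k-K)/(n-1)}$. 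Each $S_k$ has $\HH^n(S_k)=0$, satisfies the thickness condition (a subdisc of radius $a2^{-k}$ of a cap has $\capp_{n-2}\approx(a2^{-k})^{n-2}$), and contributes $\approx 2^{-K}$ both to $F(2^{-k})$ and to $(2^{-k})^{n-s}\HH^s_\infty(T_k)$. One gets $\gamma_s(\Omega)\approx K\cdot 2^{-K}$ while $\HH^n(\Omega\triangle B)=0$ and $\sup_t F(t)\approx 2^{-K}$, so the proposed inequality would force $K\lesssim 1$. The ``geometric decay'' you invoke is only that of the a priori bounds $F(t)\lesssim tr_B$ and $(2^{-k})^{n-s}\HH^s_\infty(T_k)\lesssim 2^{-k(n-s)}r_B^n$; the actual values can plateau at a common small level across arbitrarily many scales, so the sum over $k$ and the single supremum are genuinely incomparable.

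Accordingly, the paper does not derive Theorem \ref{teomain1} from Theorem \ref{teomain0}; the two are proved in parallel from Lemma \ref{lemhomo}, which gives $\lambda_\Omega-\lambda_B\gtrsim(\int f_0)^2$, combined with different lower bounds for the auxiliary superharmonic function $f_0$. For Theorem \ref{teomain1} the crucial multi-scale estimate $f_0\gtrsim\gamma_s(\Omega)$ is Main Lemma \ref{mainlemma1}, established via a Vitali family of thickness balls at all scales, the associated separated balls $\{B_k\}$, and an auxiliary Lipschitz domain $\wt\Omega_\Gamma$: Lemma \ref{finallemma} shows $\sum_k\rad(B_k)\,\omega_{\wt\Omega_\Gamma}^x(B_k\cap\Gamma)\gtrsim\sum_k\rad(B_k)^{n+1}\gtrsim\gamma_s(\Omega)$, and the countable additivity of the harmonic measure $\omega_{\wt\Omega_\Gamma}^x$ over the disjoint pieces $B_k\cap\Gamma$ is precisely what collects the contributions of all scales simultaneously. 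That aggregation mechanism has no analogue in the supremum of Theorem \ref{teomain0}, and it is the ingredient your reduction is missing.
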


\vv

 Remark that the integral of a function
$f:\bM^n\to[0,\infty)$ with respect to the Hausdorff content $\HH^{s}_\infty$ is given by
$$\int_{\bM^n}f\,d\HH^{s}_\infty = \int_0^\infty \HH^{s}_\infty(\{x\in \bM^n:f(t)>t\})\,dt.$$
Observe also that in the case when $\Omega$ satisfies the CDC, from \rf{eqmain1} we deduce that, for $0<s\leq n$,
$$\lambda_\Omega - \lambda_B \gtrsim \bigg(\int_{B\setminus \Omega} \delta_B(x)^{n-s}\,d\HH^{s}_\infty(x)\bigg)^2 \geq \bigg(\int_{B\cap \partial \Omega} \delta_B(x)^{n-s}\,d\HH^{s}_\infty(x)\bigg)^2.$$ 
The example given by the ellipsoidal perturbations of the unit ball mentioned above shows that this estimate is also sharp.  For $0<s<n$, the domain of integration in the middle term cannot be augmented to $B\triangle \Omega$, and the one in the last term to the full $\partial\Omega$ because of the same discussion
after the statement of Theorem \ref{teomain0}. 


A basic ingredient for the proof of Theorems \ref{teomain0} and \ref{teomain1} is Theorem  \ref{teoAKN} from \cite{AKN1}. 
The very rough strategy of the proof of both results is the following.
We consider $\bM^n$ embedded in $\R^{n+1}$ and we consider suitable extensions $\wt u_B$ and $\wt u_\Omega$ of the eigenfunctions $u_B$ and $u_\Omega$ appearing in \rf{eqAKN}, respectively, to some open subsets $\wt B,\wt\Omega\subset\R^{n+1}$, so that $\wt u_B$ is harmonic in $\wt B$ and $\wt u_\Omega$ is ``almost harmonic" in $\wt\Omega$ (i.e., $\Delta \wt u_\Omega$ is very small). 
Then  we apply apply \rf{eqAKN} by estimating $\wt u_B - \wt u_\Omega$ from below in terms of the harmonic measure for $\wt \Omega$,
using the fact that $\wt u_B(x)\approx \dist(x,\partial\wt B)$ while 
$\wt u_\Omega(x)=0$ in a large part of $\wt B\cap \partial \wt\Omega$.
 To relate this estimate to the 
terms on the right hand side of \rf{eqmain00}, \rf{eqmain01}, and \rf{eqmain1}, we consider an auxiliary Lipschitz domain $\wt \Omega_\Gamma$ and we relate the harmonic measure for $\wt \Omega$ to the one for 
$\wt \Omega_\Gamma$ using the maximum principle. 
 Lastly we take advantage of the fact that
the behavior of harmonic measure is well understood on Lipschitz domains.

\subsection{The Alt-Caffarelli-Friedman monotonicity formula and the Friedland-Hayman inequality}

Given a domain in the unit sphere, $\Omega\subset \mathbb S^n\subset \R^{n+1}$ whose first Dirichlet
eigenvalue is $\lambda_\Omega$, the characteristic constant of $\Omega$ is the positive number $\alpha_\Omega$ such that $\lambda_\Omega = \alpha_\Omega(n-1+\alpha_\Omega)$.

Recall that the Alt-Caffarelli-Friedman (ACF) monotonicity formula asserts the following:
	
	\begin{theorem} \label{teoACF-elliptic}  Let $x \in  \R^{n+1}$ and $R>0$. Let $u_1,u_2\in
		W^{1,2}(B(x,R))\cap C(B(x,R))$ be nonnegative subharmonic functions such that $u_1(x)=u_2(x)=0$ and $u_1\cdot u_2\equiv 0$. 
		Set
		\begin{equation}\label{eqACF2}
			J(x,r) = \left(\frac{1}{r^{2}} \int_{B(x,r)} \frac{|\nabla u_1(y)|^{2}}{|y-x|^{n-1}}dy\right)\cdot \left(\frac{1}{r^{2}} \int_{B(x,r)} \frac{|\nabla u_2(y)|^{2}}{|y-x|^{n-1}}dy\right)
		\end{equation}
		Then $J(x,r)$ is an absolutely continuous function of $r\in (0,R)$ and
			\begin{equation}\label{eqprec1}
			\frac{\partial_rJ(x,r)}{J(x,r)}\geq \frac2r\bigl(\alpha_1 + \alpha_2  - 2 \bigr). 
		\end{equation}
		where $\alpha_i$ is the characteristic constant of the open subset  $\Omega_i\subset\bS^n$ given by  $$ \Omega_i=\bigl\{r^{-1}(y-x): y\in\partial B(x,r),\,u_i(y)>0\bigr\}.$$
		Further, for $r\in (0,R/2)$ and $i=1,2$, we have
		\begin{equation}\label{eqaux*}
		\frac{1}{r^{2}} \int_{B(x,r)} \frac{|\nabla u_i(y)|^{2}}{|y-x|^{n-1}}dy\lesssim \frac1{r^{n+1}}\|\nabla u_i\|_{L^2(B(x,2r))}^2.
		\end{equation}	
	\end{theorem}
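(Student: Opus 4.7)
The plan is to dissect the theorem into three parts: (a) absolute continuity of $J$ and the logarithmic-derivative form of $\partial_r J/J$; (b) the pointwise derivative inequality with the characteristic constants, which is the substantive content of \eqref{eqprec1}; and (c) the Caccioppoli-type bound \eqref{eqaux*}. For (a), set $I_i(r):=\int_{B(x,r)}|\nabla u_i(y)|^2|y-x|^{-(n-1)}\,dy$ so that $J(x,r)=r^{-4}I_1(r)I_2(r)$. By the coarea formula, each $I_i$ is absolutely continuous on compact subintervals of $(0,R)$ and
$$I_i'(r)=r^{-(n-1)}\int_{\partial B(x,r)}|\nabla u_i|^2\,d\HH^n\qquad\text{for a.e.\ }r.$$
Thus $J$ is absolutely continuous and $\partial_r J/J=-4/r+I_1'/I_1+I_2'/I_2$, so that \eqref{eqprec1} is equivalent to the pointwise claim
$$r\,I_i'(r)\,\geq\,2\alpha_i\,I_i(r)\qquad\text{for a.e.\ }r\in(0,R),\ i=1,2. \qquad(\star)$$

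For (b), translate to $x=0$, fix $r$, and pass to polar coordinates $y=\rho\omega$, so that $|\nabla u_i|^2=(\partial_\rho u_i)^2+\rho^{-2}|\nabla_{\bS^n}u_i|^2$. Writing $v_i(\omega):=u_i(r\omega)$, which vanishes outside $\Omega_i(r)$, the Rayleigh characterization of the first Dirichlet eigenvalue on $\Omega_i(r)$ together with the identity $\lambda_{\Omega_i(r)}=\alpha_i(\alpha_i+n-1)$ gives
$$\int_{\bS^n}|\nabla_{\bS^n}v_i|^2\,d\omega\,\geq\,\alpha_i(\alpha_i+n-1)\int_{\bS^n}v_i^2\,d\omega.$$
To convert this spherical spectral gap into the radial integral inequality $(\star)$, I would pass to the frequency-type function $\Phi_i(r):=r^{-2\alpha_i}I_i(r)$, for which $(\star)$ is precisely $\Phi_i'\geq 0$. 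The argument then follows the classical Alt--Caffarelli--Friedman computation: use the distributional inequality $|\nabla u_i|^2\leq\tfrac12\Delta(u_i^2)$ (which holds because $u_i\geq 0$, $\Delta u_i\geq 0$, and $\nabla u_i=0$ a.e.\ on $\{u_i=0\}$), combined with Green's identity and a one-dimensional Hardy-type estimate that balances $r^2(\partial_\rho u_i)^2$ against the tangential spectral bound. The sharp constant $2\alpha_i$ is saturated exactly by the $\alpha_i$-homogeneous extension of the first eigenfunction of $\Omega_i(r)$.

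For (c), the same distributional inequality $|\nabla u_i|^2\leq\tfrac12\Delta(u_i^2)$ together with Green's identity on $B(x,r)\setminus B(x,\varepsilon)$, applied with the weight $|y-x|^{-(n-1)}$ (a multiple of the $\R^{n+1}$ fundamental solution of $-\Delta$), yields as $\varepsilon\to 0$ an expression for $I_i(r)$ in terms of boundary integrals of $u_i^2$ and $u_i\partial_\nu u_i$ on $\partial B(x,r)$; the point contribution at $x$ vanishes because $u_i(x)=0$. After averaging over $r\in(r,2r)$ and invoking the standard Caccioppoli inequality for the subharmonic $u_i$, one concludes \eqref{eqaux*}. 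The main obstacle is step (b): propagating the fixed-sphere Rayleigh bound to an integral radial derivative inequality requires delicate handling of the free boundary $\partial\{u_i>0\}$ and of the (only measurable) dependence of $\Omega_i(r)$ on $r$, and obtaining the sharp constant $2\alpha_i$ in $(\star)$ is precisely what requires the full ACF-style frequency computation rather than a crude Friedland--Hayman-type argument.
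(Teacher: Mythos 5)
The paper does not prove Theorem \ref{teoACF-elliptic}: it is stated after ``Recall that the Alt--Caffarelli--Friedman (ACF) monotonicity formula asserts the following,'' i.e.\ it is a recalled classical result (the original ACF monotonicity formula, refined to the sharp derivative inequality with the characteristic constants $\alpha_i$ of the actual positivity sets as in Allen--Kriventsov--Neumayer, with the auxiliary bound \eqref{eqaux*}), not something the authors prove. So there is no internal proof to compare against; the relevant references are, for instance, \cite{CS} and \cite{AKN1}. Your overall plan does follow the standard line of argument: the coarea computation of $I_i'$ and the reduction of \eqref{eqprec1} to the pointwise claim $(\star)$ $rI_i'(r)\geq 2\alpha_i I_i(r)$ are correct, and the subsequent reduction via the distributional inequality $|\nabla u_i|^2\leq \tfrac12\Delta(u_i^2)$, Green's identity with the kernel $|y-x|^{1-n}$, Cauchy--Schwarz on $\partial B_r$, the Rayleigh bound $C_i(r)\leq \frac{r^2}{\lambda_i}A_i(r)$, and an AM--GM balance of the radial term $B_i$ against $C_i$ with weight $t=r/\alpha_i$ is indeed the classical ACF computation. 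Describing that balance as a ``one-dimensional Hardy-type estimate'' is a mischaracterization (it is a Young/AM--GM optimization, not a Hardy inequality), but it is not a structural error.

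There is, however, a genuine gap in your step (c). After passing to polar Green's identity, using $u_i(x)=0$, and averaging over $r$, one is left with terms of the form $R^{-n}\|u_i\|_{L^2(B_{2R})}\|\nabla u_i\|_{L^2(B_{2R})}$ and $R^{-(n+1)}\|u_i\|_{L^2(B_{2R})}^2$; to absorb these into $R^{-(n-1)}\|\nabla u_i\|_{L^2}^2$ one needs a Poincar\'e-type inequality, i.e.\ a bound of $\|u_i\|_{L^2}$ by $R\|\nabla u_i\|_{L^2}$, and this does \emph{not} follow from subharmonicity and $u_i(x)=0$ alone. The Caccioppoli inequality you invoke runs in the wrong direction: for nonnegative subharmonic $u_i$ it gives $\|\nabla u_i\|_{L^2(B_R)}\lesssim R^{-1}\|u_i\|_{L^2(B_{2R})}$, which is the estimate you would need reversed. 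Without additional structure (e.g.\ that $u_i$ vanishes on a set of positive density in $B_{2R}$, which is automatic when the complementary function $u_j$ is nontrivial there, but not otherwise), the chain does not close. If you want to supply a self-contained proof of this preliminary estimate, you need to either add the missing density/Poincar\'e hypothesis explicitly, or appeal to the proof in \cite{CS} or the original ACF paper where the precise formulation is carefully stated.
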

	
\vv
The Friedland-Hayman \cite{FH} inequality ensures that, for any two disjoint open subsets $\Omega_1,\Omega_2\subset \bS^n$,
 $$\alpha_1+\alpha_2-2\geq 0,$$
so that $J(x,r)$ is non-decreasing on $r$, by \rf{eqprec1}.	
In fact, more is known.
By Sperner's inequality \cite{Sperner}, among all the open subsets with a fixed measure $\HH^n$ on $\bS^n$, the one that minimizes the characteristic constant 	is a spherical ball with the same measure $\HH^n$. That is to say,
if $B_i$ is a spherical ball such that $\HH^n(B_i) = \HH^n(\Omega_i)$ and $\bar\alpha_i$ denotes its characteristic constant, then
$$\alpha_i\geq
\bar\alpha_i.$$
Further, if one of the sets $\Omega_i$ differs from a hemisphere by a surface measure $h$,
that is, 
$$\Big|\HH^n(\Omega_i)- \frac12\HH^n(\bS^n)\Big| \geq h,$$
then 
$$\alpha_1+\alpha_2-2\geq c\,h^2.$$

In this paper we will deduce other more precise estimates for $\alpha_1+\alpha_2-2$ using Theorem \ref{teomain1} and Theorem \ref{teoAKN}.
To state the precise result, we need some additional notation.
Let  $\Omega_1,\Omega_2\subset \bS^{n}$ be open and disjoint and let $H\subset\R^{n+1}$ be half-space such that $0\in\partial H$. Denote
$$S_{H,1} = \bS^n \cap H,\qquad S_{H,2} = \bS^n \setminus \overline{H}.$$ 
We denote
$$V_{c_0}(\Omega_1,\Omega_2,H,a) = T_{c_0}(\Omega_1,S_{H,1},a) \cup T_{c_0}(\Omega_2,S_{H,2},a).$$
Notice that if both $\Omega_1,\Omega_2$ satisfy the the CDC and $c_0$ is small enough, then $V_{c_0} =(\Omega_1,\Omega_2,H,a) = T_{c_0}(S_{H,1}\setminus \Omega_1) \cup T_{c_0}(S_{H,2}\setminus \Omega_2)$.

For  fixed $c_0>0$ and $a\in(0,1)$, we denote for $0<s< n$,
$$\ve_s(\Omega_1,\Omega_2) = \inf_H \int_{V_{c_0}(\Omega_1,\Omega_2,H,a)} \dist(y,\partial H)^{n-s}\,d\HH_\infty^s(y),$$
with the infimum taken over all half-space $H$ such that $0\in\partial H$.
On the other hand, for $s=n$, we set
$$\ve_n(\Omega_1,\Omega_2) = \inf_H \HH^n\big((S_{H,1}\setminus \Omega_1) \cup (S_{H,2}\setminus \Omega_2)\big).$$
We will prove the following:

\begin{theorema}\label{teo22}
Let $n\geq2$ and  $0<s\leq n$.
Let $\Omega_1,\Omega_2\subset \bS^n$ be open and disjoint.
 For any $c_0>0$ and $a\in (0,1)$ we have
$$\ve_s(\Omega_1,\Omega_2)^2 \lesssim_{s,c_0,a} \min(1,\alpha_1 + \alpha_2 - 2).$$
\end{theorema}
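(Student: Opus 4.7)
If $\alpha_1+\alpha_2-2\geq\delta_0$ for a fixed small $\delta_0>0$, the bound is trivial: $\dist(y,\partial H)^{n-s}\lesssim 1$ on $\bS^n$ and $V_{c_0}$ has bounded $\HH^s_\infty$-content, so $\ve_s\leq C$ and $\ve_s^2\lesssim 1\lesssim \min(1,\alpha_1+\alpha_2-2)$. Assume henceforth $\alpha_1+\alpha_2-2\leq\delta_0$ and set $\eta:=(\alpha_1+\alpha_2-2)^{1/2}$. Let $v_i:=\HH^n(\Omega_i)$ and let $\bar\alpha_i$ denote the characteristic constant of a spherical cap of volume $v_i$; by Sperner, $\alpha_i\geq\bar\alpha_i$, and by the quantitative Friedland-Hayman inequality recalled in the Introduction, $|v_i-\tfrac12\HH^n(\bS^n)|\lesssim\eta$. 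Denote by $x_i$ the $\bS^n$-barycenter of $\Omega_i$, by $B_i$ the geodesic ball of volume $v_i$ centered at $x_i$, and set $H_i:=\{y\in\R^{n+1}:y\cdot x_i\geq 0\}$; then $B_i$ has radius $\pi/2+O(\eta)$ and differs from the hemisphere $S_{H_i,1}$ in $\HH^n$-measure by $O(\eta)$.

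\textbf{Main estimate.} The $\bS^n$-barycenter version of Theorem \ref{teomain1} applied to each $\Omega_i$ gives
$$\lambda_{\Omega_i}-\lambda_{B_i}\gtrsim J_i^2,\qquad J_i:=\int_{T_{c_0}(\Omega_i,B_i,a)}\delta_{B_i}(y)^{n-s}\,d\HH^s_\infty(y).$$
Since $\lambda=\alpha(n-1+\alpha)$, one has $\lambda_{\Omega_i}-\lambda_{B_i}\asymp \alpha_i-\bar\alpha_i$. Placing two antipodal spherical caps of volumes $v_1,v_2$ (disjoint since $v_1+v_2\leq\HH^n(\bS^n)$) and applying classical Friedland-Hayman to them yields $\bar\alpha_1+\bar\alpha_2\geq 2$. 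Summing,
$$J_1^2+J_2^2\lesssim(\alpha_1+\alpha_2-2)-(\bar\alpha_1+\bar\alpha_2-2)\leq\eta^2,$$
so $J_1+J_2\lesssim\eta$.

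\textbf{Near-antipodality of the barycenters.} Theorem \ref{teoAKN} (barycenter version) gives $\HH^n(\Omega_i\triangle B_i)\lesssim\eta$, which combined with $\HH^n(B_i\triangle S_{H_i,1})\lesssim\eta$ yields $\HH^n(\Omega_i\triangle S_{H_i,1})\lesssim\eta$. Disjointness of $\Omega_1,\Omega_2$ then forces $\HH^n(S_{H_1,1}\cap S_{H_2,1})\lesssim\eta$, and since the intersection of two hemispheres whose poles make angle $\pi-\phi$ has $\HH^n$-measure comparable to $\phi$, we conclude $\phi\lesssim\eta$; that is, $x_2=-x_1+O(\eta)$.

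\textbf{Single half-space and conclusion.} Take $H:=H_1$. The comparison $\delta_{B_i}(y)\asymp\dist(y,\partial H_i)$ on $S_{H_i,1}$ (consequence of $r_i=\pi/2+O(\eta)$), together with the bilipschitz relation $\dist(y,\partial H_1)\asymp\dist(y,\partial H_2)$ outside the lune $L:=S_{H_1,2}\triangle S_{H_2,1}$, allows one---after possibly adjusting the constants $c_0,a$---to replace $B_i$ and $\delta_{B_i}$ by $S_{H,i}$ and $\dist(\cdot,\partial H)$ in $J_i$. The lune $L$ has width $O(\eta)$, so $\dist(\cdot,\partial H)\leq C\eta$ on $L$; a direct covering estimate (covering $L$ by balls of radius $\eta$ in the thin direction, with $\lesssim \eta^{1-n}$ balls) shows that $\int_L \dist(y,\partial H)^{n-s}\,d\HH^s_\infty(y)\lesssim\eta$ for any $s\in(0,n]$. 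Combining,
$$\ve_s(\Omega_1,\Omega_2)\lesssim J_1+J_2+\eta\lesssim\eta,$$
hence $\ve_s^2\lesssim \alpha_1+\alpha_2-2$. The case $s=n$ is treated identically (using the cleaner symmetric-difference definition of $\ve_n$ and Theorem \ref{teoAKN} in place of Theorem \ref{teomain1}).

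The main obstacle is the matching in the last paragraph: Theorem \ref{teomain1} naturally adapts each $\Omega_i$ to its own barycenter-centered ball $B_i$, whereas the definition of $\ve_s$ demands a single half-space $H$. Forcing the two barycenters to be nearly antipodal via Theorem \ref{teoAKN}, and then verifying that the lune-shaped discrepancy between $H_1$ and $-H_2$ contributes an error of order at most $\eta$, is the key technical step.
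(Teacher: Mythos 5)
Your proof is correct and follows essentially the same strategy as the paper's: decompose $\alpha_1+\alpha_2-2=(\alpha_1-\bar\alpha_1)+(\alpha_2-\bar\alpha_2)+(\bar\alpha_1+\bar\alpha_2-2)$ with all three terms nonnegative by Sperner and Friedland--Hayman, invoke Theorem \ref{teomain1} (barycenter version) to bound the far-from-equator contribution, choose $H$ to be the hemisphere-determining half-space whose pole is the barycenter of $\Omega_1$ (this is the same $H$ as the paper's $L_0$-half-space), and dispose of the thin band near $\partial H$ by a covering estimate showing its contribution is $\lesssim\eta$. The one place where you deviate is the derivation of near-antipodality of $x_1$ and $x_2$: the paper uses the exact identity $\HH^n(\Omega_1)y_1+\HH^n(\bS^n\setminus\Omega_1)y_2^*=0$ (so $x_1$ and the barycenter of $\bS^n\setminus\Omega_1$ are exactly antipodal) together with $|y_2-y_2^*|\lesssim\theta_0$, whereas you reach the same conclusion from $\HH^n(\Omega_i\triangle S_{H_i,1})\lesssim\eta$ (via Theorem \ref{teoAKN}) and the disjointness of $\Omega_1,\Omega_2$. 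Both routes are valid and yield the same $O(\eta)$ angular defect; yours is slightly more indirect but perhaps conceptually cleaner. As you acknowledge, the "replace $B_i,\delta_{B_i}$ by $S_{H,i},\dist(\cdot,\partial H)$" step needs a constant-juggling argument (analogous to Lemmas \ref{lemcompar}/\ref{lemcompar'}) and a splitting of the integral into the band $\{\dist(\cdot,\partial H)\lesssim\eta\}$ and its complement, which is exactly how the paper's $I_1$/$I_2$/final-term decomposition proceeds; you have correctly identified this as the technical crux and the sketch you give (comparability outside the band, covering estimate inside) matches the paper's detailed argument.
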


Remark that when $\HH^n(\Omega_i)\to 0$, we have $\alpha_i\to\infty$, and thus $\alpha_1 + \alpha_2 - 2\to\infty$ too.
On the other hand, we always have $\ve_s(\Omega_1,\Omega_2)\lesssim 1$ by definition. This is the reason for
writing $\min(1,\alpha_1 + \alpha_2 - 2)$ instead of $\alpha_1 + \alpha_2 - 2$ on the right hand side of the inequality in Theorem \ref{teo22}.

\vv


\subsection{Carleson's conjecture}
Next we introduce the precise notion of a tangent point for a pair of disjoint open sets in $\R^{n+1}$.
For a point $x\in\R^{n+1}$, a unit vector $u$,
and an aperture parameter $a\in(0,1)$ we consider the two sided cone with axis in the direction of $u$ defined by
$$X_a(x,u)=\bigl\{y\in\R^{n+1}:|(y-x)\cdot u|> a|y-x|\bigr\}.$$
Given disjoint open sets $\Omega_1,\Omega_2\subset\R^{n+1}$ and $x\in\partial\Omega_1\cap\partial\Omega_2$,
we say that $x$ is a tangent point for the pair $\Omega_1,\Omega_2$ if $x\in\pom_1\cap\pom_2$ and there exists a unit vector $u$ such that, for all
$a\in(0,1)$, there exists some $r>0$ such that
$$(\partial \Omega_1\cup \partial \Omega_2)\cap X_a(x,u)  \cap B(x,r) =\varnothing,$$
and moreover, one component of $X_a(x,u)\cap B(x,r)$ is contained in $\Omega_1$ and the other in $\Omega_2$.
The hyperplane $L$ orthogonal to $u$ through $x$ is called a tangent hyperplane at $x$.  In case that $\Omega_2=\R^{n+1}\setminus\overline{\Omega_1}$, we say that $x$ is a tangent point for $\Omega_1$.

  Let $\Omega_1$ be a Jordan domain in $\R^2$, and set $\Gamma =\partial\Omega_1$ and
$\Omega_2 = \R^2\setminus \overline{\Omega_1}$.
For $x\in\R^2$ and $r>0$, denote by $I_1(x,r)$ and $I_2(x,r)$ the longest open arcs of the circumference
$\partial B(x,r)$ contained in $\Omega_1$ and $\Omega_2$, respectively (they may be empty). Then we define
\begin{equation}\label{eqepsiloncoef}
\ve(x,r) = \frac1r\,\max\big(\big|\pi r- \HH^1(I_1(x,r))\big|,\, \big|\pi r- \HH^1(I_2(x,r))\big|\big).
\end{equation}
  The Carleson $\ve^2$-square function is given by
\begin{equation}\label{eqEE}
\EE(x)^2 :=\int_0^1 \ve(x,r)^2\,\frac{dr}r.
\end{equation}

  Carleson's conjecture, now a theorem, asserts the following.

\begin{theorem}\label{teo-carleson}
Let $\Omega_1\subset \R^2$ be a Jordan domain, let $\Gamma=
\partial\Omega_1$, and let $\EE$ be the
associated square function defined in \rf{eqEE}. Then
the set of tangent points for $\Omega_1$
coincides with the subset of those points $x\in\Gamma$ such that $\EE(x)<\infty$, up to a set of zero measure $\HH^1$. In particular, the set
$G=\{x\in\Gamma:\EE(x)<\infty\}$ is $1$-rectifiable.
\end{theorem}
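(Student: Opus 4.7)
This is an equivalence, so the proof splits into two directions: necessity (tangent $\Rightarrow \EE(x) < \infty$) is proved using the Alt--Caffarelli--Friedman monotonicity formula, while sufficiency ($\EE(x) < \infty$ on a set $\Rightarrow$ tangency $\HH^1$-a.e.\ on it, hence $1$-rectifiability) is proved by a rectifiability argument.

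For necessity I would take $u_1 = G_{\Omega_1}(\cdot, p_1)$ and $u_2 = G_{\Omega_2}(\cdot, p_2)$, the Green functions with poles at interior points $p_i$ well away from a fixed tangent point $x$, extended by zero outside $\Omega_i$. These are nonnegative, subharmonic in $\R^2$, satisfy $u_1 u_2 \equiv 0$, and vanish at $x$. Applying Theorem \ref{teoACF-elliptic} on a ball $B(x,R)$ avoiding the poles, \rf{eqaux*} controls $J(x,r)$ from above for $r \leq R/2$; tangency together with the boundary Harnack principle yields $u_i(y) \gtrsim \dist(y, L)$ in a nontangential sector on each side of the tangent line $L$, so that $J(x,r) \gtrsim 1$ uniformly in $r > 0$. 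Integrating \rf{eqprec1} then gives
\[
\int_0^{R/2} \frac{\alpha_1(s) + \alpha_2(s) - 2}{s}\, ds \leq C.
\]
On $\bS^1$ the characteristic of an open set is $\pi/\ell$, where $\ell$ is the length of its longest component arc (the corresponding first Dirichlet eigenvalue being $(\pi/\ell)^2$). A direct computation with $\ell_i = \HH^1(I_i(x,s))/s$ then shows $\alpha_1(s) + \alpha_2(s) - 2 \gtrsim \ve(x,s)^2$ for disjoint configurations, and combining the two bounds produces $\EE(x) < \infty$.

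For sufficiency, set $G_M = \{x \in \Gamma : \EE(x) \leq M\}$; it suffices to show that $G_M$ admits a tangent line at $\HH^1$-almost every point. The plan is to upgrade the averaged smallness of $\ve(x,r)$ (extracted from $\int_0^1 \ve(x,r)^2\, dr/r \leq M^2$ via Chebyshev) to Lipschitz graph approximations of $\Gamma$ at every scale on large subsets of $G_M$. Via a stopping-time/corona decomposition in the style of David--Semmes, together with a Jones-type traveling salesman argument (carried out in higher dimensions in the companion paper \cite{FTV}), this yields $1$-rectifiability of $G_M$; uniform smallness of $\ve(x, r)$ along a dense family of scales at $\HH^1$-a.e.\ $x \in G_M$ then ensures uniqueness of the approximating direction, hence the existence of a genuine tangent line.

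\textbf{Main obstacle.} The necessity direction is clean once boundary Harnack is in hand. The real difficulty lies in sufficiency: translating the $L^2$-averaged control of $\ve(x,r)$ into actual Lipschitz approximations requires coordinating the flatness directions across scales. The essential topological input, specific to the Jordan setting, is that $\Omega_1$ and $\Omega_2$ lie on opposite sides of $\Gamma$, preventing pathological behavior such as $\Gamma$ spiraling into $x$ and forcing the candidate tangent directions to align coherently across scales.
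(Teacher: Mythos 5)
The theorem you are proving is not actually proved in this paper: it is stated as a known result and attributed to Bishop \cite{Bishop-thesis,BCGJ} (necessity) and Jaye--Tolsa--Villa \cite{JTV} (sufficiency). The paper's own contribution consists of the higher-dimensional analogues, Theorems \ref{teoguai} and \ref{teo-carleson2}. So your proposal cannot be matched against a proof in this paper, but it can be measured against Theorem \ref{teoguai} for the necessity direction, whose proof in Section 8 is the relevant comparison.

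On that score, the overall ACF strategy for necessity is the right one, but there is a gap in the step
\[
\text{``tangency $+$ boundary Harnack $\Rightarrow$ $u_i(y)\gtrsim\dist(y,L)$ nontangentially $\Rightarrow$ $J(x,r)\gtrsim1$ uniformly.''}
\]
The pointwise lower bound $u_i(y)\gtrsim\dist(y,L)$ is not a consequence of the boundary Harnack principle, and it is simply false at a general tangent point: even when a domain contains cones of every aperture at small enough scales near $x$, it can be ``pinched'' by boundary protrusions that make the Green function decay superlinearly and the harmonic measure density degenerate as $r\to0$. What \emph{is} true, and what the paper proves in Theorem \ref{teoguai}, is that $J(x,0^+)>0$ at $\HH^n$-almost every tangent point. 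The argument there is: (i) bound $J(x,\rho)^{1/2}\gtrsim\omega_1^{p_1}(B(x,\rho))\,\omega_2^{p_2}(B(x,\rho))/\rho^{2n}$; (ii) use that the set of tangent points is $n$-rectifiable and, via mutual absolute continuity of $\HH^n|_E$ and $\omega_i^{p_i}|_E$ on such sets together with the Lebesgue--Radon--Nikodym theorem, conclude $\liminf_{\rho\to0}\omega_i^{p_i}(B(x,\rho))/\rho^n>0$ for $\HH^n$-a.e.\ $x\in E$. Your sketch skips the measure-theoretic differentiation step that turns a pointwise-false estimate into a correct a.e.\ statement. Also, historically Bishop's original proof of necessity was not by ACF; the ACF route is essentially the approach of \cite{AKN2} and the present paper.

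For the sufficiency direction your ``plan'' is not a proof but a citation: ``corona decomposition $+$ traveling salesman'' is the flavor of \cite{JTV}, but the actual argument is far more delicate --- in particular, it is not a traveling-salesman bound but a genuinely topological argument exploiting the connectedness of a Jordan curve (and, in the companion \cite{FTV}, its higher-dimensional surrogate via the CDC). Identifying the topological obstruction, as you do at the end, is the right instinct, but you do not supply any of the machinery that overcomes it. As written, the proposal defers the hard half of the theorem to the literature.
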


Recall here that a set $E\subset\R^{n+1}$ is called $d$-rectifiable if there are  Lipschitz maps
$f_i:\R^d\to\R^{n+1}$, $i\in \N$, such that
\begin{equation}\label{eq001}
\HH^d\Big(E\setminus\textstyle\bigcup_{i=1}^{\infty} f_i(\R^d)\Big) = 0.
\end{equation}

The fact that $\EE(x)<\infty$ for $\HH^1$-a.e.\ tangent point in a Jordan curve was proved by Bishop in 
\cite{Bishop-thesis} (see also \cite{BCGJ}). The most difficult implication of Theorem \ref{teo-carleson}, i.e, the fact that the set $G$ is $1$-rectifiable and tangents to $\Gamma$ exist for $\HH^1$-a.e.\ $x\in G$,
was proved more recently by Ben Jaye and the last two authors of this paper \cite{JTV}.

It is natural to wonder about the existence of a suitable version of Carleson's conjecture in higher dimensions.
There are two natural questions: which coefficients should replace the coefficients $\ve(x,r)$ defined in
\rf{eqepsiloncoef}? and second, for which open sets $\Omega_1\subset\R^{n+1}$ should we expect to obtain
a characterization such as the one in Theorem \ref{teo-carleson}? In this theorem, the fact that
$\Omega_1$ is a Jordan domain ensures that
its boundary is connected, which plays an essential role in the proof. Indeed, the arguments in \cite{JTV} are based on the connectivity of the boundary and they do not extend to higher dimensions (this should not be a surprise, since typically the role of connectivity in the plane is much more relevant than in higher dimensions for many geometric problems). 

Regarding the coefficients $\ve(x,r)$, in \cite{AKN2} the authors show a very interesting connection
with the characteristic constants of spherical domains and the Friedland-Hayman inequality, which we proceed to describe. Given two disjoint open sets $\Omega_1,\Omega_2\subset \R^{n+1}$, consider the open subsets $\Sigma_1,\Sigma_2\subset \bS^n$ defined by
\begin{equation}\label{eqasigmai*}
\Sigma_i=\bigl\{r^{-1}(y-x): y\in\Omega_i\cap \partial B(x,r)\bigr\},
\end{equation}
and let $\alpha_i(x,r)= \alpha_{\Sigma_i}$, the characteristic constant of $\Sigma_i$. Analogously, set
$\lambda_i(x,r)=\lambda_{\Sigma_i}$.
In \cite{AKN2} it is remarked that, in the planar case $n=1$,
\begin{equation}\label{eqakn**}
\alpha_1(x,r) + \alpha_2(x,r) - 2 \gtrsim 
\ve(x,r)^2.
\end{equation}
This is easy to check. Indeed, without loss of generality, assume $x=0$, $r=1$.
Taking into account that the characteristic of a domain decreases as its size increases, we have $\alpha_{I_i} = \alpha_{\Sigma_i}$ and also $\lambda_{I_i} = \lambda_{\Sigma_i}$,
for $I_i:=I_i(x,r)$ as in \rf{eqepsiloncoef}. 
Let $\gamma_i=\HH^1(I_i)/(2\pi)$. Since the first eigenfunction for $I_i$ is the function $u_i(\theta) =\sin((2\gamma_i)^{-1}\theta)$ (modulo a translation in the torus), we have $\alpha_i=\lambda_i^{1/2} = (2\gamma_i)^{-1}$.
Suppose, for example, that $\ve(x,r) = \big|\pi- \HH^1(I_1)\big|$ and write $\alpha_i =\alpha_i(x,r)$.
Let $\tilde\alpha_2$ the characteristic of
$\bS^1\setminus \overline{I_1}$. Since $I_2\subset \bS^1\setminus \overline{I_1}$, we have
$\alpha_2\geq \tilde\alpha_2$. Thus,
$$\alpha_1 + \alpha_2 -2 \geq \alpha_1 + \tilde\alpha_2 -2 = \frac1{2\gamma_1} + \frac1{2(1-\gamma_1)} - 2
= \frac{1-4\gamma_1(1-\gamma_1)}{2\gamma_1(1-\gamma_1)} = \frac{2(\frac12-\gamma_1)^2}{\gamma_1(1-\gamma_1)}
\approx \frac{\ve(x,r)^2}{\gamma_1(1-\gamma_1)},
$$
which completes the proof of \rf{eqakn**}, since $\gamma_1\in (0,1)$.
Further, in case that $I_1(x,r)$ and $I_2(x,r)$ are complementary arcs, arguing as above, one can deduce
$$\min\big(1,\alpha_1(x,r) + \alpha_2(x,r) - 2\big) 
\approx \ve(x,r)^2.$$
See also \cite{Bishop-conjectures} for a very related discussion.

In view of the preceding discussion, a possible generalization of Carleson's conjecture to higher dimensions
may consist in showing that, for a suitable  domain $\Omega_1\subset\R^{n+1}$ and $\Omega_2 = \R^{n+1}\setminus \overline{\Omega_1}$, it holds
\begin{equation}\label{equiv**}
\int_{0}^1 \frac{\min(1,\alpha_1(x,r) + \alpha_2(x,r) -2)}r\,dr <\infty \qquad\Leftrightarrow \qquad \text{$x$ is a tangent point of $\Omega_1$,}
\end{equation}
for every $x\in\partial\Omega_1$, up to a set of zero measure $\HH^n$.

In this paper we prove the implication $\Rightarrow$ in the above equivalence \rf{equiv**} for a very large class of domains. Further, we do not ask the domains $\Omega_1,\Omega_2$ to be complementary. 
The precise result is the
following.

\begin{theorema}\label{teoguai}
Let $\Omega_1,\Omega_2\subset \R^{n+1}$ be disjoint Wiener regular domains in $\R^{n+1}$.
Then, for $\HH^n$-a.e.\ tangent point $x$ for the pair $\Omega_1,\Omega_2$, 
it holds
\begin{equation}\label{eqint1}
\int_{0}^1 \frac{\min(1,\alpha_1(x,r) + \alpha_2(x,r) -2)}r\,dr <\infty.
\end{equation}
\end{theorema}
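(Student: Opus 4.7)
The proof proceeds via the Alt--Caffarelli--Friedman (ACF) monotonicity formula, Theorem~\ref{teoACF-elliptic}. For a tangent point $x$ of $(\Omega_1,\Omega_2)$ with tangent direction $u$, fix a scale $R_0=R_0(x)>0$ small enough that the cone $X_{1/2}(x,u)\cap B(x,R_0)$ decomposes into two components, one in $\Omega_1$ and the other in $\Omega_2$. Choose poles $p_i\in\Omega_i$ at distance $\approx R_0/2$ from $x$, on opposite sides of the tangent hyperplane, and set $u_i:=\min(G_{\Omega_i}(\cdot,p_i),M)$ on $\Omega_i$, extended by zero outside $\Omega_i$. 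By Wiener regularity of $\Omega_i$, each $u_i$ is continuous on $\R^{n+1}$, nonnegative, subharmonic, vanishes on $\partial\Omega_i$, and lies in $W^{1,2}_{\mathrm{loc}}$; moreover $u_1u_2\equiv 0$ and $u_i(x)=0$. Thus Theorem~\ref{teoACF-elliptic} applies on $B(x,R_0)$, and integrating \rf{eqprec1} from $r$ to $R_0/2$ yields, for all $0<r<R_0/2$,
\begin{equation*}
\int_r^{R_0/2}\frac{2\bigl(\alpha_1(x,s)+\alpha_2(x,s)-2\bigr)}{s}\,ds \;\leq\; \log\frac{J(x,R_0/2)}{J(x,r)}.
\end{equation*}
The bound $J(x,R_0/2)\leq C(R_0,M)$ follows from \rf{eqaux*} together with a Caccioppoli estimate on $\|\nabla u_i\|_{L^2(B(x,R_0))}$, valid because $u_i$ is bounded by $M$ and harmonic in $\Omega_i\cap B(x,R_0)$; moreover, the range $[R_0/2,1]$ contributes at most $\log(2/R_0)$ to \rf{eqint1} thanks to the $\min(1,\cdot)$ truncation.

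The theorem therefore reduces to showing that $\liminf_{r\to 0^+}J(x,r)>0$ for $\HH^n$-a.e.\ tangent point $x$. My plan is to establish this via a blow-up argument. Changing variables $z=x+ry$ rewrites $A_i(x,r)=\int_{B(0,1)}|\nabla u_i^r(y)|^2\,|y|^{1-n}\,dy$ with the rescaled function $u_i^r(y):=u_i(x+ry)/r$. At $\HH^n$-a.e.\ tangent point, the rescaled sets $r^{-1}(\Omega_i-x)$ converge as $r\to 0^+$ to the complementary half-spaces $H_\pm=\{y\in\R^{n+1}:\pm y\cdot u>0\}$; restricting to such points and using the cone condition combined with interior Harnack chaining from the pole $p_i$, the family $\{u_i^r\}_r$ is uniformly locally bounded in $L^\infty$ and bounded below at a fixed nontangential reference point of $H_\pm$. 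Standard elliptic compactness plus the Wiener regularity of $\Omega_i$ then forces every subsequential limit $v_i=\lim_k u_i^{r_k}$ to be a nonnegative bounded harmonic function on $H_\pm$ vanishing continuously on $\partial H_\pm$, hence a positive multiple of $[\pm y\cdot u]_+$. Passing the expression for $A_i$ to the limit yields $A_i(x,r_k)\to A_i^*>0$; a diagonal argument combined with the monotonicity of $J$ then promotes this to a uniform positive lower bound $J(x,r)\geq c(x)>0$, completing the proof of \rf{eqint1}.

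\textbf{Main obstacle.} The delicate point is the nondegeneracy in the blow-up. The cone condition on its own yields only $u_i(y)\gtrsim|y-x|^{\gamma(a)}$ with $\gamma(a)\geq 1$ the principal spherical eigenvalue on the cone of aperture $a$, and this exponent strictly exceeds $1$ for every $a>0$, which would permit $J(x,r)$ to decay and ruin the bound. What saves the argument is that at a tangent point the cone aperture $a$ may be chosen arbitrarily wide (forcing $\gamma(a)\to 1$), combined with a measure-theoretic reduction to the full-$\HH^n$-measure subset of tangent points where the blow-ups of $(\Omega_1,\Omega_2)$ converge precisely to the complementary half-space pair, and the Wiener regularity of the domains to guarantee that the blow-up limits of the $u_i^r$ are genuine, nontrivial harmonic functions on $H_\pm$ rather than solutions on a degenerate limit set.
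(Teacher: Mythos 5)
Your opening half is the same as the paper's: apply the Alt--Caffarelli--Friedman formula (Theorem~\ref{teoACF-elliptic}) to the Green functions $g_i(\cdot,p_i)$ extended by zero, integrate the differential inequality \rf{eqprec1} in $r$, bound $J(x,d)$ from above by Caccioppoli and subharmonicity, and reduce the theorem to showing $\liminf_{r\to 0^+} J(x,r)>0$ for $\HH^n$-a.e.\ tangent point. Up to that point the arguments match, modulo one technical slip: the truncation $u_i=\min(G_{\Omega_i}(\cdot,p_i),M)$ is \emph{super}harmonic (not subharmonic) near the level set $\{G_i=M\}$, since $\min(G_i,M)=M-(M-G_i)_+$ and $(M-G_i)_+$ is subharmonic; so Theorem~\ref{teoACF-elliptic} does not apply to it. The paper sidesteps this by not truncating at all and restricting to radii $r<d:=\tfrac16\min_i\dist(p_i,\partial\Omega_1\cup\partial\Omega_2)$, so that the ACF balls never reach the poles.

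The genuine gap is in your nondegeneracy step. You propose a blow-up: rescale $u_i^r(y)=u_i(x+ry)/r$, invoke Harnack chaining and set convergence of $r^{-1}(\Omega_i-x)$ to a half-space $H_\pm$, and conclude every subsequential limit $v_i$ is a positive multiple of $[\pm y\cdot u]_+$. But nothing in this chain rules out $v_i\equiv 0$. Set convergence of the domains controls neither the normalization nor a lower bound of $u_i^r$; the Harnack-chain argument from the pole down the cone axis gives at best $g_i(x+su,p_i)\gtrsim_a s^{\gamma(a)}$ with $\gamma(a)>1$, and while $\gamma(a)\to 1$ as the aperture widens, the implicit constant degenerates (the wide cone is only available below a scale $r(a)\to 0$), so in the limit one gets nothing. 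Wiener regularity guarantees only that the $u_i$ vanish continuously on the boundary; it does not supply a linear lower bound. You flag this yourself as the "main obstacle," but the resolution you sketch (wider cones plus a measure-theoretic reduction plus Wiener regularity) does not actually close it: none of those three ingredients produces the uniform lower bound on $u_i^r$ at a nontangential reference point that your compactness argument needs as input. In fact, the possibility of $J(x,0^+)=0$ at non-tangent points with rectifiable common boundary is exactly what the paper's counterexample in Section~\ref{sec-counter} exhibits, and your argument makes no use of the tangency hypothesis in a way that would preclude the same degeneracy at tangent points.

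The missing idea is the harmonic-measure lower bound. The paper tests $\omega_i^{p_i}(B(x,\rho/2))$ against a cutoff $\varphi_{x,\rho}$, integrates by parts, and derives
\begin{equation*}
J(x,\rho)^{1/2}\gtrsim \frac{\omega_1^{p_1}(B(x,\rho/2))}{\rho^n}\cdot\frac{\omega_2^{p_2}(B(x,\rho/2))}{\rho^n}.
\end{equation*}
Positivity of the right-hand side as $\rho\to 0$ is then obtained not by a soft compactness argument but by a genuinely nontrivial input: at tangent points $\HH^n|_E$ is absolutely continuous with respect to $\omega_i^{p_i}|_E$ (Akman--Azzam--Mourgoglou, \cite[Theorem III]{AAM}), so the Lebesgue--Radon--Nikodym theorem gives $\liminf_{\rho\to 0}\omega_i^{p_i}(B(x,\rho)\cap E)/\HH^n(B(x,\rho)\cap E)>0$ for $\HH^n$-a.e.\ $x\in E$, and rectifiability of the tangent-point set gives $\HH^n(B(x,\rho)\cap E)\approx\rho^n$. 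It is this absolute-continuity theorem, which encodes precisely the quantitative nondegeneracy at tangent points, that your blow-up proposal would need to re-prove from scratch; without it, your scheme is incomplete.
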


\vv
Recall that a domain is called Wiener regular if the Dirichlet problem for the Laplacian with continuous data is solvable in that domain. This is a property which is implied by the CDC, which is a strictly stronger condition (see Section \ref{secharm}).
We will prove Theorem \ref{teoguai} by integrating the Alt-Caffarelli-Friedman inequality
\rf{eqprec1} applied to the Green functions $u_i$ of $\Omega_i$ and using the fact that the harmonic measure of $\Omega_i$ satisfies $\omega_{\Omega_i}(B(x,r))\gtrsim r^n$ as $r\to 0$ at $\HH^n$-a.e.\ tangent point $x\in\partial\Omega_i$, which in turn implies that $J(x,0)>0$:
$$	\int_0^{r_0}\frac2r\bigl(\alpha_1(x,r) + \alpha_2(x,r)  - 2 \bigr)\,dr \leq \int_0^{r_0}\frac{\partial_rJ(x,r)}{J(x,r)}\,dr = \log\frac{J(x,r_0)}{J(x,0)}<\infty.
$$	
Putting together Theorem \ref{teo22} and Theorem \ref{teoguai}, for $\Omega_1,\Omega_2$ as
above and for $0<s\leq n$, denoting $\ve_s(x,r) = \ve_s(\Sigma_1,\Sigma_2)$, with $\Sigma_i$ as
in \rf{eqasigmai*}, we deduce that
$$\int_{0}^1 \ve_s(x,r)^2\,\frac{dr}r <\infty$$
for $\HH^n$-a.e.\ tangent point $x$ for the pair $\Omega_1,\Omega_2$.

Finally, we remark that in the companion paper \cite{FTV} we will prove the converse implication in \rf{equiv**}
when $\Omega_1\cup \Omega_2$ satisfies the CDC. 
The precise result is the following.

\begin{theorema}\label{teo-carleson2}
For $n\geq1$, let $\Omega_1,\Omega_2\subset\R^{n+1}$ be disjoint open sets. The set of points
$x\in\R^{n+1}$ such that 
$$\int_{0}^1 \ve_n(x,r)^2\,\frac{dr}r <\infty$$
is $n$-rectifiable. In case that the set $\Omega_1\cup\Omega_2$ satisfies the CDC, then
 $\HH^n$-a.e.\ $x\in\R^{n+1}$ such that
$$\int_{0}^1 \frac{\min(1,\alpha_1(x,r) + \alpha_2(x,r) -2)}r\,dr <\infty$$
is a tangent point for the pair $\Omega_1,\Omega_2$.
\end{theorema}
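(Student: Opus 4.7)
My plan is to prove the two assertions in sequence, with the second reducing to the first via Theorem~\ref{teo22}. For the rectifiability assertion, I would run a stopping-time/corona argument driven by the coefficients $\ve_n(x,r)$. The key geometric observation is that smallness of $\ve_n(x,r)$ forces $\Omega_1\cap\partial B(x,r)$ and $\Omega_2\cap\partial B(x,r)$ to be $\HH^n$-close to complementary hemispheres of some hyperplane $L_{x,r}$ through $x$, so the trace of $\partial\Omega_1\cup\partial\Omega_2$ on $\partial B(x,r)$ lies near an $(n-1)$-dimensional equator. After reducing to the level sets $E_N=\{x:\int_0^1\ve_n(x,r)^2\,dr/r\leq N\}$, I would extract at each pair $(x,r)$ a nearly optimal hyperplane $L_{x,r}$ and run a stopping-time decomposition: the stopping rule terminates a branch whenever $\ve_n(x,r)$ or the angle between successive $L_{x,r}$ exceeds a fixed small threshold. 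A Chebyshev-type estimate against the Carleson packing of $\ve_n(x,r)^2$ bounds the total measure of stopping balls, and on coherent trees of non-stopped scales the set lies in a Lipschitz graph over $L_{x,r}$ of arbitrarily small slope. A standard Whitney-type summation then yields $n$-rectifiability of $E_N$, and hence of $E=\bigcup_N E_N$.

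For the tangent-point assertion, Theorem~\ref{teo22} with $s=n$ gives $\ve_n(x,r)^2\lesssim \min(1,\alpha_1(x,r)+\alpha_2(x,r)-2)$ pointwise, so the ACF-summability hypothesis implies $\int_0^1 \ve_n(x,r)^2\,dr/r<\infty$. By the first assertion, $\HH^n$-a.e.\ such point admits an approximate tangent $n$-plane, and in particular $\ve_n(x,r_k)\to 0$ along a sequence of scales $r_k\to 0$. To upgrade the approximate tangent plane to the two-sided cone condition defining a tangent point for the pair $\Omega_1,\Omega_2$, I would invoke the CDC for $\Omega_1\cup\Omega_2$: if some $y\in B(x,r_k)$ lay on the side of $L_{x,r_k}$ expected to fall inside $\Omega_1$ but belonged to $\partial\Omega_2$, the CDC at $y$ would force a definite capacity contribution of $(\Omega_1\cup\Omega_2)^c$ near $y$, producing a uniform lower bound on $\ve_n(x,r_k)$ contradicting $\ve_n(x,r_k)\to 0$. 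Iterating along the sequence $r_k$, and taking the cone axis to be the normal to the approximate tangent plane supplied by rectifiability, recovers the two-sided cone structure required to verify the definition of tangent point.

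The principal obstacle is the hyperplane drift control in the corona step. Unlike Jones' $\beta$-numbers, $\ve_n$ carries no obvious Pythagorean identity relating hyperplane approximations at consecutive scales. I would establish such a bound by hand, comparing the hemispherical partitions induced by $L_{x,r}$ and $L_{x,r/2}$ on the intermediate sphere $\partial B(x,3r/4)$ and estimating the $\HH^n$-symmetric difference of their induced hemispheres by a multiple of $\ve_n(x,r)+\ve_n(x,r/2)$; squaring and summing along a dyadic chain inside a coherent tree then controls the total rotation of $L_{x,r}$ by the Carleson packing of $\ve_n^2$. A secondary subtlety is the CDC-based upgrade, since tangency is a topological condition while $\ve_n$ only sees $\HH^n$-symmetric difference on spheres; one must carefully combine the approximate tangent plane (from rectifiability), the condition $\ve_n(x,r_k)\to 0$, and CDC thickness to rule out the thin boundary pieces that are invisible to the measure-theoretic coefficient but would otherwise violate the cone condition.
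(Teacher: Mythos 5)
This theorem is not proved in the present paper. The text immediately before the statement explicitly defers the proof to the companion paper \cite{FTV} (``Carleson $\ve^2$-conjecture in higher dimensions''), so there is no argument here to compare yours against. What the present paper does supply is the trivial reduction you invoke at the start of your second paragraph: Theorem~\ref{teo22} with $s=n$ gives $\ve_n(x,r)^2\lesssim\min(1,\alpha_1(x,r)+\alpha_2(x,r)-2)$, so the ACF-summability hypothesis does imply $\int_0^1\ve_n(x,r)^2\,dr/r<\infty$, and the second assertion of Theorem~\ref{teo-carleson2} does reduce to the first plus an upgrade from rectifiability to tangency. That part of your plan is correct.

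The remainder of the sketch has genuine gaps. First, the rectifiability assertion concerns a set of points $x\in\R^{n+1}$ with no a priori finiteness of $\HH^n$-measure and no ambient Ahlfors regularity; before any Chebyshev/Carleson-packing estimate can be run you must specify against which measure the packing is computed, and a corona decomposition needs a Frostman-type carrier — your sketch does not address how to localize the problem or produce such a measure. Second, the ``hyperplane drift'' step you flag as the principal obstacle is exactly where the proof lives, and what you offer is a plan, not an argument: you would need a genuine quasi-orthogonality/stability estimate for the minimizing half-spaces $H$ that plays the role of the Pythagorean identity for Jones' $\beta$-numbers, and it is not clear from comparing hemispherical partitions on an intermediate sphere that such an estimate follows, since $\ve_n$ is a purely measure-theoretic coefficient insensitive to thin rotations. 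Third, the CDC upgrade as you describe it does not reach the conclusion. The square-function hypothesis only yields $\liminf_{r\to0}\ve_n(x,r)=0$, i.e., $\ve_n(x,r_k)\to0$ along \emph{some} sequence $r_k\to0$. But the definition of a tangent point for the pair requires that for every aperture $a\in(0,1)$ there exists $r_0>0$ such that $X_a(x,u)\cap B(x,r_0)$ avoids $\partial\Omega_1\cup\partial\Omega_2$ \emph{and} its two components sit in $\Omega_1$ and $\Omega_2$ respectively — a condition at all scales $r\leq r_0$, not merely along a sequence. ``Iterating along the sequence $r_k$'' does not control the interstitial scales, and ruling out bad behavior there is where a quantitative, multi-scale use of the CDC (propagating flatness between consecutive good scales rather than producing a one-shot capacity contradiction) is really needed. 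In short, the reduction via Theorem~\ref{teo22} is right, but the rectifiability step and the topological upgrade are both substantially underdeveloped, and the paper itself treats them as the content of an entire separate article.
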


We insist on the fact that the CDC for $\Omega_1\cup\Omega_2$ is a quite mild lower regularity condition.
For example, it holds if\footnote{In fact, in \cite{Lewis} it is shown that this is also a necessary condition for the CDC.} there exist $s\in (n-1,n+1]$ and $c>0$ such that
$$\HH^s_\infty(B(x,r)\cap (\partial\Omega_1\cup\partial\Omega_2))\geq c\,r^s\quad\mbox{ for all $x\in
\partial\Omega_1\cup\partial\Omega_2$, $0<r\leq1$.}$$

Because of the discussion above, we think that this result can be considered as a natural extension
of Carleson's conjecture to higher dimensions. Observe that a pair $\Omega_1,\Omega_2$ consisting of
a Jordan domain in the plane and its complementary, as in \cite{JTV}, satisfies the assumptions in Theorem \ref{teo-carleson2}.
In fact, in the case of the plane, the above theorem applies to domains much more general than Jordan domains and so it is also new.

We remark that in \cite{AKN2} the authors propose another possible extension of Carleson's conjecture to
higher dimensions. Let
$$a(x,r)^2 := |\lambda_1(x,r) - n|^2 + |\lambda_2(x,r) - n|^2.$$
Assuming that $\Omega_2= \R^{n+1}\setminus \overline{\Omega_1}$,
they ask under what minimal assumptions on $\partial\Omega_1$ the set of those points $x\in\partial\Omega_1$ such that
\begin{equation}\label{eqAKN**}
\int_{0}^1 \frac{a(x,r)^2}r\,dr <\infty
\end{equation}
coincides with the rectifiable part of $\partial\Omega_1$, up to a set of $\HH^n$ measure zero.
As shown in \cite{AKN1}, one has 
\begin{equation}\label{eqAKN**a}
\min(1,a(x,r))^2\lesssim \min(1,\alpha_1(x,r) + \alpha_2(x,r) -2).
\end{equation}
Thus the condition \rf{eqint1} implies that 
 \begin{equation}\label{eqAKN***}
 \int_{0}^1 \frac{\min(1,a(x,r))^2}r\,dr <\infty,
\end{equation}
which is equivalent to \rf{eqAKN**} for ``reasonable'' domains, for example when both $\Omega_1$ and $\R^{n+1}\setminus\Omega_1$ are connected and have diameter larger than $1$.
By \rf{eqAKN**a} and Theorem \ref{teoguai}, the condition \rf{eqAKN***} holds for tangent points $x\in\partial\Omega_1$ in the case 
when both $\Omega_1$ and $\R^{n+1}\setminus\Omega_1$ are Wiener regular. However, 
the inequality converse to the one in \rf{eqAKN**a} does not hold in general, and so
it is not clear to
us if the condition \rf{eqAKN***} implies the existence of tangents for domains $\Omega_1,\Omega_2$ such
as the ones in Theorem \ref{teo-carleson2}.

\subsection{A counterexample}
For $n \geq 2$, let $u, v : B(0,10) \to \R$ be two nonnegative continuous functions satisfying 
\begin{equation}
	\mbox{$-\Delta u \leq 0$ in $\{u >0\}$, $-\Delta v \leq 0$ in $\{v>0\}$ and $uv=0$ in $B(0,10)$.} \nonumber
\end{equation}
Define $\Gamma^*:= \{x \,:\, J(x,0^+) >0 \}$, where $J(0^+) := \lim_{r \to 0^+} J(x,r)$ and $J(x,r)$ is the ACF monotonicity formula defined in \eqref{eqACF2} relative to $u,v$.
In \cite{AKN2}, Allen, Kriventsov ans Neumayer prove the following theorem.
\begin{theorem}[{\cite[Theorem 1.2]{AKN2}}]\label{teo-AKN2-12}
	For $n\geq 2$, let $u, v$ as above. Then $\Gamma^* \cap B(0,1)$ is $(n-1)$-rectifiable.
\end{theorem}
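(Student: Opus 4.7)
The plan is to reduce rectifiability of $\Gamma^* \cap B(0, 1)$ to the higher-dimensional $\ve^2$-rectifiability theorem of this paper (Theorem \ref{teo-carleson2}), by converting positivity of $J(x, 0^+)$ into a Carleson Dini condition on the spherical slices of $\{u > 0\}$ and $\{v > 0\}$ via the ACF monotonicity formula and the quantitative Friedland--Hayman inequality (Theorem \ref{teo22}). Theorem \ref{teo-carleson2} is then applied with the ambient dimension set to $n$, i.e., with its $n$ replaced by $n - 1$ throughout, which matches the desired $(n-1)$-rectifiability.

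Fix $x \in \Gamma^* \cap B(0, 1)$. If $u(x) > 0$, continuity of $u$ forces $v \equiv 0$ on a neighborhood of $x$, and hence $J(x, r) = 0$ for small $r$; this contradicts $x \in \Gamma^*$. Hence $u(x) = v(x) = 0$, so for every sufficiently small $r$ the ACF monotonicity formula of Theorem \ref{teoACF-elliptic} applies on $B(x, r)$. Friedland--Hayman gives $\alpha_1(x, r) + \alpha_2(x, r) \geq 2$, so $J(x, \cdot)$ is non-decreasing and bounded below away from $0$ by $J(x, 0^+)$. Integrating \eqref{eqprec1} from $0$ to some $r_0 > 0$ we obtain
\begin{equation*}
\int_0^{r_0} \frac{2(\alpha_1(x, r) + \alpha_2(x, r) - 2)}{r}\, dr \leq \log \frac{J(x, r_0)}{J(x, 0^+)} < \infty.
\end{equation*}

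Now set $\Sigma_i(x, r) := r^{-1}\big((\{u_i > 0\} \cap \partial B(x, r)) - x\big) \subset \bS^{n-1}$ with $u_1 = u$, $u_2 = v$. These are open and disjoint. For $n \geq 3$, Theorem \ref{teo22} applied on $\bS^{n-1}$ with $s = n - 1$ (and arbitrary fixed $c_0, a$) yields
\begin{equation*}
\ve_{n-1}(\Sigma_1(x, r), \Sigma_2(x, r))^2 \lesssim \min\big(1,\, \alpha_1(x, r) + \alpha_2(x, r) - 2\big).
\end{equation*}
The planar case $n = 2$ is handled instead by the explicit one-dimensional computation given in the text following \eqref{eqakn**}, which provides the same bound with the coefficient $\ve(x, r)$ in place of $\ve_{n-1}(x, r)$. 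Combining with the previous estimate we obtain the Carleson Dini condition
\begin{equation*}
\int_0^{r_0} \frac{\ve_{n-1}(x, r)^2}{r}\, dr < \infty \qquad \text{for every } x \in \Gamma^* \cap B(0, 1).
\end{equation*}

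Finally, apply Theorem \ref{teo-carleson2} with ambient space $\R^n$ (i.e., with its $n$ replaced by $n - 1$) to the disjoint open sets $\Omega_1 := \{u > 0\}$, $\Omega_2 := \{v > 0\}$: the set of points $x \in \R^n$ at which the above Dini integral is finite is $(n - 1)$-rectifiable, and it contains $\Gamma^* \cap B(0, 1)$. The main obstacle is this last step: Theorem \ref{teo-carleson2} is the deep higher-dimensional analogue of Carleson's $\ve^2$-conjecture proved in the companion paper, and it is the hardest single ingredient. The first two steps are a routine repackaging of the ACF formula with the quantitative Friedland--Hayman inequality of this paper. This strategy differs from the original argument in \cite{AKN2}, which predates both Theorem \ref{teo22} and Theorem \ref{teo-carleson2} and necessarily proceeds by other means (probably blow-up analysis combined with the quantitative Faber--Krahn inequality of \cite{AKN1}).
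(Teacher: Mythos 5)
The paper does not prove this statement at all: Theorem \ref{teo-AKN2-12} is imported verbatim from \cite{AKN2} purely as background for the counterexample constructed in Section \ref{sec-counter}, so there is no internal proof to compare against, and your argument is by definition a different route. The route you propose — integrate \eqref{eqprec1} using $J(x,0^+)>0$ and $J(x,r_0)<\infty$ (the latter via \eqref{eqaux*}), pass to a Carleson--Dini bound on $\ve_{n-1}(x,r)$ via Theorem \ref{teo22}, then invoke the rectifiability part of Theorem \ref{teo-carleson2} one dimension down — is logically coherent for $n\geq3$, and the step showing $u(x)=v(x)=0$ on $\Gamma^*$ is correct. What it buys is a conceptual unification: positivity of $J(x,0^+)$ sits inside the higher-dimensional Carleson framework of the paper. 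What it costs is that Theorem \ref{teo-carleson2}, proved in the companion paper \cite{FTV}, is a far heavier tool than anything in \cite{AKN2}, so this is a top-down re-derivation rather than a simplification.

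Two points should be made explicit before accepting this as a proof. First, the case $n=2$ is genuinely not covered by Theorem \ref{teo22}: after the dimension shift it requires the sphere dimension to be at least $2$, i.e.\ $n\geq3$. The calculation following \eqref{eqakn**} bounds the arc-based coefficient $\ve(x,r)$ from \eqref{eqepsiloncoef}, not the half-space-based quantity $\ve_1(\Sigma_1,\Sigma_2)$ appearing in Theorem \ref{teo-carleson2}; you need the (easy, but currently missing) comparison $\ve_1(\Sigma_1,\Sigma_2)\lesssim\min(1,\ve(x,r))$, which does hold because when $\ve$ is small the longest arcs $I_1,I_2$ are nearly complementary half-circles so a well-chosen half-plane $H$ makes $S_{H,i}\setminus\Sigma_i$ small, while when $\ve\approx1$ the bound is trivial. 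Second, and more substantively: you are deriving an older result from a strictly later and stronger one, and the argument is only non-circular if the proof of Theorem \ref{teo-carleson2} in \cite{FTV} does not itself invoke \cite[Theorem 1.2]{AKN2}. That independence is plausible but is not something the present paper establishes, so the proposal should be regarded as a consistency check on the two theorems' statements rather than as a self-contained replacement for the blow-up arguments of \cite{AKN2} until the dependency is verified.
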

\noindent
In the same work, see \cite[Problem 2.8]{AKN2}, the authors pose the converse question: suppose that $E=\partial \{u >0\} \cap \partial \{v>0\}$ is $(n-1)$-rectifiable. Is it true, then, that $J(x,0^+)>0$ for $\HH^{n-1}$-almost every $x \in E$ (under some minimal assumptions on $E$ and $u,v$)? In the last section of this paper, we provide an example which shows that, without further structural assumptions on the domains, the natural converse to Theorem \ref{teo-AKN2-12} is false. More precisely we construct two Wiener regular domains $\Omega_1,\Omega_2$ in $\R^2$ with $1$-rectifiable boundary and such that, if $E=\partial \Omega_1 \cap \partial \Omega_2$, then $\HH^{1}(E)>0$ and the limit $J(x,0^+)$ of the ACF functional associated to the Green functions of $\Omega_1,\Omega_2$ equals to $0$ for $\HH^1$-almost every point $x \in E$. See Section \ref{sec-counter} for the detailed construction. On the other hand, it is implicit in the proof of Theorem \ref{teoguai} that $J(x,0^+)>0$ whenever $x$ is a (true) tangent point of $E$ and $u$ and $v$ are the Green functions of disjoint Wiener regular domains (up to a set of $\HH^{n-1}$ measure zero).

\vv
{\bf Acknowledgements.} 
This work was initiated while the authors were in residence at the Hausdorff Institute of Mathematics in Spring 2022 during the program ``Interactions between geometric measure theory, singular integrals, and PDEs''. Other parts of this work took place during a two weeks  visit of M. Villa to Princeton University and during another one month visit of I. Fleschler to the Universitat Aut\`onoma de Barcelona.
I. Fleschler and M. Villa would like to thank G. De Philippis for some inspiring conversations in some early stages of this work. We also thank G. David for providing us some information in connection with the Friedland-Hayman inequality.
\vv


\section{Preliminaries}

\subsection{Miscellaneous notation}\label{secnot}
In the paper, constants denoted by $C$ or $c$ depend just on the dimension and perhaps other fixed
parameters, such as $a,\beta$ in Theorem \ref{teomain0}, for example. We will write $a\lesssim b$ if there is $C>0$ such that $a\leq Cb$ . We write $a\approx b$ if $a\lesssim b\lesssim a$.

Open balls in $\R^{n+1}$ centered in $x$ with radius $r>0$ are denoted by $B(x,r)$, and closed balls by 
$\bar B(x,r)$. For an open or closed ball $B\subset\R^{n+1}$ with radius $r$, we write $\rad(B)=r$.
 Similarly, open and closed balls in $\bM^n$ are denoted by $B_{\bM^n}(x,r)$ and  
$\bar B_{\bM^n}(x,r)$ respectively. For an open or closed ball $B\subset\bM^n$ with radius $r$, we write $\rad_{\bM^n}(B)=r$.
An open annulus in $\R^{n+1}$ centered in $x$ with inner radius $r_1$ and outer radius $r_2$ is denoted by 
$A(x,r_1,r_2)$, and the corresponding closed annulus by $\bar A(x,r_1,r_2)$. Similarly, the analogous annuli in $\bM^n$ 
are denoted by $A_{\bM^n}(x,r_1,r_2)$ and $\bar A_{\bM^n}(x,r_1,r_2)$, respectively.

We use the two notations $S(x,r)\equiv \partial B(x,r)$ for spheres
in $\R^{n+1}$ centered in $x$ with radius $r$, so that $\bS^n=S(0,1)$.

We call open sets in $\bS^n$ spherical domains, and the geodesic distance in $\bM^n$ is written as 
$\dist_{\bM^n}$. Notice that, assuming $\bS^n$ to be embedded in $\R^{n+1}$, the geodesic distance 
$\dist_{\bS^n}$ is comparable to the Euclidean distance in the ambient space $\R^{n+1}$.

Given a set $F\subset\R^{n+1}$, the notation $M_+(F)$ stands for the
set of (positive) Radon measures supported on $F$.
For $s>0$, the $s$-dimensional Hausdorff measure is denoted by $\HH^s$, and the $s$-dimensional Hausdorff
content by $\HH^s_\infty$. Recall that, for any set $E\subset \R^{n+1}$,
$$\HH^s_\infty(E) = \inf\Big\{\sum_i \diam(A_i)^s:E\subset\bigcup_i A_i\Big\}.$$
We say that $E$ is lower $s$-content regular if there exists some constant $c>0$ such that
$$\HH^s_\infty(B(x,r)\cap E)\geq c\,r^s\quad \text{ for all $x\in E$ and $0<r\leq\diam(E)$.}$$

The barycenter of a set $E\subset \R^n$ is defined by
$$x_E = \int_E x\,d\HH^n(x).$$
On the other hand, the $\bS^n$-barycenter of $E\subset\bS^n$ is defined by $x_E/|x_E|$, with $x_E$ as above. So this belongs to $\bS^n$ and it is defined only when $x_E\neq 0$.

\vv

\subsection{Capacities}\label{seccap}

The fundamental solution of the minus Laplacian in $\R^{n}$, for $n\geq3$, equals
$$\EE_n(x) = \frac{c_n}{|x|^{n-2}},$$
where $c_n= (n-2)\HH^{n-1}(\mathbb S^{n-1})$. In the plane,
the fundamental solution is
$$\EE_2(x) = \frac1{2\pi}\,\log \frac1{|x|}.$$
For a Radon measure $\mu$, we consider the potential defined by
$$U_{n-2}\mu(x) = \EE_n * \mu(x).$$
Given a set $F\subset\R^n$ (or, more generally, $F\subset \R^m$)
we define the capacity $\capp_{n-2}(F)$ by the identity
\begin{equation}\label{eqicapimu}
	\capp_{n-2}(F) = \frac 1{\inf_{\mu\in M_1(F)} I_{n-2}(\mu)},
\end{equation}
where the infimum is taken over all {\em probability} measures $\mu$ supported on $F$ and $I_{n-2}(\mu)$ is the energy
\begin{equation}\label{eqimu}
	I_{n-2}(\mu) = \iint \EE_n(x-y)\,d\mu(x)\,d\mu(y) = \int U_{n-2}\mu(x)\,d\mu(x).
\end{equation}
For $n\geq 3$ and $F\subset \R^n$, $\capp_{n-2}(F)$ is the Newtonian capacity of $F$, and for $n=2$ and $F\subset\R^2$, 
$\capp_0(F)$ is the Wiener capacity of $F$. Remark that for $n\geq3$, one also has
\begin{equation}\label{eqcap93}
\capp_{n-2}(F)=\sup\big\{\mu(F):\mu\in M_+(F),\,\|U_{n-2}\mu\|_{\infty,F}\leq1\big\},
\end{equation}
where $M_+(F)$ is the family of all Radon measures supported in $F$.
In the plane, the analogous identity with $n=2$ holds when $\diam F<1$. 

It is easy to check that the Newtonian capacity $\capp_{n-2}$ is
homogeneous of degree $n-2$ when $n\geq3$. The Wiener capacity is not homogeneous. However, the logarithmic capacity $\capp_L$, defined by
$$\capp_L(F) = e^{-\frac{2\pi}{\capp_0(F)}},$$
is $1$-homogeneous.

For $n\ge2$, let $\Omega\subset\R^n$ be an open set. We say that $\Omega$ satisfies the $n$-dimensional capacity density condition (CDC)
if there exists some constant $c>0$ such that, for every $x\in\partial\Omega$ and $r\in(0,\diam(\Omega))$, 
$$\capp_{n-2}(B(x,r)\setminus\Omega)\geq c\,r^{n-2}\quad \mbox{in the case $n\geq3$,}$$
and 
$$\capp_L(B(x,r)\setminus\Omega)\geq c\,r\quad \mbox{in the case $n=2$.}$$

For subsets $F\subset\bS^n$ and domains $\Omega\subset \bS^n$, we define $\capp_{n-2}(F)$ in the same way as
for $F\subset\R^n$ and domains $\Omega\subset \R^n$ (using the fundamental solution of the Laplacian in $\R^n$).
We define the $n$-dimensional CDC in $\bS^n$ by asking the following condition for every $x\in\partial\Omega$ and $r\in(0,\diam(\Omega))$:
$$\capp_{n-2}(B_{\bS^n}(x,r)\setminus\Omega)\geq c\,r^{n-2}\quad \mbox{in the case $n\geq3$,}$$
and 
$$\capp_L(B_{\bS^n}(x,r)\setminus\Omega)\geq c\,r\quad \mbox{in the case $n=2$.}$$

We will need the following auxiliary results.

\begin{lemma}\label{lemcap-contingut}
Let $E\subset\R^n$ or $E\subset\bS^n$ be compact and $n-2<s\leq n$. In the case $n>2$, we have
$$\capp_{n-2}(E) \gtrsim_{s,n} \HH_\infty^s(E)^{\frac{n-2}s}.$$
In the case $n=2$, we have
$$\capp_L(E) \gtrsim_s \HH_\infty^s(E)^{\frac1s}.$$
\end{lemma}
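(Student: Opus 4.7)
The plan is to apply Frostman's lemma to produce a competitor measure supported on $E$ and then estimate its potential so as to conclude via the dual characterisation \eqref{eqcap93} of the capacity.

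First, I would invoke Frostman's lemma to produce a Radon measure $\mu\in M_+(E)$ satisfying $\mu(B(x,r))\leq r^s$ for every ball and $\mu(E)\gtrsim_{s,n}\HH^s_\infty(E)$. For $E\subset\bS^n$ I view $\bS^n\subset\R^{n+1}$ and apply the classical Frostman lemma in the ambient Euclidean space; all subsequent estimates only use growth on Euclidean balls, so this suffices.

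For $n\geq 3$, I would bound the Newtonian potential of $\mu$ through the layer-cake identity
\begin{equation*}
\int\frac{d\mu(y)}{|x-y|^{n-2}}=(n-2)\int_0^\infty\frac{\mu(B(x,r))}{r^{n-1}}\,dr,
\end{equation*}
using $\mu(B(x,r))\leq\min(r^s,\mu(E))$ and splitting at $R_0:=\mu(E)^{1/s}$. The hypothesis $s>n-2$ makes both resulting pieces integrable and a short computation yields $\|U_{n-2}\mu\|_\infty\lesssim_{s,n}\mu(E)^{(s-n+2)/s}$. Combined with \eqref{eqcap93} this gives
\begin{equation*}
\capp_{n-2}(E)\geq\frac{\mu(E)}{\|U_{n-2}\mu\|_\infty}\gtrsim_{s,n}\mu(E)^{(n-2)/s}\gtrsim_{s,n}\HH^s_\infty(E)^{(n-2)/s},
\end{equation*}
which is the desired inequality.

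For $n=2$ the argument is parallel but uses the logarithmic potential. By the scalings $\capp_L(\lambda E)=\lambda\capp_L(E)$ and $\HH^s_\infty(\lambda E)=\lambda^s\HH^s_\infty(E)$ I may assume $d:=\diam E\leq 1$, so that $\log(1/|x-y|)\geq 0$ on $E\times E$. Fubini then yields
\begin{equation*}
\int\log\tfrac{1}{|x-y|}\,d\mu(y)=\int_0^d\frac{\mu(B(x,r))}{r}\,dr+\log\tfrac{1}{d}\,\mu(E),
\end{equation*}
and splitting the radial integral at $R_0=\mu(E)^{1/s}$ the term $\mu(E)\log d$ coming from the tail cancels exactly the $-\mu(E)\log d$ from $\log(1/d)\mu(E)$, leaving
\begin{equation*}
\|U_0\mu\|_\infty\leq\frac{\mu(E)\bigl(1+\log(1/\mu(E))\bigr)}{2\pi s}.
\end{equation*}
Normalising $\tilde\mu:=\mu/\mu(E)$ to a probability measure gives $I_0(\tilde\mu)\leq\|U_0\tilde\mu\|_\infty\leq(1+\log(1/\mu(E)))/(2\pi s)$, hence $\capp_0(E)\geq 1/I_0(\tilde\mu)\geq 2\pi s/(1+\log(1/\mu(E)))$ and therefore
\begin{equation*}
\capp_L(E)=e^{-2\pi/\capp_0(E)}\geq e^{-1/s}\mu(E)^{1/s}\gtrsim_s\HH^s_\infty(E)^{1/s}.
\end{equation*}

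The main subtle point is obtaining the \emph{sharp} exponent $1/s$ in the logarithmic case. Because $\capp_L$ depends exponentially on $1/\capp_0$, any extra constant factor in the estimate of $\|U_0\mu\|_\infty$ becomes an exponent $C/s$ with $C>1$, which would fail the sharp scaling already realised by small disks. This is precisely why one must choose the upper limit of the radial integral to be exactly $d=\diam E$ and rely on the cancellation of the $\log d$ terms rather than simply dominating the far-field contribution by $\log(1/R_0)\mu(E)$. The Newtonian case $n\geq 3$ does not encounter this obstacle since the dependence of $\capp_{n-2}$ on $\|U_{n-2}\mu\|_\infty$ is polynomial.
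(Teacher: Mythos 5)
Your proof is correct and takes exactly the route the paper intends: Frostman's lemma produces a competitor measure with $\mu(E)\gtrsim\HH^s_\infty(E)$ and $s$-growth, the radial layer-cake expansion of the Riesz (resp.\ logarithmic) potential is split at $R_0=\mu(E)^{1/s}$, and the dual (resp.\ energy) characterization of capacity closes the estimate; this is precisely the content of the references the paper cites. One minor note on your closing paragraph: the exact cancellation of $\log d$ is not actually load-bearing, since bounding the far-field directly by $\mu(E)\log(1/R_0)$ and the near-field layer-cake tail by $\mu(\bar B(x,R_0))\log(1/R_0)$ together reproduce the same coefficient $\tfrac{1}{s}\bigl(1+\log\tfrac{1}{\mu(E)}\bigr)$, so the sharp exponent is not at risk there.
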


The proof of this result is an immediate consequence of Frostman's Lemma. See \cite[Chapter 8]{Mattila-gmt} for the case $n>2$, and 
\cite[Lemma 4]{Cufi-Tolsa-Verdera} for the case $n=2$, for example.

\begin{lemma}\label{lemmaprod1}
Let $\ell>0$, consider a compact set $F\subset\R^n$, and denote $\wt F= F\times [0,\ell]\subset\R^{n+1}$.
In the case $n\geq3$, we have
$$\capp_{n-1}(\wt F)\gtrsim \ell\,	\capp_{n-2}(F).$$
In the case $n=2$, given $C_1\geq1$, if $\diam(F)\leq C_1\,\ell$, we have
$$\capp_{1}(\wt F)\gtrsim \frac{\ell}{\log\dfrac{2C_1\ell}{\capp_L(F)}},$$
with the implicit constant depending on $C_1$.
\end{lemma}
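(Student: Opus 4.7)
The plan is to apply the dual characterization \eqref{eqcap93} of Newtonian capacity in the ambient space $\R^{n+1}$ to a product test measure on $\wt F = F\times[0,\ell]$. Given an appropriate measure $\mu$ on $F$, set $\tilde\mu := \mu\otimes (\HH^1\rest[0,\ell])$, so $\tilde\mu(\wt F)=\ell\cdot\mu(F)$, and estimate its $(n-1)$-potential at points of $\wt F$ by integrating the vertical variable first. The cylindrical symmetry will reduce the potential in $\R^{n+1}$ essentially to a potential on $\R^n$ of the marginal measure $\mu$.

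In the case $n\geq 3$, I would pick $\mu\in M_+(F)$ realizing the supremum in \eqref{eqcap93} up to a factor of two, so that $U_{n-2}\mu\leq 1$ pointwise on $F$ (and therefore everywhere in $\R^n$, by the maximum principle for Newtonian potentials) and $\mu(F)\gtrsim \capp_{n-2}(F)$. For any $(x,t)\in\wt F$, setting $r=|x-y|$ and $u=(s-t)/r$, the inner $s$-integral is
$$\int_0^\ell \frac{ds}{(r^2+(t-s)^2)^{(n-1)/2}}=\frac{1}{r^{n-2}}\int_{-t/r}^{(\ell-t)/r}\frac{du}{(1+u^2)^{(n-1)/2}}\lesssim_n \frac{1}{r^{n-2}},$$
since $(1+u^2)^{-(n-1)/2}$ is integrable on $\R$ exactly when $n\geq 3$. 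Hence $U_{n-1}^{\R^{n+1}}\tilde\mu\lesssim U_{n-2}^{\R^n}\mu\leq 1$ pointwise on $\wt F$, and \eqref{eqcap93} applied in $\R^{n+1}$ yields $\capp_{n-1}(\wt F)\gtrsim \tilde\mu(\wt F)\gtrsim \ell\,\capp_{n-2}(F)$.

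For $n=2$ the kernel decays more slowly, so the analogous $s$-integral grows logarithmically,
$$\int_0^\ell \frac{ds}{\sqrt{r^2+(t-s)^2}}\lesssim \log\Big(1+\frac{2\ell}{r}\Big),$$
and the ratio hypothesis $\diam(F)\leq C_1\ell$ becomes essential. Indeed, since $|x-y|\leq \diam(F)\leq C_1\ell$ for $x,y\in F$, the bound rewrites as $\lesssim_{C_1}\log(\ell/|x-y|)+\log(C_1+2)$. Taking $\mu_0$ to be the equilibrium probability measure of $F$, Frostman's theorem for logarithmic potentials supplies the pointwise bound $\int\log(1/|x-y|)\,d\mu_0(y)\leq \log(1/\capp_L(F))$ on $\R^2$; combined with the previous estimate, this gives $U_1^{\R^3}\tilde\mu_0(x,t)\lesssim_{C_1}\log(2C_1\ell/\capp_L(F))$ uniformly on $\wt F$, which is positive because $\capp_L(F)\leq \diam(F)\leq C_1\ell$. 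Rescaling $\tilde\mu_0$ by the reciprocal of this logarithm and invoking \eqref{eqcap93} in $\R^3$ produces the claimed lower bound on $\capp_1(\wt F)$.

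The main obstacle is precisely the $n=2$ case: unlike $n\geq 3$, where the $(n-1)/2$ exponent makes the kernel summable along vertical slices and the product-measure idea gives a clean bound independent of $\ell$, the logarithmic divergence here must be tamed jointly by Frostman's pointwise control on the equilibrium potential and by the ratio hypothesis $\diam(F)\leq C_1\ell$, without which neither the potential bound nor the positivity of the denominator could be secured.
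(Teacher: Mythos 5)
Your proposal is correct and follows essentially the same strategy as the paper: test the dual characterization \eqref{eqcap93} in $\R^{n+1}$ with the product measure $\mu\otimes\HH^1\rest[0,\ell]$, where $\mu$ is a (near-)equilibrium measure for $F$, and bound the ambient potential by integrating out the vertical variable. The small differences are cosmetic: for $n\ge3$ you collapse the slice integral via the substitution $u=(s-t)/r$ and the convergence of $\int(1+u^2)^{-(n-1)/2}\,du$, whereas the paper splits the same integral into a near range ($|t-s|\le|x-y|$) and a far range; and for $n=2$ you invoke Frostman's pointwise bound $\int\log(1/|x-y|)\,d\mu_0\le\log(1/\capp_L(F))$ for the equilibrium probability measure, whereas the paper works with the Wiener-capacity extremal measure $\mu$ satisfying $U_0\mu\le1$ and then uses the identity $2\pi/\mu(F)=\log(1/\capp_L(F))$ to convert to $\capp_L$ at the end — the two bookkeeping schemes are equivalent. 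Your route also neatly sidesteps the paper's preliminary normalization $\diam F<1$ (needed there so that \eqref{eqcap93} is valid for the planar Wiener capacity), since you only ever apply the dual characterization to $\capp_1$ in $\R^3$.
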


\begin{proof}
By the $n-1$ homogeneity of $\capp_{n+1}(F)$ and of the inequalities stated in the lemma we can assume $\diam(F)<1$, so that
\rf{eqcap93} also holds in the case $n=2$.
Let $\mu\in	M_+(F)$ be such that $U_{n-2}\mu(x)\leq 1$ for all $x\in F$ and $\mu(F)=\capp_{n-2}(F)$ (it is well known that such measure, called equilibrium measure, exists for any compact set $F$). Denote $I_\ell= [0,\ell]$.
Consider the product measure $\wt\mu=\mu\times \HH^1|_{I_\ell}$, which is supported on $\wt F$.
For $x'=(x,x_{n+1})\in \wt F$, we have
\begin{equation}\label{eqsim1}
U_{n-1}\wt\mu (x') = \!\int\! \frac {c_n}{|x'-y'|^{n-1}} \,d\wt\mu(y') \approx \!\int_{y_{n+1}\in I_\ell}
\int_{y\in F} \frac {1}{|x-y|^{n-1} + |x_{n+1} - y_{n+1}|^{n-1}}  \,d\HH^1(y_{n+1})d\mu(y).
\end{equation}
For each $x,y\in F$, we have
\begin{multline*}
\int_{y_{n+1}\in I_\ell} \frac {1}{|x-y|^{n-1} + |x_{n+1} - y_{n+1}|^{n-1}}  \,d\HH^1(y_{n+1}) \\ = 
\int_{|x_{n+1}-y_{n+1}|\leq |x-y|}\!\!\!\cdots \,d\HH^1|_{I_\ell}(y_{n+1}) + \int_{|x-y|<|x_{n+1}-y_{n+1}|\leq \ell}\!\!\!\cdots\, d\HH^1|_{I_\ell}(y_{n+1}) =: T_1(x,y) + T_2(x,y).
\end{multline*}
Concerning $T_1(x,y)$, we have
$$T_1(x,y) \leq \int_{|x_{n+1}-y_{n+1}|\leq |x-y|}\frac {1}{|x-y|^{n-1}}  \,d\HH^1|_{I_\ell}(y_{n+1}) \leq \frac 1{|x-y|^{n-2}}.$$
Also,
$$T_2(x,y) \leq \int_{|x-y|<|x_{n+1}-y_{n+1}|\leq \ell} \frac {1}{|x_{n+1} - y_{n+1}|^{n-1}}  \,d\HH^1|_{I_\ell}(y_{n+1})
.$$
In the case $n\geq3$, it easily follows also that	
$$T_2(x,y)\lesssim \frac {1}{|x - y|^{n-2}}.$$	
Thus,	
$$U_{n-1}\wt\mu (x') \lesssim \int_{y\in F} \frac {1}{|x - y|^{n-2}}\,d\mu(y) \approx U_{n-2}\mu(x)\leq 1.$$
Consequently,
$$\capp_{n-1}(\wt F) \geq \frac{\wt \mu(F)}{\|U_{n-1}\wt \mu\|_{\infty,\wt F}} \gtrsim \wt\mu(\wt F) = \ell\,\mu(F)
\approx \ell\,\capp_{n-2}(F)\quad\mbox{ for $n\ge3$}.$$

In the case $n=2$, if $T_2(x,y)\neq 0$, we have
$$T_2(x,y) \leq \log\frac{\ell}{|x-y|}.$$
Since $|x-y|\leq C_1\ell$ for any $x,y\in F$, we can always write $T_2(x,y) \leq \log\frac{C_1\ell}{|x-y|},$
and so
$$T_1(x,y) + T_2(x,y) \leq 1+ \log\frac{C_1\ell}{|x-y|} \lesssim \log\frac{2C_1\ell}{|x-y|}.$$
Thus, 	
\begin{align*}
U_1\wt\mu (x') &\lesssim \int_{y\in F}  \log\frac{2C_1\ell}{|x-y|}\,d\mu(y) = \log(2C_1\ell)\,\mu(F) + 
\int_{y\in F}  \log\frac{1}{|x-y|}\,d\mu(y)\\
& = \log(2C_1\ell)\,\mu(F) + 2\pi\,U_0\mu(x)\leq \log(2C_1\ell)\,\mu(F) + 2\pi.
\end{align*}
Therefore,
$$\capp_1(F)  
\geq \frac{\wt \mu(F)}{\|U_{1}\wt \mu\|_{\infty,\wt F}} \gtrsim \frac{\ell\,\mu(F)}{\log(2C_1\ell)\,\mu(F) + 2\pi}
= \frac{\ell}{\log(2C_1\ell) + \frac{2\pi}{\mu(F)}}.
$$
Writing
$$\frac{2\pi}{\mu(F)} = \frac{2\pi}{\capp_0(F)} = \log\frac1{\capp_L(F)},$$
we obtain
$$\capp_1(F) \gtrsim \frac{\ell}{\log(2C_1\ell) + \log\frac1{\capp_L(F)}} = \frac{\ell}{\log\frac{2C_1\ell}{\capp_L(F)}}
.$$
\end{proof}
\vv

\begin{lemma}\label{lemmaprod2}
Let $\ell>0$, $F\subset\bS^n$ compact, and let $I_\ell$ be an interval of length $\ell$ contained in $[1/4,2]$. Denote
$$\wt F= \{x'\in \R^{n+1}: x' = tx \mbox{ for some $x\in F$ and $t\in I_\ell$}\}.$$
Then we have
$$\capp_{n-1}(\wt F)\gtrsim \ell\,	\capp_{n-2}(F) \quad \mbox{in the case $n\geq 3$,}$$
and, in the case $n=2$, if $\diam(F)\leq C_1\,\ell$,
$$\capp_{1}(\wt F)\gtrsim \frac{\ell}{\log\frac{2C_1\ell}{\capp_L(F)}}.$$
\end{lemma}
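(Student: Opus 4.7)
The proof will closely follow the proof of Lemma \ref{lemmaprod1}, with the Cartesian product $F\times[0,\ell]$ replaced by the spherical ``radial product'' $\wt F=\{tx:x\in F,\,t\in I_\ell\}$. The key geometric observation is the identity
$$|tx-sy|^2=(t-s)^2+ts\,|x-y|^2 \quad\text{for } x,y\in\bS^n \text{ and } s,t>0,$$
which, combined with $s,t\in[1/4,2]$ (so that $ts\in[1/16,4]$), gives $|tx-sy|^2\approx (t-s)^2+|x-y|^2$ with absolute constants. Hence the parametrization $\Phi(x,t):=tx$ from $F\times I_\ell$ onto $\wt F$ is bi-Lipschitz, and the Riesz kernel on $\wt F$ behaves pointwise like the Euclidean kernel on $F\times I_\ell\subset\R^{n+1}$, which is precisely the setting of Lemma \ref{lemmaprod1}.

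Concretely, I would first pick an almost-equilibrium measure $\mu\in M_+(F)$ with $\|U_{n-2}\mu\|_{\infty,F}\leq 1$ and $\mu(F)=\capp_{n-2}(F)$ via \eqref{eqcap93} (for $n=2$, after a preliminary rescaling to ensure $\diam F<1$, since both sides of the target inequality are $1$-homogeneous; the dilation is undone at the end). Then I would define $\wt\mu$ on $\wt F$ as the push-forward of $\mu\otimes\HH^1|_{I_\ell}$ by $\Phi$, so that $\wt\mu(\wt F)=\ell\,\mu(F)$. Using the distance comparison, for any $t_0x_0\in\wt F$,
$$U_{n-1}\wt\mu(t_0x_0)\approx \int_F\int_{I_\ell}\frac{dt\,d\mu(x)}{\bigl((t-t_0)^2+|x-x_0|^2\bigr)^{(n-1)/2}}.$$

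Finally, I would split the inner $t$-integral at $|t-t_0|=|x-x_0|$ exactly as in the proof of Lemma \ref{lemmaprod1}: for $n\geq 3$ both pieces are controlled by $C/|x-x_0|^{n-2}$, yielding $U_{n-1}\wt\mu(t_0x_0)\lesssim U_{n-2}\mu(x_0)\leq 1$; for $n=2$, using $|x-x_0|\leq\diam F\leq C_1\ell$, the inner integral is bounded by $C\log\tfrac{2C_1\ell}{|x-x_0|}$. An application of \eqref{eqcap93} to $\wt F$ then gives
$$\capp_{n-1}(\wt F)\geq \frac{\wt\mu(\wt F)}{\|U_{n-1}\wt\mu\|_{\infty,\wt F}}\gtrsim \ell\,\capp_{n-2}(F)$$
in the case $n\geq 3$, and the analogous logarithmic lower bound in the case $n=2$, by the same algebra as at the end of the proof of Lemma \ref{lemmaprod1}. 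The only technical points are the verification of the distance identity (elementary from $|x|=|y|=1$) and the diameter rescaling in the $n=2$ case; neither is a genuine obstacle, since once the bi-Lipschitz structure of $\Phi$ is recorded the proof is essentially a transcription of Lemma \ref{lemmaprod1}.
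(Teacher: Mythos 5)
Your proposal is correct and takes essentially the same approach as the paper: you build the same push-forward measure $\wt\mu$ of $\mu\otimes\HH^1|_{I_\ell}$ under $(x,t)\mapsto tx$ and reduce the estimate of $U_{n-1}\wt\mu$ to the computation already carried out in Lemma~\ref{lemmaprod1} via a pointwise kernel comparison. The only difference is cosmetic: the paper obtains $|tx-sy|\gtrsim|x-y|+|t-s|$ by triangle-inequality manipulations, while you deduce the two-sided comparison from the exact identity $|tx-sy|^2=(t-s)^2+ts\,|x-y|^2$, which is a cleaner way to record the same fact.
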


\begin{proof}
The proof is very similar to the one of Lemma \ref{lemmaprod1}.
We assume $\diam(F)<1$, so that
\rf{eqcap93} holds in the case $n=2$. We take
$\mu\in	M_+(F)$ be such that $U_{n-2}\mu(x)\leq 1$ for all $x\in F$ and $\mu(F)=\capp_{n-2}(F)$. 
Then we consider the ``product measure'' $\wt \mu$ defined by
$$\int f\,d\wt\mu = \int_{t\in I_\ell} \int_{x\in F}f(tx)\,d\mu(x)\,dt,\quad \text{ for $f\in C(\R^{n+1})$}.$$
Notice that $\wt\mu$ is supported on $\wt F$.

For $x'=sx\in \wt F$, with $x\in F$, $s\in I_\ell$, we have
$$U_{n-1}\wt\mu (x') = \int \frac {c_n}{|x'-y'|^{n-1}} \,d\wt\mu(y') = \int_{t\in I_\ell} 
\int_{y\in F}\frac {c_n}{|sx-ty|^{n-1}} \,d\mu(y)\, dt
$$
Now we claim that
$$|sx-ty|\gtrsim |x-y| + |s-t|.$$
Indeed, we have
$$|s-t| = ||sx| - |ty|| \leq |sx - ty|.$$
Also,
$$|x-y| = \frac1s\,|sx-sy| \leq \frac1s\,\big(|sx-ty| + |ty - sy|\big) \approx |sx-ty| + |t - s|\lesssim |sx - ty|.$$
Adding the last two inequalities, the claim follows.
Then we deduce
$$U_{n-1}\wt\mu (x') \lesssim\int_{t\in I_\ell} 
\int_{y\in F}\frac1{|x-y|^{n-1} + |s-t|^{n-1}} \,d\mu(y) dt
$$
Notice now the similarity between this inequality and \rf{eqsim1}. Then by the same arguments following \rf{eqsim1}
in the proof of Lemma \ref{lemmaprod1} we obtain the desired estimates.
\end{proof}

\vv

In the next lemma we show the equivalence of the statement of Theorem \ref{teomain0} with different
values of the
parameter $a$ in the case $n\geq3$.

\begin{lemma}\label{lemcompar}
Given $n\geq3$, let $\bM^n$ be either $\R^n$ or $\bS^n$ and let $\beta>0$.
Let $\Omega$ be a relatively open subset of $\bM^n$ and let $B$ be a geodesic ball in $\bM^n$ such that
$\HH^n(B)=\HH^n(\Omega)$
with radius $r_B$.
In the case $\bM^n=\bS^n$, suppose also that $\beta\leq \HH^n(\Omega)\leq \HH^n(\bS^n)-\beta$, and in the case
$\bM^n= \R^n$ just that $\HH^n(\Omega)\leq \beta$.
For $a,a'\in (0,1)$, we have
\begin{multline*}
\sup_{t\in (0,1)} \avint_{\partial_{\bM^n} ((1-t) B)} \frac{\capp_{n-2}(B_{\bM^n}(x,atr_B)\setminus \Omega)}{(t\,r_B)^{n-3}}\,d\HH^{n-1}(x)\\ \approx_{a,a'}
 \sup_{t\in (0,1)} \avint_{\partial_{\bM^n} ((1-t) B)} \frac{\capp_{n-2}(B_{\bM^n}(x,a'tr_B)\setminus \Omega)}{(t\,r_B)^{n-3}}\,d\HH^{n-1}(x).
\end{multline*}
\end{lemma}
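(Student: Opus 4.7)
The equivalence consists of two inequalities. Without loss of generality assume $a < a'$. Writing $Q(t, a)$ for the average on the left-hand side of the claimed equivalence, one direction is immediate from the pointwise monotonicity $\capp_{n-2}(B_{\bM^n}(x, atr_B)\setminus \Omega) \leq \capp_{n-2}(B_{\bM^n}(x, a'tr_B)\setminus\Omega)$, which yields $Q(t, a) \leq Q(t, a')$ pointwise and hence after taking suprema. The plan is to prove the reverse, $\sup_t Q(t, a') \lesssim_{a,a'} \sup_t Q(t, a)$, by a covering and co-area argument; I describe it for $\bM^n = \R^n$ (the spherical case is analogous).

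Fix $t_0 \in (0,1)$ and take $B = B(0, r_B)$. Set $\rho := c_0\, a(1-a')t_0 r_B$ for a small absolute constant $c_0$ to be chosen. For each $x \in \partial((1-t_0)B)$, pick a maximal $\rho$-separated net $\{y_k\}_{k=1}^K \subset B(x, a't_0 r_B)$, so that $\{B(y_k, \rho)\}$ covers $B(x, a't_0 r_B)$, the balls $\{B(y_k, \rho/2)\}$ are pairwise disjoint, and $K \lesssim_{a,a'} 1$. By subadditivity of the Newtonian capacity (from the dual formula \rf{eqcap93}), $\capp_{n-2}(B(x, a't_0 r_B)\setminus\Omega) \leq \sum_k \capp_{n-2}(B(y_k, \rho)\setminus\Omega)$. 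Since $B(y_k, \rho) \subset B(z, 2\rho)$ for every $z \in B(y_k, \rho/2)$, monotonicity and averaging give $\capp_{n-2}(B(y_k, \rho)\setminus\Omega) \leq \avint_{B(y_k, \rho/2)} \capp_{n-2}(B(z, 2\rho)\setminus\Omega)\,dz$, and the disjointness of the half-balls then lets me rewrite the sum as a single $\HH^n$-integral over $B(x, a't_0 r_B + \rho/2)$ with prefactor $\lesssim \rho^{-n}$.

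Now integrate the resulting inequality over $x \in \partial((1-t_0)B)$ and apply Fubini using the spherical-cap bound $\HH^{n-1}(\partial((1-t_0)B) \cap B(z, R)) \lesssim R^{n-1}$, where $R := a't_0 r_B + \rho/2$. This produces an integral over the slab $\mathcal S := \{z : \dist(z, \partial((1-t_0)B)) \leq R\}$ with prefactor $(a't_0 r_B)^{n-1}/\rho^n \approx_{a,a'} 1/(t_0 r_B)$. For sufficiently small $c_0$, every $z \in \mathcal S$ satisfies $s(z) := 1 - |z|/r_B \in I'$ for some compact subinterval $I' \subset (0, 1)$ of length $\lesssim_{a'} t_0$, and $2\rho \leq a\, s(z)\, r_B$, so monotonicity gives $\capp_{n-2}(B(z, 2\rho)\setminus\Omega) \leq \capp_{n-2}(B(z, a\, s(z)\, r_B)\setminus\Omega)$. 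The co-area formula applied to $z \mapsto |z|$ rewrites the slab integral as
\[
r_B \int_{I'} \int_{\partial((1-s)B)} \capp_{n-2}(B(z, asr_B)\setminus\Omega)\,d\HH^{n-1}(z)\,ds,
\]
whose inner integral equals $c_n((1-s)r_B)^{n-1}(sr_B)^{n-3} Q(s, a)$. Using $(1-s)r_B \approx_{a,a'} (1-t_0)r_B$ and $sr_B \approx_{a,a'} t_0 r_B$ on $I'$, and dividing the assembled inequality by $c_n((1-t_0)r_B)^{n-1}(t_0 r_B)^{n-3}$, recovers $Q(t_0, a')$ on the left and leaves $\sup_s Q(s, a)$ on the right, with a constant depending only on $a$ and $a'$. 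Taking the supremum in $t_0$ concludes.

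The main obstacle is that one cannot cover $B(x, a't_0 r_B)$ directly by balls whose centres stay on $\partial((1-t_0)B)$: such a cover would demand radii at least $a't_0 r_B$, destroying the gain. The remedy is to let the cover centres drift radially into a slab of thickness $O(a't_0 r_B)$; the co-area step then re-organises the slab as an average of standard-sphere integrals $\avint_{\partial((1-s)B)}$ over $s$ in an $O(a't_0)$-interval around $t_0$, and the slab-thickness factor $a't_0 r_B$ cancels exactly the $1/(t_0 r_B)$ from the Fubini estimate.
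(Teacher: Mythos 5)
Your proposal is a genuine alternative to the paper's argument. The paper covers the sphere $\partial((1-t)B)$ by a finite family of balls centred on it, covers the annulus $A(x_B, r-atr_B, r+atr_B)$ by a second finite family $\{\Delta_j\}$ with bounded overlap, uses subadditivity of $\capp_{n-2}$ to transfer from the first family to the second, and then re-expresses the resulting sum as a finite collection of sphere integrals $I_{Cb,t_k}$ at the discrete radii $r_k$. Your version replaces the discrete re-indexing by a continuous averaging: the $\rho$-net followed by the half-ball averaging, Fubini over the sphere, and co-area to recover sphere integrals at nearby radii $s$. Conceptually the two are doing the same thing, but your route avoids the explicit bounded-overlap family $\{\Delta_j\}$, and that is a real simplification.

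However, there is a gap in the regime $t_0 \gtrsim 1/(1+a')$. For such $t_0$ the outer radius $R = a't_0 r_B + \rho/2$ exceeds the radius $(1-t_0)r_B$ of the sphere you are integrating over, so the slab $\mathcal{S}$ contains the centre of $B$. Two of your claims then fail: the interval $I'$ of radial parameters $s(z)$ is not contained in $(0,1)$ (it touches $s=1$), and the comparability $(1-s)r_B \approx_{a,a'} (1-t_0)r_B$ is false on $I'$ (the ratio $(1-s)/(1-t_0)$ ranges from $0$ up to roughly $a'/(1-t_0)$). As written, dividing out by the normalisation $((1-t_0)r_B)^{n-1}$ and using the cap bound $\lesssim R^{n-1}$ leaves a spurious factor growing like $(1-t_0)^{1-n}$. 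The fix is to sharpen the Fubini step to $\HH^{n-1}(\partial((1-t_0)B)\cap B(z,R)) \lesssim \min(R,(1-t_0)r_B)^{n-1}$ and not to replace $(1-s)$ by $(1-t_0)$, but instead estimate $\int_{I'}(1-s)^{n-1}\,ds \lesssim a't_0 \cdot \max(1-t_0, a't_0)^{n-1}$. The identity $\min(a't_0,1-t_0)\cdot\max(a't_0,1-t_0) = a't_0(1-t_0)$ then makes the powers of $(1-t_0)$ cancel and produces a constant depending only on $a,a'$, simultaneously covering both regimes $t_0 \lessgtr 1/(1+a')$. With this correction your argument is complete; without it, the inequality you actually prove degenerates for $t_0$ near $1$, and the supremum over $t_0 \in (0,1)$ is precisely the quantity in the lemma.
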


\begin{proof}
To shorten notation, for any $a,t\in(0,1)$, write
$$I_{a,t} := \avint_{\partial_{\bM^n} ((1-t) B)} \frac{\capp_{n-2}(B_{\bM^n}(x,atr_B)\setminus \Omega)}{(t\,r_B)^{n-3}}\,d\HH^{n-1}(x).$$
Assume $a>a'$. Then it is clear that,  for every $t\in (0,1)$, $I_{a,t}\geq I_{a',t}$
since $\capp_{n-2}(B_{\bM^n}(x,atr_B)\setminus \Omega)\geq \capp_{n-2}(B_{\bM^n}(x,a'tr_B)\setminus \Omega)$.
So is suffices to show that $I_{a,t}\lesssim_{a,a'} \sup_{s\in(0,1)} I_{a',s}$.

Denote by $x_B$ the center of $B$. For a given $t\in (0,1)$ and $r=(1-t)r_B$, we consider a covering of $\partial_{\bM^n}B_{\bM^n}(x_B,r)$ by a family of $\bM^n$-balls $\{B_i\}_{i\in I_r}$ (where $I_r$ is just a suitable set of indices) centered in $\partial_{\bM^n}B_{\bM^n}(x_B,r)$ with radius $b(r_B-r) = btr_B$, with $b\ll a'$ to be chosen below, so that the family $\{B_i\}_{i\in I_r}$ has bounded overlap.
Then, denoting by $x_i$ the center of $B_i$, we split
\begin{align*}
\int_{\partial_{\bM^n} ((1-t) B)} \capp_{n-2} (B_{\bM^n}&(x,atr_B)\setminus \Omega)\,d\HH^{n-1}(x)\\
& \leq \sum_{i\in I_r} \int_{\partial_{\bM^n} ((1-t) B)\cap B_i} \capp_{n-2}(B_{\bM^n}(x,atr_B)\setminus \Omega)\,d\HH^{n-1}(x)\\
& \lesssim \sum_{i\in I_r} \sup_{x\in B_i}\capp_{n-2}(B_{\bM^n}(x,atr_B)\setminus \Omega)\,\rad_{\bM^n}(B_i)^{n-1}\\
& \leq \sum_{i\in I_r} \capp_{n-2}(B_{\bM^n}(x_i,(a+b)tr_B)\setminus \Omega)\,(tr_B)^{n-1}.
\end{align*}

Next we wish to cover the annulus $A_r=A_{\bM^n}(x_B,r-atr_B,r+atr_B)$ by another suitable family of balls. First we let
$N$ be the least integer such that $N\geq 2a/b$, and then we consider the radii
$$r_k= r + k\,\frac{atr_B}{N} \quad \mbox{ for $-N\leq k\leq N$.}$$
So we have $[r-atr_B,r+atr_B]= \bigcup_{k=-N}^{N-1} [r_k,r_{k+1}]$, and $r_{k+1} - r_k = \frac{atr_B}{N} \leq \frac{btr_B}2$. It is easy to check that there exist covering of $A_r$ by 
a family of $\bM^n$-balls $\Delta_j$, $j\in J_r$, with $\rad_{\bM^n}(\Delta_j) = btr_B$,
which are centered in $\bigcup_{k=-N}^N \partial_{\bM^n}B(x_B,r_k)$, and have bounded overlap. 
We write $j\in J_{r,k}$ if $\Delta_j$ is centered in $\partial_{\bM^n}B(x_B,r_k)$.
For $i\in I_r$, we also write
$\wt B_i = B_{\bM^n}(x_i,(a+b)tr_B)$ to shorten notation. By the subadditivity of $\capp_{n-2}$, then we have
\begin{align}\label{eqbidelta}
\sum_{i\in I_r} \capp_{n-2}(\wt B_i\setminus \Omega)\,(tr_B)^{n-1} & \leq
\sum_{i\in I_r} \sum_{j:\Delta_j\cap \wt B_i\neq\varnothing} \capp_{n-2}(\Delta_j\setminus \Omega)\,(tr_B)^{n-1}\\
& \leq C(a,b) \sum_{j\in J_r} \capp_{n-2}(\Delta_j\setminus \Omega)\,(tr_B)^{n-1},\nonumber
\end{align}
taking into account that for each $\Delta_j$ there is a bounded number (depending on $a$ and $b$) of balls $\wt B_i$ which intersect it. Using also that,  for any $x\in\Delta_j$, $\Delta_j\subset B_{\bM^n}(x,2btr_B)$,
we get
\begin{align*}
 \sum_{j\in J_r} \capp_{n-2}(\Delta_j\setminus \Omega)\,(tr_B)^{n-1} & \approx_{b} \sum_{k=-N}^N  
 \sum_{j\in J_{r,k}} \capp_{n-2}(\Delta_j\setminus \Omega)\,\rad_{\bM^n}(\Delta_j)^{n-1}\\
 & \lesssim \sum_{k=-N}^N \sum_{j\in J_{r,k}} \int_{\partial_{\bM^n}B(x_B,r_k)\cap\Delta_j}
 \capp_{n-2}(B_{\bM^n}(x,2btr_B)\setminus \Omega)\,d\HH^{n-1}(x)\\
 & \lesssim  \sum_{k=-N}^N  \int_{\partial_{\bM^n}B(x_B,r_k)}
 \capp_{n-2}(B_{\bM^n}(x,2btr_B)\setminus \Omega)\,d\HH^{n-1}(x)
\end{align*}
Writing $t_k = \frac{r_B-r_k}{r_B}$ and taking into account that $t_k\approx_{a} t$, it easily follows
that
\begin{align*}
I_{a,t} & =\avint_{\partial_{\bM^n} ((1-t) B)} \frac{\capp_{n-2}(B_{\bM^n}(x,atr_B)\setminus \Omega)}{(t\,r_B)^{n-3}}\,d\HH^{n-1}(x) \\
& \lesssim C(a,b) 
\sum_{k=-N}^N  \avint_{\partial_{\bM^n} ((1-t_k)B)}
 \frac{\capp_{n-2}(B_{\bM^n}(x,Cbt_kr_B)\setminus \Omega)}{(t_k\,r_B)^{n-3}}
 \,d\HH^{n-1}(x) \\ &\lesssim C'(a,b) \sup_{s\in (0,1)}I_{Cb,s}.
\end{align*}
So choosing $b$ so that $Cb\leq a'$, we deduce $I_{a,t}\lesssim_{a,a'} \sup_{s\in(0,1)} I_{a',s}$, as wished.
\end{proof}
\vv

The following lemma is the analogue of the preceding result for the case $n=2$.

\begin{lemma}\label{lemcompar'}
Let $\bM^2$ be either $\R^2$ or $\bS^2$, and let $\beta>0$.
Let $\Omega$ be a relatively open subset of $\bM^2$ and let $B$ be a geodesic ball in $\bM^2$  such that $\HH^2(B)=\HH^2(\Omega)$ with radius $r_B$.
In the case $\bM^2=\bS^2$, suppose also that $\beta\leq \HH^2(\Omega)\leq \HH^2(\bS^2)-\beta$, and in the case
$\bM^2= \R^2$ just that $\HH^2(\Omega)\leq \beta$.
For $a,a'\in (0,1)$, we have
\begin{multline*}
\sup_{t\in (0,1)} \avint_{\partial_{\bM^2} ((1-t) B)} \frac{t\,r_B}{
\log\frac{2ta r_B}{\capp_L(B_{\bM^2}(x,at r_B)\setminus \Omega)}}\,d\HH^{1}(x)\\
\approx \sup_{t\in (0,1)} \avint_{\partial_{\bM^2} ((1-t) B)} \frac{t\,r_B}{
\log\frac{2ta' r_B}{\capp_L(B_{\bM^2}(x,a't r_B)\setminus \Omega)}}\,d\HH^{1}(x).
\end{multline*}
\end{lemma}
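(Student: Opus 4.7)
The plan is to follow the ``two coverings'' strategy from the proof of Lemma \ref{lemcompar}, with the $(n-2)$-homogeneous capacity replaced by the planar capacitary functional $J(\ell, F) := \ell/\log(2\ell/\capp_L(F))$, which equals (up to the factor $1/a$) the integrand in $I_{a,t}$. Assume without loss of generality $a > a'$. The easier direction $\sup_t I_{a',t} \lesssim_{a,a'} \sup_t I_{a,t}$ reduces to a pointwise comparison: from $\capp_L(B_{\bM^2}(x,a'tr_B)\setminus\Omega) \leq \capp_L(B_{\bM^2}(x,atr_B)\setminus\Omega) \leq a'tr_B$ one obtains
$$\log\frac{2a'tr_B}{\capp_L(B_{\bM^2}(x,a'tr_B)\setminus\Omega)} \geq \log\frac{2atr_B}{\capp_L(B_{\bM^2}(x,atr_B)\setminus\Omega)} - \log(a/a'),$$
and splitting into the cases where the right-hand side is $\geq 2\log(a/a')$ or not gives the pointwise bound (using $\log(2a'tr_B/\capp_L) \geq \log 2 > 0$ in the second case to bound the $a'$-integrand).

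For the harder direction $\sup_t I_{a,t} \lesssim_{a,a'} \sup_s I_{a',s}$, I replicate the double-covering argument of Lemma \ref{lemcompar}. Fix $t$ and $b > 0$ with $2b \leq a'$. Cover $\partial_{\bM^2}((1-t)B)$ by balls $B_i$ of radius $btr_B$ with bounded overlap; monotonicity of $J$ in its second argument together with $B_{\bM^2}(x,atr_B) \subset B_{\bM^2}(x_i,(a+b)tr_B)$ for $x\in B_i$ bounds the integral by $\lesssim (btr_B/a) \sum_i J(atr_B, B_{\bM^2}(x_i,(a+b)tr_B)\setminus\Omega)$. Cover the annulus $A_{\bM^2}(x_B, r-(a+b)tr_B, r+(a+b)tr_B)$ (with $r = (1-t)r_B$) by smaller balls $\Delta_j$ of radius $btr_B$ centred on a family of concentric spheres with spacing $\lesssim btr_B/2$ and bounded overlap. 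The core step is a subadditivity-type inequality
$$J(atr_B, B_{\bM^2}(x_i,(a+b)tr_B)\setminus\Omega) \lesssim \sum_{j:\,\Delta_j\cap B_{\bM^2}(x_i,(a+b)tr_B)\neq \varnothing} J(atr_B, \Delta_j\setminus\Omega),$$
combined with the scale-shift $J(atr_B, \Delta_j\setminus\Omega) \lesssim (a/b)\, J(btr_B, \Delta_j\setminus\Omega)$, which follows from $\log\frac{2atr_B}{\capp_L} = \log(a/b) + \log\frac{2btr_B}{\capp_L}$ and positivity of the latter (since $\capp_L(\Delta_j\setminus\Omega) \leq btr_B$). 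Re-summing by decomposing the $\Delta_j$'s along the concentric spheres, exactly as in the proof of Lemma \ref{lemcompar}, expresses the result as an integral over $\bigcup_k \partial_{\bM^2}((1-t_k)B)$ bounded by $\lesssim C(a,b)\,\HH^1(\partial_{\bM^2}((1-t)B))\,\sup_s I_{2b, s}$; combined with the easy direction applied to $2b \leq a'$, this yields the desired inequality.

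The main obstacle is the subadditivity-type estimate for $J$. Since $\capp_L$ itself is not subadditive (e.g.\ two disks of radius $r$ at distance $d \gg r$ give $\capp_L(D_1\cup D_2) = \sqrt{rd} \gg r$), I will interpret $J(\ell, F)$ as comparable to the $3$-dimensional Newtonian capacity of the cylindrical extension $F \times [0, \ell]$: Lemma \ref{lemmaprod1} provides the lower bound $\capp_1(F \times [0,\ell]) \gtrsim J(\ell, F)$, and a matching upper bound can be produced by an equilibrium-measure argument (the potential of a cylinder of small transverse diameter is dominated by a logarithmic potential in the transverse direction, giving $\capp_1(F \times [0,\ell]) \lesssim J(\ell, F)$). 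The classical subadditivity of $\capp_1$ in $\R^3$ then transfers to $J$. Throughout, care is needed to ensure — by normalizing $r_B$ and restricting attention to the regime where $\capp_L(F) \leq \tfrac12\,\ell$ — that all logarithms are positive and that the comparability constants between $J$ and $\capp_1$ are uniform.
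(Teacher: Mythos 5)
Your proposal reaches the same conclusion but replaces the paper's central ingredient with a genuinely different one. The paper proves the $n=2$ subadditivity step in a single stroke by invoking Theorem 5.1.4 of Ransford's book, which says precisely that $F\mapsto 1/\log\big(d/\capp_L(F)\big)$ is countably subadditive over coverings of a set of diameter at most $d$; the double-covering mechanics are then carried over verbatim from Lemma \ref{lemcompar}. You instead derive the subadditivity by lifting to $\R^3$: you replace the planar functional $J(\ell,F)=\ell/\log(2\ell/\capp_L F)$ with the Newtonian capacity $\capp_1(F\times[0,\ell])$, use the countable subadditivity of $\capp_1$, and then translate back. What this buys is that you avoid citing Ransford; what it costs is that you now need a \emph{two-sided} comparison $\capp_1(F\times[0,\ell])\approx J(\ell,F)$ under $\diam F\lesssim\ell$. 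The lower bound is exactly Lemma \ref{lemmaprod1}, already in the paper; the matching upper bound is a genuine extra lemma which you only sketch, and the sketch as written has the domination reversed: the correct statement is that for a measure $\nu$ on $F\times[0,\ell]$ with $U_1\nu\leq 1$, the $t$-average of the Newtonian potential \emph{dominates from below} a re-scaled logarithmic potential of the projection $\bar\nu$, namely $\ell\geq\int_0^\ell U_1\nu(x,s)\,ds\gtrsim\int\log\frac{C\ell}{|x-y|}\,d\bar\nu(y)$; the pointwise opposite inequality you describe is false, and the un-renormalised logarithmic potential is not even sign-definite. With the corrected direction, integrating against $\bar\nu$, using $\iint\log\frac1{|x-y|}\,d\hat\nu\,d\hat\nu\geq\log\frac1{\capp_L(F)}$ for the normalised projection $\hat\nu$, and the automatic lower bound $\log(2\ell/\capp_L F)\geq\log 2$ (since $\capp_L(B(x,r)\setminus\Omega)\leq r$) closes the estimate and keeps the constants uniform. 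The rest of your argument — the pointwise comparison for the easy direction, the scale-shift $J(a\ell,F)\leq\tfrac{a}{b}J(b\ell,F)$, and the re-summing over concentric spheres — is sound. In short: a valid alternative route, self-contained once the cylinder-capacity upper bound is properly proved, but noticeably heavier than the paper's one-line appeal to Ransford's subadditivity theorem.
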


\begin{proof}
The arguments are very similar to the ones for the preceding lemma with $n\geq3$.
The main change in the proof is that instead of using the subadditivity of $\capp_{n-2}$, we use the fact
that, by Theorem 5.1.4 from \cite{Ransford},
\begin{align*}\label{eqbidelta'}
\frac1{
\log\frac{2ta r_B}{\capp_L(B_{\bM^2}(x,at r_B)\setminus \Omega)}} \leq 
\sum_{j:\Delta_j\cap \wt B_i\neq\varnothing}
\frac1{
\log\frac{2ta r_B}{\capp_L(\Delta_j\setminus \Omega)}} \leq
\sum_{j:\Delta_j\cap \wt B_i\neq\varnothing}
\frac1{
\log\frac{2\,\rad_{\bS^2}(\Delta_j)}{\capp_L(\Delta_j\setminus \Omega)}}.
\end{align*}
We leave the details for the reader.
\end{proof}

\vv

\subsection{Harmonic measure}\label{secharm}

A bounded open set $\Omega\subset\R^{n+1}$ is Wiener regular if for any continuous $f:\pom\to\R$ there exists the solution 
of the Dirichlet problem for the Laplace equation. The Wiener criterion characterizes the Wiener regularity of any bounded open. For our purposes, it suffices to know that, in the case $n\geq 2$, if for any $x\in\partial\Omega$ there exists some $r_x>0$ and $c=c(x)>0$ such that 
\begin{equation}\label{eqwiener}
\capp_{n-1}(B(x,r)\setminus \Omega) \geq c\,r^{n-1}\quad \mbox{ for $0<r\leq r_x$,}
\end{equation}
then $\Omega$ is Wiener regular. In the case $n=1$, instead a sufficient condition for Wiener regularity is that $\capp_L(B(x,r)\setminus \Omega) \geq c\,r$ for $0<r\leq r_x$, with $c=c(x)>0$.

Recall that if $\Omega\subset \R^{n+1}$ is a bounded Wiener regular open set and $f:\overline\Omega\to\R$ 
is continuous and of class $C^2$ in $\Omega$, then
$$f(x)  = \int_{\partial\Omega} f\,d\omega_{\Omega}^x - \int_{\Omega} \Delta f(y)\,g_{\Omega}(x,y)\,dy,$$
where $g_\Omega$ stands for the Green function for $\Omega$.

The following result is well known\footnote{In some references, it is called ``Bourgain's lemma''.}. See Section 2.1 from \cite{Tolsa-fractional}, for example.

\vv
\begin{lemma}
	\label{lem2.1}
	For $n\geq1$ there exists $c(n)>0$ such that, given  an open set $\Omega\subset \R^{n+1}$ and a closed ball $B$ intersecting $\pom$, it holds 
	$$\omega^{x}(B)\geq c(n)\, \frac{\capp_{n-1}(\tfrac14 B\setminus\Omega)}{\rad(B)^{n-1}}\quad \mbox{  for all $x\in \tfrac14 B\cap \Omega$
	when $n\geq2$} ,$$
and	
$$ \omega^{x}(B)\geq c(n)\,
 \frac1{\log\dfrac{\rad(B)}{\capp_L(\frac14B\setminus\Omega)}}\qquad \mbox{ for all $x\in \tfrac14 B\cap \Omega$ when $n=1$.}
 $$
\end{lemma}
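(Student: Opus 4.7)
The plan is to use the classical Bourgain-style comparison between harmonic measure and an equilibrium potential of $F:=\tfrac14B\setminus\Omega$. After intersecting $\Omega$ with a large ball containing $B$ (which only decreases $\omega^{\cdot}(B)$), we may assume $\Omega$ is bounded.

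For $n\geq 2$ the ambient dimension is $n+1\geq 3$, so the dual formulation \rf{eqcap93} applied in $\R^{n+1}$ produces a measure $\mu\in M_+(F)$ with $\mu(F)\geq \tfrac12\capp_{n-1}(F)$ and Newtonian potential $v:=U_{n-1}\mu$ satisfying $0\leq v\leq 1$ on all of $\R^{n+1}$ and harmonic on $\Omega$. Writing $\kappa:=\capp_{n-1}(F)/\rad(B)^{n-1}$, the key ingredient is a pair of kernel estimates that use $\supp\mu\subset \tfrac14 B$: for any $y\in\partial\Omega\setminus B$ and $z\in\supp\mu$ we have $|y-z|\geq \tfrac34\rad(B)$, giving $v(y)\leq C\kappa$; while for any $x\in\tfrac14 B\cap\Omega$ and $z\in\supp\mu$ we have $|x-z|\leq \tfrac12\rad(B)$, giving $v(x)\geq c_\flat\kappa$ with $c_\flat/C=(3/2)^{n-1}>1$. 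Applying the maximum principle to $v-\omega^{\cdot}(B)-C\kappa$ in $\Omega$ (whose boundary values are $\leq 0$ by the outer estimate together with $v\leq 1=\omega^y(B)$ on $\partial\Omega\cap B$), we obtain $v(x)-C\kappa\leq\omega^x(B)$; inserting the inner lower bound yields $\omega^x(B)\geq(c_\flat-C)\kappa\gtrsim \capp_{n-1}(F)/\rad(B)^{n-1}$.

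For $n=1$ the role of $v$ is played by $V:=U_0\mu/L$, where $\mu$ is the equilibrium probability measure of $F$ and $L:=\log(1/\capp_L(F))$; by Frostman's maximum principle $U_0\mu\leq L$ everywhere (with equality q.e.\ on $F$), so $V\leq 1$ on $\R^2$. After normalizing $\rad(B)=4$ the same kernel-separation gives $V(y)\leq -\log 3/L$ on $\partial\Omega\setminus B$ and $V(x)\geq -\log 2/L$ for $x\in\tfrac14 B\cap\Omega$. The maximum principle applied to
\begin{equation*}
\frac{V+\log 3/L}{1+\log 3/L}-\omega^{\cdot}(B),
\end{equation*}
whose boundary values are $\leq 0$ on all of $\partial\Omega$, then gives $\omega^x(B)\gtrsim \log(3/2)/(L+\log 3)$. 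Since $\capp_L(F)\leq \rad(\tfrac14 B)=\rad(B)/4$, restoring the scaling rewrites $L+\log 3$ as $\log(\rad(B)/\capp_L(F))-\log(4/3)$, which is positive and bounded above by $\log(\rad(B)/\capp_L(F))$, producing the claimed bound.

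The main technical point is that in the absence of Wiener regularity of $\Omega$ the maximum principle must be invoked in the form that a bounded harmonic function in $\Omega$ is dominated by the $\limsup$ of its boundary values outside a set of harmonic measure zero, or equivalently via exhaustion of $\Omega$ by regular subdomains and passage to the limit. The specific quarter-dilation $\tfrac14 B$ is precisely what makes $c_\flat>C$ in the Newtonian case and leaves the positive gap $\log(3/2)$ in the logarithmic case; without this headroom, subtracting the outer estimate from the inner one would not leave a strictly positive quantity, and the whole comparison would be vacuous.
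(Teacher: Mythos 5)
The paper does not prove this lemma; it states it as the well known Bourgain-type estimate and refers to \cite{Tolsa-fractional}. Your argument is the standard potential-theoretic proof — build the equilibrium potential of $\tfrac14 B\setminus\Omega$, exploit the gap that the quarter dilation creates between the upper bound for the potential off $B$ (distance $\geq\tfrac34\rad(B)$ from $\supp\mu$) and the lower bound inside $\tfrac14B$ (distance $\leq\tfrac12\rad(B)$), and transfer this to harmonic measure by the generalized maximum principle — and it is correct.

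Two small remarks, neither of which is a gap. First, with the paper's normalization $\EE_2=\tfrac1{2\pi}\log\tfrac1{|\cdot|}$, Frostman's maximum principle gives $U_0\mu_{\mathrm{eq}}\le L/(2\pi)$ rather than $\le L$ where $L=\log(1/\capp_L(F))$; this only rescales the constant $c(1)$, not the structure of the comparison (equivalently, run the argument with the unnormalized logarithmic potential, as you implicitly do). Second, in the $n\geq 2$ case one may simply take the equilibrium measure itself, with $\mu(F)=\capp_{n-1}(F)$, so the factor $\tfrac12$ is unnecessary. You are right that in the absence of Wiener regularity the comparison must go through the generalized maximum principle for bounded harmonic functions (or exhaustion by regular subdomains), since the boundary values of $\omega^{\cdot}(B)$ are attained only off a polar set; and you correctly observe that the outer potential estimate extends to $\partial B$ itself, which resolves the ambiguity at points of $\partial\Omega\cap\partial B$ where the boundary data $\chi_{B\cap\partial\Omega}$ is discontinuous.
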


\vv

The following two lemmas are valid for the so called NTA domains. However, we only state them for
Lipschitz domains, which suffices for the purposes of this paper. For the proof, see \cite{Jerison-Kenig}.

\begin{lemma}\label{lem:wG}
Let $n\ge 1$ and $\Omega\subset\R^{n+1}$ be a Lipschitz domain.
Let $B$ be a closed ball centered in $\partial\Omega$. Then
\begin{equation}\label{eq:Green-lowerbound1}
 \omega^{p}(B)\approx  r(B)^{n-1}\, g_\Omega(p,y)\quad\mbox{
 for all $p\in \Omega\backslash  2B$ and $y\in B\cap\Omega$,}
 \end{equation}
 with the implicit constant just depending on $n$ and the Lipschitz character of $\Omega$. 
\end{lemma}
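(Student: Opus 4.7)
The plan is to use the boundary Harnack principle for Lipschitz domains, which is the technical core of \cite{Jerison-Kenig}. Fix a corkscrew point $A_B\in \Omega\cap 2B$ for $B$, i.e., a point satisfying $\dist(A_B,\partial\Omega)\approx |A_B-x_B|\approx r(B)$; such a point exists by the Lipschitz character of $\Omega$. Consider the two nonnegative functions of $x\in \Omega\setminus \overline{2B}$,
$$u(x):=\omega^x(B),\qquad v(x):=r(B)^{n-1}\,g_\Omega(x,A_B).$$
Both are harmonic in $\Omega\setminus \overline{2B}$ (since $|x-A_B|\gtrsim r(B)$ there) and vanish continuously on $\partial\Omega\setminus \overline{2B}$; the latter uses the Wiener regularity of $\Omega$, which follows from the Lipschitz CDC via \eqref{eqwiener}.

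I would then invoke the boundary Harnack principle: for any $p\in \Omega\setminus \overline{4B}$,
$$\frac{u(p)}{v(p)}\approx \frac{u(A^*)}{v(A^*)},$$
where $A^*\in \Omega$ is a corkscrew point for $2B$, lying at distance $\approx r(B)$ from both $\partial\Omega$ and the center of $B$. The two functions $u$ and $v$ are then estimated at this single interior reference point $A^*$: Lemma \ref{lem2.1} together with the Lipschitz CDC gives $u(A^*)=\omega^{A^*}(B)\gtrsim 1$, while trivially $u(A^*)\le 1$, so $u(A^*)\approx 1$; for $v(A^*)$, the points $A^*$ and $A_B$ both lie at distance $\approx r(B)$ from $\partial\Omega$ and from each other, so a Harnack chain of bounded length joins them, and pointwise comparison with the fundamental solution yields $g_\Omega(A^*,A_B)\approx r(B)^{1-n}$, hence $v(A^*)\approx 1$.

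Combining these gives $\omega^p(B)\approx r(B)^{n-1}\,g_\Omega(p,A_B)$ for all $p\in \Omega\setminus \overline{4B}$. The intermediate annular region $p\in\Omega\cap (\overline{4B}\setminus 2B)$ is handled by an interior Harnack argument, since both $u$ and $v$ are positive and harmonic there and comparable to $1$ at corkscrew reference points. Finally, to replace $A_B$ by a general $y\in B\cap \Omega$ (implicitly of the same ``interior'' type, e.g.\ another corkscrew at scale $r(B)$), one connects $y$ to $A_B$ by a bounded Harnack chain in $\Omega\cap 2B$ furnished by the Lipschitz structure, obtaining $g_\Omega(p,y)\approx g_\Omega(p,A_B)$.

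The main obstacle is the boundary Harnack principle itself; since it is proved in \cite{Jerison-Kenig} using barrier arguments tailored to the Lipschitz geometry (and extends verbatim to NTA domains), we would refer the reader there for the detailed proof and simply apply it as a black box in the argument above.
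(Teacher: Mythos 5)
The paper gives no proof of this lemma: it merely remarks that the two lemmas (\ref{lem:wG} and \ref{lempole}) hold more generally for NTA domains and refers the reader to \cite{Jerison-Kenig}. So there is no argument in the paper for you to compare against; what you have written is a sketch of the standard Jerison--Kenig proof (their Lemmas 4.8 and 4.11), and in broad strokes it is correct, including your (correct) observation that the two-sided comparison as written can only hold when $y$ is a corkscrew-type point, not an arbitrary point of $B\cap\Omega$. A couple of points deserve more care. First, Lemma \ref{lem2.1} bounds $\omega^x(B)$ from below only for $x\in\tfrac14 B\cap\Omega$, whereas your $A^*$ is a corkscrew for $2B$ and need not lie in $\tfrac14 B$; the bound $u(A^*)\gtrsim 1$ then requires a Harnack chain from $A^*$ to a corkscrew of $\tfrac14 B$, not just Lemma \ref{lem2.1} directly. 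Second, the dismissal of the annulus $\Omega\cap(\overline{4B}\setminus 2B)$ via ``an interior Harnack argument'' does not cover points $p$ in this region that are close to $\partial\Omega$, where interior Harnack degenerates; the fix is to note that $u$ and $v$ both actually vanish on the larger set $\partial\Omega\setminus\overline{B}$, so the boundary Harnack principle at scale $\sim r(B)$ still applies there. Finally, it is worth mentioning the alternative (and arguably more standard) route that avoids invoking the full two-sided BHP as a black box: the inequality $\omega^p(B)\gtrsim r(B)^{n-1}g_\Omega(p,A_B)$ follows from a direct maximum-principle comparison on $\Omega\setminus B(A_B,cr(B))$, while the reverse inequality follows from the Riesz representation $\int\phi\,d\omega^p=-\int g_\Omega(p,z)\Delta\phi(z)\,dz$ with a bump $\phi$ adapted to $B$, together with the Carleson estimate $g_\Omega(p,z)\lesssim g_\Omega(p,A_B)$ for $z$ near $\partial\Omega\cap 2B$. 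Either route is fine; yours is correct once the two technical gaps above are filled.
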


The following theorem states the so called ``change of pole formula". Again, this holds for NTA domains (see \cite{Jerison-Kenig}) but we only state for Lipschitz domains.

\begin{lemma}[Change of pole formula]\label{lempole}
For $n\geq 1$, let $\Omega\subset\R^{n+1}$ be a Lipschitz domain and let $B$ be a ball centered in $\partial\Omega$.
Let $p_1,p_2\in\Omega$ such that $\dist(p_i,B\cap \partial\Omega)\geq c_1^{-1}\,r(B)$ for $i=1,2$.
Then, for any Borel set $E\subset B\cap\partial\Omega$,
$$\frac{\omega^{p_1}(E)}{\omega^{p_1}(B)}\approx \frac{\omega^{p_2}(E)}{\omega^{p_2}(B)},$$
with the implicit constant depending only on $n$, $c_1$, and the Lipschitz character of $\Omega$. 
\end{lemma}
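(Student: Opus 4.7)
The plan is to follow the classical Jerison--Kenig strategy, where the essential analytic input is the boundary Harnack principle for Lipschitz (or more generally NTA) domains. Set
$$u(x) := \omega_\Omega^x(E), \qquad v(x) := \omega_\Omega^x(B\cap\partial\Omega).$$
Both $u$ and $v$ are non-negative harmonic functions in $\Omega$, and since $E\subset B\cap\partial\Omega$, their boundary data are supported in $B\cap\partial\Omega$; thus both vanish continuously on $\partial\Omega\setminus\overline{B}$. The statement of the lemma is exactly $u(p_1)/v(p_1)\approx u(p_2)/v(p_2)$, so it is a ratio-comparison statement for two non-negative harmonic functions sharing the same ``zero set'' on the boundary.

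The main step is then to invoke the boundary Harnack principle of \cite{Jerison-Kenig} in the region $\Omega\setminus\overline{2B}$: applied to $u,v$, which are positive and harmonic there and both vanish on $\partial\Omega\setminus\overline{B}$, it yields that $u/v$ is essentially constant on sets at definite distance from $B\cap\partial\Omega$. More precisely, fix a reference ``corkscrew'' point $A\in\Omega$ with $A\notin 2B$ and $\dist(A,B\cap\partial\Omega)\approx r(B)$; such a point exists by the Lipschitz character of $\Omega$. Then for every $x\in\Omega\setminus 2B$ with $\dist(x,B\cap\partial\Omega)\geq c_1^{-1}r(B)$ one has $u(x)/v(x)\approx u(A)/v(A)$, with implicit constant depending only on $n$, $c_1$, and the Lipschitz character.

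To finish, note that a pole $p_i$ satisfying the hypotheses may lie inside $2B$. In that case, using the non-tangential accessibility of Lipschitz domains, connect $p_i$ to a point $A_i\in\Omega\setminus 2B$ with $\dist(A_i,B\cap\partial\Omega)\approx r(B)$ via a Harnack chain of bounded length (depending on $c_1$ and the Lipschitz character). Harnack's inequality then gives $u(p_i)\approx u(A_i)$ and $v(p_i)\approx v(A_i)$, hence $u(p_i)/v(p_i)\approx u(A_i)/v(A_i)$. Combined with the preceding paragraph applied to $A_1$ and $A_2$, this yields
$$\frac{\omega_\Omega^{p_1}(E)}{\omega_\Omega^{p_1}(B)} = \frac{u(p_1)}{v(p_1)} \approx \frac{u(A)}{v(A)} \approx \frac{u(p_2)}{v(p_2)} = \frac{\omega_\Omega^{p_2}(E)}{\omega_\Omega^{p_2}(B)},$$
as required. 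The only real obstacle is the boundary Harnack principle itself, which is a well-established classical result in this setting and is simply quoted; no new argument is needed.
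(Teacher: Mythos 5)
The paper gives no proof of this lemma; it is quoted from Jerison--Kenig for NTA domains and simply restated for Lipschitz domains. Your sketch reproduces exactly that boundary-Harnack strategy (set $u=\omega^{\cdot}(E)$, $v=\omega^{\cdot}(B\cap\partial\Omega)$, note both vanish on $\partial\Omega\setminus\overline B$, apply the comparison theorem, transport to the poles), so the overall approach is the right one.

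There is, however, a gap in the reduction of a pole $p_i\in 2B$ to a fixed reference corkscrew point. You propose to connect $p_i$ to a point $A_i\in\Omega\setminus 2B$ with $\dist(A_i,B\cap\partial\Omega)\approx r(B)$ by a Harnack chain of \emph{bounded} length. But the hypothesis $\dist(p_i,B\cap\partial\Omega)\ge c_1^{-1}r(B)$ does not control $\dist(p_i,\partial\Omega)$ from below: the pole may lie at distance $\varepsilon\ll r(B)$ from a boundary point $\xi\in\partial\Omega\setminus\overline B$ just outside $B$, in which case any Harnack chain from $p_i$ to a corkscrew at scale $\approx r(B)$ has length of order $\log(r(B)/\varepsilon)$, and interior Harnack alone does not produce a uniform constant. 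The fix is to invoke the boundary Harnack comparison theorem one more time, now centred at the boundary point $\xi$ nearest to $p_i$, at a scale $\rho\approx\dist(\xi,B\cap\partial\Omega)\gtrsim r(B)$; note that $\dist(\xi,B\cap\partial\Omega)\ge\dist(p_i,B\cap\partial\Omega)-\dist(p_i,\partial\Omega)\gtrsim r(B)$ once $\dist(p_i,\partial\Omega)$ is small. Since $u,v$ both vanish on $B(\xi,2\rho)\cap\partial\Omega\subset\partial\Omega\setminus\overline B$, the comparison theorem gives $u(p_i)/v(p_i)\approx u(A_\rho(\xi))/v(A_\rho(\xi))$ with $A_\rho(\xi)$ a corkscrew at definite distance $\approx r(B)$ from $\partial\Omega$, and only then do Harnack chains of bounded length take over to reach the reference point. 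With this adjustment the argument closes.
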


\vv


\section{Proof of Theorem \ref{teomain0} and beginning of the proof of Theorem \ref{teomain1} in the Wiener regular case
for $\bM^n=\bS^n$}
\label{seccontra}

In this section we assume that we are under the conditions of Theorems \ref{teomain0} and \ref{teomain1} and that $\bM^n=\bS^n$, and 
we fix  $\beta>0$ as in both theorems. In the proof we will allow all the implicit constants in the notation ``$\lesssim$'' to depend on $\beta$. We let $B$ be a ball with the same barycenter as $\Omega$ such that
$\HH^n(B)=\HH^n(\Omega)$.
We may assume that the barycenter of $\Omega$ is well defined, because otherwise, by 
Theorem \ref{teoAKN}, $\lambda_\Omega-\lambda_B\gtrsim \HH^n(\Omega\triangle\bS^n)^2\gtrsim1$ and then Theorems \ref{teomain0} and \ref{teomain1} are trivial.


As in Theorem \ref{teoAKN}, we let  $u_\Omega$ and $u_B$ be the corresponding eigenfunctions normalized so that
they are positive and $\|u_\Omega\|_{L^2(\bS^n)} = \|u_B\|_{L^2(\bS^n)}=1$. Let $\alpha_\Omega,\alpha_B$ be 
the respective characteristic constants of $\Omega$ and $B$, so that $\lambda_\Omega = \alpha_\Omega \,(\alpha_\Omega + n-1)$ and $\lambda_B = \alpha_B\, (\alpha_B + n-1)$. Let $\wt u_\Omega$ and $\wt u_B$ be the $\alpha_B$-homogeneous extensions of $u_\Omega$ and $u_B$, respectively.
That is,
$$\wt u_\Omega(y) = |y|^{\alpha_B}\,u_\Omega(|y|^{-1}y),\qquad \wt u_B(y) = |y|^{\alpha_B}\,u_B(|y|^{-1}y).$$
Also, denote by $\wt \Omega$ and $\wt B$ the following truncated conical domains generated by $\Omega$ and $B$:
$$\wt\Omega = \{y\in A(0,1/4,1):|y|^{-1}y\in \Omega\},\qquad \wt B = \{y\in A(0,1/4,1):|y|^{-1}y\in B\},$$
and the enlarged versions
$$\wt\Omega' = \{y\in A(0,1/8,2):|y|^{-1}y\in \Omega\},\qquad \wt B' = \{y\in A(0,1/8,2):|y|^{-1}y\in B\}.$$
We also set
$$S=\partial A(0,1/4,1).$$
Notice that $S$ contains some part of the boundary of $\wt B$.
To prove Theorem \ref{teomain1}, first we will assume that $\wt\Omega$ and $\wt\Omega'$ are Wiener regular domains in $\R^{n+1}$. Later on we will deduce the general result from the Wiener regular case.

From the identity in spherical coordinates
$$\Delta f = \partial_{rr} f + \frac nr\,\partial_r f + \frac1{r^2}\,\Delta_{\bS^n}f$$
for a homogeneous function $f$ defined in $\R^{n+1}$,
it is immediate that $\Delta \wt u_{B} = 0$ in $\wt B'$, and so in $\wt B$. On the other hand, $\Delta \wt u_{\Omega} \neq 0$ in $\wt\Omega'$ in general, unless
$\alpha_\Omega=\alpha_B$. Instead, we have
\begin{align}\label{eqlaplace3}
\Delta \wt u_\Omega(r,\theta) & = \partial_{rr} \wt u_\Omega(r,\theta) + \frac nr\,\partial_r \wt u_\Omega(r,\theta) + \frac1{r^2}\,\Delta_{\bS^n}  u_\Omega(\theta)\\
& = r^{\alpha_B-2}\big[\big(\alpha_B(\alpha_B-1) + n \alpha_B\big)u_\Omega(\theta) + \Delta_{\bS^n}u(\theta)\notag\\
& = r^{\alpha_B-2}\,(\lambda_B- \lambda_\Omega)\,u_\Omega(\theta) = r^{-2}\,(\lambda_B- \lambda_\Omega)\,u_{\wt \Omega}(r,\theta).\notag
\end{align}

We will need to use the following auxiliary result below. This may be known to experts but we provide a detailed proof for completeness.

\begin{lemma}[Basic estimates] \label{lemubom}
We have
\begin{equation}\label{equb30}
\wt u_B(y) \approx \dist(y,\partial\wt B' \cap A(0,1/8,2))\quad \mbox{ for all $y\in \wt B'$}
\end{equation}
and
\begin{equation}\label{equb31}
\wt u_\Omega(y) \lesssim 1\quad \mbox{ for all $y\in \wt \Omega'$.}
\end{equation}
Further, $\omega_{\wt B'}$ and $\omega_{\wt B}$ are mutually absolutely continuous with $\HH^n|_{\partial \wt B'}$ and $\HH^n|_{\partial \wt B}$ respectively, and for $x\in A(0,1/2,3/4)\cap \wt\Omega$ such that $|x|^{-1}x\in \frac12 B$, we have
\begin{equation}\label{equb32}
\frac{d\omega_{\wt B'}^x}{d\HH^n|_{\partial \wt B'}} \lesssim 1\quad \mbox{ in $\partial \wt B'$}
\end{equation}
and
\begin{equation}\label{equb33}
\frac{d\omega_{\wt B}^x}{d\HH^n|_{\partial \wt B}} \approx 1\quad \mbox{ in $\partial \wt B\cap A(0,1/3,4/5)$.}
\end{equation}
\end{lemma}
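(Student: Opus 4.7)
The plan is to treat the four assertions in turn, exploiting that both $\wt B$ and $\wt B'$ are bounded Lipschitz domains (truncated spherical cones with the apex removed) whose Lipschitz character depends only on $\beta$.

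For \eqref{equb30}, since $B\subset\bS^n$ is a spherical cap with $\HH^n(B)$ bounded away from $0$ and $\HH^n(\bS^n)$ in terms of $\beta$, the first Dirichlet eigenfunction $u_B$ is axially symmetric and, via the one-dimensional (Jacobi-type) ODE it satisfies in the polar angle from the center of $B$, one obtains $u_B(\theta)\approx \dist_{\bS^n}(\theta,\partial B)$ uniformly on $B$ (the simple zero at $\phi=\phi_0$, together with positivity and smoothness on the interior, gives the two-sided comparison). Writing $y=r\theta$ with $r\in(1/8,2)$ and $\theta\in B$, and using that $\alpha_B$ is bounded in terms of $\beta$,
$$\wt u_B(y)=r^{\alpha_B}u_B(\theta)\approx u_B(\theta)\approx \dist_{\bS^n}(\theta,\partial B)\approx \dist(y,\partial\wt B'\cap A(0,1/8,2)),$$
the last $\approx$ using that the lateral part of $\partial\wt B'$ equals $\partial\wt B'\cap A(0,1/8,2)$ together with an elementary geometric computation on the cone. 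For \eqref{equb31}, we may assume $\lambda_\Omega\lesssim 1$ since otherwise Theorems \ref{teomain0} and \ref{teomain1} are trivial (their right-hand sides are universally bounded); then Moser iteration applied to $-\Delta_{\bS^n}u_\Omega=\lambda_\Omega u_\Omega$ together with $\|u_\Omega\|_{L^2(\bS^n)}=1$ yields $\|u_\Omega\|_\infty\lesssim 1$, and $|y|^{\alpha_B}\lesssim 1$ on $\wt\Omega'$ gives $\wt u_\Omega\lesssim 1$.

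The mutual absolute continuity of $\omega_{\wt B}$ and $\omega_{\wt B'}$ with $\HH^n$ on the respective boundaries is immediate from Dahlberg's theorem, since $\wt B,\wt B'$ are Lipschitz. For the upper bound \eqref{equb32}, the hypotheses on $x$ yield $\dist(x,\partial\wt B')\gtrsim 1$, so $x$ is a corkscrew in $\wt B'$. For $z\in\partial\wt B'$ and small $r>0$, combining Lemma \ref{lem:wG} with the classical linear vanishing estimate $g_{\wt B'}(x,y)\lesssim \dist(y,\partial\wt B')$ for Lipschitz domains (via barrier comparison, using $\dist(x,y)\gtrsim 1$) gives $\omega_{\wt B'}^x(B(z,r)\cap\partial\wt B')\lesssim r^{n-1}\cdot r=r^n$, whence \eqref{equb32} by Lebesgue differentiation on the Ahlfors-regular boundary. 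For the two-sided estimate \eqref{equb33}, I would fix a ``central'' pole $p\in\wt B$ with $\dist(p,\partial\wt B)\gtrsim 1$: the region $\partial\wt B\cap A(0,1/3,4/5)$ lies on the smooth lateral face of the cone, strictly away from the corners where cone meets spheres, so a matching lower bound $g_{\wt B}(p,y_{z,r})\gtrsim r$ holds and yields $d\omega_{\wt B}^p/d\HH^n\approx 1$ on this region. The change of pole formula (Lemma \ref{lempole}), applicable with uniform constants since both $x$ and $p$ have controlled non-tangential access to this region, then transfers the two-sided bound to the pole $x$.

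The main technical point is the linear vanishing $u_B(\theta)\approx \dist_{\bS^n}(\theta,\partial B)$ with constants controlled by $\beta$, which requires the explicit ODE analysis of the spherical-cap eigenfunction and careful tracking of the $\beta$-dependence. The remaining assertions are standard consequences of the Lipschitz character of the truncated cones together with the harmonic-measure tools recalled in Section~\ref{secharm}.
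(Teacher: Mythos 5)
Your proof is correct, but it departs from the paper's argument at several points, and it is worth recording the trade-offs.

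For \eqref{equb30} you invoke the axial symmetry of the first eigenfunction of a spherical cap and argue linear vanishing via the one-dimensional Jacobi-type ODE at $\phi=\phi_0$, with uniformity in $\beta$ coming from the ODE having a simple zero at a cap angle bounded away from $0$ and $\pi$. This is valid, but as you yourself flag, it is the one place where the details must be carried out with care. The paper avoids the ODE entirely: it first shows $\|\wt u_B\|_{\infty,\wt B'}\approx 1$ from the $L^2$ normalization, the subharmonicity of the $0$-extension of $\wt u_B$, and Cauchy--Schwarz, and then gets the two-sided linear estimate by comparing $\wt u_B$ with the Green function of an exterior (resp.\ interior) tangent ball, using the maximum principle and Hopf-type smoothness of the barrier Green function at the tangency point, together with a Harnack chain to normalize the interior value. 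That barrier argument is arguably more robust (it never needs the explicit form of the eigenfunction) and is exactly the same machinery then reused for \eqref{equb32}--\eqref{equb33}, so the paper obtains all four estimates from one toolkit.

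For \eqref{equb31} you reduce to $\lambda_\Omega\lesssim 1$ (observing that otherwise the theorems are trivial) and then apply Moser iteration. The reduction is in fact necessary --- the pointwise bound is false without it, since $\|u_\Omega\|_\infty$ grows with $\lambda_\Omega$ for fixed $L^2$ normalization --- and it is a point the paper leaves implicit (it says to repeat the subharmonicity argument using the $\alpha_\Omega$-homogeneous extension, which again only gives a bound depending on $\alpha_\Omega$). So making the reduction explicit is a genuine improvement in rigor, though Moser iteration is heavier than the paper's one-line mean-value/Cauchy--Schwarz argument, which works equally well once $\alpha_\Omega\lesssim 1$.

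For \eqref{equb32} you combine Lemma \ref{lem:wG} with the linear decay of $g_{\wt B'}(x,\cdot)$ (obtained by barrier) to get $\omega_{\wt B'}^x(B(z,r))\lesssim r^n$ and then differentiate; the paper instead compares normal derivatives of Green functions directly at a.e.\ smooth boundary point. These are two ways of packaging the same exterior-tangent-ball barrier, and both are fine; just note that ``linear vanishing of the Green function in a Lipschitz domain'' is not true in general --- it holds here because $\wt B'$ admits uniform exterior tangent balls at $\HH^n$-a.e.\ boundary point, which is exactly what your parenthetical barrier comparison is supplying. For \eqref{equb33} your ``central pole plus change of pole'' route is essentially identical to the paper's interior-tangent-ball plus Harnack-chain argument.

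In short: your proof is correct, reaches all four conclusions, and is slightly more explicit than the paper about the needed reduction $\lambda_\Omega\lesssim 1$; the main stylistic difference is that the paper proves the crucial linear vanishing \eqref{equb30} by a barrier argument using the same Green-function comparisons it needs anyway for \eqref{equb32}--\eqref{equb33}, rather than by analyzing the eigenvalue ODE for the spherical cap.
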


Remark that the implicit constants involved in the estimates of this lemma depend on the constant $\beta$.

\begin{proof}
First we will show that 
\begin{equation}\label{equb34}
\|\wt u_B\|_{\infty,\wt B'}\approx 1.
\end{equation} 
By homogeneity and Cauchy-Schwarz, we have
$$\|\wt u_B\|_{\infty,\wt B'}\approx \|u_B\|_{\infty,B}
\gtrsim  \|u_B\|_{L^2(B)} =1.$$
To prove $\|u_B\|_{\infty, B}\lesssim 1$, let $y\in B$, consider the (Euclidean) ball $B_y=B(y,1/2)$ and
extend $\wt u_B$ by $0$ in $\R^{n+1}\setminus \wt B'$. Notice that $B_y$ may intersect $\partial\wt B'\cap A(0,1/8,2)$. It is immediate to check that $\wt u_B$
is subharmonic in $B_y$. Therefore, using also the  homogeneity of $\wt u_B$, we get
$$u_B(y) = \wt u_B(y) \leq \avint_{B_y} \wt u_B\,dx \leq \left(\avint_{B_y} |\wt u_B|^2\,dx\right)^{1/2}
\lesssim  \|u_B\|_{L^2(B)}\lesssim 1,$$
which completes the proof of \rf{equb34}.

Remark that essentially the same argument used to prove that $\|u_B\|_{\infty, B}\lesssim 1$ shows that
$$\|\wt u_\Omega\|_{\infty, \wt \Omega'}\lesssim 1,$$
proving \rf{equb31} (in this case we have to argue with the $\alpha_\Omega$-homogeneous extension of $u_\Omega$ in place of $\wt u_\Omega$).

Next we deal with \rf{equb30}. Again by homogeneity it suffices to show that
\begin{equation}\label{equb35}
u_B(y) \approx \dist(y,\partial_{\bS^n} B)\quad \mbox{ for all $y\in B$}.
\end{equation}
First we will prove that $u_B(y) \lesssim \dist(y,\partial_{\bS^n} B)$. Since $\| u_B\|_{\infty, B}\lesssim 1,$ we can assume that $\dist(y,\partial_{\bS^n} B)\leq \frac1{10}\,\rad_{\bS^n}(B)$.
For a fixed $y_0\in B$ satisfying this condition, let $\xi\in\partial_{\bS^n} B$ be such that $\dist(y_0,\partial_{\bS^n} B) = \dist(y_0,\xi)$ and consider
a ball $V_\xi\subset \R^{n+1}\setminus \wt B'$ such that $\xi\in\partial V_\xi$ with $c\leq \rad(V_\xi)\leq 1/10$, so that $V_\xi$ is outer tangent to $\wt B'$ in $\xi$. Let $g_V$ be  the
Green function of the open set $\R^{n+1}\setminus \overline{V_\xi}$. Notice that the functions
$\wt u_B$ and $g_V(x_B,\cdot)$ are both harmonic in the domain $\wt B_0=\wt B'\setminus B(x_B,\frac1{10}\rad_{\bS^n}(B))$. It is easy to check that
$g_V(x_B,y)\approx 1$ for $y\in\partial \wt B_0 \cap \partial A(0,1/8,2)$ and $y\in\partial B(x_B,\frac1{10}\rad_{\bS^n}(B))$. So using also \rf{equb34} we deduce that 
$$\wt u_B(y) \lesssim g_V(x_B,y) \,\|\wt u_B\|_{\infty,\wt B'}  \lesssim g_V(x_B,y) 
\quad\mbox{ for all $y\in \partial \wt B_0$.}
$$
By the maximum principle, the same estimate holds for all $y\in \wt B_0$, and so
$$\wt u_B(y) \lesssim g_V(x_B,y) \approx |y-\xi| 
\quad\mbox{ for all $y\in \wt B_0$,}
$$
by the smoothness of $g_V$ in $\xi$. In particular, 
\begin{equation}\label{eq381}
\wt u_B(y_0)  \lesssim |y_0-\xi| = \dist(y_0,\partial_{\bS^n} B).
\end{equation}

Now we will show that
\begin{equation}\label{equb36}
	u_B(y) \gtrsim \dist(y,\partial_{\bS^n} B)\quad \mbox{ for all $y\in B$}.
\end{equation}
To this end, observe first that the condition \rf{eq381} and the fact that $\|u_B\|_{\infty,B}\approx1$ imply that the $\max_{B} u_B$ is attained at a point $x_0\in B$ such that $\dist(x_0,\partial_{\bS^n}B)\approx1$. By a Harnack chain argument, this implies that
\begin{equation}\label{equb37}
\wt u_B(y)\approx 1\quad \mbox{ for all $y\in \wt B'$ such that $\dist(y,\partial \wt B')\approx1$.}
\end{equation}
To prove \rf{equb36}, let $y_0$ and $\xi$ be as above, and consider 
a ball $U_\xi\subset \wt B'$ such that $\xi\in\partial U_\xi$ with $c\leq \rad(U_\xi)\leq 1/10$, so that $U_\xi$ is inner tangent to $\wt B'$ in $\xi$. 
Let $g_U$ be  the
Green function of the ball $U_\xi$ and let $x_U$ be the center of $U_\xi$. Notice that the functions
$\wt u_B$ and $g_U(x_U,\cdot)$ are both harmonic in $\wt B_1= U_\xi \setminus \bar B(x_U,\frac1{10}\rad(U_\xi))$. It is easy to check that, for $y\in\partial B(x_U,\frac1{10}\rad(U_\xi))$,
$g_U(x_U,y)\approx 1$   while $\wt u_B(y)\approx 1$, by
\rf{equb37}. So we deduce that 
$$\wt u_B(y) \gtrsim g_U(x_U,y)  
\quad\mbox{ for all $y\in \partial \wt B_1$.}
$$
By the maximum principle, the same estimate holds for all $y\in \wt B_1$, and so
$$\wt u_B(y) \gtrsim g_U(x_U,y) \approx |y-\xi| 
\quad\mbox{ for all $y\in \wt B_1$,}
$$
by the smoothness of $g_U$ in $\xi$.  In particular, $\wt u_B(y_0)  \gtrsim |y_0-\xi| = \dist(y_0,\partial_{\bS^n} B)$, which concludes the proof of \rf{equb36} and \rf{equb35}.

Let us turn our attention to the behavior of the harmonic measures
$\omega_{\wt B'}$ and $\omega_{\wt B}$. The fact that they are mutually absolutely continuous with $\HH^n|_{\partial \wt B'}$ and $\HH^n|_{\partial \wt B}$, respectively, is due to the fact that
 both $\wt B'$ and $\wt B$ are Lipschitz domains, and in fact, piecewise $C^\infty$ domains.
To prove \rf{equb32} for $x$ as in the lemma (i.e.,  $x\in A(0,1/2,3/4)\cap \wt\Omega$ such that $|x|^{-1}x\in \frac12 B$), we consider an arbitrary point $\xi\in \partial \wt B'$ and we assume that
$\partial \wt B'$ is smooth in a neighborhood of $\xi$ (this happens for $\HH^n$-a.e.\ $\xi\in\partial \wt B'$).  As above, let $V_\xi\subset \R^{n+1}\setminus \wt B'$ be a ball such that $\xi\in\partial V_\xi$ with $c\leq \rad(V_\xi)\leq 1/10$, so that $V_\xi$ is (outer) tangent to $\wt B'$ in $\xi$,
and  let $g_V$ be  the
Green function of $\R^{n+1}\setminus \overline{V_\xi}$. Also, let $g_{\wt B'}$ be the Green function
of $\wt B'$. Then the function $g_V(x,\cdot) - g_{\wt B'}(x,\cdot)$ is harmonic in $\wt B'$ and
it is non-negative in $\partial\wt B'$. So 
$$g_{\wt B'}(x,y)\le g_V(x,y)\quad\mbox{ for all $y\in \wt B'$,}$$
by the maximum principle. Consequently,
$$\frac{d\omega_{\wt B'}^x}{d\HH^n|_{\partial \wt B'}}(\xi) = \partial_\nu  g_{\wt B'}(x,\xi)
\leq \partial_\nu  g_{V}(x,\xi) = \frac{d\omega_{\R^{n+1}\setminus \overline{V_\xi}}^x}{d\HH^n|_{\partial V_\xi}}(\xi)\approx 1,$$
where $\partial_\nu$ stands for the normal derivative in the inner direction. 

The same argument as above shows that 
$$\frac{d\omega_{\wt B}^x}{d\HH^n|_{\partial \wt B}} \lesssim 1\quad
 \mbox{ in $\partial \wt B$.}$$
To prove the converse estimate for $\xi\in\partial \wt B\cap A(0,1/3,4/5)$, consider again
a ball $U_\xi\subset \wt B'$ such that $\xi\in\partial U_\xi$ with $c\leq \rad(U_\xi)\leq 1/10$, so that $U_\xi$ is (inner) tangent to $\wt B$ in $\xi$. 
Let $g_{\wt B}$ be the Green function of $\wt B$ and  $g_U$ be  the
Green function of $U_\xi$, and $x_U$ the center of $U_\xi$.
Then the function $g_{\wt B}(x_U,\cdot) - g_U(x_U,\cdot)$ is harmonic in $U_\xi$ and
it is non-negative in $\partial U_\xi$. So 
$$g_{\wt B}(x_U,y) \geq g_U(x_U,y)  \quad\mbox{ for all $y\in U_\xi$,}$$
by the maximum principle. Consequently,
$$\frac{d\omega_{\wt B}^{x_U}}{d\HH^n|_{\partial \wt B}}(\xi) = \partial_\nu  g_{\wt B}(x_U,\xi)
\geq \partial_\nu  g_{U}(x_U,\xi) = \frac{d\omega_{U_\xi}^{x_U}}{d\HH^n|_{\partial U_\xi}}(\xi)\approx 1.$$
By a Harnack chain argument, it follows that
$$\frac{d\omega_{\wt B}^{x_U}}{d\HH^n|_{\partial \wt B}}(\xi) \approx 
\frac{d\omega_{\wt B}^{x}}{d\HH^n|_{\partial \wt B}}(\xi),$$
and so the proof of \rf{equb33} is concluded.	
\end{proof}

\vv



\begin{claim}[Extension of $u_B$ and $\wt u_B$]\label{claim2}
There exists an extension $u_{B}^{ex}$ of $u_B|_B$ to $\bS^n$ which belongs to $C^2(\bS^n)$ and such that $\|\nabla^j u_B^{ex}\|_{\infty,\bS^n}\lesssim 1$ for $j=0,1,2$ and $\|\Delta \wt u_B^{ex}\|_{\infty,A(0,1/4,2)}\lesssim 1$, where $\wt u_B^{ex}$ is the 
$\alpha_B$-homogeneous extension of $u_B^{ex}$ to $\R^{n+1}$. Further, we may construct $u_B^{ex}$ so that it is supported in an $\bS^n$-ball $B_0$ concentric with $B$ such that $\HH^n(\bS^n\setminus B_0)\geq \beta/2$.
\end{claim}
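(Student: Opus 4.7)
The plan is to exploit the fact that $u_B$ is radially symmetric about the center $x_B$ of $B$: writing $d=\dist_{\bS^n}$, one has $u_B(\theta)=\phi(d(\theta,x_B))$ for some function $\phi:[0,r_B]\to[0,\infty)$. Since $B$ has real-analytic boundary and $-\Delta_{\bS^n}u_B=\lambda_B u_B$ with $u_B|_{\partial B}=0$, elliptic regularity up to the boundary gives $u_B\in C^\infty(\overline{B})$, so $\phi\in C^\infty([0,r_B])$. The uniform bound $\|u_B\|_{\infty,B}\lesssim 1$ from Lemma~\ref{lemubom}, together with the a priori control $\lambda_B\lesssim 1$ (from $\HH^n(B)\geq\beta$), yields via standard elliptic estimates that $\|u_B\|_{C^2(\overline{B})}\lesssim 1$, and hence $\|\phi\|_{C^2([0,r_B])}\lesssim 1$. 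In particular $\phi^{(j)}(r_B)$ exists and is bounded for $j=0,1,2$, with $\phi(r_B)=0$ and $|\phi'(r_B)|\approx 1$, the latter by the two-sided estimate \rf{equb35}.

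Next, choose $r_0\in(r_B,\pi)$ so that $B_0:=B_{\bS^n}(x_B,r_0)$ satisfies $\HH^n(\bS^n\setminus B_0)\geq\beta/2$; this is possible because $\HH^n(B)\leq\HH^n(\bS^n)-\beta$, which forces $r_B$ to be bounded away from $\pi$ by a constant depending on $\beta$. Let
\begin{equation*}
T(r):=\phi'(r_B)(r-r_B)+\tfrac12\phi''(r_B)(r-r_B)^2
\end{equation*}
be the second-order Taylor polynomial of $\phi$ at $r_B$ (using $\phi(r_B)=0$), and let $\chi\in C^\infty_c(\R)$ satisfy $\chi\equiv 1$ on $[0,(r_B+r_0)/2]$ and $\supp\chi\subset[0,r_0]$. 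Define
\begin{equation*}
\phi^{ex}(r):=\begin{cases}\phi(r)&r\in[0,r_B],\\ T(r)\,\chi(r)&r\in(r_B,\pi],\end{cases}
\qquad u_B^{ex}(\theta):=\phi^{ex}(d(\theta,x_B)).
\end{equation*}
Because $\chi\equiv 1$ in a neighborhood of $r_B$ and $T$ matches $\phi$ up to second order at $r_B$, the function $\phi^{ex}$ is $C^2$ across $r=r_B$. The evenness of $\phi$ at $r=0$ (which follows from smoothness of $u_B$ at the center of $B$, forcing $\phi'(0)=0$) then promotes this to $u_B^{ex}\in C^2(\bS^n)$, and the bounds $\|\phi\|_{C^2([0,r_B])}\lesssim 1$, $\|T\|_{C^2}\lesssim 1$, $\|\chi\|_{C^2}\lesssim 1$ give $\|\nabla^j u_B^{ex}\|_{\infty,\bS^n}\lesssim 1$ for $j=0,1,2$.

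Finally, for the $\alpha_B$-homogeneous extension $\wt u_B^{ex}$, the same computation as in \rf{eqlaplace3} yields
\begin{equation*}
\Delta\wt u_B^{ex}(r,\theta)=r^{\alpha_B-2}\bigl[\lambda_B u_B^{ex}(\theta)+\Delta_{\bS^n}u_B^{ex}(\theta)\bigr].
\end{equation*}
Inside $B$ this vanishes (since $u_B^{ex}=u_B$ satisfies $\Delta_{\bS^n}u_B=-\lambda_B u_B$), while outside $B$ the bracket is controlled by the $C^2$ bound just established. Since $\beta\leq\HH^n(B)\leq\HH^n(\bS^n)-\beta$ restricts $\alpha_B$ to a compact subinterval of $(0,\infty)$ depending only on $n$ and $\beta$, the factor $r^{\alpha_B-2}$ is uniformly bounded on $[1/4,2]$, giving $\|\Delta\wt u_B^{ex}\|_{\infty,A(0,1/4,2)}\lesssim 1$. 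The main subtlety here is the $C^2$ matching across $r=r_B$, which is precisely why we take $T$ to be the second-order Taylor polynomial of $\phi$ at $r_B$ rather than a lower-order ansatz; the quadratic term $\tfrac12\phi''(r_B)(r-r_B)^2$ is unavoidable even though $\phi(r_B)=0$.
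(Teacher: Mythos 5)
Your proof is correct and takes a genuinely different route from the paper's. The paper stays in the Euclidean picture: it uses that the $\alpha_B$-homogeneous extension $\wt u_B$ is harmonic in the truncated cone $\wt B'$ and vanishes identically on the $C^\infty$ lateral boundary $\partial\wt B'\cap A(0,1/8,2)$, invokes boundary Schauder estimates (Gilbarg--Trudinger, Cor.\ 6.7) to obtain a uniform $C^2$ bound on $\wt u_B$ up to that lateral boundary, extends $\wt u_B$ across it by a standard reflection to a $C^2$ function on a full annulus, multiplies by a bump function to enforce the support constraint, and finally restricts back to $\bS^n$. You instead exploit the radial symmetry of $u_B$ about the center of the geodesic ball---a structural feature the paper does not use---which collapses the extension problem to a one-dimensional one on $[0,\pi]$: extend the radial profile $\phi$ past $r_B$ by its second-order Taylor polynomial times a cutoff, and the $C^2$ matching at $r=r_B$ is then explicit and elementary. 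Both approaches give the claim with constants depending only on $n$ and $\beta$; yours has the virtue of being fully explicit and not relying on a reflection lemma, while the paper's is agnostic to the radial structure and lives directly in the $\R^{n+1}$ framework used throughout Section 3. The one step in your argument that merits a word of caution is the assertion $\|u_B\|_{C^2(\overline B)}\lesssim 1$ from $\|u_B\|_{\infty}\lesssim 1$: since the Schauder estimate requires the right-hand side $\lambda_B u_B$ in $C^{0,\alpha}$, one needs a short bootstrap (via $W^{2,p}$ or interior/boundary $C^{1,\alpha}$ estimates) before closing the $C^{2,\alpha}$ bound; this is standard and the constants are uniform because $\beta\leq\HH^n(B)\leq\HH^n(\bS^n)-\beta$ keeps both $r_B$ and $\lambda_B$ in a compact range, but it is worth saying. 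The paper's version of this step (bounding $\|\wt u_B\|_{C^2(\overline V)}$ by $\|\wt u_B\|_{C(\overline{\wt B'})}$) relies on the same mechanism.
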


\begin{proof}
The arguments are quite standard.
Since the function $\wt u_B$ is harmonic in $\wt B'$  and it vanishes identically in $\partial\wt B'\setminus \partial A(0,1/8,2)$, which is 
a $C^\infty$ portion of the boundary, it follows that 
$\wt u_B\in C^2(\overline{V})$, for $V=\wt B'\cap A(0,1/4,3/2)$  and that
$\|\wt u_B\|_{C^2(\overline{V})}\lesssim \|\wt u_B\|_{C(\overline{\wt B'})}$. See for example Corollary 6.7 from \cite{Gilbarg-Trudinger}. 
By Lemma \ref{lemubom}, $\|\wt u_B\|_{C(\overline{\wt B'})}\lesssim1$. 
By a suitable reflection, one can construct a function $f\in C^2(\bar A(0,1/2,5/4))$ which coincides with $\wt u_B$ in $V$
and satisfies $\|f\|_{C^2(\bar A(0,1/2,5/4))}\lesssim\|\wt u_B\|_{C^2(\overline{V})}$.
Multiplying $f$ by a suitable bump $C^\infty$ function, we can assume that $f|_{\bS^n}$ is supported in an $\bS^n$-ball $B_0$ concentric with $B$ such that $\HH^n(\bS^n\setminus B_0)\geq \beta/2$. Then we set $u_B^{ex}:= f|_{\bS^n}$ and we let $\wt u_B^{ex}$ be its
$\alpha_B$-homogeneous extension.
\end{proof}
\vv

Recall that we assume that $u_B$ and $u_\Omega$ vanish respectively in $\bS^n\setminus B$ and $\bS^n\setminus \Omega$.
Notice that the estimate \rf{eqAKN} from Theorem \ref{teoAKN}
may not hold for the extension $u_B^{ex}$. Instead, we have
\begin{equation}\label{eqAKN2}
\lambda_\Omega - \lambda_B \geq C(\beta)\,\Big(\HH^n(\Omega \triangle B)^2 + \int_{B} |u_\Omega - u_B^{ex}|^2\,d\HH^n\Big).
\end{equation}

Consider the function $v:\bS^n\to\R$ defined by $v= u_B^{ex} - u_\Omega$ and let $\wt v$ be its $\alpha_B$-homogeneous extension, so that
$\wt v= \wt u_B^{ex} - \wt u_\Omega$. Notice that in $\wt \Omega'$, by \rf{eqlaplace3} we have
\begin{equation}\label{eqlapla}
\Delta \wt v = \Delta \wt u_B^{ex} - \Delta \wt u_\Omega = \Delta \wt u_B^{ex} - (\lambda_B-\lambda_\Omega)\,|y|^{-2}\,\wt u_\Omega.
\end{equation}
Notice also that $\supp \Delta\wt u_B^{ex}\subset  (\wt B')^c$. 

Recall that $S=\partial A(0,1/4,1)$. We can write, for $x\in\wt \Omega$,
\begin{align}\label{eqv1234}
\wt v(x) & = \int_{\partial\wt \Omega} \wt v\,d\omega_{\wt\Omega}^x - \int_{\wt\Omega} \Delta\wt v(y)\,g_{\wt\Omega}(x,y)\,dy\\
& = \int_{\partial\wt \Omega \cap \wt B\setminus S} \wt v\,d\omega_{\wt\Omega}^x 
+ \int_{\partial\wt \Omega \setminus  (\wt B\cup S)} \wt v\,d\omega_{\wt\Omega}^x 
+ \int_{\partial\wt \Omega\cap S} \wt v\,d\omega_{\wt\Omega}^x
- \int_{\wt\Omega} \Delta\wt v(y)\,g_{\wt\Omega}(x,y)\,dy\nonumber\\
& =: \wt v_1(x) + \wt v_2(x) + \wt v_3(x) + \wt v_4(x),\nonumber
\end{align}
where $g_{\wt\Omega}(x,y)$ is the Green function of $\wt\Omega$.
For the points $x\in A(0,1/2,3/4)\cap \wt\Omega$ such that $|x|^{-1}x\in \frac9{10} B$, we intend to estimate
$\wt v(x)$ from below and to relate this to the terms appearing on the right hand side of the inequalities
\rf{eqmain00}, \rf{eqmain01}, and \rf{eqmain1}, so that then we can apply the inequality \rf{eqAKN2} to prove Theorem \ref{teomain0} and Theorem~\ref{teomain1}.
To this end, we will estimate $|\wt v_2(x) +\wt v_3(x) +\wt v_4(x)|$ from above and 
$\wt v_1(x)$ from below.
\vv

\begin{lemma}\label{lemv1234}
Let $x\in A(0,1/2,3/4)\cap \wt\Omega$ and also that $|x|^{-1}x\in \frac9{10} B$, and let $\wt v_i$, for $1\leq i \leq 4$, be the functions in \rf{eqv1234}. Suppose that $|\lambda_B-\lambda_\Omega|\leq 1$.
Then we have
$$\big|\wt v_2(x) + \wt v_3(x) + \wt v_4(x)\big| \lesssim (\lambda_\Omega - \lambda_B)^{1/2}.$$
\end{lemma}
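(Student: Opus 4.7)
The plan is to bound each of $\wt v_2(x), \wt v_3(x), \wt v_4(x)$ separately by a multiple of $\ve:=(\lambda_\Omega-\lambda_B)^{1/2}$. A recurring ingredient is the consequence $\HH^n(\Omega\triangle B)\lesssim\ve$ of \eqref{eqAKN2}.

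First I would treat $\wt v_4$. Using \eqref{eqlapla} to split
\[
\wt v_4(x)=-\int_{\wt\Omega}\Delta\wt u_B^{ex}(y)\,g_{\wt\Omega}(x,y)\,dy+(\lambda_\Omega-\lambda_B)\int_{\wt\Omega}|y|^{-2}\wt u_\Omega(y)\,g_{\wt\Omega}(x,y)\,dy,
\]
the second summand is $\lesssim\lambda_\Omega-\lambda_B\leq\ve$ since $|y|^{-2}$ and $\wt u_\Omega$ are bounded on the annulus and $\int_{\wt\Omega}g_{\wt\Omega}(x,\cdot)<\infty$, using the hypothesis $|\lambda_\Omega-\lambda_B|\leq 1$. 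In the first summand, $\Delta\wt u_B^{ex}$ vanishes on $\wt B'$ (since $\wt u_B^{ex}=\wt u_B$ is harmonic there) and is uniformly bounded by Claim \ref{claim2}, so its support in $\wt\Omega$ projects to a subset of $\Omega\setminus B$ in $\bS^n$, of volume $\lesssim\HH^n(\Omega\triangle B)\lesssim\ve$. On this support, the condition $|x|^{-1}x\in\frac{9}{10}B$ together with $|y|^{-1}y\in\bS^n\setminus B$ forces $|x-y|\gtrsim 1$, so $g_{\wt\Omega}(x,y)\lesssim 1$ and the first summand is also $\lesssim\ve$.

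Next, for $\wt v_3$ I would compare harmonic measures with the smooth reference domain $D:=A(0,1/4,1)$. Since $\wt\Omega\subset D$, for any Borel $E\subset S=\partial D$ the function $y\mapsto\omega_D^y(E)$ is harmonic on $\wt\Omega$ and dominates $\chi_E$ on $\partial\wt\Omega$, so by the maximum principle $\omega_{\wt\Omega}^x|_{\partial\wt\Omega\cap S}\leq\omega_D^x|_S$. As $\dist(x,S)\gtrsim 1$ and $D$ is smooth, $d\omega_D^x/d\HH^n|_S\lesssim 1$, and Cauchy-Schwarz together with the $\alpha_B$-homogeneity of $\wt v$ gives
\[
|\wt v_3(x)|\leq\int_S|\wt v|\,d\omega_D^x\lesssim\|\wt v\|_{L^2(S)}\approx\|v\|_{L^2(\bS^n)}\lesssim\ve
\]
via \eqref{eqAKN2}.

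The main obstacle will be $\wt v_2$. On $A_2:=\partial\wt\Omega\setminus(\wt B\cup S)$ one has $\wt u_\Omega=0$, hence $\wt v=\wt u_B^{ex}$, and the integrand is supported where $|y|^{-1}y\in B_0$. Applying Green's representation to $\wt u_B^{ex}$ in $\wt\Omega$ and splitting $\partial\wt\Omega=A_1\cup A_2\cup A_3$ with $A_1=\partial\wt\Omega\cap\wt B\setminus S$ and $A_3=\partial\wt\Omega\cap S$, one arrives at
\[
\wt v_2(x)=\wt u_B(x)-\int_{A_1}\wt u_B\,d\omega_{\wt\Omega}^x-\int_{A_3}\wt u_B^{ex}\,d\omega_{\wt\Omega}^x+\int_{\wt\Omega}\Delta\wt u_B^{ex}\,g_{\wt\Omega}(x,y)\,dy.
\]
The integrals over $A_3$ and over $\wt\Omega$ are $\lesssim\ve$ by the arguments already used for $\wt v_3$ and $\wt v_4$. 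The delicate point is then to show $|\wt u_B(x)-\int_{A_1}\wt u_B\,d\omega_{\wt\Omega}^x|\lesssim\ve$. My plan is to handle this via a maximum-principle comparison between $\omega_{\wt\Omega}^x$ on $A_1$ and $\omega_{\wt B'}^x$ on the matching part of $\partial\wt B'$, exploiting that $\wt u_B$ is harmonic on $\wt B'$ and vanishes on its lateral boundary (Lemma \ref{lemubom}), with the resulting defect absorbed by $\HH^n(\Omega\triangle B)\lesssim\ve$ and the boundedness of $\wt u_B$.
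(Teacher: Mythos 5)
Your treatments of $\wt v_3$ and $\wt v_4$ are correct and are essentially the same as in the paper: for $\wt v_3$ compare $\omega_{\wt\Omega}^x|_S$ with the smooth reference measure and invoke Cauchy--Schwarz together with \eqref{eqAKN2}; for $\wt v_4$ split via \eqref{eqlapla}, absorb the $(\lambda_B-\lambda_\Omega)\,\wt u_\Omega$ term directly, and use $\supp\Delta\wt u_B^{ex}\subset(\wt B)^c$ plus $\HH^{n+1}(\wt\Omega\setminus\wt B)\lesssim\HH^n(\Omega\setminus B)\lesssim\ve$.

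The argument for $\wt v_2$, however, has a genuine gap. Your identity obtained from Green's representation of $\wt u_B^{ex}$ in $\wt\Omega$,
\[
\wt v_2(x)=\wt u_B(x)-\int_{A_1}\wt u_B\,d\omega_{\wt\Omega}^x-\int_{A_3}\wt u_B^{ex}\,d\omega_{\wt\Omega}^x+\int_{\wt\Omega}\Delta\wt u_B^{ex}\,g_{\wt\Omega}(x,\cdot),
\]
is correct, but the individual summands are \emph{not} each $\lesssim\ve$. The term $\int_{A_3}\wt u_B^{ex}\,d\omega_{\wt\Omega}^x$ is of size $O(1)$: the $\wt v_3$ argument relies on $\|\wt v\|_{L^2(\bS^n)}\lesssim\ve$, whereas $\|\wt u_B^{ex}\|_{L^2}\approx 1$, so the comparison with $\omega_D^x$ only yields a uniform bound, not one that vanishes with $\ve$. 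Likewise $\wt u_B(x)-\int_{A_1}\wt u_B\,d\omega_{\wt\Omega}^x$ is $O(1)$: already in the extreme case $\Omega=B$ one has $A_1=A_2=\varnothing$, $\wt v_2=0$, and the identity reduces to $\wt u_B(x)=\int_{A_3}\wt u_B\,d\omega_{\wt B}^x$, where both sides are $\approx\dist(x,\partial\wt B)\approx 1$. The smallness of $\wt v_2$ thus comes from a cancellation between the $\wt u_B(x)$, $A_1$, and $A_3$ pieces, which your decomposition destroys; no maximum-principle comparison with $\omega_{\wt B'}^x$ will make the individual pieces small. The paper instead estimates $\wt v_2$ directly via the layer-cake formula
\[
|\wt v_2(x)|\leq\int_0^\infty\omega_{\wt\Omega}^x\bigl(\{y\in\partial\wt\Omega\setminus(\wt B\cup S):|\wt u_B^{ex}(y)|>t\}\bigr)\,dt,
\]
using $|\wt u_B^{ex}(y)|\lesssim\delta_B(|y|^{-1}y)$ so that the level-$t$ superlevel set sits outside the dilated cone $\wt B_t$, then a nested-domain maximum-principle comparison $\omega_{\wt\Omega}^x(\partial\wt\Omega\setminus(\wt B_t\cup S))\leq\omega^x_{\wt\Omega\cap\wt B_t}(\partial\wt B_t\cap\wt\Omega)\lesssim\HH^n(\partial\wt B_t\cap\wt\Omega)$, and finally Fubini to turn the $t$-integral into a volume $\approx\HH^{n+1}(\wt\Omega\cap\wt B_0\setminus\wt B)\lesssim\HH^n(\Omega\setminus B)\lesssim\ve$. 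You would need to replace your Green's-representation splitting for $\wt v_2$ by something of this type.
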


\begin{proof}
{\bf Estimate of $|\wt v_3(x)|$}. 
 By the maximum principle and taking into account that $\frac{d\omega_{A(0,1/4,1)}^x}{d\HH^n}\approx 1$ on $S$,  for any set 
$F\subset \partial\wt \Omega\cap S$, we have
$$\omega_{\wt \Omega}^x (F) \leq \omega_{A(0,1/4,1)}^x (F) \approx \HH^n(F).$$
Therefore, $\omega_{\wt \Omega}^x|_S$ is absolutely continuous with respect to surface measure on $S$, and
$$\frac{d\omega_{\wt \Omega}^x}{d\HH^n}(y)\lesssim 1\quad \mbox{ for $\HH^n$-a.e.\ $y\in \partial\wt \Omega\cap S$.}$$
Hence, using also the $\alpha_B$-homogeneity of $\wt v$,
$$|\wt v_3(x)| \lesssim \int_{\partial\wt \Omega\cap S} |\wt v|\,d\HH^n \lesssim \int_{\Omega} |u_B^{ex}-u_\Omega|\,d\HH^n.$$
Then, by Cauchy-Schwarz and \rf{eqAKN2}, we deduce
$$
\int_{\Omega\cap B} |u_B^{ex}-u_\Omega|\,d\HH^n\leq \bigg(\int_{\Omega\cap B}  |u_B-u_\Omega|^2\,d\HH^n\bigg)^{1/2} \lesssim (\lambda_\Omega - \lambda_B)^{1/2}.$$
On the other hand, from \rf{equb30}, \rf{equb31}, and again \rf{eqAKN2}, we have
$$\int_{\Omega\setminus B} |u_B^{ex}-u_\Omega|\,d\HH^n\leq \big(\|u_B^{ex}\|_\infty + \|u_\Omega\|_\infty\big)\,\HH^n(\Omega\setminus B)\lesssim
\HH^n(\Omega\setminus B)\lesssim (\lambda_\Omega - \lambda_B)^{1/2}.$$
Adding the two preceding estimates we derive
$$|\wt v_3(x)| \lesssim (\lambda_\Omega - \lambda_B)^{1/2}.$$

{\bf Estimate of $|\wt v_4(x)|$}. To this end, we use the trivial estimate 
$g_{\wt\Omega}(x,y)\leq |x-y|^{1-n}$  and the fact that by 
\rf{eqlaplace3}
and Claim \ref{claim2},
$$\|\Delta \wt u_{\Omega}\|_{\infty,\wt \Omega}\lesssim |\lambda_\Omega-\lambda_B|\leq 1 \quad \text{ and }\quad
\|\Delta \wt u_B^{ex}\|_{\infty,\wt\Omega\setminus B}\lesssim 1.$$ 
Taking also into account that $\supp\Delta \wt u_B^{ex}\subset (\wt B)^c$, we get
\begin{align*}
|\wt v_4(x)| &\leq 
\int_{\wt\Omega \cap\wt B} |\lambda_B-\lambda_\Omega|\,|\wt u_\Omega(y)|\,\frac1{|y|^2\,|x-y|^{n-1}}
\,dy + 
\int_{\wt\Omega \setminus \wt B} \frac{|\Delta \wt u_B^{ex}(y)| + |\Delta \wt u_{\wt\Omega}(y)|}{|x-y|^{n-1}}
\,dy \\
& \lesssim |\lambda_B-\lambda_\Omega|\,\|\wt u_\Omega\|_\infty
\int_{\wt\Omega \cap\wt B} \frac1{|x-y|^{n-1}}\,dy + 
\int_{\wt\Omega \setminus \wt B} \,\frac1{|x-y|^{n-1}}
\,dy.
\end{align*}
To estimate the first summand we use the fact that $\|\wt u_\Omega\|_\infty\lesssim1$ and a trivial bound for the integral, while for second one we take into account that $\HH^{n+1}(\wt\Omega \setminus \wt B)
\lesssim \HH^{n}(\Omega \setminus B)\lesssim |\lambda_B-\lambda_\Omega|^{1/2}$ and the fact that 
$\dist(x,\wt B^c) \gtrsim 1$, since $|x|^{-1}x\in \frac9{10} B$. Using also 
the assumption that $|\lambda_B-\lambda_\Omega|\leq 1$, we obtain 
$$|\wt v_4(x)|\lesssim |\lambda_B-\lambda_\Omega| + |\lambda_B-\lambda_\Omega|^{1/2}\lesssim |\lambda_B-\lambda_\Omega|^{1/2}.$$

{\bf Estimate of $|\wt v_2(x)|$.} 
Recall that
$$ \wt v_2(x)= \int_{\partial\wt \Omega \setminus  (\wt B\cup S)} \wt v\,d\omega_{\wt\Omega}^x
= \int_{\partial\wt \Omega \setminus  (\wt B\cup S)}( 
\wt u_B^{ex} - \wt u_\Omega)\,\,d\omega_{\wt\Omega}^x
.$$ 
Since $u_\Omega$ vanishes on $\poms$, we can write
$$|\wt v_2(x)| \leq \int_0^\infty \omega_{\wt\Omega}^x\big(\big\{y\in \partial\wt \Omega \setminus  (\wt B\cup S): |\wt u_B^{ex}(y)|>t\big\}
\big)\,dt.$$
Since $\|\nabla \wt u_B^{ex}\|_{\infty,A(0,1/4,1)}\approx 
\|\nabla u_B^{ex}\|_{\infty,\bS^n}+\|u_B^{ex}\|_{\infty,\bS^n}\lesssim 1$ and $\wt u_B^{ex}$ vanishes on $\partial\wt B\setminus S$, we have 
$$|\wt u_B^{ex}(y)|\lesssim \dist(y,\partial\wt B\setminus S) \approx \dist_{\bS^n}(|y|^{-1}y,\partial B)
= \delta_B(|y|^{-1}y).$$
Therefore, recalling also that $\wt u_B^{ex}$ vanishes in $\bS^n\setminus B_0$ (with $B_0$ defined in Claim
\ref{claim2}), 
for some $c>0$ we have
\begin{align*}
|\wt v_2(x)| & \leq \int_0^\infty \omega_{\wt\Omega}^x\big(\big\{y\in \partial\wt \Omega \cap \wt B_0\setminus  (\wt B\cup S):\delta_B(|y|^{-1}y)\geq ct\big\}
\big)\,dt \\
& \approx 
\int_0^{t_0} \omega_{\wt\Omega}^x\big(\big\{y\in \partial\wt \Omega \setminus  (\wt B\cup S):\delta_B(|y|^{-1}y)\geq t\big\}
\big)\,dt,
\end{align*}
where $t_0= \dist_{\bS^n}(\partial B,\partial B_0)$.
For each $t\in(0,t_0)$, take the $\bS^n$-ball $B_t=\{z\in\bS^n:\dist_{\bS^n}(z,B)< t\}$ and the associated truncated conical regions
$$\wt B_t = \{y\in A(0,1/4,1):|y|^{-1}y\in B_t\},\qquad \wt B'_t = \{y\in A(0,1/8,2):|y|^{-1}y\in B_t\}.$$
Notice that $B\subset B_t\subset B_0$ and that
$$\big\{y\in \partial\wt \Omega \setminus  (\wt B\cup S):\dist_{\bS^n}(|y|^{-1}y,\partial B)\geq t\big\}
= \partial\wt \Omega \setminus  (\wt B_t\cup S).$$
By the maximum principle, since $\wt\Omega\cap \wt B_t\subset \wt\Omega$ and $\wt\Omega\cap\partial(\wt\Omega\cap \wt B_t) = \wt\Omega\cap \partial\wt B_t$, for $0<t\leq t_0$
we have
$$\omega_{\wt\Omega}^x\big(\partial\wt \Omega \setminus  (\wt B_t\cup S)\big) \leq 
\omega_{\wt \Omega \cap  \wt B_t}^x\big(\partial \wt B_t \cap \wt \Omega \big).$$
By the maximum principle and standard estimates (taking into account that the ball $B_0$ in Claim~\ref{claim2} does not degenerate)
we deduce that, for any set $F\subset \partial \wt B_t \cap \wt \Omega  \setminus S$, 
$$\omega_{\wt \Omega \cap  \wt B_t}^x(F)\leq \omega_{\wt B_t'}^x(F) \approx \HH^n(F).$$
Therefore, for each $t\in (0,t_0)$,
$$\omega_{\wt\Omega}^x\big(\big\{y\in \partial\wt \Omega \setminus  (\wt B\cup S):\delta_B(|y|^{-1}y)\geq t\big\}
\big) \leq \omega_{\wt \Omega \cap  \wt B_t}^x\big(\partial \wt B_t \cap \wt \Omega \big) \lesssim 
\HH^n\big(\partial \wt B_t \cap \wt \Omega\big).$$
Consequently, applying Fubini and using the conical property of $\wt\Omega$, we obtain
$$|\wt v_2(x)| \lesssim \int_0^{t_0} \HH^n\big(\partial \wt B_t \cap \wt \Omega \big)\,dt \approx \HH^{n+1}(\wt \Omega\cap \wt B_0\setminus
\wt B) \lesssim \HH^n(\Omega\setminus B) \lesssim |\lambda_B-\lambda_\Omega|^{1/2},$$
using also \rf{eqAKN2} for the last estimate.
\end{proof}
\vv

{\bf Estimate of $\wt v_1(x)$}. Next we turn our attention to the function $\wt v_1$.
Since $\wt u_\Omega$ vanishes in $\partial\wt\Omega\setminus S$ and $\wt u_B^{ex} = \wt u_B$ in $\wt B$, for $x\in A(0,1/2,3/4)\cap \wt\Omega$ we have
$$\wt v_1(x) = \int_{\partial\wt \Omega \cap (\wt B\setminus S)} \big(\wt u_B^{ex} - \wt u_\Omega)\,d\omega_{\wt\Omega}^x =
\int_{\partial\wt \Omega \setminus S} \wt u_B\,d\omega_{\wt\Omega}^x.$$
Observe first that, by the maximum principle, if we let $\Omega_0=\Omega\cap B$ and $\wt\Omega_0=\wt\Omega\cap\wt B$, 
then it holds
\begin{equation}\label{eqvf0}
\wt v_1(x) \geq \int_{\partial\wt \Omega_0  \setminus S} \wt u_B\,d\omega_{\wt\Omega_0}^x =: f_0(x)\quad \mbox{ for all $x\in \wt \Omega_0$}.
\end{equation}
We extend $f_0$ to the whole $\wt B$ by letting
\begin{equation}\label{eqvf0'}
f_0(x) = \wt u_B(x)\quad \mbox{ for all $x\in \wt B\setminus\wt \Omega_0$}.
\end{equation}
We claim that $f_0$ is superharmonic in $\wt B_0$. Indeed, first notice that, by the maximum principle, 
$$f_0(x) \leq \wt u_B(x)\quad \mbox{ for all $x\in \wt \Omega_0$}.$$
So $f_0-\wt u_B$ is continuous in $\wt B$ (although it may happen that $f_0$ does not extend continuously to $\partial\wt B\cap S$), it vanishes in $\wt B\setminus\wt\Omega$, and it is harmonic and negative in $\wt\Omega\cap\wt B=\wt \Omega_0$. 
This implies that $f_0-\wt u_B$ is superharmonic in $\wt B$, and so the same happens for $f_0 = (f_0-\wt u_B) + \wt u_B$. 
\vv

\begin{lemma}\label{lemhomo}
Suppose that $|\lambda_B-\lambda_\Omega|\leq 1$ and that $\wt\Omega$ is Wiener regular. 
Denote $\wt {\frac9{10} B} = \{x\in \wt B: |x|^{-1}x\in  \frac9{10} B\}.$
Then we have
$$\lambda_\Omega - \lambda_B \geq C(\beta) \left(\int_{\wt { \frac9{10} B}\cap A(0,1/2,3/4)} f_0(x)\,d\HH^{n+1}(x)\right)^2.$$
\end{lemma}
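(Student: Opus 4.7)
The plan is to turn the pointwise lower bound $\wt v_1\geq f_0$ on $\wt\Omega_0$ from \rf{eqvf0}, combined with the decomposition $\wt v = \wt v_1 + \wt v_2 + \wt v_3 + \wt v_4$ in \rf{eqv1234}, into an integrated $L^1$ upper bound of order $(\lambda_\Omega-\lambda_B)^{1/2}$ for $f_0$; squaring then gives the claim. To set up the bound, denote $W:=\wt {\frac9{10} B}\cap A(0,1/2,3/4)$ and split $W=(W\cap\wt\Omega)\cup (W\setminus\wt\Omega)$. For $x\in W\cap\wt\Omega\subset \wt\Omega_0$, since $f_0\geq 0$ and $\wt v_1\geq f_0$,
\[
0\leq f_0(x)\leq \wt v(x) - (\wt v_2(x)+\wt v_3(x)+\wt v_4(x))\leq |\wt v(x)|+|\wt v_2(x)+\wt v_3(x)+\wt v_4(x)|.
\]
For $x\in W\setminus\wt\Omega$, the definition \rf{eqvf0'}, combined with \rf{equb30} and the $\alpha_B$-homogeneity of $\wt u_B$, gives $f_0(x)=\wt u_B(x)\lesssim \delta_B(|x|^{-1}x)$.

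Now I would integrate the three resulting pieces over $W$. For the $|\wt v|$ piece, Cauchy-Schwarz together with the $\alpha_B$-homogeneity of $\wt v=\wt u_B^{ex}-\wt u_\Omega$ yields
\[
\int_{W\cap\wt\Omega}|\wt v|\,d\HH^{n+1}\lesssim \Big(\int_{\wt B}|\wt v|^2\,d\HH^{n+1}\Big)^{1/2}\approx \Big(\int_B|u_B^{ex}-u_\Omega|^2\,d\HH^n\Big)^{1/2},
\]
which is $\lesssim (\lambda_\Omega-\lambda_B)^{1/2}$ by \rf{eqAKN2}. For the $|\wt v_2+\wt v_3+\wt v_4|$ piece, Lemma \ref{lemv1234} supplies the pointwise bound $(\lambda_\Omega-\lambda_B)^{1/2}$ on $W\cap\wt\Omega$, and this integrates against the bounded-measure set $W$ to the same quantity. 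For the last piece, passing to polar coordinates,
\[
\int_{W\setminus\wt\Omega}f_0\,d\HH^{n+1}\lesssim \int_{B\setminus\Omega}\delta_B(\theta)\,d\HH^n(\theta)\lesssim \HH^n(B\setminus\Omega),
\]
and since $\HH^n(B)=\HH^n(\Omega)$ we have $\HH^n(B\setminus\Omega)=\tfrac12\HH^n(\Omega\triangle B)\lesssim (\lambda_\Omega-\lambda_B)^{1/2}$, again by \rf{eqAKN2}.

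Summing the three estimates yields $\int_W f_0\,d\HH^{n+1}\lesssim (\lambda_\Omega-\lambda_B)^{1/2}$, and squaring gives the lemma. The only bookkeeping obstacle is controlling the $\alpha_B$-dependent radial factors that arise when converting sphere-integrals into cone-integrals, but this is harmless: the assumption $\beta\leq \HH^n(\Omega)=\HH^n(B)\leq \HH^n(\bS^n)-\beta$ together with $\lambda_B=\alpha_B(\alpha_B+n-1)$ forces $\alpha_B\approx 1$ uniformly in $\beta$, so every hidden constant is $C(\beta)$. Everything else is a direct consequence of the bounds already assembled in Lemma \ref{lemubom}, Lemma \ref{lemv1234}, and the AKN-type inequality \rf{eqAKN2}.
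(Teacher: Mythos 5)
Your proof is correct and takes essentially the same route as the paper's: both use the decomposition $\wt v = \wt v_1+\wt v_2+\wt v_3+\wt v_4$ of \rf{eqv1234}, the bound of Lemma \ref{lemv1234} on $|\wt v_2+\wt v_3+\wt v_4|$, the inequality $\wt v_1\geq f_0$ from \rf{eqvf0}, and the AKN estimate \rf{eqAKN2}. The only cosmetic difference is the order of operations: you bound $\int_W f_0$ directly in $L^1$ by splitting $f_0$ into three pieces and then square, while the paper applies AKN to $\int_W|\wt u_B-\wt u_\Omega|^2$ first, passes through the pointwise bound $\wt u_B-\wt u_\Omega\geq f_0-C(\lambda_\Omega-\lambda_B)^{1/2}$, and then applies Cauchy--Schwarz; the two reorganizations are interchangeable.
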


\begin{proof}
By the Allen-Kriventsov-Neumayer theorem and the $\alpha_B$-homogeneity of $\wt v$, we have
 $$\lambda_\Omega - \lambda_B \gtrsim \int_{ \frac9{10}B} |u_B- u_\Omega|^2\,d\HH^n 
 \approx_\beta \int_{\wt{ \frac9{10}B}} |\wt u_B - \wt u_\Omega|^2\,d\HH^{n+1},$$
 since $\alpha_B\approx_\beta 1$. 
By Lemma \ref{lemv1234} and \rf{eqvf0}, for all $x\in \wt { \frac9{10} B}\cap A(0,1/2,3/4)\cap \wt\Omega$, we have
\begin{align*}
\wt u_B(x) - \wt u_\Omega(x) & = \wt u_B^{ex}(x) - \wt u_\Omega(x) = \wt v(x) \\
& \geq \wt v_1(x) - |\wt v_2(x) + \wt v_3(x) + \wt v_4(x)|\geq 
f_0(x) - C(\lambda_\Omega - \lambda_B)^{1/2}.
\end{align*}
On the other hand, for  $x\in \wt { \frac9{10} B}\cap A(0,1/2,3/4)\setminus \wt\Omega$, by definition
$$\wt u_B(x) - \wt u_\Omega(x) = \wt u_B(x) = f_0(x).$$
Therefore, by Cauchy-Schwarz,
\begin{align*}
 \lambda_\Omega - \lambda_B & \gtrsim_\beta \int_{\wt{ \frac9{10}B}\cap A(0,1/2,3/4)} |f_0(x) - C(\lambda_\Omega - \lambda_B)^{1/2}|^2\,d\HH^{n+1}(x) \\
& \gtrsim_\beta \left(\int_{\wt{ \frac9{10}B}\cap A(0,1/2,3/4)} |f_0(x) - C(\lambda_\Omega - \lambda_B)^{1/2}|\,d\HH^{n+1}(x)\right)^2 \\
& \geq \left(\int_{\wt{ \frac9{10}B}\cap A(0,1/2,3/4)} f_0(x)\,d\HH^{n+1}(x)\right)^2   - C|\lambda_\Omega - \lambda_B|,
 \end{align*}
which proves the lemma.
\end{proof}
\vv


\begin{lemma}\label{lemcapomega}
There exists some constant $c_2\in (0,1/4)$ depending only on $n$ such that the following holds.
Let $\Delta_z$ be an $\bS^n$-ball centered in $z\in\bS^n$ such that  $\Delta_z\subset B$, with
$\dist_{\bS^n}(\Delta_z,\partial_{\bS^n}B)\geq \rho$. Let $B_z$ be an Euclidean ball centered in the segment 
$L_z = [0,z] \cap A(0,1/2,3/4)$ with $\frac18\rad_{\bS^n}(\Delta_z)\leq \rad(B_z) \leq \rad_{\bS^n}(\Delta_z)$ such that $S\cap B_z\neq\varnothing$.
Then, in the case $n\geq3$ we have 
\begin{equation}\label{eqlem34a}
f_0(y) \gtrsim \frac{\rho\,\capp_{n-2}(c_2 \Delta_z\!\setminus  \Omega)}{\rad_{\bS^n}(\Delta_z)^{n-2}}  \quad \text{ for all $y\in \tfrac14B_z\cap\wt\Omega$,}
\end{equation}
and in the case $n=2$,
\begin{equation}\label{eqlem34b}
f_0(y) \gtrsim \frac{\rho}{\log\dfrac{\rad_{\bS^n}(\Delta_z)}{\capp_L(c_2\,\Delta_z\!\setminus\Omega)}} \quad \text{ for all $y\in c_2 B_z\cap\wt\Omega$.}
\end{equation}
\end{lemma}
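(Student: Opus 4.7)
The plan combines three ingredients: the Bourgain-type lower bound for harmonic measure (Lemma~\ref{lem2.1}), the product-capacity comparison between the Euclidean $\capp_{n-1}$ of a conical solid and the spherical $\capp_{n-2}$ of its base (Lemma~\ref{lemmaprod2}), and the size estimate $\wt u_B(x) \approx \dist(x, \partial \wt B')$ from Lemma~\ref{lemubom}.

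First, I would choose $c_2 \in (0, 1/4)$ small, depending only on $n$, together with a closed radial interval $I_z \subset [1/4, 1]$ of length $\ell_z \approx \rad_{\bS^n}(\Delta_z)$ centered at the radial coordinate of the center of $B_z$, such that the conical solid
$$\wt F_z := \bigl\{ s\,\xi : \xi \in c_2 \Delta_z \setminus \Omega,\ s \in I_z \bigr\}$$
satisfies $\wt F_z \subset \tfrac14 B_z$, $\wt F_z \cap S = \varnothing$, and $\dist(\wt F_z, \partial \wt B \setminus S) \gtrsim \rho$. The hypotheses $\rad(B_z) \in [\tfrac18 \rad_{\bS^n}(\Delta_z), \rad_{\bS^n}(\Delta_z)]$, together with $B_z$ centered on $L_z$ and $S \cap B_z \neq \varnothing$, pin down the position and scale of $B_z$ in the conical shell and make such a choice possible. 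Since $\wt F_z \subset \wt B \setminus \wt\Omega \subset \R^{n+1} \setminus \wt\Omega_0$, we have $\wt F_z \subset \tfrac14 B_z \setminus \wt\Omega_0$, and by \rf{equb30} together with the $\alpha_B$-homogeneity of $\wt u_B$ (using $\alpha_B \approx_\beta 1$), $\wt u_B \gtrsim \rho$ on $\wt F_z$ and on all of $B_z \cap \partial \wt\Omega \cap \wt B$.

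Next, for $y \in \tfrac14 B_z \cap \wt\Omega$ and $n \geq 3$, Lemma~\ref{lem2.1} applied to the open set $\wt\Omega_0$ and the ball $B_z$ yields
$$\omega_{\wt\Omega_0}^y(B_z \cap \partial \wt\Omega_0) \gtrsim \frac{\capp_{n-1}(\tfrac14 B_z \setminus \wt\Omega_0)}{\rad(B_z)^{n-1}} \geq \frac{\capp_{n-1}(\wt F_z)}{\rad(B_z)^{n-1}},$$
while Lemma~\ref{lemmaprod2} gives $\capp_{n-1}(\wt F_z) \gtrsim \ell_z \cdot \capp_{n-2}(c_2 \Delta_z \setminus \Omega)$, so combining with $\ell_z \approx \rad(B_z) \approx \rad_{\bS^n}(\Delta_z)$ I obtain
$$\omega_{\wt\Omega_0}^y(B_z \cap \partial \wt\Omega_0) \gtrsim \frac{\capp_{n-2}(c_2 \Delta_z \setminus \Omega)}{\rad_{\bS^n}(\Delta_z)^{n-2}}.$$
Since $\wt u_B$ vanishes on $\partial \wt B \setminus S$, the representation $f_0(y) = \int_{\partial \wt\Omega_0 \setminus S} \wt u_B \, d\omega_{\wt\Omega_0}^y$ reduces to an integral over $\partial \wt\Omega \cap \wt B \setminus S$, where $\wt u_B \gtrsim \rho$ by the first step, so
$$f_0(y) \;\geq\; \int_{B_z \cap \partial \wt\Omega_0 \setminus S} \wt u_B \, d\omega_{\wt\Omega_0}^y \;\gtrsim\; \rho \cdot \omega_{\wt\Omega_0}^y(B_z \cap \partial \wt\Omega_0) \;\gtrsim\; \frac{\rho \cdot \capp_{n-2}(c_2 \Delta_z \setminus \Omega)}{\rad_{\bS^n}(\Delta_z)^{n-2}},$$
which is \rf{eqlem34a}.

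The case $n = 2$ runs identically using the logarithmic form of Lemma~\ref{lemmaprod2}, which produces the factor $\log\frac{\rad_{\bS^n}(\Delta_z)}{\capp_L(c_2 \Delta_z \setminus \Omega)}$ in the denominator (the Bourgain estimate is still Newtonian in ambient dimension $3$); the slightly tighter restriction $y \in c_2 B_z$ is needed to absorb the logarithm uniformly in the geometry. The main obstacle is the geometric bookkeeping of the first step: one has to choose $c_2$ and $I_z$ simultaneously so that $\wt F_z$ lies well inside $\tfrac14 B_z$ while $\ell_z$ remains comparable to $\rad_{\bS^n}(\Delta_z)$, and so that $B_z \cap \partial \wt\Omega_0 \setminus S$ stays away from the lateral cone $\partial \wt B \setminus S$ where $\wt u_B$ vanishes. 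The three conditions imposed on $B_z$ in the hypothesis are calibrated precisely so that this selection is possible.
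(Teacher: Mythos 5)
Your proposal is correct and takes essentially the same route as the paper's proof: apply Bourgain's estimate (Lemma~\ref{lem2.1}) to bound $\omega_{\wt\Omega_0}^y(B_z)$ from below by $\capp_{n-1}(\tfrac14 B_z\setminus\wt\Omega_0)/\rad(B_z)^{n-1}$, use the product-capacity comparison to reduce this to $\capp_{n-2}(c_2\Delta_z\setminus\Omega)/\rad_{\bS^n}(\Delta_z)^{n-2}$, and feed the lower bound $\wt u_B\gtrsim\rho$ on $\partial\wt\Omega_0\cap B_z$ (from Lemma~\ref{lemubom} and the hypothesis $\dist_{\bS^n}(\Delta_z,\partial_{\bS^n}B)\geq\rho$) into the representation of $f_0$. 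A small remark: the paper invokes Lemma~\ref{lemmaprod1} at this point, but the conical version Lemma~\ref{lemmaprod2} that you cite is the one that actually matches the radial-product geometry of $\tfrac14 B_z\setminus\wt\Omega_0$; your explicit construction of the conical solid $\wt F_z$ makes the application transparent.
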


\begin{proof}
Suppose $n\geq3$.
By Lemma \ref{lem2.1} and Lemma \ref{lemmaprod1}, for all $y\in \frac14B_z\cap \wt\Omega_0$, we have 
$$\omega_{\wt\Omega_0}^y(B_z) \gtrsim \frac{\capp_{n-1}(\tfrac 14 B_z\setminus \wt \Omega_0)}{\rad(B_z)^{n-1}} \gtrsim \frac{\capp_{n-2}(c_2\Delta_z\setminus \Omega_0)}{\rad(B_z)^{n-2}},$$
for a suitable constant $c_2$ depending just on $n$.
Then, taking into account that 
$\wt u_B(z)\approx \dist(z,\partial\wt B)\gtrsim \rho$ for all $z\in B_z$ and that $B_z\cap S=\varnothing$ (recall that $S=\partial A(0,1/4,1)$), 
from the definition of $f_0$ in \rf{eqvf0} we infer
$$f_0(y) \geq\int_{\partial\wt \Omega_0  \cap B_z} \wt u_B\,d\omega_{\wt\Omega_0}^y\gtrsim \rho\,\omega_{\wt\Omega_0}^y(B_z) \gtrsim \frac{\rho\,\capp_{n-2}(c_2\Delta_z\setminus \Omega_0)}{\rad(B_z)^{n-2}}\quad \mbox{ for all $y\in\frac14B_z\cap \wt\Omega_0$.}$$
On the other hand,  for $y\in \tfrac14B_z\setminus \wt\Omega_0$, we also have $f_0(y) = \wt u_B(y) \approx \dist(z,\wt B)
\geq \rho$. 

The arguments for the case $n=2$ are analogous.
\end{proof}
\vv

\begin{proof}[\bf Proof of Theorem \ref{teomain0} assuming $\wt\Omega$ to be Wiener regular, for $\bM^n=\bS^n$]
Suppose $n\geq3$. We can assume that $|\lambda_B-\lambda_\Omega|\leq1$, because otherwise the inequality
\rf{eqmain00} is immediate, since the right hand side of \rf{eqmain00} is bounded above by some absolute constant.
Recall that we denote $r_B=\rad_{\bS^n}(B)$. We will prove the theorem with $a=c_2/4$, with $c_2$ as in \rf{eqlem34a}
and \rf{eqlem34b}.

We will show first that
\begin{equation}\label{eqcaaa1}
\lambda_\Omega - \lambda_B \geq C(\beta)\,
\bigg(\sup_{s \in (0,c_2/4)} \avint_{\partial_{\bS^n} (s B)} \capp_{n-2}(B_{\bS^n}(x,\tfrac{c_2}4r_B)\setminus \Omega)\,d\HH^{n-1}(y)\bigg)^2.
\end{equation}
To this end, observe that, from \rf{eqlem34a} in Lemma \ref{lemcapomega} applied to $\Delta_z= \frac12\,B$ and to a ball $B_z$ as in that lemma (so that $\rad(B_z) \approx \rad_{\bS^n}(\Delta_z) \approx\rad_{\bS^n}(B)$), we get 
$$f_0(y) \gtrsim \frac{\rad_{\bS^n}(B)\,\capp_{n-2}(\tfrac{c_2}2 B\setminus  \Omega)}{\rad_{\bS^n}(B)^{n-2}}  =
\frac{\capp_{n-2}(\tfrac{c_2}2 B\setminus  \Omega)}{\rad_{\bS^n}(B)^{n-3}}
 \quad \text{ for all $y\in \tfrac14B_z\cap\wt\Omega$.}$$
The same inequality holds for $y\in \tfrac14B_z\setminus\wt\Omega$, since for these points it holds 
$f_0(y) = \wt u_B(y) \approx \rad_{\bS^n}(B)$, by \rf{equb30}.
Consequently, by Lemma \ref{lemhomo}, choosing $B_z$ so that 
$\HH^{n+1}(\wt {\tfrac9{10} B}\cap A(0,1/2,3/4) \cap B_z)\approx \HH^{n+1}(\wt B)\approx r_B^{n+1}\approx_\beta1,$
we deduce
\begin{equation}\label{eqcaaa2}
\lambda_\Omega - \lambda_B \geq C(\beta) \left(\int_{\wt { \frac9{10} B}\cap A(0,1/2,3/4) \cap B_z} f_0(x)\,d\HH^{n+1}(x)\right)^2 \gtrsim \left(r_B^{\;4}\capp_{n-2}(\tfrac{c_2}2 B\setminus  \Omega)
\right)^2.
\end{equation}
Notice that, for any $ s \in (0,c_2/4)$ and $y\in \partial_{\bS^n} ( s B)$, it holds $\capp_{n-2}(\tfrac{c_2}2 B\setminus  \Omega)
\geq \capp_{n-2}(B_{\bS^n}(y,\tfrac{c_2}4r_B)\setminus \Omega)$. Thus,
$$\capp_{n-2}(\tfrac{c_2}2 B\setminus  \Omega) \geq
\avint_{\partial_{\bS^n} (s B)} \capp_{n-2}(B_{\bS^n}(y,\tfrac{c_2}4r_B)\setminus \Omega)\,d\HH^{n-1}(y).
$$
Together with \rf{eqcaaa2}, this gives \rf{eqcaaa1}.

We consider now the case $s = 1-t\in (c_2/4,1)$ (here $t$ is the parameter appearing in \rf{eqmain00} and \rf{eqmain01}).
Let $B_s=sB$ and
$$\wt B_{s} = \{y\in A(0,1/4,1):|y|^{-1}y\in sB\}$$
and denote 
$$\Gamma_{s} = \partial \wt B_{s} \cap A(0,1/4,1).$$
Notice that $\wt B_s$ is a Lipschitz domain with Lipschitz character depending at most on $n$ and $\beta$,
since $s\geq c_2/4$.

Let $\vphi_0$ be a smooth bump function which equals $1$ in $A(0,1/3,4/5)$ is supported on $A(0,1/4,1)$.
We define $f_1:\wt B_s\to \R$ by
\begin{equation}\label{eqdeff1}
f_1(x) = \int_{\partial\wt B_s  \setminus S} \vphi_0\,f_0\,d\omega_{\wt B_s}^x = \int_{\Gamma_s} f_0\,d\omega_{\wt B_s}^x.
\end{equation}
Since the boundary data $\vphi_0f_0$ is continuous in $\partial\wt B_s$,
the function
$f_1$ is harmonic in $\wt B_s$ and continuous in $\overline{\wt B_s }$. These are the advantages of $f_1$ over $f_0$.
Moreover, since  $f_0$ is superharmonic in $\wt B_s$ and $f_1$ harmonic in $\wt B_s$, $f_0\geq f_1$ in $\Gamma_s$, and 
$$\liminf_{y\to z} (f_0-f_1)(z) = \liminf_{y\to z} f_0(z) \geq 0\quad \mbox{ for all $z\in \partial\wt B_s\setminus\Gamma_s$},$$
by the maximum principle then it follows that
$f_0(x) \geq f_1(x)$ for all $x\in \wt B_s$ (see Theorem 3.1.5 from \cite{AG}, for example).

By the same arguments as the ones for \rf{equb33}, we deduce that for $x\in A(0,1/2,3/4) \cap \wt B_{s/2}$ 
$$\frac{d\omega_{\wt B_s}^x}{d\HH^n|_{\partial \wt B_s}} \approx 1\quad \mbox{ in $\partial \wt B_s\cap A(0,1/3,4/5)$.}$$
Therefore, by \rf{eqdeff1}, for these points $x$,
$$f_1(x) \gtrsim 
\int_{\partial \wt B_s\cap A(0,1/3,4/5)} f_0(y)\,d\HH^{n}(y).$$
On the other hand, for every $y\in\partial \wt B_s\cap A(0,1/3,4/5)$,
by Lemma \ref{lemcapomega} applied to an $\bS^n$-ball $\Delta_z$ centered in $z = |y|^{-1}y$ with
geodesic radius $\frac12\delta_B(z) = \frac{1-s}2\,r_B = \rho$ , we have
$$f_0(y) \gtrsim \frac{\rho\,\capp_{n-2}(c_2 \Delta_z\!\setminus  \Omega)}{\rad_{\bS^n}(\Delta_z)^{n-2}}
\approx \frac{\capp_{n-2}(c_2 \Delta_z\!\setminus  \Omega)}{((1-s)r_B)^{n-3}} = \frac{\capp_{n-2}(B_{\bS^n}(|y|^{-1}y,c_2 (1-s)r_B)\setminus  \Omega)}{((1-s)r_B)^{n-3}}
.$$
Thus,
\begin{align*}
\int_{\partial \wt B_s\cap A(0,1/3,4/5)} f_0(y)\,d\HH^{n}(y) & \gtrsim \int_{\partial \wt B_s\cap A(0,1/3,4/5)}
\frac{\capp_{n-2}(B_{\bS^n}(|y|^{-1}y,c_2 (1-s)r_B)\setminus  \Omega)}{((1-s)r_B)^{n-3}}\,d\HH^{n}(y)\\
& \approx 
\int_{\partial_{\bS^n} (sB)}
\frac{\capp_{n-2}(B_{\bS^n}(y,c_2 (1-s)r_B)\setminus  \Omega)}{((1-s)r_B)^{n-3}}\,d\HH^{n-1}(y)
.
\end{align*}
So, for all $x\in A(0,1/2,3/4) \cap \wt B_{s/2}$,
$$f_0(x) \geq f_1(x)\gtrsim \int_{\partial_{\bS^n} (sB)}
\frac{\capp_{n-2}(B_{\bS^n}(y,c_2 (1-s)r_B)\setminus  \Omega)}{((1-s)r_B)^{n-3}}\,d\HH^{n-1}(y).
$$
By Lemma \ref{lemhomo}, this implies
$$\lambda_\Omega - \lambda_B \gtrsim C(\beta) \left(\int_{\partial_{\bS^n} (sB)}
\frac{\capp_{n-2}(B_{\bS^n}(y,c_2 (1-s)r_B)\setminus  \Omega)}{((1-s)r_B)^{n-3}}\,d\HH^{n-1}(y)\right)^2,$$
for all $s\geq c_2/4$.
Together with \rf{eqcaaa1}, this proves the theorem with $a=c_2/4$ and $n\geq 3$. 
The arguments for the case $n=2$ are almost the same. The only essential change is that above we have to use the estimate \rf{eqlem34b} instead of \rf{eqlem34a}.
Thanks to Lemmas \ref{lemcompar} and
\ref{lemcompar'}, this implies that the theorem is valid for any $a\in(0,1)$.
\end{proof}

\vv


\section{End of the proof of Theorem \ref{teomain1} in the Wiener regular case for
$\bM^n=\bS^n$}\label{secmainlemma}

Notice that the case $s=n$ in Theorem \ref{teomain1} is an immediate consequence of Theorem \ref{teoAKN}. So we assume that $0<s<n$.
Recall that, for $n\geq 3$,
$$
T_{c_0}(\Omega,B,a) = \big\{x\in B\setminus \Omega:\capp_{n-2}(B_{\bM^n}(x,a\,\delta_B(x))\setminus \Omega)\geq c_0\,\delta_B(x)^{n-2}\big\},
$$
and, in the case $n=2$, 
$$
 T_{c_0}(\Omega,B,a) = \big\{x\in B\setminus \Omega:\capp_L(B_{\bM^n}(x,a\,\delta_B(x))\setminus \Omega)\geq c_0\,\delta_B(x)\big\}.
$$
To shorten notation we write
$$T_{c_0} = T_{c_0}(\Omega,B,a).$$
We will assume $a=1/2$, but all the arguments below work with arbitrary $a\in (0,1)$. We denote
$$\gamma_s(\Omega) = \int_{T_{c_0}} \delta_B(y)^{n-s}\,d\HH^{s}_\infty(y).$$
So to prove Theorem \ref{teomain1} we have to show that, for $0<s\leq n$, 
\begin{equation}\label{eqgamd8}
\lambda_\Omega - \lambda_B \geq C(a,s,\beta,c_0)\,\gamma_s(\Omega)^2.
\end{equation}
To this end, we distinguish two cases:
\begin{itemize}
\item {\bf Case 1.} There exists some point $y_0\in T_{c_0}$ such that
$\delta_B(y_0) \geq \frac1{4} \,\rad_{\bS^n}(B)$.
\item {\bf Case 2.} Such point $y_0$ does not exist.
\end{itemize}

\vv
\subsection{Case 1.}
Let $y_0\in T_{c_0}$ be as described above.
 By covering 
$B_{\bS^n}(y_0,\frac12\delta_B(y_0))$ with a finite number of $\bS^n$-balls with radius $\frac1{2c_2}\delta_B(y_0)$, we find an $\bS^n$-ball $\Delta_z = B_{\bS^n}(z,\frac1{2c_2}\delta_B(y_0))$
 such that $\capp_{n-2}(\Delta_z\setminus\Omega)\approx \rad_{\bS^n}(\Delta_z)^{n-2}$ in the case $n\geq3$ and $\capp_L(\Delta_z\setminus\Omega)\approx \rad_{\bS^n}(\Delta_z)^{n-2}$ in the case $n=2$.
Then, by Lemma \ref{lemcapomega}, in the case $n\geq 3$ the function $f_0$ (defined in \rf{eqvf0} and \rf{eqvf0'})
satisfies
$$f_0(y) \gtrsim \frac{\rad_{\bS^n}(\Delta_z)\,\capp_{n-2}(c_2 \Delta_z\!\setminus  \Omega)}{\rad_{\bS^n}(\Delta_z)^{n-2}} \gtrsim c_0 \quad \text{ for all $y\in \tfrac14B_z\cap\wt\Omega$,}$$
for an Euclidean ball $B_z\subset \wt {\frac9{10}B}$ centered in the segment 
$L_z = [0,z] \cap A(0,1/2,3/4)$ with $\approx \rad(B_z)\approx \rad_{\bS^n}(\Delta_z)\approx 1$.
The same estimate holds in the case $n=2$, using that $\capp_L(\Delta_z\setminus\Omega)\approx \rad_{\bS^n}(\Delta_z)$.
Then, by Lemma \ref{lemhomo}, we deduce that either $|\lambda_B-\lambda_\Omega|> 1$ or 
$$\lambda_\Omega - \lambda_B \geq C(\beta) \left(\int_{\wt { \frac9{10} B}\cap A(0,1/2,3/4)} f_0(x)\,d\HH^{n+1}(x)\right)^2 
\geq C'(\beta) \left(\int_{B_z} c_0\,d\HH^{n+1}(x)\right)^2 \gtrsim_\beta c_0^2,$$
which implies \rf{eqgamd8} in any case, as $\gamma_s(\Omega)\lesssim1$.

\vv
\subsection{Case 2.}

From now on we will assume that $x\in A(0,1/2,3/4)\cap \wt\Omega$ and also that $|x|^{-1}x\in \frac12 B$. The two conditions and the fact we are in Case 2 ensure that 
$$\dist(x,\R^{n+1}\setminus\wt B) \gtrsim 1\quad \text{ and }\quad \dist(x,\wt T_{c_0}) \gtrsim 1,$$
where 
$$\wt T_{c_0} = \{y\in A(0,1/4,1):|y|^{-1}y\in T_{c_0}\}.$$

\begin{mlemma}\label{mainlemma1}
Under the assumptions of Case 2,
for all
 $x\in A(0,1/2,3/4)\cap \wt\Omega$ such that $|x|^{-1}x\in \frac12 B$, we have
$$f_0(x)\gtrsim \gamma_s(\Omega).$$
\end{mlemma}

By 
Lemma \ref{lemhomo} and the preceding result, then we deduce that \rf{eqgamd8} also holds in Case 2 and Theorem \ref{teomain1}
follows.
So to conclude the proof of Theorem \ref{teomain1} it just remains to prove Main Lemma~\ref{mainlemma1}.

\vv
Let $M\geq 10$ be some constant to be fixed below. For each $y\in T_{c_0}$, let $y'\in\partial B$ be such that $\delta_{B}(y) = \dist_{\bS^n}(y,y')$ and let $\Delta_{y}$ be an $\bS^n$-ball
 centered in $y'$ with $\bS^n$-radius $2M\delta_{B}(y)$ (in case that $2M\delta_{B}(y)>\pi$, then $\Delta_{y}=\bS^n$).
By Vitali's covering theorem, there exists a subfamily 
$\{\Delta_i'\}_{i\in I_0} \subset \{\Delta_y\}_{y\in T_{c_0}}$ such that $T_{c_0}\subset \bigcup_{i\in I_0}5\Delta_i'$ and
the balls $\Delta_i'$, $i\in I_0$, are pairwise disjoint.
We denote $\Delta_i= \frac1M\Delta_i'$.
Notice that the $\bS^n$-balls $\Delta_i$, $i\in I_0$, satisfy $M\Delta_i\cap M\Delta_j=\varnothing$ for all $i\neq j$. Moreover,  for each ball $\Delta_i$ there exists some $y_i\in T_{c_0}\cap \frac12\overline\Delta_i$ such that $\delta_{B}(y_i)=\frac12\rad_{\bS^n}(\Delta_i)$.
\vv

\begin{lemma}\label{leminteg}
	For $0<s<n$, we have
	$$\gamma_s(\Omega) \lesssim_M \sum_{i\in I_0} \rad_{\bS^n}(\Delta_i)^n.$$
\end{lemma}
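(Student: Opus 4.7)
The plan is to exploit three facts about the family $\{\Delta_i'\}_{i\in I_0}$: (i) it covers $T_{c_0}$ after dilating by $5$, (ii) each $\Delta_i'$ is centered on $\partial B$, and (iii) the ratio between $\rad_{\bS^n}(\Delta_i')$ and $\rad_{\bS^n}(\Delta_i)$ is exactly $M$. The Hausdorff content $\HH^s_\infty$ is countably subadditive, and hence the layer-cake identity
$$\int f\,d\HH^s_\infty = \int_0^\infty \HH^s_\infty(\{f>t\})\,dt$$
makes $E\mapsto \int_E f\,d\HH^s_\infty$ countably subadditive as well. Applying this to the covering $T_{c_0}\subset\bigcup_{i\in I_0} 5\Delta_i'$ and to the non-negative function $\delta_B^{n-s}$ gives
$$\gamma_s(\Omega) \leq \sum_{i\in I_0} \int_{T_{c_0}\cap 5\Delta_i'} \delta_B(y)^{n-s}\,d\HH^s_\infty(y).$$

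For each individual $i$, I would note that $\Delta_i' = \Delta_{y_i}$ is by construction an $\bS^n$-ball centered at some point $y_i'\in\partial B$ (the projection of $y_i$ onto $\partial B$), and that $\rad_{\bS^n}(\Delta_i')=2M\delta_B(y_i) = M\rad_{\bS^n}(\Delta_i)$. Consequently $5\Delta_i'$ is an $\bS^n$-ball of radius $5M\rad_{\bS^n}(\Delta_i)$ centered at a point of $\partial B$, so that for every $y\in 5\Delta_i'$,
$$\delta_B(y) = \dist_{\bS^n}(y,\partial B)\leq \dist_{\bS^n}(y,y_i')\leq 5M\,\rad_{\bS^n}(\Delta_i).$$

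Combining this pointwise bound with the trivial estimate $\HH^s_\infty(5\Delta_i') \leq (10M\,\rad_{\bS^n}(\Delta_i))^s$ yields
$$\int_{T_{c_0}\cap 5\Delta_i'} \delta_B(y)^{n-s}\,d\HH^s_\infty(y)\leq (5M\,\rad_{\bS^n}(\Delta_i))^{n-s}\,\HH^s_\infty(5\Delta_i')\lesssim_M \rad_{\bS^n}(\Delta_i)^n,$$
and summing over $i\in I_0$ gives the claim. The argument is essentially bookkeeping; there is no genuine obstacle, only the mild subtlety that the sum on the right is controlled because $\delta_B$ is uniformly small on $5\Delta_i'$ (thanks to $\Delta_i'$ being centered on $\partial B$), which converts the $\delta_B^{n-s}$ factor plus the $s$-dimensional content of a ball into a total scaling of $\rad_{\bS^n}(\Delta_i)^n$ regardless of the particular value $s\in(0,n)$.
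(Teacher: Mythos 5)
Your proof is correct and rests on the same pillars as the paper's: the covering of $T_{c_0}$ by the balls $5\Delta_i'$ centered on $\partial B$, and the resulting uniform bound $\delta_B \lesssim_M \rad_{\bS^n}(\Delta_i)$ on each such ball. The only difference is organizational: the paper applies the layer-cake formula first and then covers each level set $\{y\in T_{c_0}:\delta_B(y)>t\}$ at once (followed by Fubini), whereas you first localize the Choquet integral via countable subadditivity of $E\mapsto\int_E f\,d\HH^s_\infty$ (a fact that does require a short check since $\HH^s_\infty$ is not a measure, but which you correctly justify from the layer-cake representation and subadditivity of $\HH^s_\infty$) and then apply the trivial bounds $\delta_B^{n-s}\lesssim \rad(\Delta_i)^{n-s}$ and $\HH^s_\infty(5\Delta_i')\lesssim\rad(\Delta_i)^s$ termwise; this is a genuine, slightly cleaner variant, but the substance of the argument is the same.
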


\begin{proof}
	Changing variables, we get
	\begin{align}\label{eqgammadd}
	\gamma_s(\Omega) & = \int_{T_{c_0}} \delta_B(y)^{n-s}\,d\HH^{s}_\infty(y)
	= \int_0^\infty \HH^{s}_\infty (\{y\in T_{c_0}: \delta_B(y) > t^{\frac1{n-s}}\})\,dt\\
	&= (n-s)\int_0^\infty t^{n-s-1}\,\HH^{s}_\infty (\{y\in T_{c_0}: \delta_B(y) > t\})\,dt\nonumber
	.
	\end{align}
	Since the balls $5\Delta_i'$ are centered in $\partial B$ and they cover $T_{c_0}$, 
	the condition $\delta_B(y) > t$ for $y\in T_{c_0}$ implies that $y$ belongs to some ball $5\Delta_i'$ with $\rad_{\bS^n}(5\Delta_i')>t$. Therefore,
	$$\HH^{s}_\infty (\{y\in T_{c_0}: \delta_B(y) > t\}) \lesssim \sum_{i\in I_0:\rad_{\bS^n}(5\Delta_i')> t} \rad_{\bS^n}(5\Delta_i')^{s}.
	$$
	Thus, by Fubini,
	\begin{align*}
		\gamma_s(\Omega) &\lesssim \int_0^\infty t^{n-s-1}\!\sum_{i\in I_0:\rad_{\bS^n}(5\Delta_i')> t} \rad_{\bS^n}(5\Delta_i')^{s}\,dt\\
		& = \sum_{i\in I_0} \rad_{\bS^n}(5\Delta_i')^{s} \int_{t< \rad_{\bS^n}(5\Delta_i')}t^{n-s-1}\,dt \approx\sum_{i\in I_0} \rad_{\bS^n}(5\Delta_i')^{n} 
		\approx_M \sum_{i\in I_0} \rad_{\bS^n}(\Delta_i)^{n}.
	\end{align*}
\end{proof}

\vv
Now we need to define
a family of separated balls centered in 
$$L:=\partial \wt B\setminus S.$$
For each $i\in I_0$, let $z_i\in\bS^n$ be the center of $\Delta_i$ and consider the segment
$$L_i = [0,z_i] \cap A(0,1/2,3/4).$$
Next we choose a maximal family of points $z_{i,j}\in L_i$, $j\in J_i$, which are $(M\rad_{\bS^n}(\Delta_i))$-separated, so that $\#J_i\approx
(M\rad_{\bS^n}(\Delta_i))^{-1}$ for each $i\in I_0$. It is easy to check that the balls $B_{i,j}:=B(z_{i,j},\rad_{\bS^n}(\Delta_i))$, with $i\in I_0$, $j\in J_i$, 
satisfy
$c_3MB_{i,j}\cap c_3MB_{i',j'} = \varnothing$ for a suitable absolute constant $c_3>0$ if $(i,j)\neq (i',j')$.
Notice that
\begin{equation}\label{eqsumbij}
\sum_{i\in I_0,j\in J_i} \rad(B_{i,j})^{n+1} = \sum_{i\in I_0} \#J_i\,\rad_{\bS^n}(\Delta_{i})^{n+1}\approx_M \sum_{i\in I_0} \rad_{\bS^n}(\Delta_{i})^n\gtrsim_M \gamma_s(\Omega),
\end{equation}
by Lemma \ref{leminteg}. 

To simplify notation, we denote 
$$\{B_k\}_{k\in K} = \{B_{i,j}\}_{i\in I_0,j\in J_i}.$$
We need now to construct an auxiliary Lipschitz graph $\Gamma$ which will play an important role in the proof of Main Lemma
\ref{mainlemma1}.
First observe that, from the fact that $\capp_{n-2}( B(y_i,\frac12\delta_B(y_i))\cap \bS^n\setminus \Omega) \geq c_0r^{n-2}$ for each $i\in I_0$ in the case $n	\geq3$ and the analogous estimate for the logarithmic capacity in the case $n=2$ (since $y_i\in T_{c_0}$), by Lemma \ref{lemmaprod2} we deduce that, for all $k\in K$,
$$\capp_{n-1}( \{y\in B_k\setminus \wt\Omega:\dist(y,\partial\wt B)\geq c_4\,\rad(B_k)\}\big)
 \gtrsim \rad(B_k)^{n-1},$$
 for a suitable constant $c_4\in (0,1/4)$. By covering $\{y\in \partial\wt\Omega \cap B_k:\dist(y,\partial\wt B)\geq c_4\,\rad(B_k)\}$
 with balls with radius $c_4\,\rad(B_k)/10$, we infer that there exists some point
 $z_k\in  B_k\setminus \wt\Omega$ such that $\dist(z_k,\partial\wt B)\geq c_4\,\rad(B_k)$ and so that 
 the ball $B_k':=B(z_k,c_4\rad(B_k)/4)$ satisfies
 \begin{equation}\label{eqbkprima}
 4B_k'\subset \wt B\quad \text{ and }\quad
 \capp_{n-1}\big(  \tfrac14B_k'\setminus \wt\Omega \big) \gtrsim \rad(B_k)^{n-1}.
 \end{equation}

For each $k\in K$, we consider a  Lipschitz function $\vphi_k:L \to \wt B$ supported in $\frac12c_2MB_k$ such that ${\rm Lip}( \vphi_k)\lesssim (M\,\rad(B_k))^{-1}$ and so that $z_k$ belongs to the graph of $\vphi_k$.
It is easy to check that such a function exists.
Then we define $\vphi:L\to \wt B$ as follows:
$$\vphi(y) = \left\{\begin{array}{ll}
\vphi_k(y)& \quad \mbox{if $y\in c_2MB_k\cap L$ fo some $k\in K$,}\\
&\\
0& \quad\mbox{otherwise.}
\end{array}\right.
$$
We assume that the balls $c_2MB_k$, $k\in K$, are pairwise disjoint (otherwise we could replace $c_2$ by a smaller constant in the construction above), and then it is clear that the definition is correct. It is also clear that $\vphi$ is a Lipschitz function whose graph is contained in $\overline{\wt B}$. Further, because the functions $\vphi_k$ are $cM$-Lipschitz and because of the smoothness of $L$, it follows easily that 
\begin{equation}\label{eqremark*}
\mbox{$\vphi|_F$ is $c'M^{-1}$-Lipschitz for any set $F\subset L$ with $\diam(F)$ small enough (depending on $M$).}
\end{equation}

We denote by $\Gamma$ the graph of $\vphi$, and we let $F_\Gamma$ be the closed set comprised between $L$ and $\Gamma$. Next we consider the
domain
$$\wt \Omega_\Gamma = \wt B\setminus F_\Gamma.$$
Notice that $\wt \Omega_\Gamma$ is a Lipschitz domain contained in $\wt B$ whose boundary equals $\Gamma \cup (S\cap \partial\wt B)$.

Recall that
$$f_0(y) =\int_{\partial\wt \Omega_0  \setminus S} \wt u_B\,d\omega_{\wt\Omega_0}^y\quad \mbox{ for all $y\in\wt\Omega_0$}$$
and $$f_0(x) = \wt u_B(x)\quad \mbox{ for all $x\in \wt B\setminus\wt \Omega_0$},$$
where $\Omega_0=\Omega\cap B$ and $\wt\Omega_0=\wt\Omega\cap\wt B$.
Now we define $f_1:\wt\Omega_\Gamma\to \R$ by
\begin{equation}\label{eqdeff1**}
f_1(x) = \int_{\partial\wt \Omega_\Gamma  \setminus S} f_0\,d\omega_{\wt\Omega_\Gamma}^x = \int_{\Gamma} f_0\,d\omega_{\wt\Omega_\Gamma}^x.
\end{equation}
Remark that $f_1$ is harmonic in $\wt\Omega_\Gamma$ and continuous in $\overline{\wt\Omega_\Gamma}$, except perhaps in $\Gamma\cap S$,
 because the boundary data $f_0\,\chi_{\Gamma}$ is continuous in $\partial\wt\Omega_\Gamma$.
Since  $f_0$ is superharmonic in $\wt\Omega_\Gamma$ and $f_1$ harmonic in $\wt\Omega_\Gamma$, $f_0$ and $f_1$ have the same boundary values in $\Gamma$, and 
$$\liminf_{y\to z} (f_0-f_1)(z) = \liminf_{y\to z} f_0(z) \geq 0\quad \mbox{ for all $z\in \partial\wt\Omega_\Gamma\setminus\Gamma$},$$
by the maximum principle then it follows that
\begin{equation}\label{eqvf11}
f_0(x) \geq f_1(x) \quad \mbox{ for all $x\in \wt \Omega_\Gamma$}
\end{equation}
(see Theorem 3.1.5 from \cite{AG}).
\vv

\begin{lemma}
For each $k\in K$, let $\Gamma_k$ be the following subset of $\Gamma$:
$$\Gamma_k = \big\{y\in\vphi_k(c_2MB_k\cap L): \dist(y,L) \geq \tfrac1{10} \rad(B_k)\big\}.$$
Then 
$$f_0(y) = f_1(y) \gtrsim \rad(B_k)\quad \text{ for all $y\in\Gamma\cap \tfrac14B_k'$.}$$
\end{lemma}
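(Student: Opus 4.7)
The strategy parallels that of Lemma \ref{lemcapomega}: I would combine a pointwise lower bound for $\wt u_B$ on the ball $B_k'$ with a Bourgain-type lower bound for the harmonic measure of $\wt\Omega_0$ evaluated at $B_k'$.

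The first ingredient is that $\wt u_B(w) \approx \rad(B_k)$ for every $w \in B_k'$. Indeed, by \rf{eqbkprima} we have $4 B_k' \subset \wt B$, so $\dist(w, \partial\wt B) \gtrsim \rad(B_k)$. Because $B_k$ is centered on the radial segment $L_i \subset [0,z_i]\cap A(0,1/2,3/4)$ with radius small compared to $1$, the ball $B_k'$ stays uniformly away from the caps $S = \partial A(0,1/4,1)$, and hence $\dist(w,\partial\wt B'\cap A(0,1/8,2)) \approx \rad(B_k)$. Lemma \ref{lemubom} then gives $\wt u_B(w) \approx \rad(B_k)$.

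Next, I would fix $y \in \Gamma \cap \tfrac14 B_k'$ and split into two cases. If $y \notin \wt\Omega_0$, then \rf{eqvf0'} together with the preceding step yields $f_0(y) = \wt u_B(y) \gtrsim \rad(B_k)$ directly. Otherwise $y \in \wt\Omega_0$, and I would apply Bourgain's Lemma (Lemma \ref{lem2.1}) to the ball $B_k'$: since $B_k' \subset \wt B$ gives $\wt\Omega_0 \cap B_k' = \wt\Omega \cap B_k'$, the capacity lower bound \rf{eqbkprima} implies
$$\omega_{\wt\Omega_0}^y(B_k') \gtrsim \frac{\capp_{n-1}\bigl(\tfrac14 B_k' \setminus \wt\Omega_0\bigr)}{\rad(B_k')^{n-1}} \gtrsim 1.$$
Using $B_k' \cap S = \varnothing$ together with the pointwise bound $\wt u_B \gtrsim \rad(B_k)$ on $\partial\wt\Omega_0 \cap B_k' \subset B_k'$, the definition of $f_0$ in \rf{eqvf0} yields
$$f_0(y) \geq \int_{\partial\wt\Omega_0 \cap B_k'} \wt u_B \, d\omega_{\wt\Omega_0}^y \gtrsim \rad(B_k)\,\omega_{\wt\Omega_0}^y(B_k') \gtrsim \rad(B_k).$$
To upgrade this to $f_1(y) = f_0(y)$ on $\Gamma \cap \tfrac14 B_k'$, I would invoke the fact that $\wt\Omega_\Gamma$ is a Lipschitz domain, so every boundary point is Wiener regular, and the harmonic extension $f_1$ of $f_0\chi_\Gamma$ attains its boundary value continuously at every point where $f_0|_\Gamma$ is continuous.

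\textbf{Expected main obstacle.} The most delicate point is the pointwise identification $f_1(y) = f_0(y)$ on $\Gamma \cap \tfrac14 B_k'$. Since $f_0$ is defined piecewise, as a harmonic measure integral inside $\wt\Omega_0$ and as $\wt u_B$ outside, it can in principle be discontinuous at points of the set $\Gamma \cap \partial\wt\Omega_0$. Away from that set $f_0$ is continuous on $\Gamma$ and therefore coincides with $f_1$ there in the limit. In any case, the quantitative lower bound $f_0(y) \gtrsim \rad(B_k)$ is robust and holds at every $y \in \Gamma \cap \tfrac14 B_k'$, which is what the subsequent integration in the proof of the Main Lemma requires.
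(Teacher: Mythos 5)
Your proposal is correct and follows essentially the same route as the paper: apply Bourgain's lemma (Lemma \ref{lem2.1}) with the capacity lower bound \rf{eqbkprima} to get $\omega_{\wt\Omega_0}^y(B_k') \gtrsim 1$, combine it with $\wt u_B \approx \rad(B_k)$ on $B_k'$ (which follows from Lemma \ref{lemubom} since the $B_{i,j}$ are centred on $L_i \subset \partial \wt B$ and $4B_k' \subset \wt B$), and handle $y \notin \wt\Omega_0$ via the definition \rf{eqvf0'}. Your cautiousness about the identification $f_0 = f_1$ on $\Gamma \cap \tfrac14 B_k'$ is unnecessary in the end: since $\wt\Omega$ (hence $\wt\Omega_0$) is assumed Wiener regular and $\tfrac14 B_k'$ avoids $S$, the boundary data $\wt u_B$ is continuous there and the two defining expressions for $f_0$ match along $\partial\wt\Omega_0 \cap \tfrac14 B_k'$, so $f_0$ is continuous on $\Gamma \cap \tfrac14 B_k'$ and the Lipschitz regularity of $\wt\Omega_\Gamma$ gives $f_1 = f_0$ there, exactly as the paper's one-line remark asserts.
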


\begin{proof}
The fact that $f_0(y)=f_1(y)$ for $y\in\Gamma\cap \tfrac14B_k'$ is an immediate consequence of the definition of $f_1(y)$.

By \rf{eqbkprima} and Lemma \ref{lem2.1}, for all $y\in \frac14B_k'\cap \wt\Omega_0$, we have 
$$\omega_{\wt\Omega_0}^y(2B_k') \gtrsim \frac{\capp_{n-1}(\tfrac 14 B_k'\setminus \wt \Omega_0)}{\rad(B_k')^{n-1}} \gtrsim 
 1.$$
Then, taking into account that 
$\wt u_B(z)\approx \dist(z,\partial\wt B)\approx\rad(B_k')\approx\rad(B_k)$ for all $z\in 2B_k'$, we infer
$$f_0(y) \geq\int_{\partial\wt \Omega_0  \cap 2B_k'} \wt u_B\,d\omega_{\wt\Omega_0}^y\gtrsim \rad(B_k')\,\omega_{\wt\Omega_0}^y(2B_k') \gtrsim \rad(B_k)\quad \mbox{ for all $y\in\frac14B_k'\cap \wt\Omega_0$.}$$
On the other hand,  for $y\in \tfrac14B_k'\setminus \wt\Omega_0$, we also have $f_0(y) = \wt u_B(y) \approx \dist(z,\wt B)
\approx \rad(B_k)$. 
\end{proof}
\vv

From \rf{eqvf11}, the definition of $f_1$ in \rf{eqdeff1**}, 
the preceding lemma, and the doubling property of $\omega_{\wt\Omega_\Gamma}$ (since $\wt\Omega_\Gamma$ is a Lipschitz domain) we deduce that
$$f_0(x) \geq f_1(x) \gtrsim \sum_{k\in K} \rad(B_k)\,\omega_{\wt\Omega_\Gamma}^x(\tfrac14B_k') \approx \sum_{k\in K} \rad(B_k)\,\omega_{\wt\Omega_\Gamma}^x(B_k\cap\Gamma)
\quad \mbox{ for all $x\in \wt \Omega_\Gamma$}.$$
Hence the Main Lemma \ref{mainlemma1} is a consequence of the following result and 
\rf{eqsumbij}.

\begin{lemma}\label{finallemma}
For all
 $x\in A(0,1/2,3/4)\cap \wt\Omega$ such that $|x|^{-1}x\in \frac12 B$, we have
$$\sum_{k\in K} \rad(B_k)\,\omega_{\wt\Omega_\Gamma}^x(B_k\cap\Gamma)
\gtrsim_M \sum_{k\in K} \rad(B_k)^{n+1}.$$
\end{lemma}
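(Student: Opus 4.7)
The crucial observation is that $\wt\Omega_\Gamma$ is a bounded Lipschitz domain in $\R^{n+1}$ whose Lipschitz character depends only on $M$ and $\beta$. Indeed, $\Gamma$ is the graph of $\vphi$ over the piecewise smooth surface $L$, with Lipschitz constant $O(1/M)$ by the remark \rf{eqremark*}; the remaining portion of $\partial\wt\Omega_\Gamma$ is $S\cap\partial\wt B$; and along $\Gamma\cap S$ one has $\vphi\equiv 0$, so the corners of $\wt\Omega_\Gamma$ coincide with those of $\wt B$. Moreover, our reference pole $x$ satisfies $\dist(x,\partial\wt\Omega_\Gamma)\gtrsim_\beta 1$: since $x\in A(0,1/2,3/4)$ with $|x|^{-1}x\in\tfrac12 B$, the point $x$ lies at distance $\gtrsim_\beta 1$ from $S\cap \partial\wt B$ and distance $\gtrsim r_B\gtrsim_\beta 1$ from $L$, while in Case 2 the graph $\Gamma$ lies within distance $\max_k \rad(B_k)\le r_B/2$ of $L$.

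My strategy is to establish the term-by-term bound
$$\omega_{\wt\Omega_\Gamma}^x(B_k\cap\Gamma)\gtrsim_{M,\beta}\rad(B_k)^n \qquad \text{for each } k\in K,$$
which, after multiplication by $\rad(B_k)$ and summation, yields the desired estimate. The key ingredient is the pointwise Poisson kernel lower bound
$$\frac{d\omega_{\wt\Omega_\Gamma}^x}{d\HH^n|_{\Gamma}}(\xi)\gtrsim_{M,\beta}1 \qquad \text{for } \xi\in\Gamma\text{ away from the corners }\Gamma\cap S.$$
Granted this, since $B_k$ is centered at $z_k\in\Gamma$ at bounded distance from the corners, integrating over $B_k\cap\Gamma$ produces the required term-wise estimate, provided also that the surface area satisfies $\HH^n(B_k\cap\Gamma)\approx\rad(B_k)^n$. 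The latter holds because $\Gamma$ is a Lipschitz graph (with constant $O(1/M)$) passing through $z_k$, so the Euclidean ball $B_k$ intersects $\Gamma$ in a patch of $n$-dimensional area comparable to $\rad(B_k)^n$.

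The Poisson kernel estimate is proved by an inner-tangent-ball argument analogous to that used to establish \rf{equb33} in Lemma \ref{lemubom}. At any $\xi\in\Gamma$ away from the corners, one fits an inner tangent Euclidean ball $U_\xi\subset\wt\Omega_\Gamma$ with $\xi\in\partial U_\xi$ and $\rad(U_\xi)=\rho_\xi$ of size commensurate with the local geometric scale at $\xi$, which is controlled by the $O(1/M)$-Lipschitz constant of $\vphi$. The comparison $g_{\wt\Omega_\Gamma}(x_U,\cdot)\ge g_{U_\xi}(x_U,\cdot)$ in $U_\xi$ (where $x_U$ is the center of $U_\xi$) gives a Poisson kernel bound at $\xi$ with pole $x_U$; a Harnack chain then transfers this bound from $x_U$ back to our fixed pole $x$, both being at controlled distance from $\partial\wt\Omega_\Gamma$.

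The main obstacle is making this Harnack-chain argument uniform in $\xi\in\Gamma$, since $\rho_\xi$ may range from $\sim 1$ (for $\xi$ far from all bumps) down to much smaller scales on narrow bumps; the chain has $\sim\log(1/\rho_\xi)$ links, each contributing a multiplicative factor that depends only on the Lipschitz character of $\wt\Omega_\Gamma$ and hence only on $M$ and $\beta$. A cleaner alternative is to invoke the change-of-pole formula (Lemma \ref{lempole}) together with the boundary Harnack principle for Lipschitz domains, or to exploit directly that $\wt\Omega_\Gamma$ is an $O(1/M)$-Lipschitz perturbation of $\wt B$: comparing the two Poisson kernels via the representation formula and using the already-established identity $\wt u_B(y)\approx\dist(y,\partial\wt B)$ from \rf{equb30} together with \rf{equb33} gives the desired lower bound on $d\omega_{\wt\Omega_\Gamma}^x/d\HH^n|_\Gamma$. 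Either route completes the proof.
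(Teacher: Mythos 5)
The proposal reduces the lemma to the pointwise estimate
$$\omega_{\wt\Omega_\Gamma}^x(B_k\cap\Gamma)\gtrsim_{M,\beta}\rad(B_k)^n\quad\text{for each }k\in K,$$
and proposes to obtain it from a lower bound $\frac{d\omega_{\wt\Omega_\Gamma}^x}{d\HH^n|_\Gamma}\gtrsim 1$ on $\Gamma$, proved by inner tangent balls and a Harnack chain from the tangent-ball center back to the pole $x$. This term-by-term route has a genuine gap, and it is exactly the obstacle you yourself flag but do not actually overcome.

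The inner tangent ball at a point $\xi$ near the top of the $k$-th bump has radius $\rho_k\approx\rad(B_k)$, so the Harnack chain joining its center $x_U$ (at distance $\approx\rho_k$ from $\partial\wt\Omega_\Gamma$) to the fixed pole $x$ (at distance $\approx 1$) has $N\approx\log(1/\rho_k)$ links. Each link contributes a multiplicative factor $C>1$ that indeed depends only on the Lipschitz character of $\wt\Omega_\Gamma$ (hence only on $M,\beta$); but the product over the chain is $C^N\approx\rho_k^{-\log C}$, which degenerates as $\rho_k\to 0$. In other words, you recover only
$$\omega_{\wt\Omega_\Gamma}^x(B(\xi,\rho_k))\gtrsim \rho_k^{n+\alpha}$$
for some $\alpha=\alpha(M,\beta)>0$, not $\rho_k^n$. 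This is exactly the content of Lemma~\ref{lem-KT} (which you would need anyway to make the Harnack chain quantitative), and $\alpha$ cannot be taken to be $0$: for a genuinely Lipschitz (as opposed to $C^{1,\alpha}$) boundary, only $\log\frac{d\omega}{d\sigma}\in\mathrm{BMO}$ (or VMO, under Reifenberg flatness) holds, and in particular the Poisson kernel density on $\Gamma$ is \emph{not} bounded below uniformly across scales. The ``cleaner alternatives'' you mention -- the change-of-pole formula or the boundary Harnack principle -- compare two harmonic functions vanishing on the boundary, i.e.\ they control ratios; they are also only uniform at comparable scales and incur the same $\rho_k^{\alpha}$ loss when applied across a wide range of scales, so they do not rescue the pointwise claim either. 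Since $\rad(B_k)$ ranges over arbitrarily small values, the loss $\rho_k^\alpha$ destroys the term-by-term estimate.

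The paper's proof is structured precisely to avoid this. Instead of a pointwise bound it establishes the \emph{summed} inequality directly: using the auxiliary harmonic function $h$ (the harmonic extension of $\sum_k\rad(B_k)\chi_{B_k\cap L}$) it rewrites $\sum_k\rad(B_k)^{n+1}$ as $\int_\Gamma h\,d\omega_{\wt\Omega_\Gamma}^x$, discretizes via Claim~\ref{claimfinal} to obtain the quantity $S_0=\sum_{j,k}\frac{\rad(B_j)^{n+1}\rad(B_k)}{D(B_j,B_k)^{n+1}}\omega^x_{\wt\Omega_\Gamma}(B_k)$, and then controls $S_0$ by a Schur-test argument split into $S_{0,1}$ (small-to-large scales, using the $(R/r)^{n+\alpha}$ upper growth implicitly and disjointness of the balls) and $S_{0,2}$ (large-to-small scales, where the $(R/r)^{n-\alpha}$ lower growth from Lemma~\ref{lem-KT} is used and the $\alpha$-loss is absorbed into a convergent geometric series in $2^{-i(1-2\alpha)}$ because $\alpha<1/2$). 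That is, the loss $\rho_k^{\alpha}$ that kills the pointwise estimate is harmless once one sums over a separated family of balls, but this requires the discretized Carleson-type bookkeeping that your proposal skips. To repair the proposal you would essentially have to reproduce this Schur-test argument; the ``Poisson kernel $\gtrsim 1$'' shortcut is not available.

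As a minor point, $B_k$ is centered at $z_{i,j}\in L\subset\partial\wt B$, not at $z_k\in\Gamma$ (the latter is a nearby point inside $B_k$ where the bump attains its height), but this does not affect the substance of the discussion above.
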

\vv

To prove this, we will need the following auxiliary lemma.

\begin{lemma}\label{lem-KT}
Given any $\alpha\in(0,1/2)$, suppose that the constant $M$ above is chosen large enough, depending on $\alpha$. Then there exists some constant $C=C(\alpha)$ such that, for all
 $x\in A(0,1/2,3/4)\cap \wt\Omega$ such that $|x|^{-1}x\in \frac12 B$, all
$y\in \Gamma\cap A(0,1/3,4/5)$, and $0<r\leq R\leq 2$, we have
\begin{equation}\label{eqreif1}
C^{-1}\bigg(\frac Rr\bigg)^{n-\alpha} \leq \frac{\omega_{\wt\Omega_\Gamma^x}(B(y,R))}{\omega_{\wt\Omega_\Gamma^x}(B(y,r))}
\leq C\bigg(\frac Rr\bigg)^{n+\alpha}.
\end{equation}
\end{lemma}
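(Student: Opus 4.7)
The plan is to combine the fact that, for $M$ large, the graph $\Gamma$ is locally very nearly flat with a dyadic iteration based on the change of pole formula (Lemma~\ref{lempole}) and the Green function comparison (Lemma~\ref{lem:wG}), thereby transferring flatness into an asymptotically sharp doubling exponent. First I would use the observation \rf{eqremark*} to produce a scale $r_0=r_0(M)>0$ such that, for any $y\in\Gamma\cap A(0,1/3,4/5)$ and any $r\le r_0$, the set $\Gamma\cap B(y,Cr)$ is the graph over some affine hyperplane through $y$ of a function with Lipschitz constant $\lesssim 1/M$. For scales $r_0\le r\le R\le 2$, the inequality \rf{eqreif1} is trivial with $C$ depending only on the (bounded) range of scales and the Lipschitz character of $\wt\Omega_\Gamma$, so I may restrict to the regime $r<R\le r_0$.

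Next I would reduce the estimate to a bound independent of $x$. Since $\wt\Omega_\Gamma$ is a Lipschitz domain with bounded Lipschitz character, for each scale $\rho\in(r,R]$ I fix a corkscrew point $A_\rho\in\wt\Omega_\Gamma$ with $|A_\rho-y|\approx\rho$ and $\dist(A_\rho,\partial\wt\Omega_\Gamma)\approx\rho$. The change of pole formula (Lemma~\ref{lempole}) together with $\omega^{A_\rho}(B(y,\rho))\approx 1$ (a consequence of Lemma~\ref{lem:wG} and Lemma~\ref{lem2.1}) allows me, choosing dyadic scales $r_k=2^{-k}R$, $k=0,\dots,N$ with $r_N\approx r$, to write
\begin{equation*}
\frac{\omega^{x}(B(y,r))}{\omega^{x}(B(y,R))}\;\approx\;\prod_{k=0}^{N-1}\frac{\omega^{A_{r_k}}(B(y,r_{k+1}))}{\omega^{A_{r_k}}(B(y,r_k))},
\end{equation*}
switching reference point at each scale via the change of pole.

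The core of the argument is then a quantitative comparison with a half-space at every dyadic scale. For $r_k\le r_0$, let $H_k$ be an affine hyperplane through $y$ approximating $\Gamma$ at scale $r_k$, and let $H_k^+$ be the corresponding half-space containing $A_{r_k}$. Because $\Gamma$ is a $(C/M)$-Lipschitz graph over $H_k$ on $B(y,Cr_k)$, the domains $\wt\Omega_\Gamma\cap B(y,Cr_k)$ and $H_k^+\cap B(y,Cr_k)$ differ only within an $O(r_k/M)$-neighborhood of $H_k$; the boundary Harnack principle together with the maximum principle then yield a single-scale ratio
\begin{equation*}
\frac{\omega^{A_{r_k}}(B(y,r_{k+1}))}{\omega^{A_{r_k}}(B(y,r_k))}=2^{-n\pm O(1/M)},
\end{equation*}
since the analogous ratio for the half-space $H_k^+$ equals $2^{-n}$ exactly. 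Multiplying over $N\approx\log_2(R/r)$ scales produces a total deviation of the exponent from $n$ bounded by $O(1/M)$; choosing $M=M(\alpha)$ large enough that $O(1/M)\le\alpha$ delivers \rf{eqreif1} with $C=C(\alpha)$.

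The main obstacle is the per-scale half-space comparison: showing that a Lipschitz constant $1/M$ of the boundary translates into an $O(1/M)$ perturbation of the doubling ratio away from its flat value $2^{-n}$. This is a quantitative form of the Dahlberg--Kenig--Toro type results for chord-arc (or $\delta$-Reifenberg flat) domains with small constant, and requires a careful perturbative analysis of the half-space Poisson kernel together with the boundary Harnack principle. The fact that $\wt\Omega_\Gamma$ has uniformly bounded Lipschitz character (and is locally close to a half-space) is essential for the classical estimates involved to produce constants that are uniform in $y$, in the scales $r<R$, and in the choice of $M$.
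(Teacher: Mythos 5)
The paper does not actually write out a proof of this lemma: it just invokes the local Reifenberg flatness of $\partial\wt\Omega_\Gamma$ and cites Theorem 4.1 of Kenig--Toro \cite{Kenig-Toro-Duke} and Lemma 3.7 of \cite{Prats-Tolsa-CVPDE}. Your sketch is in the same spirit as that cited argument (local flatness $\Rightarrow$ dyadic comparison with half-spaces $\Rightarrow$ nearly sharp doubling exponent), so the overall route aligns with the source the authors point to.

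There is, however, a genuine gap in the way you set up the iteration. In the telescoping line
\begin{equation*}
\frac{\omega^{x}(B(y,r))}{\omega^{x}(B(y,R))}\;\approx\;\prod_{k=0}^{N-1}\frac{\omega^{A_{r_k}}(B(y,r_{k+1}))}{\omega^{A_{r_k}}(B(y,r_k))},
\end{equation*}
the ``$\approx$'' hides $N\approx\log_2(R/r)$ applications of the change of pole formula (Lemma~\ref{lempole}), each introducing a multiplicative comparison constant $C>1$ that depends only on the Lipschitz character of $\wt\Omega_\Gamma$. That Lipschitz character is bounded but does \emph{not} improve as $M\to\infty$ (the boundary still contains, e.g., the non-flat spherical pieces coming from $S$, and Lemma~\ref{lempole} as stated gives a fixed $C$). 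Accumulating these factors yields an extra $C^N=(R/r)^{\log_2 C}$, which shifts the doubling exponent by the fixed amount $\log_2 C$, independent of $M$; this can easily exceed $\alpha$. So even granting your per-scale estimate $\omega^{A_{r_k}}(B(y,r_{k+1}))/\omega^{A_{r_k}}(B(y,r_k))=2^{-n\pm O(1/M)}$, the conclusion ``total deviation $O(1/M)$'' does not follow. The missing ingredient --- and the real content of the Kenig--Toro argument --- is a quantitative boundary Harnack / change of pole estimate for $\delta$-Reifenberg flat boundaries whose constants are of the form $1+O(\delta)$, i.e.\ tend to $1$ as the local flatness improves; equivalently, one avoids iterated pole changes entirely by applying Lemma~\ref{lempole} once and then estimating the Green function ratio $g(x,A_r)/g(x,A_R)$ via a quantitative decay $g(x,A_\rho)\approx\rho^{1\pm O(\delta)}$. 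Either way, you need the flatness to enter the Harnack-type constants themselves, not only the geometric half-space comparison at each scale, and your sketch does not surface this.
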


This lemma follows from the local Reifenberg flatness of $\partial\wt\Omega_\Gamma$ far away from $S$, by \rf{eqremark*} and taking $M$ large enough. The arguments are essentially the same as the ones for Theorem 4.1 from \cite{Kenig-Toro-Duke} and Lemma 3.7 from \cite{Prats-Tolsa-CVPDE}, and so we omit the proof.
From now on, we assume $M$ large enough so that we can take $a=1/4$ in \rf{eqreif1}.

\vv

\begin{proof}[Proof of Lemma \ref{finallemma}]
Consider the function $h_0:\partial\wt B\to\R$ given by
$$h_0(y) = \sum_{k\in K} \rad(B_k)\,\chi_{B_k\cap L}(y)$$
and let $h:\wt B\to\R$ be its harmonic extension to $\wt B$:
$$h(y) = \int h_0(z)\,d\omega_{\wt B}^y(z) = \sum_{k\in K}  \rad(B_k)\,\omega_{\wt B}^y(B_k).$$ 
 By Lemma \ref{lemubom}, we have
 $\frac{d\omega_{\wt B}^x}{d\HH^n}\approx 1$ on $\wt B\cap B_k$ for each $k$ and for
  $x\in A(0,1/2,3/4)\cap \wt\Omega$ such that $|x|^{-1}x\in \frac12 B$. Using also the fact that $h$ is harmonic in $\wt B$, that
  $x\in \wt B\cap \wt\Omega_\Gamma$, and that $h$ vanishes in
  $L\setminus \bigcup_{k\in K} \frac12c_2B_k = \Gamma\setminus \bigcup_{k\in K} \frac12c_2B_k
  $, we derive
\begin{align*}
\sum_{k\in K} \rad(B_k)^{n+1} & \approx \sum_{k\in K} \rad(B_k)\,\omega_{\wt B}^x(B_k\cap L) = \int_L h\,d\omega_{\wt B}^x = \int_\Gamma h\,d\omega_{\wt \Omega_\Gamma}^x\\
& = \sum_{k\in K}\int_{\frac12c_2 B_k\cap \Gamma} h(y)\,d\omega_{\wt \Omega_\Gamma}^x(y) 
 = \sum_{j,k\in K}  \rad(B_j)\,\int_{\frac12c_2B_k\cap \Gamma} \omega_{\wt B}^y(B_j)\,d\omega_{\wt \Omega_\Gamma}^x(y)\\
 & \lesssim \sum_{j,k\in K}  \rad(B_j)\,  \sup_{y\in\frac12c_2B_k\cap \Gamma}\omega_{\wt B}^y(B_j)\,\,\omega_{\wt \Omega_\Gamma}^x(B_k).
\end{align*} 

For all $j,k\in K$, we denote
$$D(B_j,B_k) =\dist(B_j,B_k) + \rad(B_j) +  \rad(B_k).$$ 
\vv

\begin{claim}\label{claimfinal}
For all $j,k\in K$, we have
\begin{equation}\label{eqybjk7}
 \sup_{y\in\frac12c_2B_k\cap \Gamma}\omega_{\wt B}^y(B_j)\lesssim \frac{\rad(B_j)^n\,\rad(B_k)}{D(B_j,B_k)^{n+1}}.
\end{equation}	
\end{claim}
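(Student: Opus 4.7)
The plan is to bound $\omega^y_{\wt B}(B_j)$ by combining two applications of Lemma \ref{lem:wG} with a standard Poisson kernel estimate that exploits the piecewise $C^\infty$-smoothness of $\partial\wt B$ away from the corners at $|x|\in\{1/4,1\}$. The case $D(B_j,B_k)\approx \rad(B_j)+\rad(B_k)$ (which includes $j=k$) is trivial since the right hand side of \eqref{eqybjk7} is then bounded below by a positive constant while $\omega^y_{\wt B}(B_j)\leq 1$. So I may assume $j\neq k$, in which case the separation $c_3MB_j\cap c_3MB_k=\varnothing$ forces $D:=D(B_j,B_k)\gtrsim M(\rad(B_j)+\rad(B_k))\gg \rad(B_j)+\rad(B_k)$ for $M$ large enough. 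Observe also that $y\in \frac12c_2B_k\cap\Gamma$ lies at distance $\delta_y:=\dist(y,L)\lesssim \rad(B_k)$ from $L=\partial\wt B\setminus S$, since $\vphi$ has height $\lesssim\rad(B_k)$ on $\frac12c_2MB_k\cap L$.

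Let $z_k^*\in L$ be a nearest point to $y$ and pick a corkscrew $z_j^*\in B_j\cap\wt B$ with $\dist(z_j^*,\partial\wt B)\approx\rad(B_j)$; both $z_j^*$ and $z_k^*$ remain uniformly away from the corners of $\partial\wt B$ thanks to $B_j,B_k\subset A(0,1/2,3/4)$. Applying Lemma \ref{lem:wG} once to the boundary ball $B_j$ with pole $y\in\wt B\setminus 2B_j$, and once to the boundary ball $B_y':=B(z_k^*,2\delta_y)$ (which contains $y$) with pole $z_j^*\in\wt B\setminus 2B_y'$, and using symmetry of the Green function, gives
$$\omega^y_{\wt B}(B_j)\approx \rad(B_j)^{n-1}\,g_{\wt B}(y,z_j^*),\qquad \omega^{z_j^*}_{\wt B}(B_y')\approx \delta_y^{n-1}\,g_{\wt B}(y,z_j^*).$$
Eliminating the Green function, the claim reduces to the Poisson-type bound
$$\omega^{z_j^*}_{\wt B}(B_y')\lesssim \frac{\rad(B_j)\,\delta_y^n}{D^{n+1}}.$$

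The main analytic step is to establish this last inequality, which relies on the local smoothness of $\partial\wt B$ near $z_k^*$. The Poisson kernel of $\wt B$ satisfies the classical estimate $K_{\wt B}(z_j^*,\xi)\lesssim \dist(z_j^*,\partial\wt B)\,|z_j^*-\xi|^{-(n+1)}$ for $\xi$ on a smooth portion of $\partial\wt B$ with $|z_j^*-\xi|\gg\dist(z_j^*,\partial\wt B)$; integrating this over $B_y'\cap\partial\wt B$ (whose surface measure is $\approx\delta_y^n$, with every point at Euclidean distance $\approx D$ from $z_j^*$) yields the required bound. Equivalently, one can argue by the maximum principle: construct an outer tangent ball to $\wt B$ at $z_k^*$ of radius bounded below by the uniform curvature of $L$ there and compare with the explicit Poisson kernel of the complement of that ball. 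Combining everything and using $\delta_y\lesssim\rad(B_k)$ one obtains
$$\omega^y_{\wt B}(B_j)\approx \Bigl(\frac{\rad(B_j)}{\delta_y}\Bigr)^{n-1}\omega^{z_j^*}_{\wt B}(B_y')\lesssim \frac{\rad(B_j)^n\,\delta_y}{D^{n+1}}\lesssim \frac{\rad(B_j)^n\,\rad(B_k)}{D^{n+1}}.$$
The main obstacle is justifying the Poisson kernel estimate: while classical for smooth boundaries, one must verify that all points remain in the smooth portion of $\partial\wt B$, which holds because $B_j, B_k\subset A(0,1/2,3/4)$ places them, together with $z_j^*$, $z_k^*$ and $B_y'$, uniformly away from the corners of $\partial\wt B$.
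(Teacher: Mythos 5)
Your proof is correct, but it takes a genuinely different route from the paper. Both proofs dispose of $j=k$ trivially and both invoke Lemma~\ref{lem:wG} (twice each, via the symmetry of the Green function) to turn the harmonic measure of a small boundary ball into a Green function evaluation. The divergence is in how that Green function is then estimated. The paper introduces an intermediate ball $\wh B_j$ concentric with $B_j$ of radius $\approx D(B_j,B_k)$, and uses the change of pole formula (Lemma~\ref{lempole}) together with the density estimate $\frac{d\omega_{\wt B}^x}{d\HH^n}\approx 1$ from Lemma~\ref{lemubom} to compare everything against surface measure seen from the fixed interior pole $x$. You instead eliminate the Green function directly and reduce the claim to the pointwise Poisson kernel bound $K_{\wt B}(z_j^*,\xi)\lesssim \dist(z_j^*,\partial\wt B)\,|z_j^*-\xi|^{-(n+1)}$ on the smooth part of $\partial\wt B$, which you then establish either from the classical kernel estimate for $C^2$ domains or by a tangent-ball comparison. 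That bound is essentially a movable-pole version of what Lemma~\ref{lemubom} proves for the fixed pole $x$, so the two arguments are using the same underlying smoothness of $\wt B$; the paper's choice routes it through lemmas already stated (and valid for Lipschitz/NTA domains), whereas your route needs an additional kernel estimate to be imported or re-derived. Two small remarks: your claim that the right-hand side of \eqref{eqybjk7} is ``bounded below by a positive constant'' when $D(B_j,B_k)\approx\rad(B_j)+\rad(B_k)$ is, as phrased, only true when $\rad(B_j)\approx\rad(B_k)$ — but since the separation $c_3MB_j\cap c_3MB_k=\varnothing$ rules out this regime for $j\neq k$, the only case it covers is $j=k$, so the argument goes through. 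Also, for $y\in\Gamma$ with $\vphi(y)=0$ (i.e. $y\in L\subset\partial\wt B$) the quantity $\omega_{\wt B}^y(B_j)$ must be read as the continuous boundary extension, which vanishes; this affects both your argument and the paper's equally and is harmless.
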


We defer the proof of the claim to the end of the proof of the lemma. Assuming this for the moment, we get
\begin{equation}\label{eqsumbjbk}
\sum_{k\in K} \rad(B_k)^{n+1} \lesssim 
\sum_{j,k\in K} \,  \frac{\rad(B_j)^{n+1}\,\rad(B_k)}{D(B_j,B_k)^{n+1}}\,\omega_{\wt \Omega_\Gamma}^x(B_k) =:S_0.
\end{equation}
To conclude the proof of the lemma we will bound the sum $S_0$ on right hand side above by
$$\sum_{k\in K} \rad(B_k)\,\omega_{\wt\Omega_\Gamma}^x(B_k).$$
To this end, we split the sum $S_0$ as follows:
$$S_0 =  \Bigg(\sum_{\substack{j,k\in K:\\ \rad(B_j)\leq\rad(B_k)}} \!  
 + \sum_{\substack{j,k\in K:\\ \rad(B_j)>\rad(B_k)}}\Bigg)\,\, \frac{\rad(B_j)^{n+1}\,\rad(B_k)}{D(B_j,B_k)^{n+1}}\,\omega_{\wt \Omega_\Gamma}^x(B_k)
 =:S_{0,1} + S_{0,2}.$$
\vv

\subsubsection*{Estimate of $S_{0,1}$}
Using that $\rad(B_j)\leq\rad(B_k)$ in this sum, we write
\begin{align*}
S_{0,1} & \leq \sum_{\substack{j,k\in K:\\ \rad(B_j)\leq\rad(B_k)}}
\frac{\rad(B_j)^{n}\,\rad(B_k)^2}{D(B_j,B_k)^{n+1}}\,\omega_{\wt \Omega_\Gamma}^x(B_k)\\
& = \sum_{k\in K} \rad(B_k)^2\,\omega_{\wt \Omega_\Gamma}^x(B_k) \,\,\sum_{i\geq 0} 
\sum_{\substack{j\in K:\, \rad(B_j)\leq\rad(B_k),\\ 2^i\rad(B_k)\leq D(B_j,B_k) < 2^{i+1}\rad(B_k)}}\!\!
\frac{\rad(B_j)^{n}\,}{D(B_j,B_k)^{n+1}}.
\end{align*}
Notice that, for a fixed $i\geq0$, the conditions $\rad(B_j)\leq\rad(B_k)$, $2^i\rad(B_k)\leq D(B_j,B_k) < 2^{i+1}\rad(B_k)$ imply that
$B_j\subset 2^{i+2}B_k$. Then we get
\begin{align}\label{eqal289}
\sum_{\substack{j\in K:\, \rad(B_j)\leq\rad(B_k),\\ 2^i\rad(B_k)\leq D(B_j,B_k) < 2^{i+1}\rad(B_k)}}\!\!\!
\frac{\rad(B_j)^{n}}{D(B_j,B_k)^{n+1}} & \lesssim 
\sum_{\substack{j\in K:\, B_j\subset 2^{i+2} B_k}}\!\!
\frac{\HH^n(L\cap B_j)}{(2^i\rad(B_k))^{n+1}} \\
&\lesssim \frac{\HH^n(L\cap 2^{i+2} B_k)}{(2^i\rad(B_k))^{n+1}}\approx \frac1{2^i\rad(B_k)}.\nonumber
\end{align}
Therefore,
$$S_{0,1} \lesssim \sum_{k\in K} \rad(B_k)^2\,\omega_{\wt \Omega_\Gamma}^x(B_k) \,\,\sum_{i\geq 0}\frac1{2^i\rad(B_k)}
\approx \sum_{k\in K} \rad(B_k)\,\omega_{\wt \Omega_\Gamma}^x(B_k) 
.$$

\vv
\subsubsection*{Estimate of $S_{0,2}$}
Now we assume that $\rad(B_j)>\rad(B_k)$.
Let $A\geq 1$ be the minimal constant such that $B_k\subset AB_j$. Notice that $\rad(AB_j)\approx D(B_j,B_k)$.
By Lemma \ref{lem-KT}, using also the doubling property of $\omega_{\wt \Omega_\Gamma}^x$ (since $\wt \Omega_\Gamma$ is a Lipschitz domain), we have
\begin{align*}
\omega_{\wt \Omega_\Gamma}^x(B_j) &\gtrsim \bigg(\frac{\rad(B_j)}{\rad(AB_j)}\bigg)^{n+a}\,\omega_{\wt \Omega_\Gamma}^x(AB_j)
 \\
&\gtrsim \bigg(\frac{\rad(B_j)}{\rad(AB_j)}\bigg)^{n+\alpha} \bigg(\frac{\rad(AB_j)}{\rad(B_k)}\bigg)^{n-\alpha}
\omega_{\wt \Omega_\Gamma}^x(B_k)\approx  \frac{\rad(B_j)^{n+\alpha}}{\rad(B_k)^{n-\alpha}\,D(B_j,B_k)^{2\alpha}}\,\omega_{\wt \Omega_\Gamma}^x(B_k).
\end{align*}
Plugging this estimate into the definition of $S_{0,2}$, we obtain
\begin{align*}
S_{0,2} & \lesssim
\sum_{\substack{j,k\in K:\\ \rad(B_j)>\rad(B_k)}}\!\! \frac{\rad(B_j)^{n+1}\,\rad(B_k)}{D(B_j,B_k)^{n+1}}\,
\frac{\rad(B_k)^{n-\alpha}\,D(B_j,B_k)^{2\alpha}}{\rad(B_j)^{n+\alpha}}\,\omega_{\wt \Omega_\Gamma}^x(B_j)\\
& \approx
\sum_{\substack{j,k\in K:\\ \rad(B_j)>\rad(B_k)}}\!\! \frac{\rad(B_j)^{1-\alpha}\,\rad(B_k)^{n+1-\alpha}}{D(B_j,B_k)^{n+1-2\alpha}}\,
\,\omega_{\wt \Omega_\Gamma}^x(B_j).
\end{align*}
Using that $\rad(B_k)<\rad(B_j)$, we get
$$S_{0,2} \lesssim \sum_{j\in K} \rad(B_j)^{2-2\alpha}\,\omega_{\wt \Omega_\Gamma}^x(B_j) \,\,\sum_{i\geq 0} 
\sum_{\substack{k\in K:\, \rad(B_k)<\rad(B_j),\\ 2^i\rad(B_j)\leq D(B_j,B_k) < 2^{i+1}\rad(B_j)}}\!\!
\frac{\rad(B_k)^{n}\,}{D(B_j,B_k)^{n+1-2\alpha}}.
$$
Arguing as in \rf{eqal289}, we deduce that for each $i\geq0$
$$\sum_{\substack{k\in K:\, \rad(B_k)<\rad(B_j),\\ 2^i\rad(B_j)\leq D(B_j,B_k) < 2^{i+1}\rad(B_j)}}\!\!
\frac{\rad(B_k)^{n}\,}{D(B_j,B_k)^{n+1-2\alpha}}\lesssim \frac1{\big(2^i\rad(B_j)\big)^{1-2\alpha}}.
$$
Thus, since $\alpha<1/2$,
$$S_{0,2} 
\lesssim \sum_{j\in K} \rad(B_j)^{2-2\alpha}\,\omega_{\wt \Omega_\Gamma}^x(B_j)\,\,\sum_{i\geq 0} \frac1{\big(2^i\rad(B_j)\big)^{1-2\alpha}}
\approx \sum_{j\in K} \rad(B_j)\,\omega_{\wt \Omega_\Gamma}^x(B_j).$$

\vv
Gathering the estimates obtained for $S_{0,1}$ and $S_{0,2}$, we obtain
$$
S_0\lesssim \sum_{k\in K} \rad(B_k)\,\omega_{\wt \Omega_\Gamma}^x(B_k),$$
as wished. Together with \rf{eqsumbjbk}, this concludes the proof of the lemma, modulo  Claim \ref{claimfinal}.
\end{proof}

\vv

\begin{proof}[Proof of Claim \ref{claimfinal}]
In the case $j=k$, we have 
$$\frac{\rad(B_j)^n\,\rad(B_k)}{D(B_j,B_k)^{n+1}}\approx1$$
and so the estimate \rf{eqybjk7} is trivial.

So we assume that $j\neq k$. Let $y\in\frac12c_2B_k\cap \Gamma$, and let $\wh B_j$ be a ball concentric with $B_j$ such that $2\wh B_j\cap \frac12c_2B_k = \varnothing$ and $3\wh B_j\cap \frac12c_2B_k \neq \varnothing$. Clearly, by construction, we have $B_j\subset\wh B_j$ and $\rad(\wh B_j)\approx D(B_j,B_k)$. By
Lemma \ref{lempole} and \ref{lemubom}, we have
\begin{equation}\label{eqfrac84}
\frac{\omega_{\wt B}^y(B_j)}{\omega_{\wt B}^y(\wh B_j)}
 \approx \frac{\omega_{\wt B}^x(B_j)}{\omega_{\wt B}^x(\wh B_j)} 
  \approx \frac{\HH^n(B_j\cap \wt B) }{\HH^n(\wh B_j\cap \wt B)} 
  \approx \bigg(\frac{\rad(B_j)}{\rad(\wh B_j)}\bigg)^n,
\end{equation}
where we took $x\in A(0,1/2,3/4)\cap \wt\Omega$ such that $|x|^{-1}x\in \frac12 B$.
On the other hand, by Lemma \ref{lem:wG}, if we let $p\in\wt B$ be a corkscrew point for $\wh B_j$ (that is, $p\in\wh B_j$ with $\dist(p,\partial\wt B)\approx\rad(\wh B_j)$), we have
$$
\omega_{\wt B}^y(\wh B_j) \approx g_{\wt B}(y,p) \,\rad(\wh B_j)^{n-1} \approx \omega_{\wt B}^p(B(y,2\dist(y,\partial\wt B)))\,\frac{\rad(\wh B_j)^{n-1}}{\dist(y,\partial\wt B)^{n-1}}.$$
By Lemmas \ref{lempole} and \ref{lemubom}, we also have
$$ \omega_{\wt B}^p(B(y,2\dist(y,\partial\wt B))) \approx \frac{ \omega_{\wt B}^x(B(y,2\dist(y,\partial\wt B)))}{ \omega_{\wt B}^x(\wh B_j)} 
\approx \bigg(\frac{ \dist(y,\partial\wt B)}{\rad(\wh B_j)}\bigg)^n.$$
Therefore,
$$\omega_{\wt B}^y(\wh B_j)
\approx  \bigg(\frac{ \dist(y,\partial\wt B)}{\rad(\wh B_j)}\bigg)^n\,\frac{\rad(\wh B_j)^{n-1}}{\dist(y,\partial\wt B)^{n-1}} =  \frac{ \dist(y,\partial\wt B)}{\rad(\wh B_j)}.$$
Plugging this estimate into \rf{eqfrac84}, we obtain
$$\omega_{\wt B}^y(B_j)\approx \bigg(\frac{\rad(B_j)}{\rad(\wh B_j)}\bigg)^n\,\frac{ \dist(y,\partial\wt B)}{\rad(\wh B_j)} \approx  \frac{ \rad(B_j)^n\,\dist(y,\partial\wt B)}{D(B_j,B_k)^{n+1}} \lesssim 
\frac{ \rad(B_j)^n\,\rad(B_k)}{D(B_j,B_k)^{n+1}}
.$$
\end{proof}

\vv


\section{The proof of Theorems \ref{teomain0} and \ref{teomain1} in the non-Wiener regular case}

In Sections \ref{seccontra} and \ref{secmainlemma} we have proved and Theorem \ref{teomain0} and Theorem \ref{teomain1} under the assumption that
$\wt\Omega$ and $\wt\Omega'$ are Wiener regular open sets in $\R^{n+1}$. In this section we will prove both theorems
in full generality by an approximating argument. To this end, for each $k\geq1$, we denote
$$S_k = \big\{x\in \Omega:\dist_{\bS^n}(x,\partial_{\bS^n}\Omega)\leq 2^{-k}\big\},\qquad
\Omega_k=\Omega\setminus S_k.$$
Clearly, we have
$\HH^n(\Omega\triangle \Omega_k) = \HH^n(\Omega\setminus \Omega_k) \to0$ as $k\to\infty$.
Since we are assuming that the barycenter $x_\Omega$ of $\Omega$ is well defined, it follows easily that the barycenter $x_{\Omega_k}$ 
of $\Omega_k$ is also well defined for $k$ large enough and $x_{\Omega_k}\to x_\Omega$ as $k\to\infty$.
We denote by $B$ and $B_k$ the $\bS^n$-balls with the same barycenter and $\HH^n$-measure as $\Omega$ and $\Omega_k$, respectively, so that
$B_k\to B$ in Hausdorff distance as $k\to\infty$.
We also set
$$\wt\Omega_k = \{y\in A(0,1/4,1):|y|^{-1}y\in \Omega_k\},\qquad \wt B_k = \{y\in A(0,1/4,1):|y|^{-1}y\in B_k\},$$
and 
$$\wt\Omega_k' = \{y\in A(0,1/8,2):|y|^{-1}y\in \Omega_k\},\qquad \wt B_k' = \{y\in A(0,1/8,2):|y|^{-1}y\in B_k\}.$$

\vv
\begin{lemma}
For each $k\geq1$, if $\Omega_k\neq \varnothing$, then $\wt\Omega_k$ and $\wt\Omega_k'$ are Wiener regular.
\end{lemma}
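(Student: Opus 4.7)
The plan is to verify the capacitary Wiener criterion \eqref{eqwiener} pointwise on $\partial\wt\Omega_k$ (and analogously on $\partial\wt\Omega_k'$). Since the ambient space is $\R^{n+1}$ with $n\geq 2$, it suffices to exhibit, at each $y\in\partial\wt\Omega_k$, constants $r_y,c_y>0$ such that $\capp_{n-1}(B(y,r)\setminus\wt\Omega_k)\geq c_y\,r^{n-1}$ for all $0<r\leq r_y$. By Lemma \ref{lemcap-contingut}, applied in ambient dimension $n+1$ with test dimension $s=n+1$, this reduces to showing the Lebesgue volume bound $\HH^{n+1}(B(y,r)\setminus\wt\Omega_k)\gtrsim r^{n+1}$.

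I would split $\partial\wt\Omega_k$ into two classes according to whether $y$ lies on $\partial A(0,1/4,1)=S(0,1/4)\cup S(0,1)$ or strictly inside the annulus with radial projection in $\partial_{\bS^n}\Omega_k$. In the first case, the exterior of $A(0,1/4,1)$ provides locally a Euclidean half-ball disjoint from $\wt\Omega_k$, giving the desired $(n+1)$-volume lower bound at once. For the second case, I would first record the key structural fact: since $\Omega_k=\{x\in\Omega:\dist_{\bS^n}(x,\partial_{\bS^n}\Omega)>2^{-k}\}$ is open with closure $\overline{\Omega_k}=\{x\in\Omega:\dist_{\bS^n}(x,\partial_{\bS^n}\Omega)\geq 2^{-k}\}$, the boundary $\partial_{\bS^n}\Omega_k$ consists of exactly those points of $\Omega$ at geodesic distance precisely $2^{-k}$ from $\partial_{\bS^n}\Omega$.

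The geometric heart of the argument is then the following observation. For $z=y/|y|\in\partial_{\bS^n}\Omega_k$, choose $w\in\partial_{\bS^n}\Omega$ with $\dist_{\bS^n}(z,w)=2^{-k}$; the closed geodesic ball $\bar B_{\bS^n}(w,2^{-k})$ lies entirely in $\bS^n\setminus\Omega_k$, because every point in it is within geodesic distance $2^{-k}$ of $\partial_{\bS^n}\Omega$ and thus belongs either to $\bS^n\setminus\Omega$ or to $S_k$. Since $z$ sits on the smooth boundary of $\bar B_{\bS^n}(w,2^{-k})$, for all $r\leq c\,2^{-k}$ the intersection $B_{\bS^n}(z,r)\cap\bar B_{\bS^n}(w,2^{-k})$ is essentially a spherical half-ball, which yields $\HH^n(B_{\bS^n}(z,r)\setminus\Omega_k)\gtrsim r^n$. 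Taking the radial product of this spherical set over a short interval of radii $t\in(|y|-r/2,|y|+r/2)$ produces a subset of $B(y,Cr)\setminus\wt\Omega_k$ of $(n+1)$-volume $\gtrsim r^{n+1}$, which by Lemma \ref{lemcap-contingut} translates into the capacity bound $\capp_{n-1}(B(y,Cr)\setminus\wt\Omega_k)\gtrsim r^{n-1}$, as required.

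The argument for $\wt\Omega_k'$ is identical after replacing $A(0,1/4,1)$ by $A(0,1/8,2)$. The only nontrivial step is the half-ball lower bound at radial-boundary points, which rests on the maximality of $2^{-k}$ as the distance from $z$ to $\partial_{\bS^n}\Omega$; the remaining ingredients are either routine smoothness at points of $\partial A$ or a direct invocation of the capacity--content comparison.
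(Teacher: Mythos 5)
Your argument is correct and follows essentially the same route as the paper's: split $\partial\wt\Omega_k$ into the two spherical faces of $\partial A(0,1/4,1)$ and the lateral cone-part, and at a lateral point $y$ with $z=y/|y|\in\partial_{\bS^n}\Omega_k$ use the tangent geodesic ball $\bar B_{\bS^n}(w,2^{-k})\subset\bS^n\setminus\Omega_k$ to find a fat subset of $B(y,r)\setminus\wt\Omega_k$. The only cosmetic difference is the last step: the paper observes directly that $B(y,r)\setminus\wt\Omega_k$ contains a Euclidean ball of radius $\approx r$ and hence has capacity $\gtrsim r^{n-1}$, while you derive a volume lower bound and then invoke the content--capacity comparison of Lemma \ref{lemcap-contingut}; both are fine.
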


\begin{proof}
We will show that $\wt\Omega_k$ is Wiener regular. The same arguments are valid for $\wt\Omega_k'$. By the discussion
at the beginning of Section \ref{secharm} (see \rf{eqwiener}), it is enough to show that for all $x\in\partial\wt\Omega_k$ and any $r>0$ small enough,
\begin{equation}\label{eqcapfac1}
\capp_{n-1}(B(x,r)\setminus\wt\Omega_k)\gtrsim r^{n-1}.
\end{equation}
Clearly this holds for all $x\in \partial A(0,1/4,1) \cap \partial\wt\Omega_k$ because $B(0,1)^c \cup \bar B(0,1/4)\subset \wt\Omega_k^c$. So consider the case when $x\in \partial\wt\Omega_k \setminus \partial A(0,1/4,1) = \partial\wt\Omega_k \cap A(0,1/4,1)$.
Let $\xi=|x|^{-1}x$, so that $\xi\in\partial_{\bS^n}\Omega_k$. Take $\eta\in \partial_{\bS^n}\Omega$ such that
$\dist_{\bS^n}(\xi,\eta) = \dist_{\bS^n}(\xi,\partial_{\bS^n}\Omega)$. Notice that $\dist_{\bS^n}(\xi,\eta)= 2^{-k}$, by the definition of $\Omega_k$. In fact, $\bar B_{\bS^n}(\eta,2^{-k}) \subset S_k \subset \bS^n\setminus\Omega_k$. Since $\xi\in
\partial_{\bS^n}\bar B_{\bS^n}(\eta,2^{-k})$, we deduce that, for $0<r< 2^{-k}$, $B_{\bS^n}(\xi,r)\setminus\Omega_k$ contains an $\bS^n$-ball with radius comparable to $r$. From this fact we deduce that for the same range $0<r< 2^{-k}$, $B(x,r)\setminus \wt\Omega_k$ contains a ball with radius comparable to $r$, which yields \rf{eqcapfac1}.
\end{proof}

\vv

\begin{lemma}\label{lemaprox37}
We have
$$\lim_{k\to\infty}\lambda(\Omega_k) = \lambda(\Omega)\qquad \text{and}\qquad \lim_{k\to\infty}\lambda(B_k) = \lambda(B).$$
\end{lemma}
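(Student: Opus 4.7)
The plan has two independent parts.

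\textbf{Convergence $\lambda(\Omega_k)\to\lambda(\Omega)$.} First observe that by construction $S_k\supset S_{k+1}$, so the sets $\Omega_k=\Omega\setminus S_k$ form an increasing sequence of open subsets of $\Omega$ with $\bigcup_k\Omega_k=\Omega$ (any point $x\in\Omega$ satisfies $\dist_{\bS^n}(x,\partial_{\bS^n}\Omega)>0$, hence lies in $\Omega_k$ for $k$ large). By domain monotonicity of the first Dirichlet eigenvalue,
\[
\lambda(\Omega_k)\geq \lambda(\Omega_{k+1})\geq \lambda(\Omega)\qquad\text{for all }k,
\]
so it remains to produce the matching upper bound for $\limsup_k \lambda(\Omega_k)$. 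Fix $\varepsilon>0$ and, using that $W^{1,2}_0(\Omega)$ is the closure of $C_c^\infty(\Omega)$ in $W^{1,2}$, choose $\varphi\in C_c^\infty(\Omega)$, $\varphi\not\equiv 0$, with $\|u_\Omega-\varphi\|_{W^{1,2}(\bS^n)}$ so small that the Rayleigh quotient $R(\varphi):=\int_{\bS^n}|\nabla_{\bS^n}\varphi|^2\,d\HH^n/\int_{\bS^n}|\varphi|^2\,d\HH^n$ satisfies $R(\varphi)<\lambda(\Omega)+\varepsilon$. Since $\supp\varphi$ is a compact subset of $\Omega$, we have $\dist_{\bS^n}(\supp\varphi,\partial_{\bS^n}\Omega)>0$, so $\supp\varphi\subset\Omega_k$ for all $k$ large enough. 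For such $k$, $\varphi\in C_c^\infty(\Omega_k)\subset W^{1,2}_0(\Omega_k)$ and hence
\[
\lambda(\Omega_k)\leq R(\varphi)<\lambda(\Omega)+\varepsilon.
\]
Letting $\varepsilon\downarrow 0$ yields $\limsup_k\lambda(\Omega_k)\leq \lambda(\Omega)$, completing the first part.

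\textbf{Convergence $\lambda(B_k)\to\lambda(B)$.} The balls $B_k$ are $\bS^n$-balls with $\HH^n(B_k)=\HH^n(\Omega_k)\to\HH^n(\Omega)=\HH^n(B)$, and their centers converge to the center of $B$ (indeed, we noted before the lemma that $x_{\Omega_k}\to x_\Omega$). In particular, the $\bS^n$-radii $r_{B_k}$ converge to $r_B$. The first Dirichlet eigenvalue of a geodesic ball in $\bS^n$ is a continuous (in fact real-analytic) function of the radius, as can be seen for instance by separation of variables: it is the smallest $\lambda$ such that the corresponding Legendre-type ODE along the radial direction admits a solution vanishing at angle $r_B$. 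Continuity in $r_B$ yields $\lambda(B_k)\to\lambda(B)$ directly. Alternatively, one can run the same monotone-approximation argument as above together with its converse (inner approximation of $B$ by slightly smaller concentric balls and outer approximation by slightly larger ones), since for balls both monotone approximations produce the same limit by the explicit rotational symmetry.

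\textbf{Main obstacle.} The only subtle step is the upper bound $\limsup_k\lambda(\Omega_k)\leq\lambda(\Omega)$, and this is handled cleanly by the density of $C_c^\infty(\Omega)$ in $W^{1,2}_0(\Omega)$ together with the compactness of the support of the test function, which forces it to live in $\Omega_k$ for all large $k$. The convergence for balls is then essentially a one-dimensional continuity statement.
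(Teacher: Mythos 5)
Your proof is correct and takes a genuinely different (and arguably cleaner) route than the paper's. The paper passes through the eigenfunctions: it fixes the connected component $U$ of $\Omega$ achieving $\lambda(\Omega)$, considers the eigenfunctions $u_k$ of $U_k=U\setminus S_k$, extracts a subsequence converging weakly in $W^{1,2}_0(U)$ and strongly in $L^2(U)$ by Rellich compactness, and passes the eigenvalue equation $-\Delta u_k=\lambda(U_k)u_k$ to the limit to identify the limit as the first eigenfunction of $U$. You instead work directly with the Rayleigh quotient variational characterization: the monotonicity $\lambda(\Omega_k)\ge\lambda(\Omega)$ is immediate from domain inclusion, and the matching upper bound follows from approximating $u_\Omega$ (or indeed any $W^{1,2}_0(\Omega)$ function with nearly optimal Rayleigh quotient) by a $C_c^\infty(\Omega)$ function, whose compact support lands inside $\Omega_k$ for $k$ large. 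This avoids both the passage to a connected component and the compactness argument, and does not even require knowing that a minimizer $u_\Omega$ exists. The paper's approach yields additional information about the eigenfunctions themselves (subsequential $L^2$ convergence), but since the lemma only asserts convergence of the eigenvalues, your argument suffices. Your treatment of the balls $B_k$ (continuity of the first eigenvalue of a geodesic ball as a function of its radius) is likewise valid and slightly more direct than the paper's remark to rerun the $\Omega_k$ argument for concentric balls.
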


\begin{proof}
Let $U$ be the connected component of $\Omega$ such that $\lambda(\Omega) = \lambda(U)$, and let $U_k = U\setminus S_k = U\cap \Omega_k.$
Let $u_k\in W_0^{1,2}(U_k)$ be the Dirichlet eigenfunction associated with $\lambda(U_k)$, normalized so that $u_k\geq0$ and $\|u_k\|_{L^2(U_k)}=1$.
We assume that $u_k$ is extended by $0$ to the whole $U$. For any subsequence of $\{u_k\}_k$, there exists another subsubsequence 
$\{u_{k_j}\}_j$ which converges weakly in $W_0^{1,2}(U)$ and strongly in $L^2(U)$ (by the compact embedding $W^{1,2}(U)\subset L^2(U)$) to some function $u\geq 0$ such that $\|u\|_{L^2(U)}=1$. Since $\{\lambda(U_{k_j})\}_j$ is a non-increasing sequence and 
$\lambda(U_{k_j})\geq \lambda(U)$ for any $j$, it also holds
that $\lambda(U_{k_j})$ converges to some $\alpha\in [\lambda(U),\infty)$.
Passing to the limit the equation 
$$-\Delta u_{k_j} = \lambda_{k_j}\,u_{k_j},$$ 
it follows that, in the weak sense,
$$-\Delta u = \alpha\,u\quad \mbox{ in $U$}.$$
Since $u$ is a Dirichlet eigenfunction for $U$, which is  connected, and moreover $u\geq0$, it follows that $u$ is a Dirichlet eigenfunction corresponding to the first eigenvalue for $U$ and
$\alpha = \lambda(U) = \lambda(\Omega)$.
Consequently,
$$\lim_{k\to\infty}\lambda(U_k) = \lambda(\Omega).$$
Since $U_k\subset\Omega_k$, this implies that
$$\limsup_{k\to\infty}\lambda(\Omega_k) \leq \limsup_{k\to\infty}\lambda(U_k) = \lambda(\Omega).$$
On the other hand, since $\Omega_k\subset\Omega$, we have $\lambda(\Omega_k) \geq \lambda(\Omega)$ for any $k$, and thus 
$$\lim_{k\to\infty}\lambda(\Omega_k) = \lambda(\Omega).$$

The fact that $\lim_{k\to\infty}\lambda(B_k) = \lambda(B)$ can be proved by analogous arguments. Indeed, since $\HH^n(B_k) = 
\HH^n(\Omega_k)\leq \HH^n(\Omega) = \HH^n(B)$, we can assume that $B_k\subset B$ and that they are concentric. Then the same arguments used for $\Omega_k$ work in this case.
\end{proof}
\vv

\begin{proof}[\bf Proof of Theorem \ref{teomain0} for $\bM^n=\bS^n$]
Suppose $n\geq3$.
We consider $\Omega_k$ and $B_k$ as above, so that 
$$\lambda(\Omega) - \lambda(B)  = \lim_{k\to\infty} \big(\lambda(\Omega_k) - \lambda(B_k)\big).$$
We have to show that, for a given $a\in(0,1)$, for any $t\in (0,1)$,
\begin{equation}\label{eqobjec9}
\lambda(\Omega) - \lambda(B) \geq C(a,\beta)\,\bigg(
 \avint_{\partial_{\bM^n} ((1-t) B)} \frac{\capp_{n-2}(B_{\bM^n}(y,atr_{B})\setminus \Omega)}{(t\,r_B)^{n-3}}\,d\HH^{n-1}(y)\bigg)^2.
\end{equation}
Choose $a'$ such that $a<a'<1$ and fix some $t\in (0,1)$.
Denote by $T_k$ the translation $T_k(y) = y + (x_{B_k}-x_B)$.
Since $\HH^{n-1}|_{\partial_{\bM^n} ((1-t) B_k)}$ coincides with the image measure $T_k\#
\HH^{n-1}|_{\partial_{\bM^n} ((1-t) B)}$ times $c_k= \frac{\HH^{n-1}(\partial_{\bM^n} ((1-t) B_k))}{\HH^{n-1}(\partial_{\bM^n} ((1-t) B))}$, we have
\begin{multline*}
\avint_{\partial_{\bM^n} ((1-t) B)} \capp_{n-2}(B_{\bM^n}(y,atr_{B})\setminus \Omega)\,d\HH^{n-1}(y)\\
 = \avint_{\partial_{\bM^n} ((1-t) B_k)} \capp_{n-2}(B_{\bM^n}(y+(x_{B}-x_{B_k}),atr_{B})\setminus \Omega)\,d\HH^{n-1}(y).
\end{multline*}
Since $B_k$ tends to $B$ in Hausdorff distance, it also holds that $x_{B_k}\to x_B$ and 
$r_{B_k}\to r_B$. So for $k$ large enough,
$B_{\bM^n}(y+(x_{B}-x_{B_k}),atr_{B}) \subset B_{\bM^n}(y,a'tr_{B_k}).$
Consequently, using also that $\Omega_k\subset\Omega$,
$$\capp_{n-2}(B_{\bM^n}(y+(x_{B}-x_{B_k}),atr_{B})\setminus \Omega)\leq\capp_{n-2}(B_{\bM^n}(y,a'tr_{B_k})\setminus \Omega_k).$$
 Therefore, taking into account that $r_{B_k}\approx r_B$ for $k$ large enough and applying
 Theorem \ref{teomain0} for $\Omega_k$ and $B_k$,
 we deduce
\begin{align*}
\avint_{\partial_{\bM^n} ((1-t) B)} &\frac{\capp_{n-2}(B_{\bM^n}(y,atr_{B})\setminus \Omega)}{(t\,r_B)^{n-3}}\,d\HH^{n-1}(y) \\ & \lesssim 
\avint_{\partial_{\bM^n} ((1-t) B_k)} \frac{\capp_{n-2}(B_{\bM^n}(y,a'tr_{B_k})\setminus \Omega_k)}{(t\,r_B)^{n-3}}\,d\HH^{n-1}(y) \leq C(a',\beta) \big(\lambda(\Omega_k)-\lambda(B_k)\big)
\end{align*}
Letting $k\to\infty$, we get \rf{eqobjec9}.

The arguments for the case $n=2$ are analogous.
\end{proof}
\vv

For the proof of Theorem \ref{teomain1}, we denote
$$\gamma_{s,c_0}(\Omega,B,a) = \int_{T_{c_0}(\Omega,B,a)} \delta_B(y)^{n-s}\,d\HH^{s}_\infty(y).$$

\begin{lemma}
For $d\in (0,n)$ and
any $a'\in (a,1)$ and $c_0'\in (0,c_0)$, we have
$$\liminf_{k\to\infty}\gamma_{s,c_0'}(\Omega_k,B_k,a') \geq \gamma_{s,c_0}(\Omega,B,a).$$
\end{lemma}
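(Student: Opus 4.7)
The plan is to show that every point of $T_{c_0}(\Omega,B,a)$ eventually lies in $T_{c_0'}(\Omega_k,B_k,a')$ for all sufficiently large $k$, and then to pass to the liminf in the Hausdorff-content integrals via a Fatou-type inequality for $\HH^s_\infty$.

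Fix $y \in T_{c_0}(\Omega,B,a)$. Since $\Omega_k \subset \Omega$ by construction, $y \notin \Omega_k$. By Lemma \ref{lemaprox37} (and its proof) $x_{\Omega_k} \to x_\Omega$ and $\HH^n(\Omega_k) \to \HH^n(\Omega)$, so $B_k \to B$ in Hausdorff distance; in particular $\delta_{B_k}(y) \to \delta_B(y) > 0$, hence $y \in B_k$ and $a\,\delta_B(y) \leq a'\,\delta_{B_k}(y)$ for all $k$ sufficiently large (using $a' > a$). Combined with $\Omega_k \subset \Omega$, this gives
$$B_{\bM^n}(y,a\,\delta_B(y))\setminus\Omega \;\subset\; B_{\bM^n}(y,a'\,\delta_{B_k}(y))\setminus\Omega_k,$$
whence by monotonicity of capacity, in the case $n \geq 3$,
$$\capp_{n-2}(B_{\bM^n}(y,a'\,\delta_{B_k}(y))\setminus\Omega_k) \;\geq\; \capp_{n-2}(B_{\bM^n}(y,a\,\delta_B(y))\setminus\Omega) \;\geq\; c_0\,\delta_B(y)^{n-2} \;\geq\; c_0'\,\delta_{B_k}(y)^{n-2},$$
where the last inequality holds for $k$ large since $c_0 > c_0'$ and $\delta_{B_k}(y)/\delta_B(y) \to 1$. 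The case $n=2$ is handled identically with $\capp_L$ in place of $\capp_{n-2}$ and the corresponding $1$-homogeneous normalization. Therefore $y \in T_{c_0'}(\Omega_k,B_k,a')$ for all $k$ large enough.

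Next, define $g(y) := \delta_B(y)^{n-s}\,\chi_{T_{c_0}(\Omega,B,a)}(y)$ and $g_k(y) := \delta_{B_k}(y)^{n-s}\,\chi_{T_{c_0'}(\Omega_k,B_k,a')}(y)$, with the conventions $\delta_B \equiv 0$ outside $B$ and $\delta_{B_k} \equiv 0$ outside $B_k$. The previous step together with $\delta_{B_k}(y) \to \delta_B(y)$ gives $\liminf_{k\to\infty} g_k(y) \geq g(y)$ at every $y \in \bS^n$. Applying the layer-cake representation together with a Fatou-type lemma for the outer measure $\HH^s_\infty$---a consequence of the continuity from below of Hausdorff content on increasing sequences of Borel (or analytic) sets, itself a standard consequence of $\HH^s_\infty$ being a Choquet capacity---we obtain
$$\gamma_{s,c_0}(\Omega,B,a) \;=\; \int g\,d\HH^s_\infty \;\leq\; \liminf_{k\to\infty}\int g_k\,d\HH^s_\infty \;=\; \liminf_{k\to\infty}\gamma_{s,c_0'}(\Omega_k,B_k,a').$$

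The main delicate point is the justification of the Fatou inequality for $\HH^s_\infty$: one must verify that the superlevel sets $\{g > t\}$ and $\{g_k > t\}$ are sufficiently regular (Borel or analytic) for Choquet capacitability to apply. This reduces to the measurability of the map $y \mapsto \capp_{n-2}(B_{\bM^n}(y,a'\,\delta_{B_k}(y))\setminus\Omega_k)$, which follows from the upper semicontinuity of Newtonian/logarithmic capacity under Hausdorff convergence of compact approximants of $B_{\bM^n}(y,a'\,\delta_{B_k}(y))\setminus\Omega_k$.
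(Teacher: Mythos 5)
Your pointwise inclusion argument (every $y\in T_{c_0}(\Omega,B,a)$ belongs to $T_{c_0'}(\Omega_k,B_k,a')$ for $k$ large) is correct and is essentially the inclusion the paper also establishes; but from that point on the two proofs diverge. The paper never invokes a Fatou lemma for $\HH^s_\infty$. Instead, for a parameter $\tau\in(1/2,1)$ it proves the inclusion \emph{uniformly} on the shrunken ball $\tau B$: namely $\tau B\cap T_{c_0}(\Omega,B,a)\subset T_{c_0'}(\Omega_k,B_k,a')$ for all $k\geq k_0(\tau,a,a',c_0,c_0')$. Then, using only finite subadditivity of $\HH^s_\infty$ in the layer-cake expression, it splits $\gamma_{s,c_0}(\Omega,B,a)$ into a boundary piece $I_1$ supported on $B\setminus\tau B$ (bounded by $C(1-\tau)^{n-s}$ since $\delta_B\lesssim 1-\tau$ there and $\HH^s_\infty(B\setminus\tau B)\lesssim 1$) and an interior piece $I_2$ bounded by $(a'/a)^{n-s}\,\gamma_{s,c_0'}(\Omega_k,B_k,a')$ via that uniform inclusion and a change of variables in $t$. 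Sending $\tau\to 1$ and then exploiting the monotonicity of $\gamma_{s,c_0'}(\cdot,\cdot,a')$ in $a'$ to get rid of the factor $(a'/a)^{n-s}$ yields the stated inequality.

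By contrast, you reduce to a Fatou-type inequality for the Choquet integral $\int f\,d\HH^s_\infty$, which hinges on the increasing-sets property: for an \emph{arbitrary} increasing sequence of sets $A_k\uparrow A$ one has $\HH^s_\infty(A_k)\to\HH^s_\infty(A)$. This is true (it is a theorem of Davies for Euclidean space and of Sion--Sjerve for compact metric spaces such as $\bS^n$), and together with the layer-cake formula it gives monotone convergence and hence your Fatou inequality. So your route is valid, but it rests on a genuinely non-elementary topological fact that the paper's splitting argument avoids entirely. Two further remarks. First, your measurability worry at the end is misplaced: the Davies/Sion--Sjerve increasing-sets continuity for $\HH^s_\infty$ holds for arbitrary (not merely Borel or analytic) sets, and it is \emph{that} property---not the Choquet capacitability theorem for analytic sets---that your Fatou step uses; you should cite this specifically rather than allude to ``Choquet capacitability.'' Second, the tradeoff: the paper's argument is self-contained and elementary (subadditivity plus two-parameter limits); yours is shorter and gives the stated inequality directly without the final monotonicity-in-$a'$ trick, at the cost of invoking the deeper increasing-sets theorem for Hausdorff content.
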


\begin{proof}Recall that the condition $x\in T_{c_0}(\Omega,B,a)$ means that $x\in B\setminus \Omega$ and in the case $n\geq3$, $\capp_{n-2}(B_{\bS^n}(x,a\,\delta_B(x))\setminus \Omega)\geq c_0\,\delta_B(x)^{n-2}$, while in the case $n=2$ we ask  $\capp_L(B_{\bS^n}(x,a\,\delta_B(x))\setminus \Omega)\geq c_0\,\delta_B(x)$.
Consider an arbitrary constant $\tau\in (1/2,1)$, and let $\tau B$ be the $\bS^n$-ball concentric with $B$ with radius $\tau\,\rad_{\bS^n}(B)$. Since $B_k$ tends to $B$ in Hausdorff distance (by the arguments at the beginning of this section), we deduce that $\tau B\subset B_k$ for $k$ large enough (depending on $\tau$). Therefore,
\begin{equation}\label{eqinc931}
\tau B\setminus \Omega \subset B_k\setminus \Omega_k\quad \mbox{ for $k\geq k_0(\tau)$.}
\end{equation}

Let us check that
\begin{equation}\label{eqinc932}
\tau B \cap T_{c_0}(\Omega,B,a) \subset  T_{c_0'}(\Omega_k,B_k,a') \quad \mbox{ for $k\geq k_0(\tau,a,a'c_0,c_0')$.}
\end{equation}
Indeed, for $x\in \tau B\setminus \Omega$, 
we have $a\,\delta_B(x) \leq a'\,\delta_{B_k}(x)$  
for $k$ large enough depending on $\tau,a,a'$. 
Similarly, $c_0\delta_B(x)^{n-2} \geq c_0'\,\delta_{B_k}(x)^{n-2}$ in the case $n\geq 3$ and 
$c_0\delta_B(x) \geq c_0'\,\delta_{B_k}(x)$ in the case $n=2$ for $k$ large enough depending on $c_0, c_0'$.
Consequently, in the case $n\geq3$, for $x\in \tau B \cap T_{c_0}(\Omega,B,a)$,
$$\capp_{n-2}(B(x,a'\,\delta_{B_k}(x))\cap \bS^n\setminus \Omega)\geq \capp_{n-2}(B(x,a\,\delta_{B}(x))\cap \bS^n\setminus \Omega)\geq  c_0\,\delta_B(x)^{n-2} \geq  c_0'\,\delta_{B_k}(x)^{n-2}.$$
An analogous estimate holds in the case $n=2$. Together with \rf{eqinc931}, this yields \rf{eqinc932}.

 We split, using the identity \rf{eqgammadd},
\begin{align*}
\gamma_{s,c_0}(\Omega,B,a) &= (n-s)\int_0^\infty t^{n-s-1}\,\HH^s_\infty\big(\{y\in T_{c_0}(\Omega,B,a):\delta_{B}(y)>t\}\big)
\,dt\\
& \leq (n-s)
\int_0^\infty t^{n-s-1}\,\HH^s_\infty\big(\{y\in B\setminus \tau B:\delta_{B}(y)>t\}\big)
\,dt\\
&\quad + (n-s)
\int_0^\infty t^{n-s-1}\,\HH^s_\infty\big(\{y\in \tau B \cap T_{c_0}(\Omega,B,a):\delta_{B}(y)>t\}\big)\,dt =: I_1+ I_2.
\end{align*}
Concerning the integral $I_1$, all points $y\in B\setminus \tau B$ satisfy $\delta_B(y)\leq (1-\tau)\,\rad_{\bS^n}(B)$.
Hence,
$$I_1\leq (n-s)\int_0^{C(1-\tau)} t^{n-s-1}\,\HH^s_\infty(B\setminus \tau B)\,dt \lesssim (1-\tau)^{n-s}.$$
Regarding $I_2$, by \rf{eqinc932} and the fact that $\delta_B(y) \leq a^{-1}a'\,\delta_{B_k}(y)$ for $y\in\tau B\setminus\Omega$ and $k\geq k_0(\tau,a,a'c_0,c_0')$, we have
$$\HH^s_\infty\big(\{y\in \tau B \cap T_{c_0}(\Omega,B,a):\delta_{B}(y)>t\}\big)
\leq \HH^s_\infty\big(\{y\in  T_{c_0'}(\Omega_k,B_k,a'):\delta_{B_k}(y)>(a')^{-1}at\}\big).
$$
Therefore, changing variables,
\begin{align*}
I_2 & \leq (n-s)\int_0^\infty t^{n-s-1}\,\HH^s_\infty\big(\{y\in  T_{c_0'}(\Omega_k,B_k,a'):\delta_{B_k}(y)>(a')^{-1}at\}\big)\,dt\\
& = (a^{-1}a')^{n-s}\,\gamma_{s,c_0'}(\Omega_k,B_k,a').
\end{align*}
So
$$\gamma_{s,c_0}(\Omega,B,a) \leq C(1-\tau) + (a^{-1}a')^{n-s}\,\gamma_{s,c_0'}(\Omega_k,B_k,a')\quad \mbox{ for $k\geq k_0(\tau,a,a'c_0,c_0')$.}
$$
Since we can take $\tau\in (1/2,1)$ arbitrarily close to $1$  and $a'>a$ as close to $a$ as wished, the lemma follows.
\end{proof}
\vv

\begin{proof}[\bf Proof of Theorem \ref{teomain1} for $\bM^n=\bS^n$]
From the preceding lemmas and the validity of Theorem \ref{teomain1} for the sets $\Omega_k$, we deduce
$$\gamma_{s,c_0}(\Omega,B,a)
\leq \liminf_{k\to\infty}\gamma_{s,c_0'}(\Omega_k,B_k,a') \lesssim_{\beta,c_0',a'} \liminf_{k\to\infty} \big(\lambda(\Omega_k) - \lambda(B_k)\big) = \lambda(\Omega) - \lambda(B),$$
for any $a'\in (a,1)$ and $c_0'\in (0,c_0)$, so that Theorem \ref{teomain1} holds for $\Omega$.
\end{proof}
\vv


\section{The proof of Theorem \ref{teomain0} and Theorem \ref{teomain1} for $\bM^n=\R^n$ and $\bM^n = \bH^n$.}

The proofs of Theorems \ref{teomain0} and \ref{teomain1} for Euclidean and hyperbolic domains are very similar to the one for spherical domains. For this reason, we just sketch the main ideas of the proofs.

\subsection{Domains in Euclidean space}
Remark that in this case, i.e. $\bM^n=\R^n$, one can state scale invariant versions of Theorem \ref{teomain0} and Theorem \ref{teomain1}, and so we may assume that $B=B(0,1)$ in the proofs, with $B$ attaining the infimum in
\rf{eqmain00}, \rf{eqmain01}, or \rf{eqmain1} modulo a factor of $1/2$.

For $\Omega,B\subset\R^{n+1}$ as in Theorems \ref{teomain0} and \ref{teomain1}, we let
$\alpha_\Omega=\lambda_\Omega^{1/2}$, $\alpha_B=\lambda_B^{1/2}$. We also let
$\wt u_B$ and $\wt u_\Omega$ be the respective extensions of $u_B$ and $u_\Omega$ defined by
$$\wt u_B(x,t) = u_B(x)  \,e^{\alpha_B t},\qquad \wt u_\Omega(x,t) = u_\Omega(x)  \,e^{\alpha_Bt},$$
for $(x,t)\in\R^{n+1}$. This type of extension replaces the $\alpha_B$-homogeneous extension used for the case of spherical domains.
We consider the cylindrical domains $\wt\Omega$, $\wt\Omega'$, $\wt B$, $\wt B'$ in $\R^{n+1}$ defined by:
$$\wt B = B \times (0,1),\qquad \wt B' = B \times (-1,2),\qquad
\wt\Omega = \Omega \times (0,1),\qquad \wt\Omega' = \Omega \times (-1,2).
$$
It is immediate to check that 
$\Delta \wt u_B =0$ in $\wt B'$, while in $\wt\Omega'$ we have
$$\Delta \wt u_\Omega(x,t) =   e^{\alpha_B t}\,\Delta_{\R^n} u_\Omega(x) + \lambda_B\, e^{\alpha_B t}\,u_\Omega(x)
= (\lambda_B - \lambda_\Omega) \,\wt u_\Omega(x,t).$$ 
We should compare this identity with \rf{eqlaplace3}.

As in Claim \ref{claim2}, there exists an extension $u_{B}^{ex}$ of $u_B|_B$ to $\R^n$ which belongs to $C^2(\R^n)$ and such that $\|\nabla^j u_B^{ex}\|_{\infty,\R^n}\lesssim 1$ for $j=0,1,2$  so that it is supported in the $\R^n$-ball $B_0=2B$.
Then we consider the function $v:\R^n\to\R$ defined by $v= u_B^{ex} - u_\Omega$ and we let 
$$\wt v(x,t) = v(x)\,\,e^{\alpha_B t},$$
 so that
$\wt v= \wt u_B^{ex} - \wt u_\Omega$. 
As in \rf{eqv1234}, denoting $S=\R^n\times \{0,1\}$, we write, for $x\in\wt \Omega$,
\begin{align*}
\wt v(x) & = \int_{\partial\wt \Omega} \wt v\,d\omega_{\wt\Omega}^x - \int_{\wt\Omega} \Delta\wt v(y)\,g_{\wt\Omega}(x,y)\,dy\\
& = \int_{\partial\wt \Omega \cap \wt B\setminus S} \wt v\,d\omega_{\wt\Omega}^x 
+ \int_{\partial\wt \Omega \setminus  (\wt B\cup S)} \wt v\,d\omega_{\wt\Omega}^x 
+ \int_{\partial\wt \Omega\cap S} \wt v\,d\omega_{\wt\Omega}^x
- \int_{\wt\Omega} \Delta\wt v(y)\,g_{\wt\Omega}(x,y)\,dy\nonumber\\
& =: \wt v_1(x) + \wt v_2(x) + \wt v_3(x) + \wt v_4(x),\nonumber
\end{align*}
where $g_{\wt\Omega}(x,y)$ is the Green function of $\wt\Omega$.
Then we continue in the same way as in the proof of Theorem \ref{teomain0} and Theorem \ref{teomain1}.
We estimate $|\wt v_2(x) + \wt v_3(x) + \wt v_4(x)|$ from above as in Lemma~\ref{lemv1234} and
$\wt v_1(x)$ from below following arguments very similar to the ones for $\bM^n = \bS^n$, first considering the Wiener regular case, and later deducing the general result from the Wiener regular one.
We leave the details for the reader. 

\vv

\subsection{Domains in hyperbolic space}
As for the Euclidean space, the proof of Theorems \ref{teomain0} and \ref{teomain1} when $\bM^n= \bH^n$ is very similar to the case $\bM^n = \bS^n$. For this reason, we only sketch the arguments and highlight the main modifications required.

Before getting started, some preliminary remarks: we consider the half-space model of the hyperbolic space $\bH^n$, which is the upper half space $\R^n_+$ with the metric $\frac{dx_1^2+\cdots dx_n^2}{x_n^2}$. Unlike the Euclidean case, and similarly to the spherical one, Theorems \ref{teomain0} and \ref{teomain1} are not scale invariant in this geometry. Hence the constants in the estimates naturally depend on the volume of $\Omega$.
On this, see also \cite[Remark 1.5]{AKN1}. 

Let $\Omega, B \subset \bH^n$ be as in Theorems \ref{teomain0} and \ref{teomain1} (so in particular, $\HH^n(\Omega)=\HH^n(B) <\beta$),
with $\HH^n$ computed in the hyperbolic metric.  We assume  that $B=B_x$ attains the infimum in
\rf{eqmain00}, \rf{eqmain01}, or \rf{eqmain1} modulo a factor of $1/2$.
Since the Laplace-Beltrami operator is invariant by isometries, by applying a suitable isometry to $\Omega$ we can assume that  $B$ is centered at the point $(0,\ldots,0,1)\in\R^{n}_+$. From the fact that $\HH^n(B)=\mathrm{vol}(B) <\beta$, it follows
that $4B$ is far away from $\partial\R^n_+$ in the Euclidean metric. So inside $4B$ the Euclidean metric and the hyperbolic metric are comparable,
and the Hausdorff measures $\HH^s$ are comparable when computed with both metrics (with the comparability constant depending on $\beta$ in both cases).

Let $\lambda_\Omega, \lambda_B$ be the respective first Dirichlet eigenvalues of $\Omega$ and $B$, let $u_\Omega, u_B$ be the corresponding eigenfunctions, and put $\alpha_\Omega= \lambda_\Omega^{1/2}$ and $\alpha_B = \lambda_B^{1/2}$. For $(x,t) \in \bH^n \times \R$, we set
\begin{equation*}
	\wt u_\Omega(x,t) := u_\Omega(x) e^{t \alpha_B}\, \mbox{ and } \, \wt u_B(x,t) := u_B(x) e^{t \alpha_B}.
\end{equation*}
These are the extensions of $u_\Omega$ and $u_B$, respectively, analogous to the ones defined at the beginning of Section \ref{seccontra} for the spherical case.

We also consider the cylindrical domains in $\bH^n \times \R$ given by
\begin{equation*}
	\wt B = B \times (0, 1), \,\,\,\,  \wt B' = B \times (-1,2), \,\,\,\,  \wt \Omega = \Omega \times  (0,1) \, \mbox{ and } \, \wt \Omega' = \Omega \times (-1,2).
\end{equation*} 
It is easy to check that $\Delta_{\bH^n \times \R}\wt u_B=0$, where $\Delta_{\bH^n \times \R}$ is the Laplace-Beltrami operator in $\bH^n \times \R$. In local coordinates, this is equivalent to saying that
\begin{equation*}
	L\wt u_B = 0 \, \mbox{ in } \, \wt B',
\end{equation*}
where $L$ is a uniformly elliptic second order operator in divergence form given by $L u=  - \mathrm{div} (A \nabla u)$, where $A$ is the diagonal matrix with smooth coefficients defined by 
\begin{equation}
	a_{ij} = x_n^{2-n}\delta_{i,j}\, \mbox{ if }\, 1 \leq i,j \leq n, \,\, \mbox{ and } \, a_{ij} =x_n^{-n} \delta_{ij} \,\mbox{ otherwise.} 
\end{equation}
Moreover, note that
\begin{align}\label{hyp-eq2}
	L\wt u_\Omega(x,t) & = x_n^{-n}\left(  \Delta_{\bH^n} \wt u_\Omega (x,t) + (\partial_t^2 u_{\Omega}(x) e^{t \alpha_B})\right) 
	 = x_n^{-n} (-\lambda_\Omega + \lambda_B)\, \wt u_\Omega(x,t),
\end{align}
  which is the analog of \eqref{eqlaplace3} for the current setting.  Remark that  the multiplicative term $x_n^{-n}$ is bounded in $4B$ and so $L$ is uniformly elliptic on $4B$.
  
  The proof continues very similarly to the spherical case. We just highlight some of the differences. 
  \begin{itemize}
  	\item As far as the definition of Newtonian capacity is concerned, no changes is needed since the fundamental solution to $L$ is comparable to that of $-\Delta$ in $4B$. See equation (2.14) in \cite{AGMT}. In particular, an open set is Wiener regular for the Laplacian if and only if it is Wiener regular for uniformly elliptic operator such as $L$ (see \cite[Theorem 6.21]{Prats-Tolsa-Notes}).
  	\item Lemmas from Section \ref{secharm}: Lemma \ref{lem2.1} for elliptic uniformly elliptic operators can be found in \cite[Lemma 11.21]{HKM}. For Lemma \ref{lem:wG} see \cite[Lemma 2.8]{AGMT}. Finally, we couldn't find a reference for Lemma \ref{lempole}, but the proof in the elliptic case is essentially the same to that for harmonic measure. A key point for these estimates is that we only need to apply them for subdomains
	of $\wt B'':=4B\times [-4,4]$, where $L$ is uniformly elliptic. Further, we can redefine the matrix $A$ away from $\wt B''$ so that it becomes uniformly elliptic in the whole space if necessary.
	
  	\item About Lemma \ref{lemubom}: the bounds $\|\wt u_B\|_{\infty, \wt B'}, \|\wt u_\Omega\|_{\infty, \wt \Omega'\cap \wt B''} \lesssim 1 $ follows from the fact that $\alpha_B \approx_\beta 1$ and the mean value property on balls for subsolutions, see \cite[Theorem 8.17]{Gilbarg-Trudinger}). 
  	The argument to prove estimate 
  	\eqref{equb30} relies on a) estimates on the Green functions, b) the maximum principle and c) Harnack inequality. All these are available: see \cite[Lemma 2.6]{AGMT} for the properties of the Green function associated to the operator $L$; see \cite[Theorem 8.16]{Gilbarg-Trudinger} for b) and \cite[Section 8.8]{Gilbarg-Trudinger} for c). As far as the behaviour of elliptic measures $\omega_{L, \wt B'}, \omega_{L, \wt B}$ is concerned, that is, the second part of Lemma \ref{lemubom}, it follows again from the estimates on the Green functions connected to $L$. See \cite[Section 2.4]{AGMT} for more information on this. 
  	 
 \item Claim 1 in Section \ref{seccontra} also holds in this case. More specifically, there exists an extension $u^{ex}_B$ of $u_B|_B$ to $\bH^n$ which belongs to $C^2(\bH^n)$ and such that $\|\nabla^j u^{ex}_B\|_{\infty, \bH^n} \lesssim 1$ for $j=0,1,2$ and so that it is supported on an $\bH^n$-ball $B_0=2B$. The implicit constant in the previous bound might depend on $\beta$.
 \item The representation formula used in the important decomposition \eqref{eqv1234} in the current case, and assuming Wiener regularity of $\Omega$ and so of $\wt \Omega$, reads as 
 \begin{equation*}
 	\varphi(x) = \int_{\partial \wt \Omega} \varphi d \omega_L^x  + \int_{\wt \Omega} A^*(y) \nabla_y g_{\wt \Omega}(x,y) \cdot \nabla_y \varphi(y)\, dy
 \end{equation*}
See \cite{AGMT}, equation (2.6). In our case, the matrix $A$ is symmetric, and thus $A^*=A$. Here $g_{\wt \Omega}(\cdot, \cdot)$ is the Green function of $\wt \Omega$ associated to $L$. 
We consider a smooth function $\psi:\R^{n+1}\to\R$ which equals $1$ on $\wt B'$ and is supported on $\wt B''$.
Defining $v=u_B^{ex} - u_\Omega$, and extending it to $\bH^n \times \R$ by $\wt v(x,t) := v(x) \, e^{t \alpha_B}$, we write, for $x \in \wt \Omega$,
\begin{equation}\label{hyp-eq1}
	(\psi\,\wt v) (x) = \int_{\partial \wt \Omega} \psi \wt v \, d \omega_{L, \wt \Omega}^x - \int_{\wt \Omega} A(y) \, \nabla_y g_{\wt \Omega}(x,y) \cdot \nabla_y (\psi \wt v)(y) \, dy.
\end{equation}
Denoting $S= \bH^n \times \{0,1\}$ we split \eqref{hyp-eq1} as
\begin{align*}
	(\psi\,\wt v) (x) & = \int_{\partial \wt \Omega \cap \wt B \setminus S} \psi\,\wt v \, d \omega_{L, \wt \Omega}^x  + \int_{\partial \Omega \setminus (\wt B \cup S)} \psi\,\wt v \, d \omega_{L,\wt \Omega}^x + \int_{\partial\wt \Omega\cap S} \psi\,\wt v\,d\omega_{L,\wt\Omega}^x\\
	&\quad + \int_{\wt \Omega} A(y) \, \nabla_y g_{\wt \Omega}(x,y) \cdot \nabla_y (\psi\,\wt v)(y) \, dy \\
	& =: \wt v_1(x) + \wt v_2 (x) + \wt v_3(x) + \wt v_4(x). 
\end{align*}
\item The bounds for the analogous functions $\wt v_i$, $i=2,3,4$, in Lemma \ref{lemv1234}, rest on the behaviour of harmonic measure, the maximum principle and \eqref{eqlaplace3}.  Notice that the multiplication by $\psi$ ensures that the integrands above are supported in $\wt B''$, where $L$ is uniformly elliptic. Then the estimates for $\wt v_2$ and $\wt v_3$ are similar to the ones in Lemma \ref{lemv1234}. Regarding $\wt v_4$, this can be written as
$$\wt v_4(x)= \int A \, \nabla_y (\psi\, g_{\wt \Omega})(x,y) \cdot \nabla \wt v  \, dy +
\int A \, \nabla \psi \cdot\big(v \nabla_y g_{\wt \Omega})(x,y)- g(x,y)\,\nabla \wt v\big) \, dy.$$
The first integral on the right hand side equals $-\int L(\wt v)\psi\, g_{\wt \Omega})(x,y)  \, dy$ and this is estimated in the same ways as $\wt v_4$ in Lemma \ref{lemv1234}. For the second integral on the right hand side above we use the fact that $\supp\psi\cap \supp\wt v\subset (\Omega\setminus B)\times (-4,4)$. Then we obtain the same conclusion as in Lemma \ref{lemv1234}.
 The same can be said about the lower bound for $\wt v_1$. The proofs of Theorem \ref{teomain0} and \ref{teomain1}, then, continue in the same way, modulo technical adjustments which we omit. The extension to the non Wiener regular case is also very similar and we leave the details to the reader.
  \end{itemize}


\section{Proof of Theorem \ref{teo22}}

Without loss of generality, we assume that $\HH^n(\Omega_1)\leq \HH^n(\Omega_2)$. For simplicity, we assume that 
$a=1/2$ in the definition of $\ve_s(\Omega_1,\Omega_2)$, but minor modifications yield the result for any $a\in(0,1)$.
Let $\bar \alpha_i$ be the characteristic of the spherical cap $B_i\subset\bS^n$ with the same $\HH^n$ measure as $\Omega_i$.
The Friedland-Hayman \cite{FH} inequality ensures that
 $\alpha_1+\alpha_2-2\geq 0$.
Then we write
\begin{equation}\label{eqcadena}
\alpha_1+\alpha_2-2 = (\alpha_1-\bar\alpha_1) + (\alpha_2-\bar\alpha_2) + (\bar \alpha_1+\bar\alpha_2-2)\geq 0.
\end{equation}
By Sperner's inequality \cite{Sperner}, among all the open subsets with a fixed measure $\HH^n$ on $\bS^n$, the one that minimizes the characteristic constant is a spherical cap with that measure $\HH^n$. Hence, 
$$\alpha_i\geq
\bar\alpha_i.$$
So the three summands on the right hand side of \rf{eqcadena} are non-negative.
Further, if one of the caps $B_i$ differs from a hemisphere by a surface measure $h_0$,
that is, 
$$h_0=\max_i\Big|\HH^n(B_i)- \frac12\HH^n(\bS^n)\Big|,$$
then 
\begin{equation}\label{eqfac748}
\bar \alpha_1+\bar \alpha_2-2\geq c\,h_0^2.
\end{equation}
See Corollary 12.4 from \cite{CS}, for example.
So to prove Theorem \ref{teo22} we can assume that, for $i=1,2$,
\begin{equation}\label{eqfac749}
\Big|\HH^n(B_i)- \frac12\HH^n(\bS^n)\Big| \leq \frac1{100}\,\HH^n(\bS^n),
\end{equation}
because otherwise 
$$\alpha_1 + \alpha_2 - 2\gtrsim1 $$
and the statement in the theorem is trivial. Observe that \rf{eqfac749} implies that
\begin{equation}\label{eqfac750}
\beta\leq\HH^n(\Omega_i)=\HH^n(B_i)\leq \HH^n(\bS^n)-\beta
\end{equation}
for a suitable absolute constant $\beta$.
From this estimate it follows that $\bar \alpha_{i}\approx 1$ for $i=1,2$. So we can assume that $|\alpha_i-\bar \alpha_i|\leq \frac12\,
\bar \alpha_{i}$ because otherwise the theorem follows trivially from \rf{eqcadena}. So $\alpha_i\approx \bar \alpha_{i}\approx 1$.
Then from the identity $\lambda_i = \alpha_i(\alpha_i+n-1)$ (where $\lambda_i\equiv \lambda_{\Omega_i}$) it follows immediately that
\begin{equation}\label{eqfac751} 
\alpha_i - \bar \alpha_{i} \approx \lambda_i - \lambda_{B_i}.
\end{equation}

For $i=1,2$, let $x_i\in \bS^n$ be the barycenter of $\Omega_i$.
We assume that the barycenter exists because otherwise this means that $\int_{\Omega_i} y\,d\HH^n(y)=0$, while
$\left|\int_{B_i} y\,d\HH^n(y)\right|\gtrsim1$ for the spherical cap $B_i$ because of \rf {eqfac750} (independently of the choice of its center in $\bS^n$).
Hence, by the Allen-Kriventsov-Neumayer theorem, using \rf{eqfac750} and \rf{eqfac751}, we would obtain
$$\alpha_i - \bar\alpha_{i} \approx \lambda_i - \lambda_{B_i} \gtrsim \HH^n(\Omega_i\triangle B_i)^2 \geq 
\bigg|\int_{\Omega_i} y \,d\HH^n(y) - 
\int_{B_i} y \,d\HH^n(y)\bigg|^2 \gtrsim1,$$
which would yield the conclusion of the theorem. The same argument shows that, in fact, we have
$$\left|\int_{\Omega_i} y\,d\HH^n(y)\right|\approx 1.$$

From now on we assume that the spherical caps $B_i$, $i=1,2$, are centered in the barycenters $x_i$ of the $\Omega_i$'s.
In the case when $\HH^n(\Omega_i) + \HH^n(\Omega_2)=\HH^n(\bS^n)$ it follows easily that 
 the barycenters of $\Omega_1$ and $\Omega_2$ are opposite points in $\bS^n$, and $B_1$ and $B_2$ are complementary balls in 
 $\bS^n$.
 When the preceding condition does not hold, we need to be a little more careful.
 
 Let 
 $$\theta_0 = \HH^n(\bS^n) - \HH^n(\Omega_1) - \HH^n(\Omega_2),$$
 and suppose that $\theta_0>0$.
 For $i=1,2$,
 $$y_i = \frac1{\HH^n(\Omega_i)} \int_{\Omega_i} y\,d\HH^n(y),$$
 so that $x_i=y_i/|y_i|$. Also, let 
 $$y_2^* = \frac1{\HH^n(\bS^n\setminus\Omega_1)} \int_{\bS^n\setminus\Omega_1} y\,d\HH^n(y).$$
 Notice that $y_2^*=y_2$ if $\theta_0=0$ and that
  $$\HH^n(\Omega_1)\,y_1 + \HH^n(\bS^n\setminus\Omega_1)\,y_2^* = \int_{\bS^n} y\,d\HH^n(y)=0.$$
  So the barycenter $x_1$ and the point $x_2^*=y_2^*/|y_2^*|$ are antipodal points in $\bS^n$.
 We also have
 \begin{align*}
 |y_2 - y_2^*| & \leq \bigg|\frac1{\HH^n(\Omega_2)} - \frac1{\HH^n(\bS^n\setminus\Omega_1)}\bigg|\int_{\bS^n\setminus\Omega_1} |y|\,d\HH^n(y) \\
 & \quad+ \frac1{\HH^n(\bS^n\setminus\Omega_1)} 
 \bigg|\int_{\Omega_2} y\,d\HH^n(y) -  
 \int_{\bS^n\setminus\Omega_1} y\,d\HH^n(y)\bigg|\lesssim \theta_0 + \theta_0\approx\theta_0.
\end{align*}
Also, let $B_2^*$ be a spherical cap centered in $x_2^*$ with measure $\HH^n(B_2^*) = \HH^n(\bS^n\setminus \Omega_1)$.
Since 
$$|\rad_{\bS^n}(B_2) - \rad_{\bS^n}(B_2^*)|\approx |\HH^n(B_2)- \HH^n(B_2^*)|\lesssim \theta_0,$$
using also \rf{eqfac750}, we infer that
\begin{equation}\label{eqs1s2}
\dist_H(\partial_{\bS^n}B_2, \partial_{\bS^n}B_1) =
\dist_H(\partial_{\bS^n}B_2, \partial_{\bS^n}B_2^* ) \lesssim \theta_0.
\end{equation}
On the other hand, from the definition of $\theta_0$, it follows that
$$
h_0=\max_i\big|\HH^n(\Omega_i) - \frac12\,\HH^n(\bS^n)\big|\geq \frac12\,\theta_0.
$$
Then, by \rf{eqfac748},
\begin{equation}\label{eqgvn4}
\bar \alpha_1+\bar \alpha_2-2\gtrsim \theta_0^2\gtrsim \dist_H(\partial_{\bS^n}B_2, \partial_{\bS^n}B_1)^2.
\end{equation}

To estimate $\ve_s(\Omega_1,\Omega_2)$ we denote by $L_1$ be the $n$-plane that contains $\partial_{\bS^n}B_1$, and we let $L_0$ 
 be the $n$-plane through the origin parallel to $L_1$. Then we choose $H$ to be the half-space that contains $B_1$ and whose boundary is $L_0$ (notice that $B_1$ is contained in a hemisphere because of the assumption
$\HH^n(\Omega_1)\leq \HH^n(\Omega_2)$). First we consider the case $0<s<n$. In this case, we have
\begin{align}\label{eqb1b2}
\ve_s(\Omega_1,\Omega_2) & \leq  \int_{V_{c_0}(0,1,H,a)}\dist(y,L_0)^{n-s}\,d\HH_\infty^{s}(y)\\
& \leq \sum_{i=1}^2\int_{V_{c_0}(0,1,H,a)\cap B_i}\dist(y,L_0)^{n-s}\,d\HH_\infty^{s}(y) + 
\int_{\bS^n\setminus(B_1\cup B_2)}\dist(y,L_0)^{n-s}\,d\HH_\infty^{s}(y).\nonumber
\end{align}
First we deal with the last integral on the right hand side:
\begin{align}\label{eqkdc12}
\int_{\bS^n\setminus(B_1\cup B_2)}\dist(y,L_0)^{n-s}\,d\HH_\infty^{s}(y) & \leq \max_{\bS^n\setminus(B_1\cup B_2)}\,\dist(y,L_0)^{n-s}
\HH_\infty^{s}(\bS^n\setminus(B_1\cup B_2)) \\ &\lesssim \max_{i=1,2} \,\dist_H(\partial_{\bS^n} B_i, \bS^n\cap L_0)^{n-s}\,\HH_\infty^{s}(\bS^n\setminus(B_1\cup B_2)).\nonumber
\end{align}
Regarding $\dist_H(\partial_{\bS^n} B_i, \bS^n\cap L_0)$,  
for $i=1$, since $L_0$ is parallel to $L_1$ and $L_0$ splits $\bS^n$ in two hemispheres, we  have
\begin{equation}\label{eqdhj3}
\dist_H(\partial_{\bS^n} B_1, \bS^n\cap L_0) \lesssim |\HH^n(B_1) - \tfrac12\HH^n(\bS^n)|\leq h_0.
\end{equation}
Also, for $i=2$,
$$\dist_H(\partial_{\bS^n} B_2, \bS^n\cap L_0) \leq \dist_H(\partial_{\bS^n} B_2, \partial_{\bS^n} B_2^*) + 
\dist_H(\partial_{\bS^n} B_2^*,\bS^n\cap L_0).$$
Since $\partial_{\bS^n} B_2^*=\partial_{\bS^n} B_1$, from \rf{eqs1s2} and \rf{eqdhj3} we get
$$
\dist_H(\partial_{\bS^n} B_2, \bS^n\cap L_0) \lesssim \theta_0 + \dist_H(\partial_{\bS^n} B_1,\bS^n\cap L_0)\lesssim\theta_0 + h_0\lesssim h_0.
$$
So, for a suitable $C_2>0$,
\begin{equation}\label{eqdhj4} 
\dist_H(\partial_{\bS^n} B_i, \bS^n\cap L_0) \le C_2\,h_0\quad \text{ for both $i=1,2$.}
\end{equation}
Plugging this estimate into \rf{eqkdc12}, we obtain
\begin{equation}\label{eqkdc1210}
\int_{\bS^n\setminus(B_1\cup B_2)}\dist(y,L_0)^{n-s}\,d\HH_\infty^{s}(y)\lesssim h_0^{n-s}\,\HH^s_\infty(\bS^n\setminus(B_1\cup B_2)).
\end{equation}
To estimate $\HH^s_\infty(\bS^n\setminus(B_1\cup B_2))$, notice that by \rf{eqdhj4}, $\bS^n\setminus(B_1\cup B_2)\subset {\mathcal U}_{C_2 h_0}(\bS^n\cap L_0)$. So $\bS^n\setminus(B_1\cup B_2)$ can be covered by a family $I$ of
of $\bS^n$-balls of radius $2C_2h_0$ with bounded overlap, centered in $\bS^n\cap L_0$. Using that 
$\HH^{n-1}(\bS^n\cap L_0) = \HH^{n-1}(\bS^{n-1}) \approx 1$, it follows easily that $\# I\lesssim h_0^{1-n}$.
Hence, 
\begin{equation}\label{eqkdc121aa}
\HH^s_\infty(\bS^n\setminus(B_1\cup B_2)) \leq 
\HH^s_\infty({\mathcal U}_{C h_0}(\bS^n\cap L_0))\lesssim
h_0^s\,\,\# I \lesssim h_0^{s+1-n}.
\end{equation}
Thus, plugging this estimate into \rf{eqkdc1210}, we obtain
\begin{equation}\label{eqkdc121}
\int_{\bS^n\setminus(B_1\cup B_2)}\dist(y,L_0)^{n-s}\,d\HH_\infty^{s}(y)\lesssim h_0^{n-s}\,h_0^{s+1-n} = h_0.
\end{equation}

We turn our attention to the first summands on the right hand side of \rf{eqb1b2}.
We denote 
$$\delta_H(y) =\dist_{\bS^n}(y,\bS^n\cap L_0)$$
and
$$V_i = \{y\in V_{c_0}(0,1,H,a) \cap B_i: \delta_{H}(y) >10C_2 h_0\}.$$
Recall we are assuming $a=1/2$.
Then we split
\begin{align*}
\int_{V_{c_0}(0,1,H,a)\cap B_i} \dist(y,L_0)^{n-s}\,d\HH_\infty^{s}(y) &\lesssim \int_{V_{c_0}(0,1,H,a)\cap B_i\setminus V_i}
\dist(y,L_0)^{n-s}\,d\HH_\infty^{s}(y)\\
&\quad + 
\int_{V_i}\dist(y,L_0)^{n-s}\,d\HH_\infty^{s}(y) \\
& =: I_1 + I_2 .
\end{align*}
To deal with $I_1$ we take into account the fact that $\dist(y,L_0)\lesssim h_0$ in the domain of integration, 
and thus together with \rf{eqkdc121aa} this gives
$$
I_1\lesssim h_0^{n-s}\,\HH^{s}_\infty({\mathcal U}_{C h_0}(\bS^n\cap L_0)) \lesssim h_0.$$

To estimate $I_2$ we take into account that for $y\in V_i$,
$$\dist(y,L_0) \approx \delta_H(y) \approx \dist(y,\partial_{\bS^n} B_i).$$
Thus, 
$$I_2\lesssim 
\int_{V_i}\dist(y,\partial_{\bS^n} B_i)^{n-s}\,d\HH_\infty^{s}(y).$$ 
Next we plan to apply 
 Theorem \ref{teomain1}, under the assumption \rf{eqfac749}.
Taking into account \rf{eqfac751}, this implies that, for a given $c_5>0$,
\begin{equation}\label{eqteo99}
\left(\int_{T_{i}}\dist(y,\partial_{\bS^n} B_i)^{n-s}\,d\HH_\infty^{s}(y) \right)^2\lesssim 
\lambda_i - \lambda_{B_i}\approx
\alpha_i - \bar\alpha_i,
\end{equation}
with $T_{i}$ defined by
$$T_{i}  =  \big\{y\in B_i\setminus\Omega_i:\capp_{n-2}(B(y,\tfrac34\delta_{B_i}(y))\cap \bS^n\setminus \Omega_i)\geq c_5\,\delta_{B_i}(y)^{n-2}\big\},
$$
and, in the case $n=2$, 
$$
T_{i}  = \big\{y\in B_i\setminus\Omega_i:\capp_L(B(y,\tfrac34\delta_{B_i}(y))\cap \bS^n\setminus \Omega_i)\geq c_5\,\delta_{B_i}(y)\big\}.
$$
We claim that, for a suitable constant $c_5$,
$$V_i\subset T_{i}.$$
Indeed, for $y\in V_i\cap B_i$, by definition we have $\delta_{H}(y) >10C_2 h_0$ and consequently, by \rf{eqdhj4},
$$|\delta_{B_i}(y) - \delta_{H}(y)|\leq 2\,\dist_H(\partial_{\bS^n} B_i, \bS^n\cap L_0) \leq 2 C_2h_0\leq \frac15 \,\delta_{H}(y),$$
which readily implies that
$\frac34\,\delta_{B_i}(y) \geq \frac12\,\delta_H(y)$, or equivalently, $B(y,\frac12\,\delta_H(y))\subset B(y,\frac34\,\delta_{B_i}(y))$.
Then we derive
$$\capp_{n-2}(B(y,\tfrac34\delta_{B_i}(y))\cap \bS^n\setminus \Omega_i) \geq \capp_{n-2}(B(y,\tfrac12\delta_{H}(y))\cap \bS^n\setminus \Omega_i) \geq c_0\,\delta_{H}(y)^{n-1}\approx \delta_{B_i}(y)^{n-1},$$
as well as the analogous estimate for logarithmic capacity in the case $n=2$,
which yields our claim. Then by \rf{eqteo99}, we have
$$I_2^2\lesssim\left(\int_{V_{i}}\dist(y,\partial_{\bS^n} B_i)^{n-s}\,d\HH_\infty^{s}(y) \right)^2\leq 
\left(\int_{T_{i}}\dist(y,\partial_{\bS^n} B_i)^{n-s}\,d\HH_\infty^{s}(y) \right)^2\lesssim \alpha_i - \bar\alpha_i.
$$
This estimate, together with the one obtained for $I_1$, gives
$$\bigg(\int_{V_{c_0}(0,1,H,a)\cap B_i} \dist(y,L_0)^{n-s}\,d\HH_\infty^{s}(y)\bigg)^2 \lesssim \alpha_i - \bar\alpha_i + h_0^2.$$
Plugging this inequality into \rf{eqb1b2} and using also \rf{eqkdc12} and \rf{eqfac748}, we obtain
\begin{equation}\label{eqfafa7}
\ve_s(\Omega_1,\Omega_2)^2 \lesssim \sum_{i=1}^2(\alpha_i-\bar\alpha_i) + h_0^2 
\lesssim \sum_{i=1}^2(\alpha_i-\bar\alpha_i)
  + (\bar \alpha_1+\bar\alpha_2-2) = \alpha_1+\alpha_2-2.
\end{equation}
  
\vv
In the case $s=n$, the arguments are easier. We write $S_1 = \bS^n\cap H$, $S_2= \bS^n\setminus H$, and then
similarly to \rf{eqb1b2}, we write
\begin{align*}
\ve_n(\Omega_1,\Omega_2) & \leq  \HH^n(S_1\setminus \Omega_1) +\HH^n(S_2\setminus \Omega_2) \leq \sum_{i=1}^2 \HH^n(B_i\setminus \Omega_i)  + 
\sum_{i=1}^2 \HH^n(S_i\triangle B_i)
.
\end{align*}
By the theorem of Allen-Kriventsov-Neumayer, recalling the assumption \rf{eqfac749},
$$\HH^n(B_i\setminus \Omega_i)^2 \lesssim \alpha_i - \bar \alpha_i.$$
On the other hand,
$\HH^n(S_i\triangle B_i)\lesssim h_0$ by \rf{eqdhj4}. Thus, again the same estimate as in \rf{eqfafa7} holds, with $s=n$.
\qed
\vv


\section{Existence of tangents implies finiteness of the Carleson square function}

In this section we prove Theorem \ref{teoguai}. Given two disjoint Wiener regular domains $\Omega_1,\Omega_2\subset\R^{n+1}$, denote by $g_i$ the Green function of the domain $\Omega_i$, for $i=1,2$.
Let $p_1\in\Omega_1$, $p_2\in\Omega_2$, and consider the functions 
$$u_1(y) = g_1(y,p_1),\qquad u_2(y) = g_2(y,p_2).$$
We extend $u_i$ by $0$ in $\Omega_i^c$, and abusing notation we still denote by $u_i$ such extension. The Wiener regularity of $\Omega_i$ ensures that $u_i$ is continuous away from $p_i$.

Let $d=\frac16\,\min_i \dist(p_i,\partial\Omega_1\cup\partial\Omega_2)$.
For all $x\in\R^{n+1}\setminus (\Omega_1\cup\Omega_2)$ and all $r\in (0,d)$, by the ACF monotonicity formula, we have
$$\frac{\partial_rJ(x,r)}{J(x,r)}\geq \frac2r\bigl(\alpha_1(x,r) + \alpha_2(x,r)  - 2 \bigr),$$
with $J(x,r)$ and $\alpha_i(x,r)=\alpha_i$ as in \rf{eqACF2} and \rf{eqprec1}. Integrating on $r$, for any $\rho\in (0,d)$ we derive
$$\int_\rho^d  \frac{\alpha_1(x,r) + \alpha_2(x,r) -2}r\,dr \leq
\log\frac{J(x,d)}{J(x,\rho)}.$$
Thus,
$$\int_0^d  \frac{\alpha_1(x,r) + \alpha_2(x,r) -2}r\,dr \leq
\log\frac{J(x,d)}{\inf_{0<\rho\leq d} J(x,\rho)}.$$
Hence, in order to show that \rf{eqint1} holds for $x\in\partial\Omega_1\cap\partial\Omega_2$, it suffices to show that
$$\frac{J(x,d)}{\inf_{0<\rho\leq d} J(x,\rho)}<\infty.$$

Notice first that, by \rf{eqaux*},
$$J(x,d) \lesssim \avint_{B(x,2d)} |\nabla u_1|^2\,dy\; \cdot\avint_{B(x,2d)} |\nabla u_2|^2\,dy.$$
By the Caccioppoli inequality and the subharmonicity of $u_i$, for each $i$,
$$\avint_{B(x,2d)} |\nabla u_i|^2\,dy \lesssim \frac1{r^2} \avint_{B(x,3d)} |u_i|^2\,dy \lesssim \frac1{r^2} \bigg(\avint_{B(x,4d)} u_i\,dy\bigg)^2.$$
So the continuity of $u_i$ ensures that $J(x,d)<\infty$.

To estimate $J(x,\rho)$ from below,  let $\vphi_{x,\rho}$ be a $C^\infty$ bump function such that $\chi_{B(x,\rho/2)}\leq \vphi_{x,\rho}\leq \chi_{B(x,\rho)}$, with
$\|\nabla\vphi_{x,\rho}\|_\infty\lesssim \rho^{-1}$. Then, denoting by $\omega_i^{p_i}$ the harmonic measure for $\Omega_i$ with
pole at $p_i$, by the properties of the Green function, it holds
\begin{align*}
\omega_i^{p_i}(B(x,\rho/2)) & \leq \int \vphi_{x,\rho}\,d\omega^{p_i} = -\int \nabla \vphi_{x,\rho}\,\nabla u_i\,dy 
\leq \|\nabla \vphi_{x,\rho}\|_2\,\|\nabla u_i\|_{2,B(x,\rho)}\\
& \lesssim \rho^{(n-1)/2} \bigg(\int_{B(x,\rho)} \frac{\rho^{n-1}|\nabla u_i|^2}{|x-y|^{n-1}}\,dy\bigg)^{1/2} =  \rho^{n} \bigg(\frac1{\rho^2}\int_{B(x,\rho)} \frac{|\nabla u_i|^2}{|x-y|^{n-1}}\,dy\bigg)^{1/2}.
\end{align*}
Therefore,
$$J(x,\rho)^{1/2} \gtrsim \frac{\omega_1^{p_1}(B(x,\rho/2))}{\rho^n}\cdot \frac{\omega_2^{p_2}(B(x,\rho/2))}{\rho^n}.$$
Hence, to prove the proposition it suffices to show that
\begin{equation}\label{eqjrho}
\liminf_{\rho\to 0}\frac{\omega_i^{p_i}(B(x,\rho/2))}{\rho^n} >0\quad \text{ for $\HH^n$-a.e.\ tangent point $x\in\partial\Omega_1\cap\partial\Omega_2$,}
\end{equation}
for $i=1,2$. To this end, consider a subset $E$ of the tangent points for the pair $\Omega_1$, $\Omega_2$ such that $\HH^1(E)<\infty$. 
Since the set of tangent points is $n$-rectifiable, we have
\begin{equation}\label{eqmult1}
\lim_{\rho\to0} \frac{\HH^n(B(x,\rho)\cap E)}{(2\rho)^n} = 1\quad \text{ for $\HH^n$-a.e.\ $x\in E$.}
\end{equation}

By standard arguments, using that the tangent points for the pair $\Omega_1$, $\Omega_2$ are cone points for $\Omega_i$ (for $i=1,2$), it follows that $\HH^n|_E$ is absolutely continuous with respect to $\omega^{p_i}|_E$ for
$i=1,2$
 (see \cite[Theorem III]{AAM}, for example\footnote{Actually, in Theorem III from \cite{AAM} it is assumed that
$\partial\Omega$ is lower $n$-content regular in order to prove the mutual absolute continuity of $\HH^n|_E$ and $\omega^{p_i}|_E$. However, a quick inspection of the arguments shows that for the absolute continuity $\HH^n|_E\ll \omega_i^{p_i}|_E$ one only needs 
$\Omega_i$ to be Wiener regular.}). Then, by the Lebesgue-Radon-Nykodim differentiation theorem, we have
$$\lim_{\rho\to0} \frac{\HH^n(B(x,\rho)\cap E)}
{\omega_i^{p_i}(B(x,\rho)\cap E)}<\infty \quad\text{ for $\omega^{p_i}$-a.e.\ $x\in E$.}$$
Since null sets for $\omega_i^{p_i}$ are also null sets for $\HH^n|_E$, we infer that
\begin{equation}\label{eqmult2}
\lim_{\rho\to0} \frac{\omega_i^{p_i}(B(x,\rho)\cap E)}{\HH^n(B(x,\rho)\cap E)}>0 \quad\text{ for $\HH^n$-a.e.\ $x\in E$.}
\end{equation}
Multiplying \rf{eqmult1} and \rf{eqmult2}, we deduce \rf{eqjrho} and we complete the proof of the proposition.
\qed
\vv


\section{A counterexample to a question of Allen, Kriventsov and Neumayer}\label{sec-counter}

In this section we show that there are two Wiener regular domains $\Omega_1,\Omega_2\subset\R^2$ with rectifiable boundary
such that $E:=\pom_1\cap \pom_2$ satisfies:
\begin{itemize}
\item[(a)] $\HH^1(E)>0$,
\item[(b)] $\omega_1^{p_1}(E)=\omega^{p_2}(E)=0$, where $\omega_i^{p_i}$ stands for the harmonic measure
for $\Omega_i$ with pole at some point $p_i\in\Omega_i$, and
\item[(c)] the Alt-Caffarelli-Friedman functional associated with the respective Green functions $g_1(\cdot,p_1)$, $g_2(\cdot,p_2)$ of $\Omega_1$, $\Omega_2$
satisfies $J(x,0+)=0$ for $\HH^1$-a.e.\ $x\in E$.
\end{itemize}
 
The domain $\Omega_1$ is defined as follows:
$$\Omega_1= (0,1)^2\setminus \bigcup_{i=1}^\infty F_i,$$
where each $F_i$ is a union of $N_i$ equispaced closed intervals $\{I_j^i\}_{1\leq j\leq N_i}$ which are contained in the segment $L_i:=[0,1]\times \{2^{-i}\}$, so that the leftmost point of the leftmost interval $I_1^i$ is $(0,2^{-i})$ and the rightmost point of the rightmost interval 
$I_{N_i}^i$ is $(1,2^{-i})$.  The  intervals $I_j^i$ have length $\frac1{N_i}2^{-i}$, so that
$\HH^1(F_i)=2^{-i}$.
The numbers $N_i$ tend to $\infty$ as $i\to\infty$ and will be chosen below.

We let $\Omega_2$ be the domain  symmetric to $\Omega_1$ with respect to $x$ axis, so that $E=[0,1]\times\{0\}.$ 
We choose $p_1=(1/2,3/4)$ and $p_2=(1/2,-3/4)$.
See Figure 1.

\usetikzlibrary{patterns}
\begin{figure}
\centering
\begin{tikzpicture}
	 \shadedraw[shading=axis,shading angle=180, opacity=0.2] (0pt,128pt) rectangle (256pt, 0pt);
	 \draw [ultra thick, dash=on 32pt off 42.6pt phase  0pt] (0pt,64pt) -- (256pt ,64pt);
	 \draw [very thick, dash=on 4pt off 12.8pt phase  0pt] (0pt,32pt) -- (256pt ,32pt);
	 \draw [very thick, dash=on 0.5pt off 3.5pt phase  0pt] (0pt,16pt) -- (256pt ,16pt);
	 \draw [ultra thick] (0pt, 0pt) -- (256pt, 0pt);
	  \draw [thick, dash=on 0.5pt off 3.5pt phase  0pt] (0pt,-16pt) -- (256pt ,-16pt);
	 \draw [very thick, dash=on 4pt off 12.8pt phase  0pt] (0pt,-32pt) -- (256pt ,-32pt);
	 \draw [ultra thick, dash=on 32pt off 42.6pt phase  0pt] (0pt,-64pt) -- (256pt ,-64pt);
	 \draw (0pt, 128pt) -- (256pt, 128pt) -- (256pt, -128pt) -- (0pt, -128pt) -- (0pt, 128pt);
	 \filldraw[black] (128pt,85.3pt) circle (2pt) node[anchor=west]{$p_1$};
	 \filldraw[black] (128pt,-85.3pt) circle (2pt) node[anchor=west]{$p_2$};
	   \node[left] at (0,0) {$E$};
	   \node[left] at (40pt, 100pt) {$\Omega_1$};
	   \node[left] at (40pt, -100pt) {$\Omega_2$};
	   \node[left] at (20pt, 75pt) {$I_1^1$};
	   \node[left] at (259pt, 45pt) {$I_{16}^2$};
\end{tikzpicture}
\caption{The figure depicts $(0,1)^2 \setminus F_1 \cup F_2 \cup F_3$, and its reflection through the $x$-axis. The intuition behind the example is that, while total length of the segments $\{I_j^i\}$ goes to zero, overall $\cup_i F_i$ is sufficently scattered around so to obstruct the random walk leaving at $p_i$ from reaching $E$ (the segment $[0,1]$).}
\end{figure}
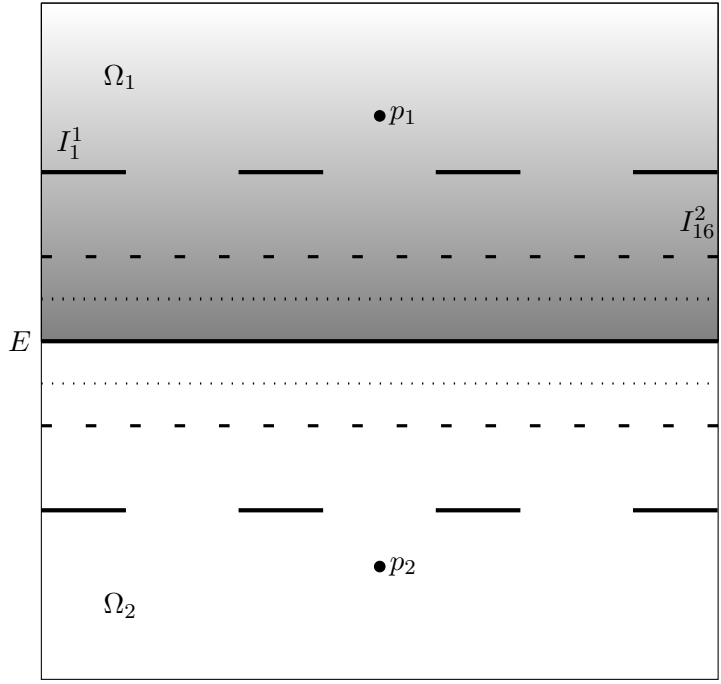

By Wiener's criterion, it is immediate to check that $\Omega_1$ and $\Omega_2$ are Wiener regular. Further, $\HH^1(\partial\Omega_1)
=\HH^1(\partial\Omega_2)=5$ and $\HH^1(E)=1$, and both $\pom_1$ and $\pom_2$ are rectifiable. 

Next we will check that $\omega^{p_1}_1(E)=0$ (by symmetry, we will have $\omega_2^{p_2}(E)=0$ too).
To this end, for each $i\geq1$ we consider the auxiliary domain $U_i = \Omega_1\cap ((0,1)\times (2^{-i},1))$. That is, $U_i$ is the region of $\Omega_1$ which is above the segment $L_i$. We denote 
$$u(x) = \omega_1^x(E).$$
Obviously $u$ is harmonic in $\Omega_1$ and in each domain $U_i$. Notice also that it vanishes  in $\pom_1 \setminus E$
(in a continuous way in its relative interior with respect to $\pom_1$).

We will prove the following. 

\begin{claim}\label{claim33}
There exists some $\alpha\in (0,1)$ such that for all $i\geq 1$ we have
$$\sup_{x\in L_i} u(x) \leq \alpha\,\sup_{x\in L_{i+1}} u(x),$$
choosing $N_i=2^{2i}$.
\end{claim}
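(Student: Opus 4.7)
Set $\Omega_1' := \Omega_1\cap\{y_2>2^{-(i+1)}\}$ and $M_j := \sup_{L_j\cap\Omega_1} u$. My plan is to apply the harmonic representation of $u$ in $\Omega_1'$ to reduce the claim to a uniform harmonic-measure lower bound, and then to establish that lower bound via Bourgain's lemma together with a direct logarithmic-capacity computation that exploits the choice $N_i=2^{2i}$. Since $u$ is bounded and harmonic in $\Omega_1'$, continuous up to $\partial\Omega_1'$, and vanishes on every part of $\partial\Omega_1'$ except on the ``gap set'' $L_{i+1}\setminus F_{i+1}$ (where it coincides with the restriction of $u$ from $\Omega_1$, these being interior points of $\Omega_1$), the harmonic representation in $\Omega_1'$ gives for every $x\in L_i\cap\Omega_1$
$$
u(x) = \int_{L_{i+1}\setminus F_{i+1}} u\,d\omega_{\Omega_1'}^x \leq M_{i+1}\,\omega_{\Omega_1'}^x\big(L_{i+1}\setminus F_{i+1}\big).
$$
It therefore suffices to show the uniform estimate $\omega_{\Omega_1'}^x(L_{i+1}\setminus F_{i+1}) \leq \alpha$ for some $\alpha<1$, or equivalently $\omega_{\Omega_1'}^x\big(\partial\Omega_1'\setminus(L_{i+1}\setminus F_{i+1})\big) \geq 1-\alpha>0$, uniformly in $i$ and $x$.

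To obtain this lower bound I would apply Lemma \ref{lem2.1} (the planar form of Bourgain's lemma) with the ball $B:=B(x,r)$, $r:=2^{-(i+2)}$. Because $r<\dist(L_i,L_{i\pm 1})$, the ball $B$ sits in the horizontal slab $\{2^{-(i+1)}<y_2<2^{-(i-1)}\}$, so $B\cap\partial\Omega_1'$ is contained in $F_i$ together with a possible piece of $\partial(0,1)^2$ if $x$ happens to lie within $r$ of a vertical side; in every case $B\cap\partial\Omega_1'$ is entirely contained in the non-gap set $\partial\Omega_1'\setminus(L_{i+1}\setminus F_{i+1})$. Bourgain's lemma then yields
$$
\omega_{\Omega_1'}^x\big(\partial\Omega_1'\setminus(L_{i+1}\setminus F_{i+1})\big)\geq \omega_{\Omega_1'}^x(B) \gtrsim \frac{1}{\log\big(r/\capp_L(\tfrac14 B\setminus\Omega_1')\big)},
$$
and matters reduce to the uniform lower bound $\capp_L(\tfrac14 B\setminus\Omega_1')\gtrsim r$.

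When $\tfrac14 B$ meets a vertical side of $(0,1)^2$ this is immediate from the linear segment contained on that side, so assume $x$ lies in the bulk, whence $\tfrac14 B\setminus\Omega_1'=\tfrac14 B\cap F_i$. By the choice $N_i=2^{2i}$, this set is a union of $N'\sim 2^{i-3}$ equispaced intervals of length $2^{-3i}$ inside a segment of $L_i$ of length $L\sim r$. I would test the logarithmic energy on the uniform probability measure $\mu$ on $\tfrac14 B\cap F_i$, splitting $I_0(\mu)$ into its same-interval diagonal contribution, which is of order $(i\log 2)/N'$ and vanishes as $i\to\infty$ since $N'=2^{i-3}$ grows exponentially, and its off-diagonal part, which by the asymptotic $\sum_{k\neq j}\log|k-j|=N'^{\,2}\log N'-\tfrac32 N'^{\,2}+o(N'^{\,2})$ contributes $\log(1/L)+\tfrac32+o(1)$. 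The result is $I_0(\mu)\leq \log(1/r)+O(1)$, hence $\capp_L(\tfrac14 B\cap F_i)\gtrsim r$; the logarithm in Bourgain's bound is then $O(1)$ uniformly in $i$, yielding the required estimate with some universal $c>0$ and $\alpha=1-c<1$. The finitely many small values of $i$ for which $N'$ is not yet large enough can be handled by hand (e.g.\ via an explicit harmonic-measure computation in the sub-rectangle between $L_i$ and $L_{i+1}$) and absorbed into $\alpha$. The hard part is precisely this capacity estimate, because the individual intervals of $F_i$ have length $2^{-3i}$, much smaller than the spacing $2^{-2i}$, so that the positivity of $\capp_L(\tfrac14 B\cap F_i)$ at scale $r$ cannot come from any single component and genuinely relies on the cooperative effect of the many equispaced intervals that the quadratic growth $N_i=2^{2i}$ provides.
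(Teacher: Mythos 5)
Your proposal follows the same skeleton as the paper's proof: reduce the claim to a uniform harmonic-measure bound for $L_{i+1}\setminus F_{i+1}$ in $\Omega_1'=\Omega_1\cap\{y_2>2^{-(i+1)}\}$ (the paper's $U_{i+1}$), use the representation $u(x)=\int_{L_{i+1}}u\,d\omega_{\Omega_1'}^x$ for $x\in L_i$ together with the maximum principle, and derive the complementary lower bound via Bourgain's lemma (Lemma \ref{lem2.1}) applied to a ball of radius $\sim 2^{-i}$ centered at $x$ that meets only the obstacle $F_i$ and not the gap set. Both proofs thereby reduce the matter to showing $\capp_L\bigl(\tfrac14 B\cap F_i\bigr)\gtrsim \rad(B)$, and the one genuine difference is in how this capacity estimate is established. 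The paper builds a Frostman measure with $1/2$-growth to get $\HH^{1/2}_\infty\bigl(F_i\cap B(x,r)\bigr)\gtrsim r^{1/2}$ and then invokes Lemma \ref{lemcap-contingut}, whereas you test the logarithmic energy directly on the uniform probability measure $\mu$ on $\tfrac14 B\cap F_i$, splitting into diagonal and off-diagonal contributions; the diagonal term is $O(i/N')$ and is killed by the exponential growth $N'\approx 2^{i-4}$ that the choice $N_i=2^{2i}$ provides, and the off-diagonal term, via $\sum_{j\ne k}\log|j-k|=N'^2\log N'-\tfrac32 N'^2+o(N'^2)$, gives $\log(1/r)+O(1)$, so that $\capp_L\bigl(\tfrac14 B\cap F_i\bigr)\geq e^{-\iint\log\frac1{|y-z|}\,d\mu(y)\,d\mu(z)}\gtrsim r$. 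Both routes are correct and yield the same constant-order bound for the logarithm in Bourgain's lemma. The direct energy computation is more self-contained (no Frostman lemma or content-to-capacity comparison needed) but requires the asymptotic bookkeeping you perform, with the finitely many small $i$ handled separately as you note; the Frostman route is a little more robust to the specific arithmetic relation between $r$, the spacing $2^{-2i}$, and the interval length $2^{-3i}$. Your closing observation — that no single interval of $F_i$ is capacitarily visible at scale $r$ and only the cooperative effect of the $\sim 2^i$ equispaced intervals saves the estimate — is precisely why $N_i=2^{2i}$ is chosen.
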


From this claim and the maximum principle (applied to $U_1$) we get, for any $m>1$,:
$$\omega_1^{p_1}(E)= u(p_1) \leq \sup_{x\in L_1}u(x) \leq \alpha\,\sup_{x\in L_2}u(x)\leq\cdots \leq \alpha^{m-1}\!\sup_{x\in L_{m}}u(x)\leq
\alpha^{m-1}.$$
So letting $m\to\infty$, we derive $\omega_1^{p_1}(E)=0$

\begin{proof}[Proof of Claim \ref{claim33}]
For any given $i\geq 1$ and all $x\in L_i$, we have
$$u(x) = \int_{\partial U_{i+1}} u(\xi)\,d\omega_{U_{i+1}}^x(\xi) = \int_{L_{i+1}} u(\xi)\,d\omega_{U_{i+1}}^x(\xi),$$
taking into account that $u$ vanishes in $\partial U_{i+1}\setminus L_{i+1}$. Thus,
\begin{equation}\label{equu889}
u(x) \leq \sup_{\xi\in L_{i+1}} \!u(\xi)\,\omega_{U_{i+1}}^x(L_{i+1})\quad \text{ for all $x\in L_i$.}
\end{equation}
We write
$$\omega_{U_{i+1}}^x(L_{i+1}) = 1- \omega_{U_{i+1}}^x(\partial U_{i+1}\setminus L_{i+1}).$$
Since $\partial U_i\cap B(x,2^{-i-1})\subset (L_{i+1})^c$ (because $x\in L_i$), by Lemmas \ref{lem2.1} and \ref{lemcap-contingut},
we have
\begin{align}\label{eqaljdy42}
\omega_{U_{i+1}}^x(\partial U_{i+1}\setminus L_{i+1}) & \geq \omega_{U_{i+1}}^x(\partial U_{i+1}\cap B(x,2^{-i-1}))\\
& \gtrsim
 \frac1{\log\dfrac{2^{-i-1}}{\capp_L(\partial U_{i+1}\cap B(x,2^{-i-3}))}} \gtrsim 
  \frac1{\log\dfrac{2^{-i-1}}{(\HH_\infty^{1/2}(\partial U_{i+1}\cap B(x,2^{-i-3})))^{2}}}. \nonumber
\end{align}
Notice that  $\HH_\infty^{1/2}(\partial U_{i+1}\cap B(x,2^{-i-3}))\geq \HH_\infty^{1/2}(F_i\cap B(x,2^{-i-3}))$.  Moreover,     
we will show below that
\begin{equation}\label{eqcontent7}
\HH_\infty^{1/2}(F_i\cap B(x,2^{-i-3}))\gtrsim 2^{-i/2}.
\end{equation} 
Assuming this  for the moment, plugging this estimate into \rf{eqaljdy42}, we deduce that
$$\omega_{U_{i+1}}^x(\partial U_{i+1}\setminus L_{i+1})\gtrsim 1,$$
and thus 
$\omega_{U_{i+1}}^x(L_{i+1})\leq \alpha,$
for some fixed $\alpha\in (0,1)$. Together with \rf{equu889}, this proves the claim.
 
Now it remains to prove \rf{eqcontent7}. 
 To simplify notation, write $r=2^{-i-3}$.
 By Frostman's lemma, it suffices
to show that there exists some measure $\mu$ supported on $F_i\cap B(x,r)$ with total mass $\|\mu\|= r^{1/2}$ such that $\mu(B(y,t))\lesssim
t^{1/2}$ for all $y\in F_i\cap B(x,r)$, $t>0$. To define $\mu$, we let $\{I_j^i\}_{j\in J_{x,r}}$ be the family of intervals $I_j^i$ which are contained in $B(x,r)$
and then we set
$$\mu= \frac{r^{1/2}}{\sum_{j\in J_{x,r}} \HH^1(I_j^i)}\sum_{j\in J_{x,r}} \HH^1|_{I_j^i}.$$
Clearly $\|\mu\|=r^{1/2}$ and so it remains to check that $\mu(B(y,t))\lesssim
t^{1/2}$ for all $y\in F_i\cap B(x,r)$, $t>0$. Since the number of segments $I_j^i$ contained in $B(x,r)$ is comparable to $rN_i$, we have
$$\sum_{j\in J_{x,r}} \HH^1(I_j^i) \approx r\,\HH^1(F_i)  = r\,2^{-i}$$
and 
recall that
$$\HH^1(I_j^i) = \frac1{N_i}\,\HH^1(F_i)=\frac1{N_i}\,2^{-i}.$$
So we have
$$\mu(I_j^i) \approx \frac{ r^{1/2}}{r\,2^{-i}}\,\frac1{N_i}\,2^{-i} = \frac1{N_i\,r^{1/2}}.$$
Observe first that, by the choice $N_i = 2^{2i}$,
$$\mu(I_j^i) \approx \frac1{N_i\,r^{1/2}}\approx \frac1{N_i\,2^{-i/2}} = \left(\frac1{N_i}\,2^{-i}\right)^{1/2} = \HH^1(I_j^i)^{1/2}.$$
From this estimate,   we deduce that, for $y\in I_j^i$ and $0<t\leq \HH^1(I_j^i)$,
$$\mu(B(y,t)) \leq \frac{\mu(I_j^i)}{\HH^1(I_j^i)}\,t\lesssim \frac{\HH^1(I_j^i)^{1/2}}{\HH^1(I_j^i)}\,t \leq t^{1/2}.$$

From the fact that $\HH^1(F_i)\leq 1/4$, it easily follows that $\dist(I_j^i,I_{j+1}^i)\geq 1/(2N_i)$ (we assume that $I_j^i$ and $I_{j+1}^i$ are consecutive intervals in $F_i$). So for $y\in I_j^i$ and $\HH^1(I_j^i)\leq t\leq 1/(2N_i)$, it follows that $B(y,t)\cap F_i = B(y,t)\cap I_j^i$ (i.e., $B(y,t)$ does not intersect any interval $I_k^i$ different from $I_j^i$). Then we have
$$\mu(B(y,t)) \leq \mu(I_j^i) \lesssim \HH^1(I_j^i)^{1/2}\leq t^{1/2}.$$
Finally, for $1/(2N_i)<t\leq 2r$, 
$$\mu(B(y,t)) \lesssim \frac tr \,\|\mu\|= \frac{t\,r^{1/2}}r \lesssim t^{1/2}.$$
So $\mu$ satisfies the desired growth condition and then \rf{eqcontent7} holds.
\end{proof}

\vv

Next we will show that $J(x,0+)=0$ for $\HH^1$-a.e.\ $x\in E$. 
Recall that, for such $x$ and for $0<r\leq 1/4$,
$J(x,r) = J_1(x,r)^{1/2}J_2(x,r)^{1/2},$
where 
$$J_i(x,r) = \frac{1}{r^{2}}\int_{B(x,r)} |\nabla g_i(y,p_i)|^{2}\,dy.$$
We assume that the Green functions $g_i(\cdot,p_i)$ are extended by $0$ away from $\Omega_i$. By Caccioppoli's inequality, since 
$g_i(\cdot,p_i)$ is subharmonic and non-negative in $B(x,8r)$,
\begin{equation}\label{eqJi**}
J_i(x,r)\approx \avint_{B(x,r)} |\nabla g_i(y,p_i)|^2\,dy \lesssim \frac1{r^2}\avint_{B(x,2r)} |g_i(y,p_i)|^2\,dy
\leq \frac1{r^2}\sup_{y\in B(x,2r)} g_i(y,p_i)^2.
\end{equation}
To bound $g_i(y,p_i)$ in terms of the harmonic measure $\omega_i^{p_i}$, we appeal to the following lemma:

\begin{lemma}\label{lemWGpla}
Let $\Omega\subset\R^{2}$ be a bounded, Wiener regular open set.
Let $\bar B$ be a closed ball centered at $\partial\Omega$. Let $\omega$ be the harmonic measure for $\Omega$ and $g$ its Green function.
Then,
\begin{equation}\label{eqgreen73}
g(z,y) \lesssim \omega^z(8\bar B)\,\bigg(\log \frac{\capp_L(\bar B)}{\capp_L(\frac14 \bar B\setminus\Omega)}\bigg)^2
\qquad\mbox{
 for all $z\in \Omega\setminus  2\bar B$ and $y\in \frac15\bar B\cap\Omega$.}
 \end{equation}
\end{lemma}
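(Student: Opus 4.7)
The plan is to combine two applications of the maximum principle at different scales with Bourgain's estimate (Lemma \ref{lem2.1}), each application contributing one factor of $L$ to the final bound. To ease notation set $L := \log(\capp_L(\bar B)/\capp_L(\tfrac{1}{4} \bar B \setminus \Omega))$, and assume $L \geq 2$ (otherwise the inequality follows from standard Green function bounds) and $\diam(\Omega)\leq 1$ after rescaling. I would first handle the case $z \in \Omega \setminus 8\bar B$; the remaining range $z \in 8\bar B \setminus 2\bar B$ follows by recentering $\bar B$ at a boundary point closer to $z$ and applying a Harnack chain.

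For the first step, I view $u(w) := g(z,w)$ as a nonnegative harmonic function on $U := \Omega \cap 4\bar B$ (since $z \notin 4\bar B$), vanishing on $\partial\Omega \cap 4\bar B$ by Wiener regularity. The maximum principle then gives
\[
g(z,y) \;\le\; M_z := \sup_{w \in \Omega \cap \partial(4\bar B)} g(z,w)
\]
for every $y \in \tfrac{1}{5} \bar B \cap \Omega$, so it suffices to prove $M_z \lesssim \omega^z(8\bar B)\,L^2$.

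For the second step, fix $w \in \Omega \cap \partial(4\bar B)$ and view $v(z') := g(w,z')$ as a nonnegative harmonic function of $z'$ on $\Omega' := \Omega \setminus \overline{8\bar B}$ (using $w \in 8\bar B$ and $z \in \Omega'$). Since $v = 0$ on $\partial\Omega \setminus 8\bar B$ and $v \le \tilde M_w := \sup_{\partial(8\bar B)\cap\Omega} v$ on $\partial(8\bar B)\cap\Omega$, the maximum principle yields
\[
g(z,w) \;\le\; \tilde M_w \cdot \omega^z_{\Omega'}\big(\partial(8\bar B)\cap\Omega\big).
\]
A strong-Markov decomposition of Brownian motion (entering $8\bar B$ at some $\zeta \in \partial(8\bar B)\cap\Omega$ and then hitting $\partial\Omega$ within a controlled multiple of $\bar B$), together with Lemma \ref{lem2.1} applied at scale $\approx r$ around $\zeta$ (using a short Harnack chain back to the center $\xi$ of $\bar B$ when $\zeta$ is far from $\partial\Omega$), gives $\omega^\zeta(\partial\Omega \cap C\bar B) \gtrsim 1/L$ uniformly in $\zeta$. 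This produces $\omega^z_{\Omega'}(\partial(8\bar B)\cap\Omega) \lesssim L \cdot \omega^z(8\bar B)$, after absorbing $C$ into the geometric constants. A comparison of $g(w,\cdot)$ with the logarithmic equilibrium potential of $\tfrac{1}{4}\bar B \setminus \Omega$ (whose value on that set is comparable to $L$) then yields $\tilde M_w \lesssim L$, completing the chain of inequalities and giving $g(z,y) \lesssim \omega^z(8\bar B)\, L^2$.

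The most delicate step will be the bound $\tilde M_w \lesssim L$: a naive estimate only gives $g(w,\zeta) \lesssim \log(1/|\zeta-w|) \approx \log(1/r)$, which is not comparable to $L$ in general. The resolution is to exploit that Wiener regularity of $\Omega$ together with the Bourgain lower bound on $\omega^y(\bar B)$ force $g(w,\cdot)$ to decay near $\partial\Omega$ at least as fast as the equilibrium potential of $\tfrac{1}{4}\bar B\setminus\Omega$, trading the $\log(1/r)$ loss for the sharp $L$. A secondary subtlety is the Harnack chain for those $\zeta \in \partial(8\bar B)\cap\Omega$ sitting far from $\partial\Omega$, whose length must be controlled independently of the fine local structure of $\Omega$.
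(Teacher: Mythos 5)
Your two-scale strategy is the right one for this lemma: a factor of $L$ from strong Markov plus Bourgain (Lemma~\ref{lem2.1}), and a second factor of $L$ from the cross-annulus Green function bound $\tilde M\lesssim L$, which you correctly flag as the delicate point and which is indeed obtained by comparing $\Omega$ with the larger domain $B(y,R)\setminus(\tfrac14\bar B\setminus\Omega)$ and computing there against the equilibrium potential of $\tfrac14\bar B\setminus\Omega$. The factor count $L\cdot L=L^2$ is correct.

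There is, however, a genuine gap in the scales of Step~3. With $\Omega'=\Omega\setminus\overline{8\bar B}$, the inner boundary points $\zeta$ lie on $\partial(8\bar B)$, hence at distance $8\,\rad(\bar B)$ from the center $\xi$. To get $\omega^\zeta(B'')\gtrsim 1/L$ from Lemma~\ref{lem2.1} you need a ball $B''$ with $\zeta\in\tfrac14 B''$ \emph{and} $\tfrac14\bar B\subset\tfrac14 B''$ (so that the capacity term is controlled by $L$); these two conditions force $\rad(B'')\geq\tfrac{33}{2}\,\rad(\bar B)$, so $B''\not\subset 8\bar B$. Consequently you only get $\omega^z_{\Omega'}(\partial(8\bar B)\cap\Omega)\lesssim L\,\omega^z(C\bar B)$ for some $C$ considerably larger than $8$, and since $\omega^z(C\bar B)\geq\omega^z(8\bar B)$, this $C$ cannot in general be ``absorbed'': that would require a quantitative doubling property of $\omega^z$ that Wiener regular domains do not enjoy. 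The proposed Harnack-chain repair is also unavailable, since nothing prevents $\Omega$ from being disconnected, or pinched to arbitrarily thin corridors, between $\partial(8\bar B)$ and a neighborhood of $\xi$, so the chain length is not controlled. Both problems disappear, and Step~1 becomes superfluous, if you instead take $\Omega'=\Omega\setminus\bar B$ and run Step~2 directly with $w=y\in\tfrac15\bar B\cap\Omega$: then $g(y,\cdot)$ is harmonic in $\Omega'$ and $z\in\Omega'$ (as $z\notin 2\bar B$), and for $\zeta\in\partial\bar B\cap\Omega$ the ball $B''=\bar B(\zeta,5\rad(\bar B))$ satisfies $\zeta\in\tfrac14 B''$, $\tfrac14\bar B\subset\tfrac14 B''$, $\xi\in B''\cap\partial\Omega$, and $B''\subset 8\bar B$, so Lemma~\ref{lem2.1} gives $\omega^\zeta(8\bar B)\gtrsim 1/L$ and the argument closes with the stated constant~$8$. (The paper itself refers to \cite{Prats-Tolsa-Notes} for the proof of this lemma rather than supplying one, so a direct comparison with the authors' own argument is not possible here.)
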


For the proof of the lemma, see \cite[Chapter 7]{Prats-Tolsa-Notes}.
In the case when $\Omega$ is a domain satisfying the CDC condition, the right hand side of \rf{eqgreen73} is comparable to $1$. In this situation, the result above appears in \cite{AH}.
However, in the generality stated above, it is difficult to find the lemma in the literature. 

The domains $\Omega_1$, $\Omega_2$ above do not satisfy the CDC condition (roughly speaking, because the holes between the segments $I_j^i$ are two big). However, any closed ball $\bar B$  with radius at most $1$ centered in $E$ satisfies
$$\capp_L(\tfrac14 \bar B\setminus\Omega_i) \geq \capp_L(\tfrac14 \bar B\cap E) \approx \rad(\bar B)\approx \capp_L(\bar B).$$
Thus, applying Lemma \ref{eqgreen73} to $\Omega_i$ and the ball $\bar B= B(x,10r)$, we get
$$g_i(p_i,y) \lesssim \omega_i^{p_i}(\bar B(x,80r))\quad \mbox{ for all $y\in B(x,2r)$,}
$$
assuming $r$ small enough so that $p_i\not\in B(x,20r)$.
Plugging this estimate into \rf{eqJi**}, we obtain
\begin{equation}\label{eqji**2}
J_i(x,r)\lesssim \left(\frac{\omega_i^{p_i}(\bar B(x,80r))}{r}\right)^2
\end{equation}
for all $x\in E$ and $r$ small enough.
Consequently, by Fatou's lemma,
\begin{align*}
\int_E J(x,0+)^{1/4}\,d\HH^1(x) & \leq \liminf_{r\to0} \int_E J(x,r)^{1/4}\,d\HH^1(x) \\
& \leq 
\liminf_{r\to0} \bigg[\left(\int_E J_1(x,r)^{1/2}\,d\HH^1(x)\right)^{1/2}
\left(\int_E J_2(x,r)^{1/2}\,d\HH^1(x)\right)^{1/2}\bigg].
\end{align*}
Observe now that, for $i=1,2$, by \rf{eqji**2} and Fubini,
\begin{align*}
\int_E J_i(x,r)^{1/2}\,d\HH^1(x) &
\lesssim \int_E \frac{\omega_i^{p_i}(\bar B(x,80r))}{r} \,d\HH^1(x)\\
& = \int_{y\in \pom:\dist(y,E)\leq 80r} \frac1r\int_{x\in E\cap \bar B(y,80r)}\,d\HH^1(x)\,d\omega_i^{p_i}(y)\\
& \lesssim  \omega_i^{p_i}(\bar U_{80r}(E)),
\end{align*}
where $\bar U_t(F)$ stands for the closed $t$-neighborhood of $F$.
So we deduce
$$\int_E J(x,0+)^{1/4}\,d\HH^1(x)\lesssim \lim_{r\to 0} \omega_1^{p_1}(\bar U_{80r}(E))\cdot \lim_{r\to 0} \omega_2^{p_2}(\bar U_{80r}(E)) = \omega_1^{p_1}(E)\,\omega_2^{p_2}(E) =0,$$
using the outer regularity of the measures $\omega_i^{p_i}$. Thus, $J(x,0+)=0$ for $\HH^1$-a.e.\ $x\in E$, as wished.

\vv

\begin{rem}
We already mentioned that the domains $\Omega_1$, $\Omega_2$ defined above do not satisfy the CDC. However, they can be modified suitably so that 
the CDC holds. For example, we could define $\wt \Omega_i=\Omega_i\setminus K$, where $K$ is the union of a countable family of homothetic copies
of the $1/3$ Cantor set located between all consecutive intervals $I_j^i$, $I_{j+1}^i$. That is, in the ``hole" in $L_i$ between the intervals $I_j^i$, $I_{j+1}^i$, we put a homothetic copy of the $1/3$ Cantor set with diameter comparable to the size of the hole.
We could also replace the $1/3$ Cantor set by
a suitable approximation by intervals, in case we wanted the boundary of $\wt\Omega_i$ to be made up of a countable family of intervals.
We leave the details for the reader.
\end{rem}


\vvv

\end{document}